\documentclass[11pt]{amsart}
\usepackage{amssymb,amsfonts,amsthm,amsmath,mathrsfs,xspace,hyperref}
\usepackage{graphicx}
\usepackage[francais,english]{babel}
\usepackage{inputenc}
\usepackage[T1]{fontenc}
\usepackage[centering]{geometry}
\usepackage{csquotes}
\usepackage{color}
\usepackage{enumerate}
\usepackage{mathabx}
\usepackage{stmaryrd}
\usepackage{enumitem}


  \newcommand{\R}{\ensuremath{\mathbb{R}}}%
  \newcommand{\Z}{\ensuremath{\mathbb{Z}}}%
	\newcommand{\Q}{\ensuremath{\mathbb{Q}}}%
  \newcommand{\N}{\ensuremath{\mathbb{N}}}%
				\newcommand{\X}{\ensuremath{\mathcal{X}}}%
                \renewcommand{\P}{\ensuremath{\mathcal{P}}}%

                       \newcommand{\mk}{\ensuremath{\mathfrak{m}}}%
                                \newcommand{\nk}{\ensuremath{\mathfrak{n}}}%
                                                                                                                                \newcommand{\hk}{\ensuremath{\mathfrak{h}}}%

        \renewcommand{\H}{\ensuremath{\mathcal{H}}}%
        \newcommand{\GL}{\ensuremath{\operatorname{GL}}}%
        \newcommand{\vol}{\ensuremath{\operatorname{vol}}}%
                \newcommand{\relvol}{\ensuremath{f}}%
                                \newcommand{\asdim}{\ensuremath{\operatorname{asdim}}}%

        \newcommand{\corps}{\ensuremath{\mathbb{K}}}%
                \newcommand{\Lie}{\ensuremath{\operatorname{Lie}}}%

	\newcommand{\fix}{\ensuremath{\textrm{Fix}}}%
    \newcommand{\acts}{\ensuremath{\curvearrowright}}%
  \newcommand{\sub}{\ensuremath{\operatorname{Sub}}}%
    \newcommand{\aut}{\ensuremath{\operatorname{Aut}}}%

			\newcommand{\Fit}{\ensuremath{\operatorname{Fit}}}
			\newcommand{\Res}{\ensuremath{\operatorname{Res}}}
			\newcommand{\FC}{\ensuremath{\operatorname{FC}}}

\newcommand{\ass}{\ensuremath{\mathrm{ass}}}
\newcommand{\ann}{\ensuremath{\mathrm{ann}}}
\newcommand{\rad}{\ensuremath{\mathrm{rad}}}

\newcommand{\p}{\ensuremath{\mathfrak{p}}}

\theoremstyle{definition}
  \newtheorem{defin}{Definition}[section]

	 \newtheorem{notation}[defin]{Notation}
  \newtheorem{main-question}{Question}
    \newtheorem{main-problem}[main-question]{Problem}

\theoremstyle{plain}
  \newtheorem{thm}[defin]{Theorem}
  \newtheorem{thmmain}{Theorem}

      \newtheorem*{main-thm}{Theorem}
    \newtheorem*{main-prop}{Proposition}
  \newtheorem{prop}[defin]{Proposition}
    \newtheorem{prop-def}[defin]{Proposition-Definition}
      \newtheorem{conjecture}{Conjecture}
         \newtheorem{conjecturebis}{Conjecture}
    \newtheorem{main-defin}{Definition}

  \newtheorem{cor}[defin]{Corollary}
    \newtheorem*{main-cor}{Corollary}

   \newtheorem{lem}[defin]{Lemma}

\theoremstyle{remark}
  \newtheorem{remark}[defin]{Remark}

  \newtheorem{example}[defin]{Example}
  
  \begin{document}
  	
  \date{July 13, 2022}	
  
\title{Growth of actions of solvable groups}

  	\author{Adrien Le Boudec}
  	\address{CNRS, UMPA - ENS Lyon, 46 all\'ee d'Italie, 69364 Lyon, France}
  	\email{adrien.le-boudec@ens-lyon.fr}
  	
  	\author{Nicol\'as Matte Bon}
  	\address{
  		CNRS,
  		Institut Camille Jordan (ICJ, UMR CNRS 5208),
  		Universit\'e de Lyon,
  		43 blvd.\ du 11 novembre 1918,	69622 Villeurbanne,	France}
  	
  	\email{mattebon@math.univ-lyon1.fr}
  	
  	\thanks{Supported by the LABEX MILYON (ANR-10-LABX-0070) of Universite de Lyon, within the program "Investissements d'Avenir" (ANR-11-IDEX-0007) operated by the French National Research Agency.}

  	\maketitle
  	
\begin{abstract}
Given a finitely generated group $G$, we are interested in common geometric properties of all graphs of faithful actions of $G$. In this article we focus on their growth. We say that a group $G$ has a Schreier growth gap $f(n)$ if every faithful $G$-set $X$ satisfies  $\vol_{G, X}(n)\succcurlyeq f(n)$, where $\vol_{G, X}(n)$ is the growth of the action of $G$ on $X$. 	Here we study Schreier growth gaps for finitely generated solvable groups. 

We prove that if a metabelian group $G$ is either finitely presented or torsion-free, then $G$ has a Schreier growth gap $n^2$, provided $G$ is not virtually abelian. We also prove that if $G$ is a metabelian group of Krull dimension $k$, then $G$ has a Schreier growth gap $n^k$. For instance the wreath product $C_p \wr \Z^d$ has a Schreier growth gap $n^d$, and $\Z \wr \Z^d$ has a Schreier growth gap $n^{d+1}$. These lower bounds are sharp. For solvable groups of finite Pr\"ufer rank, we establish a Schreier growth gap $\exp(n)$, provided $G$ is not virtually nilpotent. This covers all solvable groups that are linear over $\Q$. Finally for a vast class of torsion-free solvable groups, which includes solvable groups that are linear, we establish a Schreier growth gap $n^2$.
\end{abstract}

\section*{Introduction}

Let $G = \langle S \rangle$ be a finitely generated group. We are interested in the  growth of faithful actions of $G$. A set $X$ endowed with a $G$-action will be called a $G$-set. The Schreier graph of the action of $G$ on $X$  is the graph $\Gamma(G, X)$  with vertex set $X$, and edges $(x, sx)$ for all $x \in X$ and $s \in S$. The terminology Schreier graph is usually used for transitive actions, but in the present setting we do \textit{not} require that actions are transitive (see e.g.\ Remark \ref{rmq-finite-orbits}). The \textbf{growth of the action} of $G$ on $X$ is defined by
 \[\vol_{G, X}(n)=\max_{x\in X} |S^n \cdot x|.\] In other words, $\vol_{G, X}(n)$ is the maximal cardinality of a ball of radius $n$ in $\Gamma(G, X)$. Given functions $f, g\colon \N \to \N$, we write $f(n)\preccurlyeq g(n)$ if there is a constant $C>0$ such that $f(n)\leq Cg(Cn)$, and $f(n)\simeq g(n)$ if $f(n)\preccurlyeq g(n)$ and $g(n) \preccurlyeq f(n)$. The function $\vol_{G, X}(n)$ does not depend on the choice of $S$ up to $\simeq$. 
 
 For the action of $G$ on itself by  left translations, the function $\vol_{G,X}(n)$ is the classical growth of the group $G$, which we denote by $\vol_G(n)=|S^n|$. The latter is one of the most widely studied asymptotic invariants of groups, after the results of Milnor \cite{Mil-sol}, Wolf \cite{Wolf}, Gromov \cite{Gromov-poly}, Grigorchuk \cite{Gri-growth} and many other developments. By contrast, few general results are  available on growth of actions.

 It is clear that for every  $G$-set $X$ we have $\vol_{G, X}(n) \preccurlyeq  \vol_G(n)$, but various groups admit faithful actions for which the function $\vol_{G, X}(n)$ is much smaller. Classical instances of groups having natural actions with $\vol_{G, X}(n)\simeq n$ are B.H. Neumann's examples of continuously many non-isomorphic finitely generated groups  \cite{Neumann-manygroups}, the Grigorchuk group \cite{Bar-Gri-Hecke}, or the topological full group of a $\Z$-action on the Cantor space. Moreover actions of linear growth, and more generally the analysis of  graphs of actions, played a crucial role in the recent developments on topological full groups and other related groups, see  \cite{Ju-Mo, Nek-simple-dyn, Nek-frag} and the recent preprint \cite{BNZ}. Other examples of groups admitting faithful actions of linear growth are non-abelian free groups (an observation that can be attributed to Schreier \cite{Schreier}), and all right-angled Artin groups (and hence all their infinite subgroups) \cite{Salo}. 
 
 Conversely, it is natural to ask whether there are obstructions to the existence of actions of small growth. We introduce the following definition:

\begin{main-defin}
	Let $f\colon \N\to \R_+$. A finitely generated group $G$ has a {\textbf{Schreier growth gap} $f(n)$} if every faithful $G$-set $X$ satisfies  $\vol_{G, X}(n)\succcurlyeq f(n)$.
\end{main-defin}

For every infinite group $G$ we always have $\vol_{G, X}(n)\succcurlyeq n$ for every faithful $G$-set $X$. Hence we are interested in Schreier growth gaps $f(n)$ where the function $f$ is super-linear, meaning that $f(n)/n$ is unbounded. The word \enquote{gap} implicitly refers to that situation. 

Every infinite group $G$ with Kazhdan property (T) has a Schreier growth gap $\exp(n)$. This is a standard consequence of the fact that for every $G$-set $X$ the unitary representation of $G$ on $\ell^2(X)$ has a spectral gap. This remark can be traced back to Kazhdan, and is pointed out by Gromov in \cite[Remark 0.5.F]{Gro-asdim}  (see also \cite[Th. B]{Stuck-growth}, and \cite[\S 4]{Ju-dlS} for a statement that does not assume transitivity of actions, and also \cite[\S 7]{Cor-MathZ}). In a different direction, Schreier growth gaps were established in \cite{MB-graph-germs} and \cite{LB-MB-comm-lemm}, respectively in the setting of topological full groups of \'etale groupoids and of branch groups acting on rooted trees.

Given the above manifestations of this phenomenon, we want to initiate a systematic study of Schreier growth gaps for  finitely generated groups. The purpose of this article is to establish various Schreier growth gaps among finitely generated solvable groups. Beyond the fact that the role played by solvable groups in the classical theory of growth of groups suggests to consider this setting, it is natural here to consider \enquote{small} groups $G$, in view of the fact that if a group $G$ has a Schreier growth gap $f(n)$, then the same is true for any group having $G$ as a subgroup. 

\subsection*{The method: non-foldable subsets and confined subgroups} Before discussing our main results, we outline our approach to study Schreier growth gaps. It is based on the following notion of  independent interest.

\begin{main-defin}
A subset $\mathcal{L}$ of a group $G$ is \textbf{non-foldable} if for every faithful $G$-set $X$, for every finite subset $\Sigma \subset \mathcal{L}$ there exists $x\in X$ such that the orbital map $g\mapsto gx$ is injective on $\Sigma$. 
\end{main-defin} 

 Roughly speaking, a subset $\mathcal{L}$ is non-foldable if for every faithful $G$-set $X$, the graph $\Gamma(G, X)$ contains Lipschitz embedded copies of arbitrarily large finite subsets of $\mathcal{L}$. In terms of growth, this implies that $\vol_{G, X}(n)$ must be at least equal to the maximal cardinality of a ball of radius $n$ in $\mathcal{L}$, where $\mathcal{L}$ is equipped with the induced metric from $G$. Our method to establish Schreier growth gaps consists in exhibiting non-foldable subsets that are as large as possible. Non-foldable subsets provide  information on the geometry of the graphs $\Gamma(G, X)$ beyond the notion of growth. For example they also provide lower bounds for their {asymptotic dimension}. As the growth, the asymptotic dimension of the graph $\Gamma(G, X)$ is an invariant of the action of $G$ on $X$, that is monotone when passing to a finitely generated subgroup. The non-foldable subsets that we exhibit are natural and explicit, so that our results also provide computable lower bounds for the asymptotic dimension of $\Gamma(G, X)$. 

The study of non-foldable subsets of a group $G$ is crucially related to the study of closed $G$-invariant subsets for the conjugation action of $G$ on the space  $\sub(G)$ of subgroups of $G$. The latter is a compact space  with the topology induced from the set $2^G$ of all subsets of $G$. Recall that a subgroup of $H$ of a group $G$ is \textbf{confined} if the closure of the $G$-orbit of $H$ in $\sub(G)$ does not contain the trivial subgroup $\{1\}$. Equivalently, $H$ is confined if there is a finite set $P$ of non-trivial elements of $G$ which intersects all conjugates of $H$. Confined subgroups are natural generalisations of uniformly recurrent subgroups (URSs) of Glasner and Weiss \cite{GW-urs}. Confined subgroups and URSs were studied and found applications recently in \cite{Kenn-urs,LBMB-sub-dyn,Elek-simple-alg,MB-Tsan, LB-lattices,MB-graph-germs,Fraczyk-urs,LB-MB-comm-lemm,LB-MB-confined-ht,CLB-commens,Bou-Houd}. A common point in \cite{LBMB-sub-dyn, MB-graph-germs, LB-MB-comm-lemm} is a complete classification, or a strong structural result, of confined subgroups and URSs for certain families of groups defined by an action by homeomorphisms. This  global rigidity behaviour for confined subgroups and URSs leads to Schreier growth gaps \cite{MB-graph-germs, LB-MB-comm-lemm}. A major difference in the present article is that solvable groups may admit a large pool of confined subgroups and URSs. This was already illustrated by Glasner--Weiss \cite{GW-urs}. In our present setting it is essential to first guess what will be the relevant non-foldable subsets $\mathcal{L}$, which has the consequence of restricting the confined subgroups that need to be studied.

We shall now state our results. For simplicity here we state the conclusions that we draw about Schreier growth gaps, and refer the reader to the core of the article for statements about non-foldable subsets. 

\subsection*{Polycyclic groups}

At this point it is worth mentioning that if $G$ is an infinite virtually abelian group, then $G$ always admits a faithful $G$-set with linear growth (Proposition \ref{prop-virtually-abelian}), and thus does not satisfy any Schreier growth gap. Thus virtually abelian groups should be considered as  trivial  for the problem considered in this paper, and shall systematically be excluded.

The following proposition treats the case of polycyclic groups.

\begin{main-prop}[Proposition \ref{prop-n4-growth} and Corollary \ref{cor-poly-growth}] Let $G$ be a polycyclic group. 
	\begin{enumerate}[label=\roman*)]
		\item \label{item-prop-noeth-exp} If $G$ is not virtually nilpotent, then $G$ has a Schreier growth gap  $\exp(n)$.
		\item If $G$ is virtually nilpotent and not virtually abelian, then $G$ has a Schreier growth gap $n^4$.
	\end{enumerate}
\end{main-prop}

The proof of this result is elementary. Recall that polycyclic groups are the solvable groups that have all their subgroups finitely generated. This property notably implies that the space $\sub(G)$ of subgroups of $G$ is a countable compact space. Simple compactness considerations then allow to reduce the understanding of the growth of actions of nilpotent and polycyclic groups to the classical results on the growth of these groups. We refer to Section \ref{s-noetherian} for details. Here we shall point out that this approach has limited scope, and does not generalize to other solvable groups.

\subsection*{Metabelian groups}


The next step to consider is the case of metabelian groups. Even for these groups, the situation becomes much more diversified compared to the polycyclic case. One major difference is that there are groups of exponential growth which admit faithful actions of linear growth. The archetype of such an example is the lamplighter group $C_p\wr \Z$, where $C_p$ is the cyclic group of  order $p$. (In fact the lamplighter group admits action of almost arbitrarily prescribed growth, see \S \ref{subsec-behavior}). Hence there is no Schreier growth gap that is uniform for all (non-virtually abelian) metabelian groups, unlike in the polycyclic case. Nevertheless we prove that such uniform gaps do hold in the following two situations:

%
%

\begin{thmmain} \label{thm-intro-metab-quad}
	Let $G$ be a finitely generated metabelian group that is not virtually abelian. Suppose that $G$ satisfies at least one of the following: \begin{enumerate}[label=\roman*)]
		\item $G$ is finitely presented;
		\item $G$ is torsion-free.
	\end{enumerate}
Then $G$ has a Schreier growth gap  $n^2$. 
\end{thmmain}

We note that the quadratic bound is optimal both in the finitely presented case and in the torsion-free case, as it is realized respectively by Baumslag's finitely presented metabelian groups (\S \ref{subsec-Baumslag}) and by the wreath product $\Z \wr \Z$ (see below). 

Beyond the classical lamplighter group, every wreath product $G=A\wr B$ of two finitely generated abelian groups admits faithful actions of polynomial growth. A natural example is the action of $G$ on the Cartesian product $X=B\times A$ called the \textbf{standard wreath product action} (whose definition is recalled in \S \ref{s-wreath-actions}). With natural choices of generators, the graph of this action is obtained by taking a copy of the Cayley graph of $A$ and attaching to each vertex a copy of the Cayley graph of $B$. When $A = B = \Z$ this graph  is a comb (Figure \ref{fig-comb}). The growth of the standard action of $A\wr B$  is equivalent to the growth of the abelian group $B\times A$. So for the lamplighter group $G=C_p \wr \Z^d$,  the standard wreath product action satisfies $\vol_{G, X}(n)\simeq n^d$, while for $G=\Z \wr \Z^d$ we have $\vol_{G, X}(n)\simeq n^{d+1}$. 

The next result states that for these groups the growth of the standard action of $G$ is the minimal possible growth of all faithful $G$-actions:

\begin{main-thm}[Theorem \ref{thm-wreath}] 
	For every $d \geq 1$, the following hold:
	\begin{enumerate}[label=\roman*)]
		\item The group $C_p\wr \Z^d$ has a Schreier growth gap $n^d$.
		
		\item The group $\Z \wr \Z^d$ has a Schreier growth gap  $n^{d+1}$. 
	\end{enumerate}
\end{main-thm}

This result is a simple illustration of a more general theorem about metabelian groups, in which we obtain a Schreier growth gap in terms of an algebraic invariant called the Krull dimension. Let $G$ be a finitely generated metabelian group. Recall that whenever $1\to M\to G\to Q\to 1$ is a short exact sequence  such that $M$ and $Q$ are abelian, $M$ can be seen as a finitely generated module over the group ring $\Z Q$. This point of view plays a crucial role in the study of metabelian groups since the seminal work of Hall \cite{Hall}. The Krull dimension of the $\Z Q$-module $M$ does not depend on $(M,Q)$ provided that $G$ is not virtually abelian (see \S \ref{s-Krull}). This positive integer is called the \textbf{Krull dimension of $G$}, and was introduced by Cornulier. It implicitly appears in  \cite{Cornulier-CBrank}. This notion was further studied by Jacoboni, who established estimates for the return probability for the random walk on a metabelian group $G$ in terms of the Krull dimension of $G$ \cite{Lison}. 

The following result relates the Krull dimension of a metabelian group to the possible growth of all its faithful actions:

\begin{thmmain}\label{thm-metabelian-krull}
	Let $G$ be a finitely generated metabelian group which is not virtually abelian, and let $k = \dim_{\mathrm{Krull}}(G)$. Then $G$ has a Schreier growth gap $n^k$. 
\end{thmmain}

We have $\dim_{\mathrm{Krull}}(C_p \wr \Z^d)=d$ and $\dim_{\mathrm{Krull}}(\Z \wr \Z^d)=d+1$, so the above statement for wreath products follows from Theorem \ref{thm-metabelian-krull}. As another example,  for the free metabelian group $\mathbb{FM}_d$ on $d$ generators, we obtain a Schreier growth gap $n^{d+1}$. This estimate is sharp, in the sense that there is an action realizing the lower bound. The case of $\mathbb{FM}_d$ turns out to be very particular among free solvable groups, as for the free solvable group $\mathbb{FS}_{d, \ell}$ of rank $d$ and solvability length $\ell \ge 3$, we prove a Schreier growth gap $\exp(n)$ (Theorem \ref{thm-free-solvable}). 

The main tool in the proof of the above results is Theorem \ref{thm-explicit-metab} below, which can be described as our main result on metabelian groups. Given a finitely generated group in a short exact sequence $1\to M\to G\to Q\to 1$ where $M,Q$ are abelian, Theorem \ref{thm-explicit-metab} provides a  criterion which guarantees that $G$ has a Schreier growth gap $n^d$, where the exponent $d$ depends on  the  $\Z Q$-module $M$. Beyond wreath products and free metabelian groups, this estimate is sharp  for other interesting families of metabelian groups. We illustrate this in  \S \ref{s-metabelian-first-examples} with Baumslag's finitely presented metabelian groups  \cite{Baumslag-group}. In this case the lower bound provided by Theorem \ref{thm-explicit-metab} is quadratic. Here we mention that these groups have Krull dimension one, so that there are examples for which Theorem \ref{thm-explicit-metab} provides a better bound than Theorem \ref{thm-metabelian-krull}.

\subsection*{Solvable groups}
We now discuss solvable groups of higher length.  A prominent class of finitely generated solvable groups is the class of groups of finite Pr\"ufer rank. Recall that a group $G$ has finite  \textbf{Pr\"ufer rank} if there exists $k\in \N$ such that every finitely generated subgroup of $G$ is generated by at most $k$ elements (the rank of $G$ is then the least $k$ with this property). In the sequel we abbreviate finite Pr\"ufer rank by  \textbf{finite rank}. The class of solvable groups of finite  rank contains polycyclic groups, but it is much richer. Basic examples of groups of finite rank that are not polycyclic are the Baumslag-Solitar groups $\mathrm{BS}(1,n)$. Every finitely generated solvable group that is linear over $\Q$ has finite rank. Examples of groups that are not of finite rank are wreath products $A \wr B$ where $A$ is non-trivial and $B$ is infinite. The class of finitely generated solvable groups of finite rank admits several algebraic characterisations: it coincides with the minimax groups \cite[Ch.\ 5]{Lennox-Rob}, and by a theorem of P.\ Kropholler it also coincides with those solvable groups that do not admit a lamplighter group $C_p \wr \Z$ as a subquotient \cite{Kropholler84}. On a more geometric perspective, finite rank solvable groups have been studied by Pittet--Saloff-Coste in \cite{Pittet-Saloff-Coste} and Cornulier--Tessera in \cite{CT-Banach}. The following result asserts that these groups satisfy the strongest possible Schreier growth gap.

\begin{thmmain}\label{thm-intro-prufer}
	Let $G$ be a finitely generated solvable group of finite rank, and assume that $G$ is not virtually nilpotent. Then $G$ has a Schreier growth gap  $\exp(n)$. 
\end{thmmain}

So this result extends the previously mentioned result for polycyclic groups to finite rank solvable groups. However we point out that the proof here is substantially more involved than in  the polycyclic case. First we reduce the general case to the case of torsion-free finite rank solvable groups. Such a group is linear over $\Q$, and its nilpotent radical is subject to Malcev theory on torsion-free divisible nilpotent groups. We deduce Theorem \ref{thm-intro-prufer} from a general result on groups that are extensions $1\to N\to G\to Q\to 1$ where $N$ is a nilpotent group of finite rank and the action of $Q$ on $N^{ab}\otimes \Q$ satisfies a certain irreducibility condition (Theorem \ref{t-strongly-irreducible}), together with a result of independent interest which asserts that finite rank solvable groups always contain subgroups of a particular form, to which the previous result applies (Proposition \ref{prop-non-virt-nilp-subgroup}).

\bigskip

The last problem that we discuss is whether the quadratic Schreier growth gap for torsion-free metabelian groups from Theorem \ref{thm-intro-metab-quad} extends to torsion-free solvable groups. We conjecture that this is the case: 

\begin{conjecture} \label{conj-intro-torsion-free}
	Let $G$ be a finitely generated solvable group which is virtually torsion-free, and not virtually abelian. Then $G$ has a Schreier growth gap  $n^2$. 
\end{conjecture}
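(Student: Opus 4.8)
Since Conjecture \ref{conj-intro-torsion-free} is stated as open, what follows is a proposal for an attack rather than a proof. The starting point is the monotonicity of Schreier growth gaps: if $H\leq G$ is finitely generated and has a Schreier growth gap $f(n)$, then so does $G$ (a faithful $G$-set restricts to a faithful $H$-set, and balls in $\Gamma(H, X)$ are contained in balls in $\Gamma(G, X)$). Hence one may first replace $G$ by a finite-index torsion-free subgroup $G_0$: proving the statement for $G_0$ implies it for $G$, and $G_0$ is again finitely generated solvable, torsion-free, and not virtually abelian (a finite-index subgroup of a non-virtually-abelian group is not virtually abelian). So one may assume that $G$ itself is torsion-free.

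Next I would distinguish cases. If $G$ is virtually nilpotent, choose a finite-index nilpotent subgroup $N\leq G$; it is finitely generated, torsion-free and polycyclic, and it is non-abelian, for otherwise $G$ would be virtually abelian. Since a torsion-free finitely generated nilpotent group that is virtually abelian is abelian, $N$ is not virtually abelian, and the second statement of Proposition \ref{prop-intro-noeth} gives $N$, hence $G$, a Schreier growth gap $n^4\succcurlyeq n^2$. If $G$ is not virtually nilpotent but has finite Pr\"ufer rank, Theorem \ref{thm-intro-prufer} gives a gap $\exp(n)\succcurlyeq n^2$. If $G$ is metabelian, Theorem \ref{t-intro-metab-torsion-free} applies directly. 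Thus the only case left open by the results of this paper is: $G$ torsion-free, of infinite Pr\"ufer rank, not virtually nilpotent, and of derived length at least $3$.

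For this remaining case, the plan is to reduce to Theorem \ref{t-intro-metab-torsion-free} (or Theorem \ref{thm-intro-prufer}) by exhibiting inside $G$ a finitely generated subgroup of the relevant kind --- most naturally a finitely generated metabelian subgroup that is not virtually abelian, which is then automatically torsion-free and whose existence would conclude the argument. Concretely, by Kropholler's theorem \cite{Kropholler84} an infinite-rank finitely generated solvable group has a section isomorphic to $C_p\wr\Z$; one would like to promote this section to an actual subgroup of $G$ of the desired type, for instance by locating a term $A=G^{(i)}$ of the derived series together with an element $t\in G$ such that $\langle A_0, t\rangle$ is metabelian and not virtually abelian for a suitable finitely generated $\langle t\rangle$-invariant subgroup $A_0\leq A$ --- an infinite-rank counterpart of Proposition \ref{prop-non-virt-nilp-subgroup}. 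Alternatively one could try to build a quadratic non-foldable subset directly in the (abelian)-by-(solvable) setting, extending the constructions underlying Theorems \ref{thm-explicit-metab} and \ref{t-strongly-irreducible}.

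The main obstacle is exactly this last step. Sections of solvable groups need not lift to subgroups, and here $G$ is torsion-free, so the $C_p\wr\Z$ section is visibly not one; the difficulty is to find the correct finitely generated subgroup, or directly the correct non-foldable subset, which is a structural problem about torsion-free solvable groups of infinite rank and derived length $\geq 3$. As emphasised in the introduction, such groups can carry a very rich family of confined subgroups, including ones inherited from wreath-product-like behaviour, so the global-rigidity mechanism available for shorter derived length does not apply directly: one must first guess the relevant non-foldable subset before the confined subgroups obstructing it can be classified or controlled.
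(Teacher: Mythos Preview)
You correctly recognise that the statement is a conjecture and that the paper does not prove it; your case analysis and reductions are fine. However, your primary strategy for the residual case --- locating inside $G$ a finitely generated metabelian subgroup that is not virtually abelian --- is precisely the approach that the paper warns cannot succeed in general. In the remark following Theorem \ref{t-torsion-free-nilp-by-abelian} the authors state that there exist finitely generated torsion-free solvable groups, not virtually abelian, all of whose finitely generated metabelian subgroups are virtually abelian (they call such groups \emph{restricted}; see Definition after Proposition \ref{prop-Lennox-isolator}). So the promotion of Kropholler's $C_p\wr\Z$ section to an honest metabelian subgroup of the desired type is provably impossible in some cases, and this is not merely a technical obstacle but a genuine structural one.

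The paper's own partial progress on the conjecture proceeds differently. Rather than descending to a subgroup, it works with the Fitting quotient: for restricted $G$ with $\Fit(G)$ torsion-free, Proposition \ref{prop-no-metab-sbgp-restriction} shows $\Fit(G)$ is abelian and equal to its own centraliser, and then Proposition \ref{prop-lift-nice-subsets} lifts a non-foldable subset of $Q=G/\Fit(G)$ to one of $G$. Iterating along the Fitting series gives Theorem \ref{thm-fitting-torsionfree}. Your ``alternative'' suggestion --- building a quadratic non-foldable subset directly in an abelian-by-solvable extension --- is therefore much closer to what the paper actually does, and is the direction worth pursuing; the difficulty that remains, and that keeps the conjecture open, is that the successive Fitting quotients need not be torsion-free even when $G$ is.
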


We prove that the conjecture is true in many situations, which cover all familiar classes of torsion-free solvable groups.  The first observation to make is that Theorem \ref{thm-intro-metab-quad}  immediately implies that if a torsion-free group $G$ contains a finitely generated metabelian subgroup which is not virtually abelian, then $G$ has a Schreier growth gap  $n^2$. Classical arguments imply that this situation covers all groups which are virtually nilpotent-by-abelian. By a theorem of Malcev, this includes all solvable linear groups (i.e.\ isomorphic to a subgroup of $\GL(n, \corps)$, where $\corps$ is a field). Hence the conjecture is true for these groups:

\begin{main-cor}[Corollary  \ref{cor-linear-case-quadratic}] 
Let $G$ be a finitely generated solvable linear group which is virtually torsion-free. If $G$ is not virtually abelian, then $G$ has a Schreier growth gap  $n^2$. 
\end{main-cor}

The next result proves the conjecture under a reinforcement of the torsion-free assumption on $G$. Note that if a group $G$ admits a series $\{1\}=N_0\unlhd N_1\unlhd \cdots \unlhd N_k=G$ such that the successive quotients $N_{i+1}/N_i$ are torsion-free, then $G$ is torsion-free. We refer to \S \ref{subsec-fittingseries} for the definition of the Fitting series.  

\begin{main-thm}[Theorem \ref{thm-fitting-torsionfree}] 	Suppose that $G$ is a finitely generated solvable group such that the successive quotients in the Fitting series of $G$ are torsion-free, and $G$ is not virtually abelian. Then $G$ has a Schreier growth gap  $n^2$. 
\end{main-thm}

 The proof of that result does not only rely on the case of metabelian groups. It involves a rather technical mechanism that allows in some cases to lift the desired conclusion from a quotient to the ambient group (see \S  \ref{subsec-lift-exp} and \S \ref{subsec-fittingseries}).

While the above results establish the conjecture under additional assumptions on $G$, it is also natural to add assumptions on the actions. The following result asserts that Conjecture \ref{conj-intro-torsion-free} is true if we restrict to transitive actions, or more generally to actions with finitely many orbits.

\begin{main-thm}[Theorem \ref{t-torsion-free-quasitransitive}]
	Let $G$ be a finitely generated solvable group which is virtually torsion-free and not virtually abelian. Let $X$ be a faithful $G$-set such that the action of $G$ on $X$ has finitely many orbits. Then $\vol_{G, X}(n) \succcurlyeq  n^2$. 
\end{main-thm}

\subsection*{Applications: subgroups of topological full groups}

We end this introduction by mentioning that Schreier growth gaps provide a natural quantitative obstruction to the existence of embeddings between groups. Explicitly, if a group $L$ is known to admit a faithful action of growth $g(n)$ and if a group $G$ has a Schreier growth gap $f(n)$ with $f(n) \npreccurlyeq g(n)$, then $G$ cannot embed into $L$ (see Proposition \ref{prop-monoton}). Interesting examples of groups $L$ that naturally come with an action of possibly small growth are topological full groups of group actions on the Cantor set. The subgroup structure of such groups is in general quite mysterious. Our results can be immediately applied to obtain restrictions on the solvable subgroups of various topological full groups. As an illustration, we state here one example of such an application.  Recall that Matui showed that for every minimal $\Z$-action on the Cantor set which is not an odometer, the topological full group contains the lamplighter group $C_2\wr \Z$ \cite{Mat-exp}. By contrast, our result on Schreier growth gaps of wreath products has the following consequence:


\begin{main-cor}[see \S \ref{subsubsection-full-gp}]
	Let $d \geq 1$ and let $A$ be a non-trivial finitely generated abelian group. If the wreath product $A \wr \Z^d$ embeds into the topological full group of a $\Z$-action on the Cantor set, then $A$ is finite and $d = 1$.
\end{main-cor}
 
The fact that the group $C_p \wr \Z^2$ does not embed into the topological full group of the full shift over $\Z$ was conjectured in \cite{Salo}. We refer to \S \ref{s-non-embeddings} for  additional applications of our results in this setting (notably Corollary \ref{cor-nV-poly}).
 
  \subsection*{Guidelines}
  Sections  \ref{s-finite-rank}--\ref{sec-metab}--\ref{sec-torsionfree} are the core of the article, and our main results stated in this introduction are proven there.

 \subsection*{Acknowledgements}
 We are grateful to A. Erschler, J. Frisch and T. Zheng for interesting conversations on Schreier graphs of the lamplighter group which inspired Proposition \ref{p-lamplighter-given-growth}. We also thank Y. Cornulier and P. de la Harpe for useful comments on a preliminary version of this work.

 \setcounter{tocdepth}{1}
 \tableofcontents
 
\section{Preliminaries} \label{s-preliminaries}

\subsection{Notation} \label{subsec-notation}

Let $G$ be a group. We denote by $G^{(i)}$ the derived series of $G$, defined inductively by $G^{(1)} = G' = [G,G]$ and $G^{(i+1)} = [G^{(i)},G^{(i)}] $. The abelianization $G/G'$ is denoted $G^{ab}$. The lower central series of $G$ is denoted by  $\gamma_i(G)$. So $\gamma_1(G)=G$ and $\gamma_{i+1}(G)=[\gamma_{i}(G),G]$ for all $i \geq 1$. The upper central series is denoted $Z_i(G)$, with the convention $Z_1(G)=Z(G)$ and $Z_{i+1}(G)/Z_i(G)=Z(G/Z_i(G))$ for all $i$.

The \textbf{Fitting subgroup} is denoted $\Fit(G)$. It is the subgroup generated by all nilpotent normal subgroups of $G$. The \textbf{FC-center} $\FC(G)$ is the subgroup of $G$ consisting of elements with a finite conjugacy class. 

The set of elements of finite order in a group $G$ will be denoted $T(G)$. Recall that when $G$ is nilpotent, $T(G)$ is a subgroup of $G$, which is finite if $G$ is finitely generated.

Recall that a set $X$ on which $G$ acts is called a $G$-set. All $G$-sets are supposed to be non-empty. We say that $X$ is faithful, or transitive, whenever the action of $G$ on $X$ has this property. 
 
 For $f,g, \colon \N \to \N$, we write $f\preccurlyeq g$ if there exists a constant $C>0$ such that $f(n)\leq Cg(Cn)$ for sufficiently large $n$. If $f\preccurlyeq g$ and $g\preccurlyeq f$, we write $f\simeq g$. 
 
\begin{notation}
Let $G$ be a group with an abelian normal subgroup $M$. Since $M$ acts trivially by conjugation on itself, the conjugation action of $G$ on $M$ factors through $Q = G/M$, so that $M$ naturally has the structure of a $\mathbb{Z} Q$-module. When adopting this point of view, we will use additive notation for $M$, and the notation $qm$ for $q \in Q$ and $m \in M$ will be used for the module operation of $q$ on $m$.
\end{notation}

\subsection{Graphs of actions}

Let $G$ be a finitely generated group, and fix a finite symmetric generating subset $S$. If $X$ is a $G$-set, we denote by $\Gamma(G, X)$ the graph whose vertex set is $X$,  and for every $x\in X$ and $s\in S$ there is an edge connecting  $x$ to  $sx$. The graph $\Gamma(G, X)$ is called the \textbf{Schreier graph of the action} of $G$ on $X$. Note that  $\Gamma(G, X)$ is not connected in general: its connected components are the $G$-orbits in $X$.  We intentionally omit $S$ in the notation, as we will only be interested in properties of these graphs that do not depend on the choice of $S$. We consider the simplicial distance $d$ on  $\Gamma(G, X)$, where $d(x, y)$ is defined as the length of the shortest path from $x$ to $y$ (ignoring orientation of edges), with the convention that $d(x, y)=+\infty$ if $x, y$ lie in different connected components. Distinct generating subsets yield bi-Lipschitz equivalent metrics.

 \begin{remark} \label{rmq-finite-orbits}
An extreme case that is covered by our setting is the case where all $G$-orbits in $X$ are finite, so that  $\Gamma(G, X)$ is the disjoint union of finite graphs. This corresponds to $G$-actions that are given by a family of finite index subgroups $(G_i)$ of $G$, where $G$ acts on the union of coset spaces $G/G_i$. Note that this action is faithful if and only if there is no non-trivial normal subgroup of $G$ contained in $\bigcap_i G_i$.
 \end{remark}

\subsection{Growth of actions}  \label{subsec-growth}

Let $(X, d)$ be a metric space (where we allow the distance $d$ to take the value $+\infty$).  We denote by $B_x(n)$ the ball of radius $n$ around a point $x\in X$.  We  say that $(X, d)$ is \textbf{uniformly locally finite} if for every $n$ we have $\sup_{x\in X} |B_x(n)|<\infty$. For example, any graph of bounded degree is uniformly locally finite. If $(X, d)$ is uniformly locally finite, we denote $\vol_X(n)=\sup_{x\in X} |B_x(n)|$. This function is invariant under the equivalence relation $\simeq$  if one replaces  $d$ by a quasi-isometric metric.

In the sequel $G$ is a finitely generated group, and $S$ a finite symmetric generating set. Let $d_S$ be the associated right-invariant word metric on $G$. Recall that the growth of $G$ is denoted $\vol_G(n) = |S^n|$.

\begin{notation}
	For a subset $\mathcal{L}$ of $G$, we write $\relvol_{(G, \mathcal{L})}:=\vol_{(\mathcal{L}, d_S|_{\mathcal{L}})}$. 
\end{notation}

\begin{remark}
If $\mathcal{L} = H$ is a subgroup of $G$, then for every $n$ all balls $B_h(n)$ are isometric since $H$ acts transitively on itself by right translations. Hence in this case $\relvol_{(G,H)}(n) = |H \cap B_e(n)|$ is the classical relative growth of $H$ in $G$.
\end{remark}

\begin{defin}
The \textbf{growth of the action} of $G$ on a $G$-set $X$ is $\vol_{G, X}(n)= \vol_{(\Gamma(G, X), d)}(n)$. 
\end{defin}

Equivalently,  \[\vol_{G, X}(n)=\max_{x\in X} |S^n \cdot x|.\] Since distinct generating subsets of $G$ yield bi-Lipschitz equivalent metrics on $\Gamma(G, X)$, $\vol_{G, X}(n)$ does not depend on $S$ up to the equivalence relation $\simeq$.

The following proposition establishes basic properties of growth of actions. The proof is straightforward, and we omit it.

\begin{prop}[Monotonicity] \label{prop-monoton}
Let $G$ be a finitely generated group, and $X, Y$ $G$-sets.
\begin{enumerate}[label=\roman*)]
\item If $H$ is a finitely generated subgroup of $G$, then $\vol_{H, X}(n)\preccurlyeq\vol_{G, X}(n)$.
\item If there is a surjective $G$-equivariant map $X \to Y$, then $\vol_{G, X}(n) \succcurlyeq  \vol_{G, Y}(n)$. \item If there is a $G$-equivariant map $X\to Y$ whose fibers have uniformly bounded cardinality, then $\vol_{G, X}(n)\preccurlyeq\vol_{G, Y}(n)$. In particular if there is an injective $G$-equivariant map $X\to Y$, then $\vol_{G, X}(n)\preccurlyeq\vol_{G, Y}(n)$.
\end{enumerate}
\end{prop}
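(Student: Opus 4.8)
The plan is to treat the three items separately, in each case unwinding the definition $\vol_{G,X}(n)=\max_{x\in X}|S^n\cdot x|$ for a fixed finite symmetric generating set $S$, and checking the comparison directly on orbital balls before taking the maximum over basepoints.

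For item i), I would fix a finite symmetric generating set $S_H$ of $H$ and a finite symmetric generating set $S$ of $G$. Since $G$ is finitely generated, each element of the finite set $S_H$ is a product of at most $C$ elements of $S$ for some constant $C$, so $S_H^n\subseteq S^{Cn}$ as subsets of $G$. Hence for every $x\in X$ we get $S_H^n\cdot x\subseteq S^{Cn}\cdot x$, and therefore $|S_H^n\cdot x|\le |S^{Cn}\cdot x|\le \vol_{G,X}(Cn)$. Taking the maximum over $x\in X$ gives $\vol_{H,X}(n)\le \vol_{G,X}(Cn)$, which is exactly $\vol_{H,X}(n)\preccurlyeq \vol_{G,X}(n)$.

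For item ii), let $\pi\colon X\to Y$ be a surjective $G$-equivariant map and fix a finite symmetric generating set $S$ of $G$. Given $y\in Y$, choose by surjectivity $x\in X$ with $\pi(x)=y$; then equivariance gives $\pi(S^n\cdot x)=S^n\cdot \pi(x)=S^n\cdot y$, so $\pi$ restricts to a surjection $S^n\cdot x\twoheadrightarrow S^n\cdot y$ and $|S^n\cdot y|\le |S^n\cdot x|\le \vol_{G,X}(n)$. Maximizing over $y\in Y$ yields $\vol_{G,Y}(n)\le \vol_{G,X}(n)$, i.e.\ $\vol_{G,X}(n)\succcurlyeq \vol_{G,Y}(n)$. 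For item iii), suppose $\pi\colon X\to Y$ is $G$-equivariant with every fiber of cardinality at most $k$; then for each $x\in X$ the restriction of $\pi$ to $S^n\cdot x$ lands in $S^n\cdot \pi(x)$ by equivariance and is at most $k$-to-one, so $|S^n\cdot x|\le k\,|S^n\cdot \pi(x)|\le k\,\vol_{G,Y}(n)$. Taking the maximum over $x$ gives $\vol_{G,X}(n)\le k\,\vol_{G,Y}(n)$, hence $\vol_{G,X}(n)\preccurlyeq \vol_{G,Y}(n)$; the injective case is $k=1$.

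There is no genuine obstacle here: the only point requiring a little care is that $\vol_{G,X}$, $\vol_{H,X}$, $\vol_{G,Y}$ are a priori defined with respect to possibly different generating sets and only well defined up to $\simeq$, so one must keep track of the multiplicative and argument-rescaling constants absorbed in $\preccurlyeq$. Since each step above produces an inequality of the exact shape $f(n)\le C\,g(Cn)$, this bookkeeping is immediate, and no compactness or structural input is needed.
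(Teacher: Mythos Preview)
Your proof is correct and is precisely the straightforward verification the paper has in mind; in fact the paper omits the proof entirely, stating only that it is straightforward.
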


The next lemma relates the growth of actions of a group and of  its finite index subgroups. Recall that when $H$ is a subgroup of $G$, then to every $H$-set  $X$ one can associate a $G$-set, called the \textbf{induced $G$-set}. $\operatorname{Ind}_H^G(X)$ is defined by  $\operatorname{Ind}_H^G(X):=(G\times X)/H$, where the quotient is taken with respect to the diagonal action $h\cdot (g, x)=(gh^{-1}, hx)$. The action of $G$ on $G\times X$ given by $g_1\cdot (g_2, x)=(g_1g_2, x)$ descends to an action on $\operatorname{Ind}_H^G(X)$.

\begin{prop} \label{prop-finite-index}
Let $G$ be a group and $H$ be a subgroup of finite index in $G$. Then for every $G$-set $X$ we have $\vol_{G, X}(n) \simeq \vol_{H, X}(n)$. Conversely for every $H$-set $X$, the induced $G$-set $Y:=\operatorname{Ind}_H^G(X)$ satisfies $\vol_{G, Y}(n)\simeq \vol_{H, X}(n)$. 
\end{prop}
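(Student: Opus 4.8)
The plan is to prove the two assertions in turn, deriving the second from the first together with one small observation about twisting an action by an automorphism. Throughout, note that since $G$ is finitely generated and $H$ has finite index, $H$ is finitely generated as well; I fix finite symmetric generating sets $S$ of $G$ and $T$ of $H$, and write $k = [G:H]$.

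\emph{First part.} One inequality is immediate: $H$ is a finitely generated subgroup of $G$, so $\vol_{H,X}(n) \preccurlyeq \vol_{G,X}(n)$ by Proposition \ref{prop-monoton}. For the converse I would use a coset decomposition together with the standard fact that the inclusion $(H, d_T) \hookrightarrow (G, d_S)$ is a quasi-isometry (a finite-index subgroup is coarsely dense), so that there are constants $\lambda, c$ with $|h|_T \le \lambda |h|_S + c$ for all $h \in H$. Fixing a left transversal $g_1 = e, \ldots, g_k$ of $G/H$ with $D = \max_j |g_j|_S$, every $g$ with $|g|_S \le n$ factors as $g = g_j h$ with $h \in H$ and $|h|_S \le n + D$, hence $|h|_T \le \lambda n + (\lambda D + c) =: m$. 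Then $gx = g_j(hx)$ with $hx$ in the $H$-ball $T^m \cdot x$, so that $S^n \cdot x \subseteq \{g_j y : 1 \le j \le k, \ y \in T^m \cdot x\}$ and $|S^n \cdot x| \le k\,\vol_{H,X}(m)$. Taking the maximum over $x \in X$ gives $\vol_{G,X}(n) \le k\,\vol_{H,X}(\lambda n + \lambda D + c)$, i.e.\ $\vol_{G,X}(n) \preccurlyeq \vol_{H,X}(n)$.

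\emph{Second part.} Choosing a left transversal $g_1 = e, \ldots, g_k$ of $G/H$ identifies $Y = (G \times X)/H$ with the disjoint union $\bigsqcup_{j=1}^k X_j$, where $X_j = \{[(g_j, x)] : x \in X\}$ is a copy of $X$. Rather than analyse the $H$-action on $Y$ directly (which permutes the copies $X_j$), I would pass to the normal core $N = \bigcap_{g \in G} gHg^{-1}$, which has finite index in $G$ and satisfies $N \trianglelefteq G$ and $N \le H$. Since $N$ is normal it fixes every left coset $g_j H$, hence preserves each copy $X_j$, and a direct computation shows that $N$ acts on $X_j \cong X$ through the original action precomposed with the fixed automorphism $n \mapsto g_j^{-1} n g_j$ of $N$. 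The elementary observation is that precomposing an $N$-action by a fixed automorphism $\alpha$ does not change its growth up to $\simeq$: the Schreier graph of the twisted action with respect to a generating set $T_N$ coincides with the Schreier graph of the original action with respect to the finite generating set $\alpha(T_N)$, and growth is independent of the finite generating set. Hence $\vol_{N, X_j}(n) \simeq \vol_{N,X}(n)$ for each $j$, and since the $X_j$ are $N$-invariant, $\vol_{N,Y}(n) = \max_j \vol_{N,X_j}(n) \simeq \vol_{N,X}(n)$. Finally I would invoke the first part twice: applied to $N \le G$ and the $G$-set $Y$ it gives $\vol_{G,Y}(n) \simeq \vol_{N,Y}(n)$, and applied to $N \le H$ and the $H$-set $X$ it gives $\vol_{H,X}(n) \simeq \vol_{N,X}(n)$. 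Chaining these yields $\vol_{G,Y}(n) \simeq \vol_{N,Y}(n) \simeq \vol_{N,X}(n) \simeq \vol_{H,X}(n)$, as desired.

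\emph{Main obstacle.} The only substantive step is the converse inequality of the first part, where $G$-balls in $\Gamma(G,X)$ must be controlled by $H$-balls: here one must combine the finitely many bounded-length coset translations with the quasi-isometry between the word metrics of $H$ and $G$, the latter being the one genuinely non-formal input (everything else is bookkeeping). In the second part the potential difficulty is that $H$ mixes the $k$ copies of $X$ inside $Y$, which would make a direct $H$-equivariant comparison awkward; passing to the normal core $N$ dissolves this issue, since $N$ stabilises each copy and acts on it merely by a twist, reducing the whole second part to the already-established first part plus the automorphism-invariance of growth.
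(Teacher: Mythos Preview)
Your proof is correct and follows essentially the same route as the paper: the paper declares the first part straightforward (you spell out the coset-decomposition-plus-quasi-isometry argument, which is fine), and for the second part both you and the paper pass to the normal core $N$, observe that $N$ preserves each fiber $X_j$ and acts there via a conjugation twist of its action on $X$, invoke automorphism-invariance of growth, and then apply the first part twice. The only cosmetic difference is that the paper phrases the fibers as the preimages of the projection $Y\to G/H$ rather than via an explicit transversal, but the content is identical.
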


\begin{proof}
The first claim is straightforward, and we only justify the second claim. Let $N$ be the intersections of all conjugates of $H$, which is normal and of finite index in $G$.  Set
set $Y:=\operatorname{Ind}_H^G(X)$ and consider the natural $G$-equivariant projection $p\colon Y\to G/H$.  Since $N$ acts trivially on $G/H$, it preserves each fiber of the map $p$. The fiber $p^{-1}(H)$ is naturally a copy of $X$, and the action of $N$ on it coincides with the action obtained by restricting the $H$-action, so that by the first claim we have $\vol_{N, p^{-1}(H)}(n)\simeq \vol_{H, X}(n)$. If $gH$ is another $H$-coset, the map $x\mapsto gx$ induces an identification of $p^{-1}(H)$ to $p^{-1}(gH)$ which is $N$-equivariant up to the automorphism of $N$ induced by conjugation by $G$,   so that we also have $\vol_{N, p^{-1}(gH)}(n) \simeq \vol_{H, X}(n)$. Since $Y$ is the disjoint union of the finitely many fibers of the map $p$, we obtain (using the first claim again) that $\vol_{G, Y}(n)\simeq \vol_{N, Y}(n)\simeq \vol_{H, X}(n)$. 
\end{proof}

%
%
%
%
%

\subsection{Confined of subgroups}

We denote by $\sub(G)$ the space of subgroups of $G$, endowed with the topology inherited from $\left\lbrace 0,1\right\rbrace ^G$. The following simple lemma shows that the growth of actions is well-behaved with respect to the topology on $\sub(G)$.

\begin{lem} \label{lem-growth-closure}
	Let $X$ be a $G$-set, and let \[ \mathcal{S}(X) = \overline{ \left\lbrace G_x  \, : \, x \in X \right\rbrace } \subseteq \sub(G). \] Then for every $H \in \mathcal{S}(X)$, we have $\vol_{G, G/H}(n) \preccurlyeq \vol_{G, X}(n)$. 
\end{lem}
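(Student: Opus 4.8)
The plan is to show that membership of $H$ in $\mathcal{S}(X)$ means $H$ can be approximated, in the topology of $\sub(G)\subseteq\{0,1\}^G$, by point stabilizers $G_x$ with $x\in X$; and that a sufficiently good approximation of $H$ by some $G_x$ forces the ball of radius $n$ around $H$ in $\Gamma(G,G/H)$ to embed (as a pointed graph) into the ball of radius $n$ around $x$ in $\Gamma(G,X)$. Concretely, fix $n$ and fix the finite generating set $S$. The ball $B_H(n)$ in $\Gamma(G,G/H)$ consists of cosets $wH$ with $|w|_S\le n$, and the adjacency among these cosets, as well as the pattern of which of them coincide, is entirely determined by the finite data $\{g\in G : |g|_S\le 2n\}\cap H$: indeed $w_1H=w_2H$ iff $w_1^{-1}w_2\in H$, and for $|w_1|_S,|w_2|_S\le n$ we have $|w_1^{-1}w_2|_S\le 2n$. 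So the isomorphism type of the pointed graph $(B_H(n),H)$ depends only on the restriction of the indicator function of $H$ to the finite set $S^{2n}$.

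Next I would use the definition of $\mathcal{S}(X)$ as a closure: since $H\in\overline{\{G_x:x\in X\}}$, the basic open neighborhood of $H$ determined by the finite set $S^{2n}$ contains some $G_x$, i.e.\ there is $x\in X$ with $G_x\cap S^{2n}=H\cap S^{2n}$. By the previous paragraph applied to $G_x$ in place of $H$ (the same combinatorial description of balls works for any subgroup, since $B_{G_x}(n)$ in $\Gamma(G,G/G_x)$ is isometric to $B_x(n)$ in $\Gamma(G,X)$ via $wG_x\mapsto wx$), the pointed balls $(B_H(n),H)$ and $(B_x(n),x)$ are isomorphic as graphs. In particular $|B_H(n)|=|B_x(n)|\le \vol_{G,X}(n)$.

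Finally, taking the supremum over $n$ — or rather, noting that the above holds for every $n$ — gives $\vol_{G,G/H}(n)=\sup_{wH}|B_{wH}(n)|=|B_H(n)|\le\vol_{G,X}(n)$ for all $n$ (here using that $G$ acts transitively on $G/H$, so all balls of radius $n$ have the same size, equal to $|B_H(n)|$). This yields $\vol_{G,G/H}(n)\le\vol_{G,X}(n)$, which is even stronger than the asserted $\preccurlyeq$; the statement with $\preccurlyeq$ follows a fortiori, and is the natural form since it is independent of the choice of $S$. The only mildly delicate point is the bookkeeping in the first paragraph — checking that the indicator of $H$ on $S^{2n}$ really does pin down both the vertex identifications and the edges of $B_H(n)$ — but this is an elementary verification with no real obstacle; there is no hard step here.
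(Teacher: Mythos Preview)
Your overall approach is correct and essentially the same as the paper's: balls in Schreier graphs are determined by the intersection of the stabilizer with a finite set of group elements, so approximating $H$ by some $G_x$ on $S^{2n}$ forces the radius-$n$ balls to coincide. However, there is one genuine error in your final step. You write
\[
\vol_{G,G/H}(n)=\sup_{wH}|B_{wH}(n)|=|B_H(n)|,
\]
justified by ``$G$ acts transitively on $G/H$, so all balls of radius $n$ have the same size''. This is false in general: the left $G$-action on $G/H$ is \emph{not} by automorphisms of the Schreier graph (edges are $(x,sx)$, and $g\cdot(wH,swH)=(gwH,gswH)$ is not of that form unless $gs=s'g$ for some $s'\in S$). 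Concretely, $|B_{wH}(n)|$ is governed by $wHw^{-1}\cap S^{2n}$, not by $H\cap S^{2n}$, and these can differ when $H$ is not normal. So your argument only bounds $|B_H(n)|$, not $\vol_{G,G/H}(n)$.

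The fix is immediate and is implicitly what the paper does: observe that $\mathcal{S}(X)$ is invariant under conjugation (since $G_{gx}=gG_xg^{-1}$), so for each $w$ the subgroup $wHw^{-1}$ also lies in $\mathcal{S}(X)$, and you can run your approximation argument for it to bound $|B_{wH}(n)|$. Equivalently, having found $x$ with $G_x\cap S^{2n}=H\cap S^{2n}$, the ball $B_{wH}(n)$ matches the ball $B_{wx}(n)$ in $\Gamma(G,X)$ once $G_x$ agrees with $H$ on the larger finite set $w^{-1}S^{2n}w$. Either way one obtains $|B_{wH}(n)|\le\vol_{G,X}(n)$ for every $w$, and hence the desired inequality.
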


\begin{proof}
	Let $H\in \mathcal{S}(X)$ and let $(x_k)$  a sequence of points such that $(G_{x_k})$ converges to $H$ in $\sub(G)$. For every coset $gH\in G/H$ and every $n$, there exists $k_0$ such that for every $k\geq k_0$ the ball of radius $n$ centered at $gH$ in the graph $\Gamma(G, G/H)$ is isomorphic to  the ball of radius $n$ centered at $x_k$ in $\Gamma(G, X)$. Thus its cardinality does not exceed $\vol_{G, X}(n)$, showing that $\vol_{G, G/H}(n)\preccurlyeq\vol_{G, X}(n)$. \qedhere
\end{proof}

As a consequence of Lemma \ref{lem-growth-closure}, if the trivial subgroup $\{1\}$ belongs to the closure of the stabilisers $\{G_x, x\in X\}$, then $\vol_{G}(n)\preccurlyeq\vol_{G, X}(n)$ and thus $\vol_{G, X}(n)\simeq \vol_{G}(n)$.

Recall that a subgroup $H$ of $G$ is \textbf{confined} if the closure of the $G$-conjugacy class of $H$ in $\sub(G)$ does not contain the trivial subgroup $\left\lbrace 1\right\rbrace$. Explicitly, this means that there exists a finite subset $P$ of $G$ consisting of non-trivial elements such that $g Hg^{-1} \cap P \neq \emptyset$ for all $g \in G$. We will consider more generally the case where we only take into account the conjugation action of a given subgroup $L$ of $G$:

\begin{defin} \label{def-k-conf-subset}
	Let $G$ be a group, $L$ a subgroup of $G$. A subgroup $H$ of $G$ is \textbf{confined by $L$} if the closure of the $L$-conjugacy class of $H$ in $\sub(G)$ does not contain the trivial subgroup. Equivalently, if there exists a finite subset $P \subset G \setminus \left\lbrace 1\right\rbrace $ such that $g Hg^{-1} \cap P \neq \emptyset$ for all $g \in L$. We say that such a subset $P$ \textbf{is confining for $(H,L)$}.
\end{defin}

\begin{notation}
	We denote by $S_G(P,L)  \subset \sub(G)$ the set of subgroups $H$ of $G$ such that $P$ is confining for $(H,L)$.
\end{notation}

\subsection{Non-foldable subsets} \label{subsec-non-foldable}

We now introduce the notion of non-foldable subset of a group $G$. The motivation is that non-foldable subsets provide geometric information on the graphs of actions of $G$. In particular they provide lower bounds for certain asymptotic invariants of these graphs: see Lemma \ref{lem-exp-subset-growth} and Proposition \ref{p-asdim-non-foldable}.

\begin{defin} \label{d-non-foldable}
	Let $G$ be a group and $\mathcal{L}$ a subset of $G$. 
	\begin{enumerate}[label=\roman*)]
		\item Let $X$ be a $G$-set. We say that $\mathcal{L}$ is \textbf{non-folded in $X$} if for every finite subset $\Sigma$ of $\mathcal{L} $, there exists $x \in X$ such that the map $\Sigma \to X$, $g \mapsto g x$, is injective.
		\item We say  that $\mathcal{L}$ is \textbf{non-foldable} if $\mathcal{L} $ is non-folded in $X$ for every faithful $G$-set $X$. 
		\end{enumerate}
\end{defin}

The following lemma reformulates the condition that a given subset of $G$ is non-foldable in terms of confined of subgroups.

\begin{lem}[Non-foldable subsets and confined subgroups] \label{l-non-foldable-confined}
	Let $G$ be a group, and $\mathcal{L}$ be a subset of $G$. The following are equivalent:
	\begin{enumerate}[label=(\roman*)]
		\item \label{i-non-foldable} $\mathcal{L}$ is a non-foldable subset of $G$.
		\item  \label{i-conf} for every finite subset  $\Sigma \subset \mathcal{L} $, we have \[\bigcap_{H\in S_G(P, G)} \! \! H\neq \{1\}, \] where $P=\{g^{-1}h\colon g, h\in \Sigma, g\neq h\}$.
	\end{enumerate}
\end{lem}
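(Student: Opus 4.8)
The proof is a duality argument between faithful $G$-sets and conjugation-invariant families of subgroups, once both conditions are rephrased in terms of point stabilizers. First I would record the elementary translation: for a $G$-set $X$ and $x\in X$, the orbital map $\Sigma\to X$, $g\mapsto gx$, is injective if and only if $g^{-1}h\notin G_x$ for all distinct $g,h\in\Sigma$, i.e.\ if and only if $G_x\cap P=\emptyset$, where $P=\{g^{-1}h\colon g\neq h\in\Sigma\}$. Note $P$ is a finite symmetric subset of $G\setminus\{1\}$. Thus ``$\mathcal{L}$ is non-folded in $X$ with respect to $\Sigma$'' means exactly that some point of $X$ has stabilizer disjoint from $P$, and $\mathcal{L}$ being non-foldable means this holds for \emph{every} faithful $G$-set and every finite $\Sigma\subset\mathcal{L}$.

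Second, the structural point: $S_G(P,G)$ is closed under conjugation in $G$, since if $gHg^{-1}\cap P\neq\emptyset$ for all $g$ then the same holds with $H$ replaced by any of its conjugates. Consequently $N_P:=\bigcap_{H\in S_G(P,G)}H$ is a normal subgroup of $G$, and it coincides with the kernel of the $G$-action on the disjoint union $X_P:=\bigsqcup_{H\in S_G(P,G)}G/H$ (the normal core of each $H\in S_G(P,G)$ is itself an intersection of members of $S_G(P,G)$, so intersecting over cosets adds nothing). This $X_P$ is the canonical ``worst case'' candidate witnessing non-non-foldability.

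For \ref{i-non-foldable}$\Rightarrow$\ref{i-conf} I would argue by contraposition. Suppose $N_P=\{1\}$ for some finite $\Sigma\subset\mathcal{L}$. Then $S_G(P,G)\neq\emptyset$ (otherwise $N_P=G\neq\{1\}$), so $X_P$ is a nonempty faithful $G$-set by the previous step; and every point of $X_P$ has stabilizer equal to a conjugate of some $H\in S_G(P,G)$, hence meeting $P$. So no point of $X_P$ has stabilizer disjoint from $P$, i.e.\ $\mathcal{L}$ is not non-folded in $X_P$, contradicting \ref{i-non-foldable}. For \ref{i-conf}$\Rightarrow$\ref{i-non-foldable}, let $X$ be an arbitrary faithful $G$-set and $\Sigma\subset\mathcal{L}$ finite; if no point of $X$ had stabilizer disjoint from $P$, then, using $gG_xg^{-1}=G_{gx}$, every point stabilizer would lie in $S_G(P,G)$, whence $N_P\subseteq\bigcap_{x\in X}G_x=\{1\}$ by faithfulness, contradicting \ref{i-conf}.

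The argument is essentially a straightforward unwinding of definitions; the one step that must be handled with care — and the closest thing to an obstacle — is the conjugation-invariance of $S_G(P,G)$, which is what makes $X_P$ faithful and is therefore the crux of the nontrivial implication. The degenerate cases $P=\emptyset$ (when $|\Sigma|\le 1$) and $S_G(P,G)=\emptyset$ are harmless, as then both \ref{i-non-foldable} and \ref{i-conf} hold trivially (the empty intersection being $G\neq\{1\}$), and I would dispose of them in a line.
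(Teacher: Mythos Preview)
Your proof is correct and follows essentially the same approach as the paper: the translation between injectivity of the orbital map on $\Sigma$ and the condition $G_x\cap P=\emptyset$, and the use of the canonical $G$-set $X_P=\bigsqcup_{H\in S_G(P,G)} G/H$ to witness the failure of non-foldability, are exactly what the paper does. You are slightly more explicit than the paper about the conjugation-invariance of $S_G(P,G)$ (equivalently, that $N_P$ is the kernel of the action on $X_P$) and about the degenerate cases $|\Sigma|\le 1$ and $S_G(P,G)=\emptyset$, but these are purely expository differences.
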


\begin{proof}
	Assume that \ref{i-conf} holds, and let $X$ be a faithful $G$-set. Assume by contradiction that \ref{i-non-foldable}  does not hold. Let $\Sigma\subset \mathcal{L}$ be a subset such that for every $x\in X$, the map $g\mapsto gx$ is not injective on $\Sigma$, and let $P$ be the corresponding set of differences as in the statement. Then $G_x\cap P\neq \varnothing$ for  every $x\in X$, and thus $G_x\in S_G(P, G)$ for every $x$. Hence $\cap_x G_x$ contains $\bigcap_{S_G(P, G)} \! \! H$, and is therefore non-trivial. This contradicts the fact that $X$ is faithful.
	
	Assume now that \ref{i-non-foldable} holds. Fix a finite subset $\Sigma\subset \mathcal{L}$ and consider the $G$-set $X:=\sqcup_{H\in S_G(P, G)} G/H$,  for the subset  $P$ associated to $\Sigma$. By construction every point of $X$ is fixed by an element of $P$, so that no orbital map can be injective in restriction to $\Sigma$. If the action of $G$ on $X$ was faithful then by the assumption \ref{i-non-foldable} we would have a contradiction. So $X$ cannot be faithful, and since the kernel is precisely $\bigcap_{S_G(P, G)} H$, condition \ref{i-conf} holds. \qedhere
\end{proof}

\begin{lem} \label{lem-exp-subset-growth}
	Let $G$ be a group, and $X$ a $G$-set. If $\mathcal{L} $ is non-folded in $X$, then $\vol_{G,X} (n) \succcurlyeq\relvol_{(G, \mathcal{L})}(n)$.
\end{lem}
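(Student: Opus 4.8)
The plan is to prove the sharper inequality $\vol_{G,X}(n)\ge \relvol_{(G,\mathcal{L})}(n)$ for a fixed finite symmetric generating set $S$, from which the stated $\succcurlyeq$ follows a fortiori. First I would unwind the two quantities: $\relvol_{(G,\mathcal{L})}(n)=\sup_{\ell\in\mathcal{L}}\big|\{m\in\mathcal{L}:d_S(\ell,m)\le n\}\big|$ (balls for the induced metric), and $\vol_{G,X}(n)=\max_{z\in X}|B_z(n)|$ for the graph metric on $\Gamma(G,X)$. So it suffices, given $n$ and $\ell\in\mathcal{L}$, to exhibit a point of $X$ whose ball of radius $n$ in $\Gamma(G,X)$ has at least $|\{m\in\mathcal{L}:d_S(\ell,m)\le n\}|$ elements.

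Fix such $n$ and $\ell$, and set $\Sigma:=\{m\in\mathcal{L}:d_S(\ell,m)\le n\}$, which is finite because $G$ is finitely generated. By the hypothesis that $\mathcal{L}$ is non-folded in $X$ applied to $\Sigma$, there is $x\in X$ with $m\mapsto mx$ injective on $\Sigma$. I would then take $y:=\ell x$ as base point. For $m\in\Sigma$, since $d_S$ is the right-invariant word metric and $S$ is symmetric, $d_S(\ell,m)\le n$ gives $m\ell^{-1}\in S^n$; writing $m\ell^{-1}$ as a word of length $\le n$ in $S$ traces a path of that length in $\Gamma(G,X)$ from $y$ to $(m\ell^{-1})y=mx$, so $mx\in B_y(n)$. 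Hence $\{mx:m\in\Sigma\}\subseteq B_y(n)$, and by injectivity this set has exactly $|\Sigma|$ elements, so $\vol_{G,X}(n)\ge|B_y(n)|\ge|\Sigma|$. Taking the supremum over $\ell\in\mathcal{L}$ yields $\vol_{G,X}(n)\ge\relvol_{(G,\mathcal{L})}(n)$.

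The argument is essentially elementary; the only point demanding care is the bookkeeping of left versus right translations together with the invariance side of $d_S$, which is what forces the base point to be the translate $y=\ell x$ rather than $x$ itself and underlies the estimate $d_{\Gamma(G,X)}(y,mx)\le d_S(\ell,m)$. I would also note in passing that replacing $S$ by another finite symmetric generating set rescales the metrics on $G$ and on $\Gamma(G,X)$ by the same bi-Lipschitz factor, so both $\relvol_{(G,\mathcal{L})}$ and $\vol_{G,X}$ change only up to $\simeq$, making the conclusion independent of the chosen $S$.
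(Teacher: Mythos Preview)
Your proof is correct and follows essentially the same argument as the paper's own proof: both pick a finite set $\Sigma = B_\ell(n)\cap\mathcal{L}$ realizing (or approaching) $\relvol_{(G,\mathcal{L})}(n)$, apply the non-folded hypothesis to obtain $x$ with $m\mapsto mx$ injective on $\Sigma$, and translate the base point to $y=\ell x$ so that $\Sigma\cdot x\subseteq S^n\cdot y$. Your explicit remark on the right-invariance bookkeeping and the resulting pointwise inequality $\vol_{G,X}(n)\ge \relvol_{(G,\mathcal{L})}(n)$ is a nice clarification, but the substance is the same.
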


\begin{proof}
	Let $S$ be a symmetric generating subset of $G$. For $n \geq 1$, let $g_n$ be an element of $G$ such that $|B_{g_n}(n) \cap \mathcal{L}| =  \relvol_{(G, \mathcal{L}) }(n)$.  We apply the defining condition of $\mathcal{L} $ being non-folded in $X$ to the finite subset $\Sigma =B_{g_n}(n) \cap \mathcal{L}$. We obtain $x_n \in X$ such that $\Sigma \to X$, $g \mapsto g x_n$, is injective. If we write $y_n = g_n x_n$, this implies that $|S^n y_n | \geq |\Sigma|$. But by definition we have $\vol_{G,X}(n) \geq |S^n y_n|$, so $\vol_{G,X}(n) \geq |\Sigma| = \relvol_{(G, \mathcal{L}) }(n)$.
\end{proof}

\begin{example}
	An infinite cyclic subgroup $C$  is always a non-foldable subset of $G$. In particular for a finitely generated group $G$, the existence of a distorted cyclic subgroup $C$ provides a non-trivial Schreier growth gap $\relvol_{(G,C)}(n)$.
\end{example}

\subsection{Non-foldable $k$-tuples}

In Section \ref{sec-metab} we will exhibit non-foldable subsets of a particular form in metabelian groups. To this end, we introduce the following convenient terminology. 

\begin{defin} 
	Let $G$ be a group, $k \geq 1$ and $(g_1,\ldots,g_k) \in G^k$.
		\begin{enumerate}[label=\roman*)]
		\item Let $X$ be a $G$-set. We say that $(g_1,\ldots,g_k)$ is \textbf{non-folded in $X$} if for all $n \geq 1$, there exists $x_n \in X$ such that \[ \left[-n,n \right]^k \to X, \, \, (n_1, \ldots, n_k ) \mapsto g_k^{n_k}  \ldots g_1^{n_1} (x_n),\] is injective. 
		\item We say  that $(g_1,\ldots,g_k)$ is \textbf{non-foldable} if $(g_1,\ldots,g_k)$ is non-folded in $X$ for every faithful $G$-set $X$. 
	\end{enumerate}
\end{defin}

The following is a reformulation of the definition:

\begin{lem} \label{lem-mapZk-inj}
Let $X$ be a $G$-set. A $k$-tuple $(g_1,\ldots,g_k)$ is non-folded in $X$ if and only if the following two conditions hold: \begin{itemize}
		\item the map  $\varphi : \mathbb{Z}^k \to G, \, \, (n_1, \ldots, n_k ) \mapsto g_k^{n_k}  \ldots g_1^{n_1}$, is injective;
		\item the image of $\varphi$ is non-folded in $X$.
	\end{itemize}
	In particular if $(g_1,\ldots,g_k)$ is non-folded in $X$ then $g_1,\ldots,g_k$ all have infinite order. 
\end{lem}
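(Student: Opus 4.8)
The plan is to prove Lemma~\ref{lem-mapZk-inj} directly from the definitions, treating the two directions separately.

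\smallskip

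\emph{Setup.} Let $\varphi\colon \Z^k\to G$ be the map $(n_1,\dots,n_k)\mapsto g_k^{n_k}\cdots g_1^{n_1}$, and for a $G$-set $X$ and a point $x\in X$ write $\psi_x=\varphi(\cdot)x\colon \Z^k\to X$, so that $\psi_x(n_1,\dots,n_k)=g_k^{n_k}\cdots g_1^{n_1}x$. By definition $(g_1,\dots,g_k)$ is non-folded in $X$ if and only if for every $n\geq 1$ there is $x_n\in X$ with $\psi_{x_n}$ injective on $[-n,n]^k$. The key elementary observation, which I would state first, is that a map from $\Z^k$ (or from a product of intervals) into a set is injective if and only if it separates points, and that for the orbital map $\psi_x$ this can be rephrased using stabilisers: $\psi_x$ is injective on a finite set $\Sigma\subset\Z^k$ exactly when $\varphi$ is injective on $\Sigma$ and $G_x$ meets none of the elements $\varphi(u)^{-1}\varphi(v)$ with $u\neq v$ in $\Sigma$.

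\smallskip

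\emph{The two directions.} Assume first that $\varphi$ is injective and its image $\mathcal{L}=\varphi(\Z^k)$ is non-folded in $X$. Fix $n\geq 1$ and let $\Sigma=\varphi([-n,n]^k)$, a finite subset of $\mathcal{L}$. By the non-folded property applied to $\Sigma$ we get $x\in X$ such that $g\mapsto gx$ is injective on $\Sigma$; since $\varphi$ is injective on $[-n,n]^k$, the composite $\psi_x$ is then injective on $[-n,n]^k$. Setting $x_n=x$ shows $(g_1,\dots,g_k)$ is non-folded in $X$. Conversely, assume $(g_1,\dots,g_k)$ is non-folded in $X$. To see $\varphi$ is injective: given $u\neq v$ in $\Z^k$, pick $n$ large enough that $u,v\in[-n,n]^k$ and take $x_n$ with $\psi_{x_n}$ injective on $[-n,n]^k$; then $\varphi(u)x_n\neq\varphi(v)x_n$, so $\varphi(u)\neq\varphi(v)$. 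To see the image is non-folded in $X$: any finite $\Sigma\subset\mathcal{L}$ is contained in $\varphi([-n,n]^k)$ for some $n$ (take $n$ larger than all the relevant coordinates of preimages, which exist and are unique by injectivity of $\varphi$); the point $x_n$ provided by the hypothesis then has $g\mapsto gx_n$ injective on $\varphi([-n,n]^k)$, hence on $\Sigma$. Finally, the last sentence of the lemma is immediate: if some $g_i$ had finite order, then $\varphi$ would fail to be injective (the standard basis vector $n e_i$ would be sent to the identity for $n$ the order of $g_i$), contradicting what was just shown.

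\smallskip

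\emph{Main obstacle.} There is no serious obstacle; this is a bookkeeping lemma. The only point requiring a little care is the containment $\Sigma\subset\varphi([-n,n]^k)$ in the converse direction, where one must use that $\varphi$ is injective (already established in that direction) to talk about well-defined preimages and choose $n$ uniformly bounding their coordinates. I would phrase the argument so that injectivity of $\varphi$ is proved before it is invoked for this step.
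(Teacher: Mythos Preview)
Your proof is correct. The paper does not actually give a proof of this lemma: it introduces it with the sentence ``The following is a reformulation of the definition'' and leaves the verification to the reader. Your argument supplies exactly the routine check the authors omit, and the care you take in the converse direction (establishing injectivity of $\varphi$ before using it to bound preimages) is the right way to organise it.
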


In particular in this setting Lemma \ref{lem-exp-subset-growth} reads as follows: 

\begin{lem} \label{lem-exp-tuple-growth}
	Let $G$ be a group, and $X$ a $G$-set. If a $k$-tuple $(g_1,\ldots,g_k)$ is non-folded in  $X$ then $\vol_{G,X}(n)   \succcurlyeq  n^k$.
\end{lem}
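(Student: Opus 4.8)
The plan is to deduce the statement directly from Lemma \ref{lem-mapZk-inj} and Lemma \ref{lem-exp-subset-growth}, after which everything reduces to the elementary computation of the relative growth of a copy of $\mathbb{Z}^k$ sitting inside $G$.

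First I would apply Lemma \ref{lem-mapZk-inj}: since $(g_1,\dots,g_k)$ is non-folded in $X$, the map $\varphi\colon \mathbb{Z}^k\to G$, $(n_1,\dots,n_k)\mapsto g_k^{n_k}\cdots g_1^{n_1}$, is injective, and the subset $\mathcal{L}:=\varphi(\mathbb{Z}^k)$ is non-folded in $X$. Lemma \ref{lem-exp-subset-growth} then yields $\vol_{G,X}(n)\succcurlyeq \relvol_{(G,\mathcal{L})}(n)$, so it remains to prove $\relvol_{(G,\mathcal{L})}(n)\succcurlyeq n^k$.

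For this last step I would fix a finite symmetric generating set $S$ of $G$ containing $g_1,\dots,g_k$ (permissible since $\relvol_{(G,\mathcal{L})}$ is independent of $S$ up to $\simeq$). As $e=\varphi(0)\in\mathcal{L}$, we have $\relvol_{(G,\mathcal{L})}(n)\ge |B_e(n)\cap\mathcal{L}|$. Every element $g_k^{n_k}\cdots g_1^{n_1}$ with $\sum_i |n_i|\le n$ lies in $B_e(n)$, and by injectivity of $\varphi$ distinct tuples produce distinct such elements; counting the tuples with $|n_i|\le \lfloor n/k\rfloor$ for every $i$ gives at least $(2\lfloor n/k\rfloor+1)^k$ elements of $B_e(n)\cap\mathcal{L}$, whence $\relvol_{(G,\mathcal{L})}(n)\succcurlyeq n^k$, as required.

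I do not anticipate a genuine obstacle: the statement is essentially a corollary of the two preceding lemmas combined with the standard fact that $\mathbb{Z}^k$ has polynomial growth of degree $k$. The only point deserving care is to record that the copy of $\mathbb{Z}^k$ is actually \emph{embedded} in $G$ (not merely that each $g_i$ has infinite order), which is precisely what the injectivity of $\varphi$ furnished by Lemma \ref{lem-mapZk-inj} guarantees.
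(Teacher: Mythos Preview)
Your proof is correct and follows essentially the same approach as the paper: both apply Lemma~\ref{lem-mapZk-inj} to obtain that $\varphi$ is injective and its image $\mathcal{L}$ is non-folded in $X$, then invoke Lemma~\ref{lem-exp-subset-growth} together with the fact that $\relvol_{(G,\mathcal{L})}(n)\succcurlyeq n^k$. The only difference is that you spell out the elementary counting argument for this last growth estimate, which the paper simply asserts.
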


\begin{proof}
	Consider the map $\varphi : \mathbb{Z}^k \to G, \, \, (n_1, \ldots, n_k ) \mapsto g_k^{n_k}  \ldots g_1^{n_1}$, and denote by $\mathcal{L}$ its image. Since $\varphi$ is injective, we have $\relvol_{(G, \mathcal{L}) }(n)  \succcurlyeq  n^k$. And since $\mathcal{L}$ is non-folded in $X$ the statement follows from Lemma \ref{lem-exp-subset-growth}.
\end{proof}

\subsection{Asymptotic dimension of graphs of actions}

Another geometric invariant associated to graphs of actions of a finitely generated group is the asymptotic dimension, introduced by Gromov \cite{Gro-asdim}. Let us recall its definition and some basic properties, in analogy with the previous discussion on growth from \S  \ref{subsec-growth} and \S  \ref{subsec-non-foldable}. For more information we refer to the survey \cite{BD-asdim}.

\begin{defin}
	Let $(X, d)$ be a metric space, and $n \geq 0$. We say that space $(X, d)$  has asymptotic dimension at most $n$ if for every $R>0$ there exists  a cover $\mathcal{U}$ of $X$ consisting of subsets of uniformly bounded diameter such that every $R$-ball in $X$ intersects at most $n+1$ sets in $\mathcal{U}$. 
	The asymptotic dimension $\asdim(X, d)$ of $X$ is the smallest $n\in \N\cup \{ \infty \}$ such that $(X, d)$ has asymptotic dimension at most $n$. 
\end{defin}

The following lemma follows easily from the definition of asymptotic dimension, see the argument in \cite[\S 6]{BTS-asdim} (or the proof of \cite[Proposition 2.5]{MB-graph-germs} for more details).

\begin{lem} \label{l-asdim-monotone}
	Let $(\Gamma, d_\Gamma)$ and $(\Delta, d_\Delta)$ be uniformly locally finite metric spaces. Assume that there exist $C, D>0$ such that for every $R>0$ and $x\in \Gamma$ there exists a $C$-Lipschitz map $f\colon B(x, R)\to \Delta$ such that $|f^{-1}(y)|\le D$ for every $y\in \Delta$. Then $\asdim(\Gamma)\le \asdim(\Delta)$. 
\end{lem}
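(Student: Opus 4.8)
The plan is to deduce Lemma \ref{l-asdim-monotone} directly from the definition of asymptotic dimension, by transporting a good cover of $\Delta$ back to $\Gamma$ through the locally-defined Lipschitz maps. Fix $R>0$; we must produce a cover of $\Gamma$ by sets of uniformly bounded diameter so that every $R$-ball in $\Gamma$ meets at most $\asdim(\Delta)+1$ of them. The natural scale to use on the $\Delta$ side is $R'=CR$ (or a slightly larger constant, to absorb the fact that a $C$-Lipschitz map sends an $R$-ball into a set of diameter at most $2CR$): let $\mathcal{U}$ be a cover of $\Delta$ witnessing $\asdim(\Delta)\le n:=\asdim(\Delta)$ at scale $R'$, so the members of $\mathcal{U}$ have diameter $\le D_0$ for some $D_0=D_0(R')$, and every $R'$-ball in $\Delta$ meets at most $n+1$ members of $\mathcal{U}$.

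The subtlety is that the Lipschitz maps $f$ are only defined on balls $B(x,R)$, not globally on $\Gamma$, so I cannot simply pull back $\mathcal{U}$ along one map. The fix is to choose an auxiliary maximal $R$-separated net in each orbit (or equivalently a cover of $\Gamma$ by the balls $B(x_i, R)$ centered at net points $x_i$), and, for each net point $x_i$, pull the cover $\mathcal{U}$ back through the given $C$-Lipschitz map $f_i\colon B(x_i,R)\to\Delta$ to get a cover $\{f_i^{-1}(U)\cap B(x_i,R): U\in\mathcal{U}\}$ of $B(x_i,R)$. Taking the union over $i$ of all these pieces gives a cover $\mathcal{V}$ of $\Gamma$. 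Its members have bounded diameter: $f_i^{-1}(U)$ need not be connected, but $U$ has diameter $\le D_0$ and the fibers of $f_i$ have size $\le D$, so $f_i^{-1}(U)$ is contained in $B(x_i,R)$ and has at most $D\cdot|\{\,y\in\Delta: d_\Delta(y,U)\le D_0\,\}|$ points — finitely many, uniformly, since $\Delta$ is uniformly locally finite — hence uniformly bounded diameter. (If one prefers, replace each such piece by its intersection with balls of radius $2R$ around net points; the uniform-local-finiteness of $\Gamma$ then bounds the cardinality, hence the diameter, of each member of $\mathcal{V}$ directly.)

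Finally I must bound the multiplicity. Take any $R$-ball $B(z,R)$ in $\Gamma$ and suppose it meets $f_i^{-1}(U)$ for some $i$ and $U\in\mathcal{U}$. Because $z$ is within $R$ of the net point $x_i$ and $f_i$ is $C$-Lipschitz, $f_i(B(z,R)\cap B(x_i,R))$ lies in a ball of radius $CR\le R'$ in $\Delta$; the members $U$ for which $f_i^{-1}(U)$ meets $B(z,R)$ are exactly those whose preimage meets this set, so there are at most $n+1$ of them for each fixed $i$. The remaining point is that $B(z,R)$ can meet the balls $B(x_i,R)$ for only boundedly many net points $x_i$: any such $x_i$ lies in $B(z,2R)$, and since the $x_i$ form an $R$-separated set, uniform local finiteness of $\Gamma$ bounds their number by some constant $N=N(R)$. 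Hence $B(z,R)$ meets at most $N(n+1)$ members of $\mathcal{V}$, a bound independent of $z$.

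The main obstacle is precisely this last over-counting across different net points: a naive pullback gives a multiplicity bound of the form (multiplicity on $\Delta$) times (number of nearby charts), not $\asdim(\Delta)+1$ on the nose. This is harmless because the definition of asymptotic dimension is insensitive to replacing "multiplicity $\le n+1$" by "multiplicity $\le C'$ for a constant $C'$ depending only on $R$" — one standard way to see this is to refine $\mathcal{V}$ to an honest $(n+1)$-multiplicity cover by the usual colouring argument, but since we only need $\asdim(\Gamma)\le\asdim(\Delta)$ and the quantities involved (net multiplicity $N(R)$, fiber bound $D$, cover multiplicity $n+1$) are all finite, it suffices to invoke the elementary fact that a uniformly locally finite space covered at every scale by sets of bounded diameter with bounded (not necessarily $(n+1)$-bounded) multiplicity has $\asdim\le n$; see \cite[\S 6]{BTS-asdim} or the proof of \cite[Proposition 2.5]{MB-graph-germs}. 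I would therefore present the argument at the level of detail above and refer to those sources for the bookkeeping.
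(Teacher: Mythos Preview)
The paper does not prove this lemma; it simply refers to \cite[\S 6]{BTS-asdim} and \cite[Proposition 2.5]{MB-graph-germs}. Since your final sentence defers to exactly the same references, in that narrow sense you end up where the paper does. But the sketch you give before that deferral contains a genuine error, and presenting it as an outline of the proof would be misleading.

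The issue is the multiplicity estimate. You obtain a cover $\mathcal{V}$ of $\Gamma$ whose $R$-multiplicity is at most $N(R)\,(n+1)$, where $N(R)$ is the maximal number of net points $x_i$ within distance $2R$ of a given point, and you then assert that ``the definition of asymptotic dimension is insensitive to replacing `multiplicity $\le n+1$' by `multiplicity $\le C'$ for a constant $C'$ depending only on $R$'\,''. This is false: the multiplicity bound \emph{is} the dimension bound. A cover with $R$-multiplicity $\le m+1$ witnesses $\asdim\le m$, nothing better. Moreover $N(R)$ --- the maximal size of an $R$-separated set in a $2R$-ball --- is typically unbounded in a uniformly locally finite space (in a regular tree it grows with $R$), so your construction gives no finite bound on $\asdim(\Gamma)$ at all. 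The ``colouring argument'' you invoke is the passage between equivalent characterisations of $\asdim\le n$ (covers of multiplicity $\le n+1$ versus $(n+1)$-coloured covers with $R$-disjoint colour classes); it does not reduce multiplicity.

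The arguments in the cited references avoid this overcounting by working with a \emph{single} map $f$ on one large ball $B(x,\rho)$ at a time, rather than patching over a net. One pulls back an $(n+1)$-coloured, $r$-disjoint cover of $\Delta$ with $r>2CR$; the cover elements in $\Gamma$ are taken to be the $2R$-connected components of the preimages $f^{-1}(U)$. Each such preimage has at most $D\cdot V_\Delta(D_0)$ points, so its $2R$-components have diameter at most $2R\,(D\,V_\Delta(D_0)-1)$, a bound independent of $\rho$ and $x$; and an $R$-ball in $B(x,\rho)$ meets at most one $2R$-component per colour, giving multiplicity exactly $\le n+1$. A compactness (diagonal) argument then upgrades covers of arbitrarily large balls with this uniform diameter bound to a cover of all of $\Gamma$. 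If you want to include a sketch, it should follow this line; otherwise simply cite the references as the paper does.
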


\begin{defin}
	Let $G$ be a finitely generated group, and $X$ a $G$-set. The asymptotic dimension of the $G$-set $X$ is $\asdim(G, X)=\asdim(\Gamma(G, X))$.
\end{defin}

Note that $\asdim(G, X)$ does not depend on the generating subset $S$ used to define the graph $\Gamma(G, X)$, since asymptotic dimension is a quasi-isometry invariant \cite[Prop. 22]{BD-asdim}. 

The following lemma is a straightforward consequence of Lemma \ref{l-asdim-monotone}.

\begin{lem}
	Let $G$ be a finitely generated group and $X$ be a $G$-set.  Then for every  finitely generated subgroup $H$ of $G$ we have $\asdim(H, X)\le \asdim(G, X)$. 
\end{lem}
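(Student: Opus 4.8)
The plan is to compare the two Schreier graphs $\Gamma(H,X)$ and $\Gamma(G,X)$ — which share the same vertex set $X$ — via the identity map, and then apply Lemma~\ref{l-asdim-monotone}. This is the asymptotic-dimension analogue of the first item of Proposition~\ref{prop-monoton}, and the argument follows the same pattern.

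First I would fix a finite symmetric generating subset $S_H$ of $H$ and a finite symmetric generating subset $S_G$ of $G$, used respectively to define $\Gamma(H,X)$ and $\Gamma(G,X)$; by quasi-isometry invariance of asymptotic dimension the choices are irrelevant. Since $H\le G$, each $h\in S_H$ can be expressed as a word of length at most $C:=\max_{h\in S_H}|h|_{S_G}$ in the letters of $S_G$. Hence for every $x\in X$ and every edge $\{x,hx\}$ of $\Gamma(H,X)$ with $h\in S_H$ one has $d_{\Gamma(G,X)}(x,hx)\le C$, so the identity map $\mathrm{id}\colon\Gamma(H,X)\to\Gamma(G,X)$ is $C$-Lipschitz. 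Moreover both graphs have bounded degree (at most $|S_H|$ and $|S_G|$ respectively), hence are uniformly locally finite.

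It then remains to invoke Lemma~\ref{l-asdim-monotone} with $\Gamma=\Gamma(H,X)$, $\Delta=\Gamma(G,X)$, taking for each $R>0$ and each $x$ the map $f$ to be the restriction of $\mathrm{id}$ to the ball $B(x,R)$: this $f$ is $C$-Lipschitz and injective, so $|f^{-1}(y)|\le 1$ for all $y$, i.e.\ one may take $D=1$. The lemma yields $\asdim(\Gamma(H,X))\le\asdim(\Gamma(G,X))$, that is, $\asdim(H,X)\le\asdim(G,X)$. There is no genuine obstacle here; the only points needing (minimal) care are that the identity map is \emph{globally} Lipschitz on $\Gamma(H,X)$ — which is precisely where the inclusion $H\le G$ enters — and the uniform-local-finiteness hypothesis of Lemma~\ref{l-asdim-monotone}, both of which are immediate for Schreier graphs of finitely generated groups.
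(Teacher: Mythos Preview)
Your proof is correct and is exactly the argument the paper has in mind: it simply states that the lemma is a straightforward consequence of Lemma~\ref{l-asdim-monotone}, and your use of the identity map (which is globally $C$-Lipschitz and injective) is the natural way to realise this.
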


The following lemma is proven analogously to Proposition \ref{prop-finite-index}, using Lemma \ref{l-asdim-monotone} and the invariance of asymptotic dimension up to quasi-isometry. 

\begin{lem}
	Let $G$ be a finitely generated group and $H$ be a subgroup of finite index of $G$. Then for every faithful $G$-set $X$ we have $\asdim(G, X)=\asdim(H, X)$. Conversely for every $H$-set $X$, the induced $G$-set $Y:=\operatorname{Ind}^G_H(X)$ satisfies $\asdim(G, Y)=\asdim(H, X)$. 
\end{lem}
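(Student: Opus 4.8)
The plan is to transcribe the proof of Proposition~\ref{prop-finite-index}, replacing Proposition~\ref{prop-monoton} by Lemma~\ref{l-asdim-monotone} and growth-comparison by the quasi-isometry invariance of asymptotic dimension. The only structural change is forced by the fact that, unlike the growth function, asymptotic dimension is merely \emph{one-sidedly} monotone under equivariant maps with uniformly bounded fibres: for such a surjection $Y\to X$, Lemma~\ref{l-asdim-monotone} gives $\asdim(\Gamma(G,Y))\le\asdim(\Gamma(G,X))$ but no reverse inequality in general. I would therefore establish the two assertions in the opposite order to Proposition~\ref{prop-finite-index}: first the statement about induced $G$-sets, which is hands-on, and then deduce the first statement from it. Fix nested finite symmetric generating sets $S_N\subseteq S_H\subseteq S_G$ for the subgroups involved below, and recall that, being of finite index in $G$, the subgroups $H$ and $N$ are undistorted.

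For the induced-set statement, fix an $H$-set $W$, put $d=[G:H]$, choose coset representatives $a_z\in z$ for $z\in G/H$ with $a_{H}=e$ and $|a_z|_{S_G}\le d-1$, and recall $Y=\operatorname{Ind}_H^G W=(G\times W)/H$. Every element of $Y$ is uniquely of the form $[a_z,w']$ with $z\in G/H$ and $w'\in W$, so one may consider $f\colon\Gamma(G,Y)\to\Gamma(H,W)$ defined by $f([a_z,w'])=w'$, equivalently $f([g,w])=(a_{gH}^{-1}g)\,w$. Along an edge $[g,w]\sim s[g,w]$ one computes $f(s[g,w])=h\cdot f([g,w])$ with $h=a_{sgH}^{-1}\,s\,a_{gH}\in H$ and $|h|_{S_G}\le 2d-1$; since $H$ is undistorted, $|h|_{S_H}$ is bounded by a constant depending only on $(G,H,S_G,S_H)$, so $f$ is Lipschitz with a uniform constant, and each fibre of $f$ has exactly $d$ elements. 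By Lemma~\ref{l-asdim-monotone}, $\asdim(\Gamma(G,Y))\le\asdim(\Gamma(H,W))$. Conversely $w\mapsto[e,w]$ is an injective $H$-equivariant map $W\hookrightarrow Y$, inducing a $1$-Lipschitz injection $\Gamma(H,W)\hookrightarrow\Gamma(G,Y)$, so Lemma~\ref{l-asdim-monotone} gives the reverse inequality; hence $\asdim(G,Y)=\asdim(H,W)$.

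For the first assertion, $\asdim(H,X)\le\asdim(G,X)$ is immediate, since the identity is a $1$-Lipschitz injection $\Gamma(H,X)\hookrightarrow\Gamma(G,X)$. For the reverse inequality I would put $N=\bigcap_{g\in G}gHg^{-1}$, a normal subgroup of finite index with $N\le H$. As $G$-sets, $\operatorname{Ind}_N^G\operatorname{Res}_N^G X\cong X\times(G/N)$, and the first projection $X\times(G/N)\to X$ is $G$-equivariant and exhibits $\Gamma(G,X\times(G/N))\to\Gamma(G,X)$ as a covering of graphs whose deck group is $G/N$ acting freely (the deck transformations being $(x,hN)\mapsto(x,hgN)$, $g\in G$). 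Since asymptotic dimension does not increase when passing to the quotient by a proper action of a finite group — a special case of the Hurewicz-type estimate for asymptotic dimension, see~\cite{BD-asdim} — we get $\asdim(\Gamma(G,X))\le\asdim(\Gamma(G,X\times(G/N)))$; by the induced-set statement just proved (applied to $N\le G$ and $W=\operatorname{Res}_N^G X$) the right-hand side equals $\asdim(N,X)$, and $\asdim(N,X)\le\asdim(H,X)$ because $S_N\subseteq S_H$. The chain $\asdim(H,X)\le\asdim(G,X)\le\asdim(N,X)\le\asdim(H,X)$ forces equality throughout. (Faithfulness of $X$ plays no role here; it is kept only for parallelism with the rest of the section.)

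The step I expect to be the crux is exactly the one where the growth argument has no direct analogue: obtaining $\asdim(\Gamma(G,X))\le\asdim(\Gamma(G,X\times(G/N)))$, i.e.\ bounding the asymptotic dimension of a Schreier graph above by that of a finite cover of it. Lemma~\ref{l-asdim-monotone} only yields the opposite inequality, so this is where an external input is genuinely needed — the invariance of asymptotic dimension under quotients by free finite group actions (equivalently, passing through a dominating \emph{regular} finite cover and invoking the finite-group case of the quotient estimate). Once this is granted, everything else is a routine transcription of the proof of Proposition~\ref{prop-finite-index}, with Lemma~\ref{l-asdim-monotone} in place of Proposition~\ref{prop-monoton}.
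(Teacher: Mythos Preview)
Your proof of the induced-set assertion is correct and more explicit than the paper's sketch. For the first assertion you rightly observe that only $\asdim(H,X)\le\asdim(G,X)$ comes for free from Lemma~\ref{l-asdim-monotone}, and that the reverse inequality requires extra input --- unlike the growth case, where the analogue is a two-line counting argument.

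However, your justification of the crucial step $\asdim(\Gamma(G,X))\le\asdim(\Gamma(G,X\times(G/N)))$ is incorrect. The Hurewicz-type theorem for asymptotic dimension in \cite{BD-asdim} bounds the dimension of the \emph{domain} of a map in terms of the target and the fibres: for $f\colon A\to B$ with fibres of uniform asymptotic dimension $\le k$ it gives $\asdim(A)\le\asdim(B)+k$. Applied to the covering projection $\pi\colon\Gamma(G,X\times(G/N))\to\Gamma(G,X)$ (with $k=0$, the fibres being finite), this yields $\asdim(\Gamma(G,X\times(G/N)))\le\asdim(\Gamma(G,X))$ --- the opposite of what you need. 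Nor does quasi-isometry invariance save the step: the cover is a quasi-isometry on each connected component, but the constants are not uniform across components. For instance with $G=\Z$, $N=2\Z$, $X=\bigsqcup_{m\text{ odd}}\Z/m\Z$, the fibre over any point of the $m$-th piece has diameter $m$ in the cover, so no coarse inverse to $\pi$ exists. Thus neither of the two ingredients the paper names (Lemma~\ref{l-asdim-monotone} and quasi-isometry invariance) produces the missing inequality in the way you propose, and a different argument is needed here. The paper's one-line pointer does not resolve this either; your write-up makes the difficulty explicit rather than creating it.
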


Lemma \ref{l-asdim-monotone} has the following immediate consequence concerning  non-foldable subsets.

\begin{prop}\label{p-asdim-non-foldable}
	Let $G$ be a finitely generated group, and $\mathcal{L}$ a non-foldable subset of $G$. Then for every faithful $G$-set $X$ we have $\asdim(G, X)\ge \asdim(\mathcal{L})$, where $\mathcal{L}$ is seen as a metric space endowed with the restriction to $\mathcal{L}$ of a word metric on $G$. 
\end{prop}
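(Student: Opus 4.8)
The statement to prove is Proposition \ref{p-asdim-non-foldable}: if $\mathcal{L}$ is a non-foldable subset of a finitely generated group $G$, then for every faithful $G$-set $X$ we have $\asdim(G, X) \ge \asdim(\mathcal{L})$.

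The plan is to reduce this to Lemma \ref{l-asdim-monotone} applied in the direction "$\asdim(\mathcal{L}) \le \asdim(\Gamma(G,X))$", by constructing, for each radius $R$ and each ball $B(x,R)$ in the metric space $\mathcal{L}$, a uniformly Lipschitz map with uniformly bounded fibers into $\Gamma(G,X)$. Concretely, I would first observe that it suffices to prove the inequality $\asdim(\mathcal{L}) \le \asdim(G, X)$, where $\mathcal{L}$ carries the restriction of a fixed word metric $d_S$. The key point is that the non-foldability hypothesis, being a statement about arbitrarily large \emph{finite} subsets of $\mathcal{L}$, is exactly what is needed to embed balls of $\mathcal{L}$ — which are finite since $G$ is finitely generated — into orbits of $X$ in a controlled way.

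The main steps, in order: (1) Fix a word metric $d_S$ on $G$ with symmetric generating set $S$; both $\Gamma(G,X)$ and $(\mathcal{L}, d_S|_{\mathcal{L}})$ are uniformly locally finite (the latter because $G$ is, the former because it has bounded degree). (2) Given $R > 0$ and a point $g \in \mathcal{L}$, consider the ball $\Sigma := B_{d_S}(g, R) \cap \mathcal{L}$, a finite subset of $\mathcal{L}$. By the definition of non-foldability (applied to the faithful $G$-set $X$), there is a point $x \in X$ such that the orbital map $h \mapsto hx$ is injective on $\Sigma$. (3) This orbital map $\varphi_\Sigma : \Sigma \to \Gamma(G,X)$, $h \mapsto hx$, is $1$-Lipschitz for the word metric: if $d_S(h, h') \le 1$ then $h' = sh$ for some $s \in S$, hence $h'x = s(hx)$, so $h'x$ and $hx$ are adjacent in $\Gamma(G,X)$; by triangle inequality it is $1$-Lipschitz on all of $\Sigma$. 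Since $\varphi_\Sigma$ is injective, its fibers have cardinality $1$. Thus for every $R$ and every ball of radius $R$ in $\mathcal{L}$ we have produced a $1$-Lipschitz map into $\Gamma(G,X)$ with fibers of size $\le 1$. (4) Apply Lemma \ref{l-asdim-monotone} with $\Gamma = \mathcal{L}$, $\Delta = \Gamma(G,X)$, $C = 1$, $D = 1$, to conclude $\asdim(\mathcal{L}) \le \asdim(\Gamma(G,X)) = \asdim(G,X)$.

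The only subtlety — and it is minor — is checking that the hypotheses of Lemma \ref{l-asdim-monotone} are met \emph{uniformly} in the center $x$ of the ball: the constants $C = D = 1$ do not depend on $R$ or on the ball, which is what the lemma requires, and this is automatic from the argument above since non-foldability provides such an $x \in X$ for every finite $\Sigma$. There is no real obstacle here; the content is entirely in unwinding definitions and recognizing that non-foldability is precisely the "folding-free embedding of balls" property that feeds into the monotonicity lemma for asymptotic dimension. (One should only be careful that the word metric used to define $\mathcal{L}$ as a metric space agrees, up to bi-Lipschitz equivalence, with the one used to define $\Gamma(G,X)$ — which holds since any two finite generating sets give bi-Lipschitz word metrics, and asymptotic dimension is a bi-Lipschitz, indeed quasi-isometry, invariant.)
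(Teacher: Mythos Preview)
Your proposal is correct and follows exactly the approach intended by the paper: the paper states that Proposition \ref{p-asdim-non-foldable} is an immediate consequence of Lemma \ref{l-asdim-monotone}, and your argument supplies precisely the verification that the orbital maps on balls of $\mathcal{L}$ furnish the required $1$-Lipschitz injective maps into $\Gamma(G,X)$.
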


\section{Motivating and limiting examples}

In this section we give some examples of group actions which add  context  to  the main results of the article.

\subsection{Some actions of small growth} \label{s-small-growth}

The point of this paragraph is to recall some examples of finitely generated solvable groups of exponential growth that admit natural actions with small growth, such as the wreath products $C_p \wr \Z^d$ or $\Z\wr \Z^d$, and Baumslag metabelian groups.

\subsubsection{Wreath product actions} \label{s-wreath-actions} Consider a wreath product $G =  A \wr B$ of two groups $A,B$. Recall that $G$ is the semi-direct product $\oplus_B A \rtimes B$, where $\oplus_B A$ is the set of finitely supported functions $B \to A$, and $B$ acts on $\oplus_B A$ by $b \cdot \varphi : b' \mapsto  \varphi(b^{-1}b')$. The group $G$ naturally comes with an action on the set $B \times A$, called the \textbf{standard wreath product action}, defined by $(\varphi,b) \cdot (b_0, a_0) = (bb_0, \varphi(bb_0)a_0)$. In the sequel we denote by $X_\mathrm{st}$ the $G$-set $X_\mathrm{st} = B \times A$.

If $S_1,S_2$ are finite generating subsets of $A,B$ respectively, then $S = S_1 \cup S_2$ is a generating subset of $G$. Here and throughout the paper, we implicitly view $S_1$ and $S_2$ a subsets of $G$ by identifying $A$ and $B$ with their natural copies inside $G$, where the copy of $A$ consists of elements in $\oplus_B A$ that are trivial everywhere except at the identity position. The edges in the graph $\Gamma(G,X_\mathrm{st})$ are defined by saying that there is an edge between $(1_B,a)$ and $(1_B,s_1a)$ for every $a \in A$ and $s_1 \in S_1$, and between $(b,a)$ and $(s_2b,a)$ for every $b \in B,a \in A$ and $s_2 \in S_2$. Equivalently, the graph $\Gamma(G,X_\mathrm{st})$ is obtained by taking a copy of the Cayley graph of $A$ and attaching to each vertex a copy of the Cayley graph of $B$. For instance when $A$ is finite, $\Gamma(G,X_\mathrm{st})$ is just the union of $k=|A|$ copies of the Cayley graph of $B$ that are joined at the identity position, and $\vol_{G, X_\mathrm{st}}(n) \simeq \vol_{B}(n)$. So for $B = \Z$, we obtain a union of $k$ bi-infinite lines. When $A = B = \Z$ with standard generating subsets, the graph $\Gamma(\Z \wr \Z,X_\mathrm{st})$ is a comb, see Figure \ref{fig-comb}. In that case we have $\vol_{G, X_\mathrm{st}}(n) \simeq  n^2$. In general the growth of the $G$-action on $X_\mathrm{st}$ is given by the following:

\begin{figure}[ht]
	\includegraphics[scale=.5]{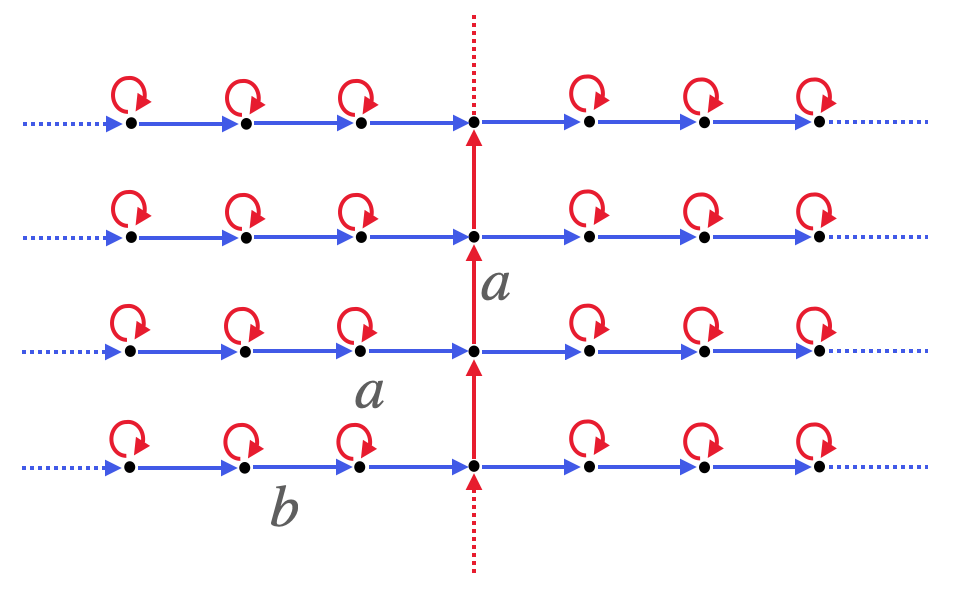}
	\caption{\small The graph of the standard action of $\Z\wr \Z$ on $\Z\times \Z$. Red arrows correspond to the generator $a$ of the lamp group, while blue arrows correspond to the generator $b$ of the base group. }\label{fig-comb}
\end{figure} 

\begin{lem}
	We have  $\vol_{G, X_\mathrm{st}}(n) \simeq \vol_{B}(n) \times  \vol_{A}(n)$. 
\end{lem}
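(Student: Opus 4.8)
The plan is to use the explicit description of $\Gamma(G, X_\mathrm{st})$ recalled just before the statement — namely a copy of $\cay(A)$ (the ``spine'', consisting of the vertices $(1_B, a)$ for $a \in A$) with a copy of $\cay(B)$ (a ``tooth'', consisting of the vertices $(b, a)$ for $b \in B$) attached at each spine vertex — and to estimate directly the size of a ball of radius $n$. First I would record that $G$ acts transitively on $X_\mathrm{st}$: given $(b_0, a_0)$ and $(b_1, a_1)$, the element $(\varphi, b_1 b_0^{-1}) \in G$ with $\varphi(b_1) = a_1 a_0^{-1}$ and $\varphi$ trivial elsewhere sends the former to the latter. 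Since $G$ acts by automorphisms of $\Gamma(G, X_\mathrm{st})$, all balls of a given radius are isometric, so it is enough to estimate $|B_{(1_B, 1_A)}(n)|$.

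For the upper bound, I would track how the two coordinates of a vertex change along an edge. With $S = S_1 \cup S_2$, an edge labelled by $s_2 \in S_2$ joins $(b, a)$ to $(s_2 b, a)$, changing only the $B$-coordinate (by a generator of $B$), whereas an edge labelled by $s_1 \in S_1$ joins $(1_B, a)$ to $(1_B, s_1 a)$ and is a loop whenever the $B$-coordinate is not $1_B$, so it changes only the $A$-coordinate (by a generator of $A$). Consequently a path of length at most $n$ issued from $(1_B, 1_A)$ multiplies the $B$-coordinate by at most $n$ elements of $S_2$ and the $A$-coordinate by at most $n$ elements of $S_1$; hence $B_{(1_B, 1_A)}(n) \subseteq S_2^n \times S_1^n$, and therefore $\vol_{G, X_\mathrm{st}}(n) \le |S_2^n|\,|S_1^n| = \vol_B(n)\,\vol_A(n)$.

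For the lower bound I would exhibit an explicit large subset of the ball. For $0 \le k \le n$, travelling $k$ steps along the spine from $(1_B, 1_A)$ reaches $(1_B, a)$ for every $a \in S_1^k$, and from such a point travelling the remaining $n-k$ steps inside the tooth at $a$ reaches $(b, a)$ for every $b \in S_2^{n-k}$; since distinct spine vertices carry disjoint teeth, the reached set has cardinality $|S_1^k|\,|S_2^{n-k}|$, so $|B_{(1_B, 1_A)}(n)| \ge |S_1^k|\,|S_2^{n-k}|$. Taking $k = \lfloor n/2 \rfloor$ and using that the functions $n \mapsto |S_1^n|$ and $n \mapsto |S_2^n|$ are nondecreasing, this gives $\vol_{G, X_\mathrm{st}}(n) \succcurlyeq \vol_A(n)\,\vol_B(n)$. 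Combined with the upper bound, we obtain $\vol_{G, X_\mathrm{st}}(n) \simeq \vol_B(n)\,\vol_A(n)$. There is no genuine obstacle here; the only point to keep in mind is that the asymmetry between the two coordinate directions (moves in the $A$-direction are available only along the spine) merely constrains paths, which is precisely what drives the upper bound, while the lower bound needs nothing beyond the embedded ``corner'' $S_1^{\lfloor n/2 \rfloor} \times S_2^{\lceil n/2 \rceil}$.
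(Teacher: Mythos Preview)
Your proof is correct and follows essentially the same approach as the paper: the upper bound via coordinate-tracking is exactly the paper's observation that the identity map $\Gamma(G,X_\mathrm{st})\to B\times A$ (with the $\ell^1$-metric) is Lipschitz, and your lower bound (walk $k$ steps on the spine, then $n-k$ on a tooth) is the paper's argument that $(b,a)$ with $|a|_{S_1},|b|_{S_2}\le n$ lies in the ball of radius $2n$, up to the cosmetic choice of splitting $n$ versus doubling the radius. Your preliminary transitivity observation is valid but unnecessary, since the coordinate-tracking bound $B_{(b_0,a_0)}(n)\subseteq S_2^n b_0\times S_1^n a_0$ holds at every basepoint directly.
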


\begin{proof}
	By the above description of $\Gamma(G,X_\mathrm{st})$, the identity map $\Gamma(G,X_\mathrm{st}) \to B \times A$ is a Lipschitz map, where the target space $B \times A$ is endowed with the $\ell^1$-metric. This implies $\vol_{G, X_\mathrm{st}}(n) \preccurlyeq \vol_{B}(n) \times  \vol_{A}(n)$. Conversely, every element $(b,a)$ such that $|a|_{S_1}, |b|_{S_2} \leq n$ is at distance at most $2n$ from $(1_B,1_A)$ in $\Gamma(G,X_\mathrm{st})$, so the ball of radius $2n$ around $(1_B,1_A)$ in $\Gamma(G,X_\mathrm{st})$ contains at least $\vol_{B}(n) \times  \vol_{A}(n)$ elements. The statement follows.
\end{proof}

\subsubsection{Baumslag's finitely presented examples} \label{subsec-Baumslag}

Consider the group given by the finite presentation
\[\Lambda_p = \left\langle u,t,s \,  | \, u^p = 1, \, [s,t] = 1, \, [u^{t},u] = 1, \,  u^{s} = u^{t} u  \right\rangle, \] 
where $p$ is  a prime number. These groups (actually their torsion-free counterpart) were introduced in \cite{Baumslag-group}. Baumslag showed that $\Lambda_p$ is isomorphic to the semi-direct product $\mathbb{F}_p[T, T^{-1},(T+1)^{-1}] \rtimes \Z^2$, where  the generators $t,s$ of $\Z^2$ act on the ring $R:=\mathbb{F}_p[T, T^{-1},(T+1)^{-1}]$ by multiplication by $T$ and $T+1$.

We claim that $\Lambda_p$ admits a faithful transitive action on a set $X$ such that $\vol_{\Lambda_p, X}(n)\simeq n^2$. To see this, observe that every element of $R$ can be uniquely written as \[ \sum_{n \in \Z} a_n T^n + \sum_{\ell \geq 0, q > 0} b_{\ell,q} T^{-\ell} (1+T)^{-q},\] where $a_n, b_{\ell,q}  \in \mathbb{F}_p$, and only finitely many of them are non-zero. Consider the subgroup $H$ of $R$ defined by the condition $a_0 = 0$. An easy computation shows that the intersection of all conjugates of $H$ in $\Lambda_p$ is trivial. Equivalently, the action of $\Lambda_p$ on $X = \Lambda_p/H$ is faithful. The subgroup $H$ has finite index equal to $p$ in $R$. We have a  $G$-equivariant surjective map $ G/H\to G/R$, whose fibers have cardinality $p$. Thus Proposition \ref{prop-monoton} implies that $\vol_{\Lambda_p, X}(n) \simeq  \vol_{\Lambda_p/R}(n)= n^2$.

The Baumslag group admits the following generalisation. For $d\ge 1$ let \[ \Lambda_{p, d}= \mathbb{F}_p[T_1,\ldots, T_d, T_1^{-1}, \ldots, T_d^{-1}, (1+T_1)^{1},\ldots (1+T_d)^{-1}] \rtimes \Z^{2d}.\] The action is defined by saying that if $t_1,\ldots, t_d, s_1,\ldots, s_d$ are generators of $\Z^{2d}$, then $t_i$ and $s_i$ act respectively by multiplication by $T_i$ and $T_i+1$. The (torsion-free analogues) of these groups were introduced by Erschler \cite{Ersch-Liouville} as generalisation of Baumslag's groups, to give examples of finitely presented  groups which are amenable but not Liouville  (for $d\ge 3$). They admit an analogous finite presentation \cite[Lemma 5.1]{Ersch-Liouville}. The same argument as above shows that each group $\Lambda_{p, d}$ admits a faithful action with $\vol_{\Lambda_{p, d}, X}(n)\simeq n^{2d}$.

\subsubsection{A general criterion} The following result, which is based on a result of Olshanskii \cite{Olshanskii-KK}, shows that  the existence of actions of polynomial growth of the wreath products $C_p \wr \Z^d$ and the above groups $\Lambda_{p, d}$ are not isolated phenomena.

\begin{prop} \label{prop-growth-ol}
	Let $G$ be a finitely generated metabelian group such that $G'$ is a torsion group and $G/G'$ has torsion-free rank $d$. Then $G$ admits a faithful transitive action on a set $X$ with $\vol_{G, X}(n) \simeq  n^d$.
\end{prop}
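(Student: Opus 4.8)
\medskip

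The plan is to realize $G$ as a quotient of a wreath product $L \wr \Z^d$ with $L$ finite, use Olshanskii's result to get a faithful action of that wreath product of polynomial growth $n^d$, and then transfer it to $G$. First I would set up the module picture: write $1 \to G' \to G \to Q \to 1$ with $Q = G/G'$ abelian of torsion-free rank $d$. Since $G'$ is a torsion abelian group and $G$ is finitely generated metabelian, $G'$ is finitely generated as a $\Z Q$-module, hence (being torsion, and using that $Q$ is finitely generated abelian so $\Z Q$ is Noetherian) $G'$ has bounded exponent, say $m$, so $G'$ is a module over $(\Z/m\Z) Q$. Passing to a prime $p \mid m$ and an associated prime of the module, one reduces (at the cost of enlarging, not shrinking, what we must embed — so harmless for a lower bound) to arranging that $G$ maps onto, or is commensurable with, a metabelian group whose derived subgroup is an $\mathbb F_p$-vector space; the key point I want is an embedding of some finite-index or quotient structure into a group of the form $N \rtimes \Z^d$ with $N$ locally finite. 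Concretely, modding out $G'$ by a suitable finite-index $Q$-submodule (the preimage of which has finite index in $G$) we get a finitely generated metabelian group $\bar G$ with $\bar G' \cong \bigoplus_{\text{finite}} \mathbb F_p$ as a set and $\bar G = \bar G' \rtimes \Z^d$, i.e. a quotient of $C_p \wr \Z^d$ composed with change of base... — more cleanly: $\bar G$ is of the form $L \wr \Z^d$ modulo a normal subgroup contained in the base, where $L$ is a finite $p$-group.

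\medskip

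Next, invoke Olshanskii's theorem from \cite{Olshanskii-KK}: a finitely generated metabelian group $N \rtimes \Z^d$ with $N$ a torsion group admits a faithful transitive action with $\vol(n) \simeq n^d$. Applying this directly to $G$ itself (which already has the required form: $G' = N$ is a torsion group, $G/G' $ has torsion-free rank $d$, and after quotienting by the finite torsion subgroup of $G/G'$ if necessary, or using Proposition \ref{prop-finite-index} to pass to a finite-index subgroup, $G/G'$ is virtually $\Z^d$) gives a subgroup $H \le G$ with $\bigcap_{g} gHg^{-1} = \{1\}$ and $\relvol_{(G,H)}$ controlled so that the action on $G/H$ has growth $\simeq n^d$. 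The upper bound $\vol_{G,X}(n) \preccurlyeq n^d$ is what Olshanskii's construction delivers; the lower bound $\vol_{G,X}(n) \succcurlyeq n^d$ should come for free since $X = G/H$ surjects $G$-equivariantly onto $G/(HG') = Q/(\text{image of }H)$ which is a transitive action of an abelian group of torsion-free rank $d$, hence of growth $\succcurlyeq n^d$ by Proposition \ref{prop-monoton}(ii) — one just needs $HG'$ to have finite index in $G$, equivalently $H$ to have finite index in $G'$, which is exactly how Olshanskii's stabilizer is built (it is the preimage of a finite-index subgroup of the lamp group).

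\medskip

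The main obstacle I anticipate is the bookkeeping needed to reduce to the situation where Olshanskii's theorem applies cleanly, and to make sure the faithfulness is preserved: one must check that the relevant stabilizer subgroup $H$ has trivial normal core in $G$, not merely in some quotient or finite-index subgroup. Transferring through Proposition \ref{prop-finite-index} handles the virtually-$\Z^d$ issue on the quotient side, but on the $G'$ side one needs that $G'$ admits a finite-index $Q$-submodule $H$ with $\bigcap_g gHg^{-1} = \{1\}$; this is a statement about the module $G'$ over $\Z Q$ — it holds because $G'$ is residually finite as a module (a finitely generated module over a finitely generated commutative ring is residually finite), so one can separate any nonzero element by a finite-index submodule, and a compactness/diagonal argument over the finitely generated group $Q$ (or rather: it suffices to kill the finitely many generators of a would-be normal core, which is itself a finitely generated submodule) produces a single $H$ of finite index with trivial core. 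Once these reductions are in place, the growth estimates are immediate from the monotonicity properties already recorded and from Olshanskii's theorem, so I would keep the proof short and delegate the wreath-product growth computation to the cited lemma on $\vol_{G, X_{\mathrm{st}}}$.
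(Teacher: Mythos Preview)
Your proposal is essentially correct and arrives at the same argument as the paper, but you take a long detour to get there. The entire first paragraph (wreath products, associated primes, reduction to $\mathbb F_p$-modules) is unnecessary: none of that structure is used. The paper's proof is three sentences. Olshanskii's result, as cited, is exactly the statement that a metabelian group $G$ with $G'$ torsion admits a subgroup $H\le G'$ of finite index in $G'$ whose normal core in $G$ is trivial; this applies to $G$ directly, with no need to realise $G$ inside or as a quotient of $L\wr\Z^d$, and no need to pass to a finite-index subgroup to handle torsion in $G/G'$. Once you have such an $H$, the map $G/H\to G/G'$ has fibers of cardinality $[G':H]<\infty$, so Proposition~\ref{prop-monoton} gives $\vol_{G,G/H}(n)\simeq\vol_{G,G/G'}(n)\simeq n^d$ (the last equivalence because $G/G'$ is finitely generated abelian of torsion-free rank $d$). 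That is the whole proof.

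Your last paragraph, where you argue that $G'$ admits a finite-index $\Z Q$-submodule with trivial core via residual finiteness of finitely generated modules over Noetherian rings, is a sketch of the proof of Olshanskii's lemma rather than an application of it. That is fine if you want to be self-contained, but it should be presented as such rather than as an ``obstacle''. Also, a small correction: you write that Olshanskii's theorem directly gives an action of growth $n^d$; in the paper's formulation it only gives the subgroup $H$, and the growth computation is the (easy) second step.
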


\begin{proof}
	By \cite[\S 3]{Olshanskii-KK}, if $G$ is a metabelian group such that  $G'$ is a torsion group, there always exists a finite index subgroup $H$ of $G'$ such that  the intersection of all $G$-conjugates of $H$ is trivial, so that the action of $G$ on $G/H$ is faithful. Since $H\le G'$   we have a  $G$-equivariant surjective map $ G/H\to G/G'$, whose fibers have cardinality equal to  the index $[G': H]$. Proposition \ref{prop-monoton} then implies $\vol_{G, G/H}(n)\simeq \vol_{G, G/G'}(n)\simeq n^d$. \qedhere
\end{proof}

\subsection{Actions of lamplighter groups with prescribed growth} \label{subsec-behavior}

It is natural to wonder if, as in the case of the growth of the group, the growth of an action of a solvable group is always polynomial of integer degree or exponential. It turns out that this is far from being the case. In fact, even  the growth of an action of the lamplighter group $G=C_2 \wr \Z$ can  be arranged to be equivalent to an arbitrary function  satisfying mild conditions:

\begin{prop}\label{p-lamplighter-given-growth}
	Let $G=C_2 \wr \Z$. Let $f\colon \N \to \R_+$ be a non-decreasing function such  that $f(n+1)/f(n)$ is non-increasing. Then there exists a faithful and transitive $G$-set $X$ such that $\vol_{G, X}(n)\simeq n f(n)$. In particular:
	\begin{itemize}
		\item for every real number $\alpha \ge 1$, there exists a faithful $G$-set $X$ with $\vol_{G, X}(n)\simeq n^\alpha$;
		\item for every $0<\beta \le 1$, there exists a faithful $G$-set $X$ with $\vol_{G, X}(n) \simeq \exp(n^\beta)$. 
	\end{itemize}
\end{prop}


\begin{proof} 
	Let us write $f(n)= 4^{g(n)}$, where $g\colon \N\to \R$ is non-decreasing and $g(n+1)-g(n)$ is non-increasing. If $g(n+1)-g(n)$ is bounded away from $0$, then $f(n)\simeq \exp(n)$, so that we can choose $X=G$ acting on itself. Thus in the following we suppose that $g(n+1)-g(n)$ tends to $0$. Upon replacing $g(n)$ by $g(n_0+n)$ for some $n_0>0$ we can suppose that $g(n+1)-g(n)\le 1$ for every $n$ (this does not affect $f(n)$ up to $\simeq$ since $f(n)\le 4^{g(n+n_0)} \le 4^{n_0(g(1)-g(0))} f(n)$). Similarly if $g(n)$ is bounded then we can choose $X=C_2\times \Z$ (the standard wreath product action; see \S \ref{s-wreath-actions}). So we assume that $g(n)$ tends to $+\infty$. 
	
	For $k\in \N$ let  $x_k=\min\{n\colon g(n)\ge k\}$ and let $\Omega=\{x_k, k\in \N\} \cup \{-x_k, k\in \N\}$.   
	Let $H=\{r\in \oplus_\Z C_2 \colon r|_\Omega=0\}$  and set $X=G/H$. Note that $X$ is a faithful $G$-set. Indeed it is easy to see that every non-trivial element $r\in \oplus_\Z C_2$ has a conjugate  outside of $H$.

	In what follows we write elements of $G$ as pairs $(r,m)$ with  $r\in \oplus_\Z C_2$ and $m\in \Z$, and let $S=\{s, t^{\pm 1}\}$ be the standard generating set given by $s=(\delta_0, 0)$ and $t=(0, 1)$.   The set $X$ can be naturally identified with $\Z \times (\oplus_{\Omega} C_2)$, by mapping each coset $(r, m)H$ to $(m, \sigma_mr|_{\Omega})$, where $\sigma_mr$ denotes the shifted configuration $\sigma_mr(x):=r(x-m)$. Under this identification, the generator $t$ acts by moving $(m, r)$ to $(m+1, r)$, and the generator $s$ acts on $(m, r)$ by flipping the value of $r(-m)$ if $-m\in \Omega$, while $s$ fixes $(m, r)$ if $-m\notin \Omega$. From this description it follows that any product of at most $n$ generators must move $(m, r)$ to some point $(m', r')$, where $m'\in   [m-n, m+n]$ and  $r$ and $r'$ coincide outside $[m-n, m+n]\cap \Omega$. On the other hand,  by applying a product of at most $2n$ generators to $(m, r)$ it is possible to reach any $(m', r')$ satisfying the same constraints. This implies that the ball  $B_n$ of radius $n$ around  $(m, r)$ in the Schreier graph of the action satisfies
	\begin{equation}\label{e-lamplighter-actions} |B_n|\le (2n+1)2^{|[m-n, m+n]\cap \Omega|}\le |B_{2n}|.\end{equation}
	Let us analyse the cardinality $|[m-n, m+n]\cap \Omega|$. First we note that the assumption that $g(n+1)-g(n)$ is non-increasing implies that $x_{k+1}-x_k$ is non-decreasing, so that when $n$ is fixed,  $|[m-n, m+n]\cap \Omega|$ is maximized for $m=0$.  On the other hand the assumption that $g(n+1)-g(n)\le 1$ implies that the map $k\mapsto x_k$ is injective. As a consequence,  we have $|[0, n]\cap \Omega|=\max\{k \colon x_k\le n\}$, which  is equal to $g(n)$ up to an additive error bounded by 1. We deduce that $\vol_{G, X}(n)= (2n+1)2^{|[-n, n]\cap \Omega|}\simeq n2^{2g(n)}= n f(n)$. \qedhere
\end{proof}

\begin{remark}
	For the   $d$-dimensional lamplighter $G=C_2\wr \Z^d$, the same argument (with minor modifications) can be applied to show that for every non-decreasing function $f\colon \N\to \R_{+}$ such that $f(n+1)/f(n)$ is non-decreasing, there exists a faithful $G$-set $X$ such that $\vol_{G, X}(n)\simeq n^d f(n)$. 
	Note that the smallest growth obtained through this construction is $n^d$, which is equal to the growth of the standard wreath product action of $C_2 \wr \Z^d$.
\end{remark}

\section{Polycyclic groups} \label{s-noetherian}

The goal of this section is to explain how the understanding of the growth of actions of nilpotent and polycyclic groups reduces to the classical results on growth of these groups due to Wolf \cite{Wolf}, Bass \cite{Bass-nilp} and Guivarc'h \cite{Guivarch-nilp}. The approach is elementary. It is based on compactness arguments in the space $\sub(G)$, and holds more generally for Noetherian groups. Recall that a group is \textbf{Noetherian} if all its subgroups are finitely generated. A solvable group is Noetherian if and only if it is polycyclic.

First we isolate the following remark for future reference:

\begin{lem} \label{l-product-actions}
	Let $G$ be a finitely generated group which is a direct product $G=Q_1\times \cdots \times Q_k$. For each $i$, let $X_i$ be a faithful $Q_i$-set. Then $X=\sqcup_i X_i$ is a faithful $G$-set, where $G$ acts on each $X_i$ through $Q_i$, and we have 
	\[\vol_{G, X}(n)=\max_{i=1}^k \vol_{Q_i, X_i}(n).\]
	In particular if   all $Q_i$s have polynomial growth with $\vol_{Q_i}(n)\simeq n^{d_i}$, then there is a faithful $G$-set $X$ such that  $\vol_{G, X}(n)\simeq n^d$, where $d=\max_i d_i$. 
\end{lem}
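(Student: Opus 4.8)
The plan is to treat the disjoint union $X = \sqcup_i X_i$ directly from the definition of $\vol_{G,X}$. First I would observe that since each $X_i$ is a faithful $Q_i$-set and $G$ acts on $X_i$ through the projection $G \to Q_i$, the kernel of the $G$-action on $X_i$ is precisely the product of the other factors $\prod_{j \neq i} Q_j$; intersecting these kernels over all $i$ gives the trivial subgroup, so the action of $G$ on $X$ is faithful. Next, fix a finite symmetric generating set: I would take $S = \bigcup_i S_i$ where $S_i$ is a finite symmetric generating set of $Q_i$ (viewed inside $G$), which generates $G$ since $G$ is the direct product.

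For the growth computation, the key point is that if $x \in X_i$, then for any $g \in G$ the element $g$ acts on $x$ only through its image $\bar g \in Q_i$, and moreover the word length of $\bar g$ with respect to $S_i$ is at most the word length of $g$ with respect to $S$ (projecting a word over $S$ to $Q_i$ kills all letters not in $S_i$ and keeps the letters in $S_i$). Conversely, any word over $S_i$ is already a word over $S$ of the same length. Hence for $x \in X_i$ we have $S^n \cdot x = S_i^n \cdot x$ as subsets of $X_i$, so $|S^n \cdot x| = |S_i^n \cdot x| \le \vol_{Q_i, X_i}(n)$. Taking the maximum over all $x \in X$, which ranges over $\bigcup_i X_i$, yields $\vol_{G,X}(n) = \max_i \max_{x \in X_i} |S_i^n \cdot x| = \max_i \vol_{Q_i, X_i}(n)$, establishing the displayed equality on the nose (not merely up to $\simeq$) for this choice of generating set.

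For the final assertion, suppose each $Q_i$ has polynomial growth with $\vol_{Q_i}(n) \simeq n^{d_i}$. Then I would take $X_i = Q_i$ with the left translation action, which is faithful, and for which $\vol_{Q_i, X_i}(n) = \vol_{Q_i}(n) \simeq n^{d_i}$. Applying the first part, $X = \sqcup_i Q_i$ is a faithful $G$-set with $\vol_{G,X}(n) = \max_i \vol_{Q_i}(n) \simeq \max_i n^{d_i} = n^d$ where $d = \max_i d_i$; here I would note that a finite maximum of functions that are each $\simeq n^{d_i}$ is $\simeq n^{\max_i d_i}$, since $\max_i C_i^{-1} n^{d_i} \le \max_i \vol_{Q_i}(n) \le \max_i C_i n^{d_i}$ and for $n \ge 1$ these two-sided bounds are comparable to $n^d$.

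There is essentially no obstacle here: the statement is a direct unwinding of definitions, and the only mild care needed is (i) confirming faithfulness via the intersection of kernels, and (ii) checking that the word metric on $G$ restricts to the word metric on each factor $Q_i$ under the appropriate choice of generating set, so that balls in $\Gamma(G,X)$ restricted to an orbit inside $X_i$ coincide with balls in $\Gamma(Q_i, X_i)$. The fact that $\vol_{G,X}$ is independent of the generating set up to $\simeq$ then makes the conclusion robust, and the polynomial-growth consequence is immediate from the elementary observation about finite maxima of polynomials.
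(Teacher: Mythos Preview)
Your proof is correct and complete; the paper itself does not give a proof of this lemma, presenting it simply as a remark isolated for future reference. Your argument is exactly the natural unwinding of the definitions one would expect, and there is nothing to add.
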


In particular we have the following for virtually abelian groups. 

\begin{prop}\label{prop-virtually-abelian}
	If $G$ is an infinite finitely generated virtually abelian group,  then there exists a faithful $G$-set $X$ such that $\vol_{G, X}(n)\simeq n$. 
\end{prop}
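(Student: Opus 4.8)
The plan is to reduce to the abelian case and then quote the structure theory of finitely generated abelian groups together with Lemma~\ref{l-product-actions}. First I would observe that since $G$ is infinite, finitely generated and virtually abelian, it contains a finite-index subgroup $A$ that is free abelian of rank $d \geq 1$; moreover, after passing to a further finite-index subgroup we may assume $A$ is normal in $G$ (replace $A$ by the intersection of its finitely many conjugates, which is still free abelian of positive rank and of finite index). By Proposition~\ref{prop-finite-index}, it suffices to construct, for some finite-index subgroup $H$ of $G$, a faithful $H$-set $X$ with $\vol_{H,X}(n)\simeq n$: indeed the first part of that proposition gives $\vol_{G,Y}(n)\simeq\vol_{H,X}(n)$ for $Y=\operatorname{Ind}_H^G(X)$, and if the latter is $\simeq n$ then so is the former, while $Y$ is automatically faithful as a $G$-set because its restriction to $H$ already is.

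Thus the core of the argument is the case $G=\Z^d$ itself (or more generally a finitely generated abelian group). Here $X = \Z^d$ with the translation action is faithful and has $\vol_{G,X}(n)=\vol_{\Z^d}(n)\simeq n^d$, which is too big once $d\geq 2$; so instead I would write $\Z^d = \Z \times \cdots \times \Z$ as a direct product of $d$ copies of $\Z$, take $X_i = \Z$ with the translation action of the $i$-th factor (a faithful $\Z$-set of linear growth, since $\vol_\Z(n)=2n+1\simeq n$), and apply Lemma~\ref{l-product-actions}. That lemma produces the faithful $\Z^d$-set $X=\sqcup_i X_i$ with $\vol_{\Z^d,X}(n)=\max_i \vol_\Z(n)\simeq n$. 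For a general finitely generated infinite abelian group $B$, write $B\cong \Z^d \times F$ with $F$ finite and $d\geq 1$: the same construction on the $\Z^d$ part, together with the (faithful, finite, hence bounded-growth) action of $F$ on itself added as one more piece of the disjoint union, gives a faithful $B$-set of linear growth — this is exactly the "in particular" clause of Lemma~\ref{l-product-actions} applied with all $d_i\le 1$.

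Putting the pieces together: take $H\trianglelefteq G$ of finite index with $H\cong\Z^d$, $d\ge 1$; build the faithful $H$-set $X$ of linear growth as above; induce to get $Y=\operatorname{Ind}_H^G(X)$, which is a faithful $G$-set with $\vol_{G,Y}(n)\simeq\vol_{H,X}(n)\simeq n$. Since every infinite finitely generated group satisfies $\vol_{G,Y}(n)\succcurlyeq n$ for any faithful $Y$ (noted in the introduction), the growth is exactly $\simeq n$, as claimed.

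I do not expect a serious obstacle here; the only point requiring a small amount of care is the passage to a \emph{normal} free-abelian finite-index subgroup (so that the induction machinery of Proposition~\ref{prop-finite-index} applies cleanly and faithfulness is transparent), and checking that the induced $G$-set remains faithful — but this is immediate because faithfulness of the $H$-action already forces faithfulness of the $G$-action on $Y$, $H$ being a subgroup of $G$. Everything else is a direct appeal to Lemma~\ref{l-product-actions}, Proposition~\ref{prop-finite-index}, and the structure theorem for finitely generated abelian groups.
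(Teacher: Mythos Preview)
Your proof is correct and follows essentially the same route as the paper: pass to a finite-index free abelian subgroup $\Z^d$, apply Lemma~\ref{l-product-actions} to the disjoint union $\sqcup_{i=1}^d \Z$, and then use Proposition~\ref{prop-finite-index} to induce back to $G$. The extra care you take with normality and faithfulness of the induced action is harmless but not needed---Proposition~\ref{prop-finite-index} does not require $H$ to be normal, and faithfulness is indeed automatic for the reason you give.
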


\begin{proof}
	The group $G$ contains a finite index subgroup isomorphic to $\Z^d$ for some $d\ge 1$. Thus the conclusion follows from Proposition \ref{prop-finite-index}, by considering the action of $\Z^d$ on $\sqcup_{i=1}^d \Z$ as in Lemma \ref{l-product-actions} 
\end{proof}


The following proposition is a partial converse to Lemma \ref{l-product-actions} for Noetherian groups.

\begin{prop} \label{prop-noetherian}
Let $G$ be a finitely generated Noetherian group, and $X$ a faithful $G$-set. Then there exist a finite index subgroup $G^0\le G$ and normal subgroups $K_1,\dots,K_d\unlhd G^0$  such that, if we denote $Q_i=G^0/K_i$, then the following hold:
\begin{enumerate}[label=\roman*)]
\item \label{item-neoth-subdirect} The intersection $\bigcap_i K_i$ is trivial. Equivalently, $G^0$ embeds in $Q_1\times \cdots \times Q_d$.

\item  \label{item-neoth-fixed} If we denote $X_i:=\fix(K_i)$, then we have $X=\cup_i X_i$, and each $X_i$ is a $G^0$-invariant subset such that the action of $G^0$ on $X_i$ factors through an action of $Q_i$. 

\item \label{item-neoth-locally-embeds} $Q_i$ is non-folded in $X_i$ for all $i$. 
	
	\item \label{item-neoth-growth} We have $\vol_{G, X}(n)\simeq \max_{i=1}^n \vol_{Q_i}(n)$.
	\end{enumerate}

\end{prop}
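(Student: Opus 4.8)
The strategy is to exploit the Noetherian hypothesis to make $\sub(G)$ a *countable* compact metrizable space, and then run a compactness/minimality argument. First I would pass to the closure $\mathcal S(X)=\overline{\{G_x : x\in X\}}\subseteq\sub(G)$; since $G$ is Noetherian, $\sub(G)$ is countable and compact, hence $\mathcal S(X)$ is a countable compact space, so it has isolated points, and in fact a Cantor–Bendixson-type argument will let me extract finitely many ``maximal'' subgroups. More concretely: the trivial subgroup $\{1\}$ does not lie in $\mathcal S(X)$ because $X$ is faithful (if it did, Lemma \ref{lem-growth-closure} would give $\vol_G\preccurlyeq\vol_{G,X}$, which is not a problem per se but the point is the kernel is trivial; the relevant fact is just that $\bigcap_{H\in\mathcal S(X)}\bigcap_{g}gHg^{-1}=\{1\}$). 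Then among the subgroups in $\mathcal S(X)$, consider those that are *maximal* with respect to inclusion: since every ascending chain of subgroups stabilizes (Noetherian), Zorn's lemma (or just the ACC) guarantees that every element of $\mathcal S(X)$ is contained in a maximal one, and I would argue there are only finitely many maximal elements $H_1,\dots,H_d$ — this is where I would use that $\mathcal S(X)$ is closed together with the ACC, perhaps via a König's lemma / compactness argument on the tree of subgroups. Replacing each $H_i$ by its normal core (finite index since, again, the number of conjugates is controlled — actually one needs $H_i$ to have finitely many conjugates, which should follow from maximality and the structure of $\mathcal S(X)$), and intersecting, I would set $G^0$ to be a suitable finite-index subgroup and $K_i$ the cores, so that $\bigcap_i K_i$ is trivial, giving \ref{item-neoth-subdirect}.

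For \ref{item-neoth-fixed}: set $X_i=\fix(K_i)=\{x : K_i\subseteq G_x\}$. Each $X_i$ is $G^0$-invariant since $K_i\unlhd G^0$, and $G^0$ acts on $X_i$ through $Q_i=G^0/K_i$ by definition. The equality $X=\bigcup_i X_i$ is the crux: given $x\in X$, the stabilizer $G_x^0 := G^0\cap G_x$ lies in $\mathcal S(X)$ up to passing to $G^0$ (one needs to check the closure behaves well under restriction to a finite-index subgroup — this is routine), hence $G_x^0$ is contained in some maximal $H_i$; then $K_i$, being the core of $H_i$, should be shown to be contained in $G_x^0$, i.e. $x\in X_i$. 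Getting this containment $K_i\subseteq G_x$ for the *right* $i$ is the main technical obstacle, and is precisely what the maximality of $H_i$ inside the closure $\mathcal S(X)$ buys us — roughly, if $K_i\not\subseteq G_x$ then one could perturb $x$ to produce a point of $\mathcal S(X)$ with stabilizer strictly larger than $H_i$, contradicting maximality. I expect this is where the argument needs the most care, including a correct treatment of conjugacy (one should work with the $G^0$-conjugacy class and use that the stabilizer of a nearby point in the orbit is conjugate).

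For \ref{item-neoth-locally-embeds}: I claim $Q_i$ is non-folded in $X_i$, i.e. for every finite $\Sigma\subset Q_i$ there is $x\in X_i$ on which the orbital map is injective. The action of $Q_i$ on $X_i$ is faithful by construction of $K_i$ (it is an intersection of point stabilizers restricted to $X_i$, by \ref{item-neoth-fixed} and \ref{item-neoth-subdirect}), and now one applies the *same* closure argument inside $\sub(Q_i)$: the closure of the $Q_i$-stabilizers on $X_i$ is a countable compact subset of $\sub(Q_i)$; if $Q_i$ failed to be non-folded, some finite $\Sigma$ would force every point of $X_i$ to have a nontrivial stabilizer element from the finite set $P$ of differences, so $\{1\}$ would be separated from the closure, but by maximality $X_i$ contains points whose stabilizer in $Q_i$ is as small as possible — in fact one should arrange that the trivial subgroup $\{1\}$ *is* in the closure of the $Q_i$-stabilizers on $X_i$, which would immediately give non-foldedness. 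Finally \ref{item-neoth-growth}: the upper bound $\vol_{G,X}(n)\preccurlyeq\max_i\vol_{Q_i}(n)$ follows from \ref{item-neoth-subdirect}–\ref{item-neoth-fixed} together with Proposition \ref{prop-finite-index} (pass to $G^0$) and Proposition \ref{prop-monoton} (the equivariant injection $X_i\hookrightarrow\bigsqcup_j Q_j$ coming from the subdirect embedding, observing each $X_i$ maps to a union of $Q_i$-cosets), while the lower bound $\vol_{G,X}(n)\succcurlyeq\max_i\vol_{Q_i}(n)$ is exactly Lemma \ref{lem-exp-subset-growth} (or Lemma \ref{l-product-actions}-style reasoning) applied to \ref{item-neoth-locally-embeds}, since $Q_i$ is non-folded in $X_i$ and $\relvol_{(Q_i,Q_i)}(n)=\vol_{Q_i}(n)$. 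Combining the two inequalities gives $\simeq$.
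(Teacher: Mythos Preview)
Your approach has the direction reversed, and this creates a genuine gap. You look for \emph{maximal} elements $H_1,\dots,H_d$ of $\mathcal S(X)$ and then hope that the normal core $K_i$ of $H_i$ is contained in $G_x$ whenever $G_x\subseteq H_i$. But the inclusion $G_x\subseteq H_i$ gives no information about whether $\operatorname{core}(H_i)\subseteq G_x$; these are unrelated containments. Your suggested perturbation argument (``if $K_i\not\subseteq G_x$ then produce a stabilizer strictly larger than $H_i$'') does not work: there is no mechanism by which failure of $K_i\subseteq G_x$ produces a larger subgroup in the closure. Worse, the finiteness of the set of maximal elements can fail outright. Take $G=\Z$ acting faithfully on $X=\bigsqcup_{p\text{ prime}}\Z/p\Z$; then $\mathcal S(X)=\{p\Z : p\text{ prime}\}\cup\{\{0\}\}$, and the maximal elements are exactly the $p\Z$, of which there are infinitely many. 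So your covering argument cannot start.

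The paper's proof goes the other way: it looks for subgroups $L_1,\dots,L_n\in\mathcal X$ that are \emph{contained in} (rather than containing) every $H\in\mathcal X$, for at least one $i$. The mechanism is dynamical. Inside the orbit closure of any $H\in\mathcal X$, pick a minimal closed $G$-invariant subset $\mathcal L$; since $\sub(G)$ is countable compact (Noetherian), $\mathcal L$ has an isolated point, hence by minimality $\mathcal L$ is a finite $G$-orbit $\{L\}^G$. Now the key observation is that for $L$ finitely generated, the set $\{K:L\le K\}$ is \emph{open} in $\sub(G)$; since $L$ lies in the closure of the $G$-orbit of $H$, some conjugate of $H$ contains $L$, i.e.\ $H$ contains some $L_j$ in the finite orbit. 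Compactness then gives a finite $G$-invariant list $\{L_1,\dots,L_n\}\subset\mathcal X$ with $\bigcup_i\{H:L_i\le H\}\supseteq\mathcal X$. In the example above this produces $L_1=\{0\}$, which is the right answer. One then sets $G^0=\bigcap_i N_G(L_i)$ (finite index, since each $L_i$ has a finite $G$-orbit) and $K_i=G^0\cap L_i$; the containment $K_i\subseteq G_x$ is now immediate from $L_j\le G_x$ for some $j$, and part \ref{item-neoth-locally-embeds} follows because each $L_i$ is itself a limit of stabilizers $G_{x_n}$, so that the $Q_i$-stabilizers of the $x_n$ tend to the trivial subgroup. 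Your treatment of \ref{item-neoth-growth} is essentially correct once \ref{item-neoth-fixed} and \ref{item-neoth-locally-embeds} are in place.
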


\begin{proof}

Let $\X$ be a closed $G$-invariant subset of $\sub(G)$. Fix $H\in \X$. Let $\H$ be the closure of the $G$-orbit of $H$ in $\X$, and $\mathcal{L} \subseteq \H$ be a non-empty minimal closed $G$-invariant subset. Since $G$ is Noetherian, $\sub(G)$ is a countable compact space, and hence every closed subset of $\sub(G)$ admits isolated points. So $\mathcal{L}$ admits isolated points, and hence by minimality it follows that $\mathcal{L}$ is finite. So $\mathcal{L}$ is just the finite $G$-orbit of a subgroup $L \in \X$. Now since $L$ is finitely generated, the set $S_G(\geq,L)$ of subgroups $H$ of $G$ such that $H$ contains $L$ is an open neighbourhood of $L$ in $\sub(G)$. Since by definition $L$ belongs to the closure of the $G$-orbit of $H$, it follows that there exists a conjugate of $H$ that contains $L$, and as a consequence $H$ itself contains some $L' \in \mathcal{L}$. Now using again that $S_G(\geq,L)$ is open for every subgroup $L$ and compactness of $\X$, it follows that we can find a finite $G$-invariant subset  $\{L_1, \cdots, L_n\}\in \X$,  such that for every $H\in \X$ there is $i$ such that $L_i\le H$.

Now let $\mathcal{X}$ be the closure of $\{G_x\colon x\in X\}$ in $\sub(G)$.  Let $L_1, \cdots, L_n\in \X$ as in the conclusion of the previous paragraph. Then for every $x$ in $X$ there exists $i$ such that $L_i \leq G_x$. In particular $\bigcap_i L_i$ acts trivially on $X$, and hence is trivial by the assumption that the action is faithful. Since the normalizer $N_G(L_i)$ of $L_i$ has finite index in $G$ for all $i$, $G^0:=\bigcap_i N_G(L_i)$ is a finite index subgroup of $G$. Set $K_i=G^0\cap L_i$, which is a normal subgroup of $G^0$. The intersection $\bigcap K_i$ is clearly trivial since this is already true for the groups $L_i$, and every $x\in X$ is fixed by some $K_i$. As in the statement, set $X_i:=\fix(K_i)$, and note that all conclusions in part \ref{item-neoth-subdirect} and \ref{item-neoth-fixed} hold true by construction. Let us show \ref{item-neoth-locally-embeds}. Since $L_i$ belongs to $\X$, we can find a sequence of points $(x_n)\subset X$ such that $G_{x_n}$ converges to $L_i$. As a consequence the subgroups  $G^0_{x_n}=G^0\cap G_{x_n}$ converge to $G^0\cap L_i=K_i$. It follows in particular that $G^0_{x_n}$ contains $K_i$ for $n$ large enough, which means that $(x_n)\subset X_i$ for $n$ large enough. Thus the stabiliser of $x_n$ in $Q_i$ converges to the trivial subgroup in $\sub(Q_i)$, so that $Q_i$ is non-folded in $X_i$. 
In order to justify  \ref{item-neoth-growth}, first note  that $\vol_{G, X}(n)\simeq \vol_{G^0, X}(n)$ by Proposition \ref{prop-finite-index}. Second, note that since the action of $G_0$ on the orbit of every $x\in X$ factors through an action of some $Q_i$, we must have $\vol_{Q_i}(n)\preccurlyeq\vol_{G^0, X}(n)$ for all $i$, and hence $\max_i \vol_{Q_i}(n)\preccurlyeq\vol_{G^0, X}(n)$. Finally the converse inequality follows from part \ref{item-neoth-locally-embeds}.\qedhere
\end{proof}

Recall that a group $G$ is \textbf{subdirectly decomposable} if it admits non-trivial normal subgroups $M,N$ such that $M \cap N = 1$. We will also say that $G$ is \textbf{virtually subdirectly decomposable} if some finite index subgroup of $G$ is subdirectly decomposable.

\begin{cor} \label{cor-noeth-group-loc-embed}
Suppose $G$ is Noetherian and not virtually subdirectly decomposable. Then the whole group $G$ is non-foldable. In particular  $\vol_{G, X}(n)\simeq \vol_G(n)$ for every faithful $G$-set $X$. 
\end{cor}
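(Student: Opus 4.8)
The plan is to deduce Corollary \ref{cor-noeth-group-loc-embed} directly from Proposition \ref{prop-noetherian} by showing that the hypothesis ``not virtually subdirectly decomposable'' forces the decomposition produced by the proposition to be trivial, in the sense that one of the subgroups $K_i$ must already be $\{1\}$. First I would take an arbitrary faithful $G$-set $X$ and apply Proposition \ref{prop-noetherian} to obtain the finite index subgroup $G^0\le G$ and the normal subgroups $K_1,\dots,K_d\unlhd G^0$ with $\bigcap_i K_i=\{1\}$, together with the $G^0$-invariant decomposition $X=\bigcup_i X_i$ where $X_i=\fix(K_i)$ and the action of $G^0$ on $X_i$ factors through $Q_i=G^0/K_i$, each $Q_i$ being non-folded in $X_i$.

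The key step is to observe that if $d\ge 2$ and all $K_i$ were non-trivial, then, after possibly grouping, $G^0$ would be subdirectly decomposable: indeed one can split the index set into two non-empty parts $I\sqcup J=\{1,\dots,d\}$ and set $M=\bigcap_{i\in I}K_i$ and $N=\bigcap_{j\in J}K_j$, which are normal in $G^0$ with $M\cap N=\bigcap_i K_i=\{1\}$. If both $M$ and $N$ are non-trivial this already contradicts the assumption that $G$ is not virtually subdirectly decomposable (since $G^0$ has finite index in $G$). Thus I need to argue that one can choose the partition so that both pieces are non-trivial, or else conclude that some individual $K_i$ is trivial. The cleanest route: discard from the list any $K_i$ that equals $\{1\}$; if after this all remaining $K_i$ are non-trivial and there are at least two of them, the partition argument applies and yields a contradiction; hence either the list collapses to a single index $i_0$ with necessarily $K_{i_0}=\bigcap_i K_i=\{1\}$, or some $K_i=\{1\}$ from the outset. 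In both cases there is an index $i$ with $K_i=\{1\}$, so that $X_i=\fix(K_i)=X$ and $Q_i=G^0$, and $G^0$ is non-folded in $X$ by part \ref{item-neoth-locally-embeds} of the proposition.

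Once $G^0$ is known to be non-folded in the arbitrary faithful $G$-set $X$, it follows that $G^0$ is a non-foldable subset of $G$ in the sense of Definition \ref{d-non-foldable}. Since $G^0$ has finite index in $G$, the word metric on $G$ restricted to $G^0$ is quasi-isometric to the word metric on $G$ itself, so $\relvol_{(G,G^0)}(n)\simeq \vol_G(n)$. By Lemma \ref{lem-exp-subset-growth} applied to $\mathcal{L}=G^0$ we get $\vol_{G,X}(n)\succcurlyeq \relvol_{(G,G^0)}(n)\simeq \vol_G(n)$, and combined with the trivial bound $\vol_{G,X}(n)\preccurlyeq \vol_G(n)$ this gives $\vol_{G,X}(n)\simeq \vol_G(n)$. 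To conclude that $G$ (and not merely $G^0$) is non-foldable, I would note that a finite subset $\Sigma\subset G$ with distinct elements can be translated into $G^0$ up to bounded error, or more directly invoke that non-foldability of a finite index subgroup that is non-folded in every faithful $G$-set passes to $G$; alternatively one can rerun the partition argument keeping track of $G$ itself rather than $G^0$ and observing that normal subgroups of $G^0$ intersecting trivially give virtual subdirect decomposability of $G$ by definition.

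The main obstacle I anticipate is the bookkeeping in the partition step: one must be careful that Proposition \ref{prop-noetherian} does not already guarantee the $K_i$ to be ``independent'' in a strong sense, so the reduction from ``$\bigcap_i K_i=\{1\}$ with $d\ge 2$ and all $K_i\ne\{1\}$'' to ``$G^0$ virtually subdirectly decomposable'' needs the explicit observation that any bipartition of the index set yields two normal subgroups with trivial intersection, at least one pair of which is non-trivial unless a single $K_i$ is already trivial. This is elementary but is the one place where the logical structure must be spelled out carefully; everything else is a direct application of the cited lemmas.
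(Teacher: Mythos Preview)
Your approach is the paper's. The paper phrases the key step more directly: since $G^0$ is not subdirectly decomposable, any two non-trivial normal subgroups of $G^0$ meet non-trivially, so by a one-line induction any finite family of non-trivial normal subgroups has non-trivial intersection; hence if all the $K_i$ were non-trivial one would get $\bigcap_i K_i\neq\{1\}$, contradicting Proposition~\ref{prop-noetherian}\ref{item-neoth-subdirect}. This is logically equivalent to your bipartition route but sidesteps the bookkeeping you flag --- and note that your ``cleanest route'' as written is not quite complete, since having each remaining $K_i$ individually non-trivial does not force both halves $\bigcap_{i\in I} K_i$ and $\bigcap_{j\in J} K_j$ of an arbitrary bipartition to be non-trivial; it is precisely the pairwise-intersection induction that makes this work. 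The paper then writes $Q_i=G$ where strictly $Q_i=G^0$, which is exactly the finite-index issue you raise at the end; for the growth conclusion this is harmless.
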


\begin{proof}
We apply Proposition \ref{prop-noetherian}. It cannot be that all the $K_i$ are non-trivial, since otherwise the intersection would be non-trivial by assumption. Hence there is $i$ such that $K_i = 1$, and hence $Q_i=G$ is non-folded in  $X_i=X$. 
\end{proof}

\subsection{Nilpotent groups}

If $G$ is a nilpotent group $G$, then classical results of Bass and Guivarc'h \cite{Bass-nilp, Guivarch-nilp} assert that the growth of $G$ is $\vol_G(n)\simeq n^{\alpha_G}$, where $\alpha_G$ is a positive integer given by the formula 
\begin{equation}\label{e-Bass-Guivarch}\alpha_G:=\sum_{i\ge 0} i\dim_\Q\left((\gamma_i(G)/\gamma_{i+1}(G))\otimes \Q\right).\end{equation}

The following is a direct consequence of Proposition \ref{prop-noetherian} combined with the Bass-Guivarc'h formula. It implies in particular that the growth of the action of every nilpotent group is polynomial.

\begin{cor} \label{cor-polynomial-growth}
	Let $G$ be a finitely generated nilpotent group, and let $X$ be a faithful $G$-set. Then there exist a finite index subgroup $G^0\le G$ and normal subgroups $K_1,\dots,K_d\unlhd G^0$  with $\bigcap K_i=\{1\}$ such that if we write $Q_i=G^0/K_i$ and $\alpha_*=\max_i \alpha_{Q_i}$,  then $\vol_{G, X}(n) \simeq n^{\alpha_*}$. 
	\end{cor}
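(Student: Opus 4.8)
The plan is to combine Proposition~\ref{prop-noetherian} with the Bass--Guivarc'h formula~\eqref{e-Bass-Guivarch}. Given a finitely generated nilpotent group $G$ and a faithful $G$-set $X$, I would first apply Proposition~\ref{prop-noetherian}: nilpotent groups are finitely generated, hence Noetherian (all subgroups of a finitely generated nilpotent group are finitely generated), so the proposition applies and produces a finite index subgroup $G^0\le G$ together with normal subgroups $K_1,\dots,K_d\unlhd G^0$ such that $\bigcap_i K_i=\{1\}$, the sets $X_i:=\fix(K_i)$ cover $X$, the $G^0$-action on $X_i$ factors through $Q_i:=G^0/K_i$, each $Q_i$ is non-folded in $X_i$, and $\vol_{G,X}(n)\simeq \max_i \vol_{Q_i}(n)$.

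Next, since $G^0$ has finite index in the finitely generated nilpotent group $G$, it is itself finitely generated and nilpotent, and each quotient $Q_i=G^0/K_i$ is a finitely generated nilpotent group. By the theorem of Bass and Guivarc'h, $\vol_{Q_i}(n)\simeq n^{\alpha_{Q_i}}$ where $\alpha_{Q_i}$ is the integer given by~\eqref{e-Bass-Guivarch}. Substituting into the last conclusion of Proposition~\ref{prop-noetherian} gives
\[
\vol_{G,X}(n)\simeq \max_{i=1}^d \vol_{Q_i}(n)\simeq \max_{i=1}^d n^{\alpha_{Q_i}}=n^{\alpha_*},
\]
where $\alpha_*=\max_i \alpha_{Q_i}$, which is precisely the assertion.

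There is essentially no obstacle here: the entire content has been packaged into Proposition~\ref{prop-noetherian}, and the only additional input is the classical polynomial growth formula for finitely generated nilpotent groups together with the elementary fact that subgroups and quotients of finitely generated nilpotent groups are again finitely generated nilpotent (so that the formula applies to each $Q_i$). The one point worth a sentence of care is that $\max$ of finitely many functions of the form $n^{\alpha}$ is $\simeq n^{\max_i \alpha_i}$, which is immediate. Thus the corollary is a direct consequence, and I would simply spell out these two invocations.
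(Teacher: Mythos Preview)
Your proposal is correct and follows exactly the approach indicated in the paper, which states the corollary as a direct consequence of Proposition~\ref{prop-noetherian} combined with the Bass--Guivarc'h formula without giving further details. Your write-up is simply a spelled-out version of that one-line justification.
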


In the sequel we denote by $H_3(\Z)$ the Heisenberg group of $3 \times 3$-matrices over the integers. Recall that it has infinite cyclic center, and the associated quotient is free abelian of rank $2$. It has the presentation $H_3(\Z) = \langle a, b, c\colon [a,b]=c, [a,c]=[b, c]=1\rangle$. For later use we record the following well-known fact.

\begin{lem} \label{l-subgroup-H3}
Let $G$ be finitely generated  nilpotent group which is not virtually abelian. Then $G$ contains a subgroup $H$ isomorphic to $H_3(\Z)$. 
\end{lem}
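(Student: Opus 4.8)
The plan is to reduce to the rational Mal'cev completion and find a copy of $H_3(\Z)$ there. First I would invoke the standard fact that a finitely generated nilpotent group $G$ which is not virtually abelian has nilpotency class at least $2$ after passing to a finite index subgroup; more precisely, the quotient $G/T(G)$ by the finite torsion subgroup is a torsion-free finitely generated nilpotent group which is again not virtually abelian (virtual abelianness would lift back), so I may assume $G$ is torsion-free. A torsion-free finitely generated nilpotent group embeds into its rational Mal'cev completion $G_\Q$, a connected simply connected nilpotent Lie group over $\Q$ with $\dim G_\Q = h(G)$ the Hirsch length; and $G$ is virtually abelian if and only if $G_\Q$ is abelian. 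So the hypothesis gives that $G_\Q$ is non-abelian, i.e. $[\mathfrak g,\mathfrak g]\neq 0$ where $\mathfrak g = \Lie(G_\Q)$.

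The heart of the argument is then purely Lie-algebraic: in a non-abelian nilpotent Lie algebra $\mathfrak g$, one can find two elements $x,y$ with $[x,y]=z\neq 0$ and $[x,z]=[y,z]=0$. To see this, take any $x,y$ with $[x,y]\neq 0$, and push $[x,y]$ into the last nonzero term of the lower central series: since $\gamma_c(\mathfrak g)\neq 0$ for some $c\geq 2$ and $\gamma_{c+1}(\mathfrak g)=0$, and $\gamma_c(\mathfrak g)$ is spanned by iterated brackets, there exist $v_1,\dots,v_c$ with $[v_1,[v_2,[\cdots,v_c]]]\neq 0$; set $z$ to be this element, $x = v_1$ and $y = [v_2,[\cdots,v_c]]$, and then $[x,z],[y,z]\in\gamma_{c+1}(\mathfrak g)=0$. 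The subalgebra of $\mathfrak g$ spanned by $x,y,z$ is then a copy of the Heisenberg Lie algebra $\mathfrak h_3$ over $\Q$. By Mal'cev's correspondence this exponentiates to a rational Heisenberg subgroup $H_3(\Q)\le G_\Q$, and intersecting with $G$ (or rather, taking the subgroup of $G$ generated by suitable roots/powers of $\exp(x),\exp(y)$ landing in $G$) yields a finitely generated torsion-free nilpotent group of Hirsch length $3$ with infinite cyclic center and $\Z^2$ quotient, hence a finite-index subgroup of $H_3(\Z)$, which is itself isomorphic to $H_3(\Z)$ up to finite index — and every finite-index subgroup of $H_3(\Z)$ contains, and is contained in, a copy of $H_3(\Z)$, so we get the claimed subgroup.

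The one point requiring a little care — which I expect to be the main technical obstacle — is descending from the rational group $H_3(\Q)\le G_\Q$ to an honest subgroup of $G$ isomorphic to $H_3(\Z)$, rather than just commensurable to it inside $G_\Q$. The clean way is: choose $a,b\in G$ whose images in $G_\Q$ generate, together with $[a,b]$, a non-abelian subalgebra — this is possible because $G$ spans $\mathfrak g$ over $\Q$ and the bracket $\mathfrak g\times\mathfrak g\to\mathfrak g$ is nonzero, so it is nonzero on some pair from a generating set of $G$. Replacing $a,b$ by iterated commutators as in the Lie-algebra step (now carried out with group commutators, using that $\gamma_{c}(G)$ is central modulo $\gamma_{c+1}(G)$ and non-trivial), we obtain $a',b'\in G$ with $[a',b']=c'$ of infinite order, $[a',c']=[b',c']=1$; the subgroup $\langle a',b'\rangle$ is then a finitely generated nilpotent group of class exactly $2$ with these relations. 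A short argument (or citing the structure of $2$-generated class-$2$ torsion-free nilpotent groups) shows it contains a copy of $H_3(\Z)$: pass to $\langle a'^{N}, b'^{N}\rangle$ for suitable $N$ so that the commutator subgroup is exactly infinite cyclic and the abelianization is $\Z^2$, which is precisely the presentation of $H_3(\Z)$. Everything else is routine.
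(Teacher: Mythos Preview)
Your third paragraph is essentially the paper's proof, and the Mal'cev detour in the first two paragraphs is unnecessary. The paper argues directly: after passing to a torsion-free finite-index subgroup, let $\gamma_k(G)$ be the last non-trivial term of the lower central series ($k\ge 2$ since $G$ is non-abelian); pick $g\in\gamma_{k-1}(G)$ and $h\in G$ with $c:=[g,h]\neq 1$; then $c\in\gamma_k(G)$ is central, so $\langle g,h,c\rangle$ is a quotient of $H_3(\Z)$. Since this quotient is non-abelian and torsion-free, and every proper quotient of $H_3(\Z)$ is either abelian or has torsion, it must be $H_3(\Z)$ itself.

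Your final sentence --- passing to $\langle a'^N,b'^N\rangle$ for suitable $N$ to force the commutator subgroup to be infinite cyclic and the abelianization to be $\Z^2$ --- is the one step where your argument is looser than it needs to be. The commutator subgroup is already $\langle c'\rangle\cong\Z$, and passing to powers does not obviously repair a bad abelianization. In fact no repair is needed: once you have $a',b'$ with $c'=[a',b']$ of infinite order and $[a',c']=[b',c']=1$, the obvious map $H_3(\Z)\to\langle a',b'\rangle$ is a surjection onto a non-abelian torsion-free group, hence an isomorphism by the classification of quotients of $H_3(\Z)$. That is exactly the paper's concluding observation, and it makes both the $N$-th-power maneuver and the earlier descent from $H_3(\Q)\le G_\Q$ superfluous.
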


\begin{proof}
Upon passing to a finite index subgroup we can suppose that $G$ is torsion-free. Let $\gamma_{k} (G)$ be the last non-trivial term in the lower central series. Since $G$ is not abelian, we have $k\ge 2$ and we can find  $g\in \gamma_{k-1} (G)$ and $h\in G$ such that $k:=[g, h]$ is not trivial. Note that $k$ is central in $G$, so that we have $[g, k]=[h,k]=1$. Hence   $H=\langle g, h, k\rangle$ is isomorphic to a quotient of $H_3(\Z)$. Since every proper quotient of $H_3(\Z)$ either has torsion or is abelian, it follows that $H$ is  isomorphic to $H_3(\Z)$.
\end{proof}

\begin{lem} \label{l-H3-indecomp}
The group $H_3(\Z)$ is not virtually subdirectly decomposable. 
\end{lem}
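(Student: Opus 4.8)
The plan is to prove the equivalent assertion that no finite-index subgroup $G\le H_3(\Z)$ is subdirectly decomposable. Write $Z=Z(H_3(\Z))=\langle c\rangle$ and recall that $H_3(\Z)/Z\cong\Z^2$ with the commutator inducing the standard (non-degenerate) symplectic form $\omega$ on $\Z^2$, so that $[g,h]=c^{\,\omega(\bar g,\bar h)}$, where $\bar g$ denotes the image of $g$ in $\Z^2$.

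The first step is to show that $Z(G)$ is infinite cyclic for every finite-index subgroup $G\le H_3(\Z)$. Since $G$ has finite index, $G\cap Z$ is a non-trivial, hence infinite cyclic, subgroup of $Z$, and it is central in $G$. Conversely, the image $\bar G\le\Z^2$ has finite index and therefore spans $\Q^2$; if some $z\in Z(G)$ had $\bar z\ne 0$, then from $[z,g]=1$ for all $g\in G$ we would get $\omega(\bar z,\bar g)=0$ for all $g\in G$, hence $\omega(\bar z,\cdot)\equiv 0$ on $\Q^2$ by bilinearity, contradicting non-degeneracy of $\omega$. Thus $Z(G)=G\cap Z\cong\Z$.

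The second step is the classical fact that in a nilpotent group every non-trivial normal subgroup meets the center non-trivially: for $N\unlhd G$ non-trivial, let $k\ge 0$ be the largest integer for which the iterated commutator $[N,\underbrace{G,\dots,G}_{k}]$ is non-trivial (such $k$ exists because $G$ is nilpotent); this subgroup is non-trivial, contained in $N$ (as $N$ is normal), and centralized by $G$, so $N\cap Z(G)\ne\{1\}$. Combining the two steps: if $M,N\unlhd G$ are both non-trivial, then $M\cap Z(G)$ and $N\cap Z(G)$ are non-trivial subgroups of the infinite cyclic group $Z(G)$, hence intersect non-trivially, so $M\cap N\supseteq M\cap N\cap Z(G)\ne\{1\}$. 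Therefore $G$ is not subdirectly decomposable, and as $G$ was an arbitrary finite-index subgroup, $H_3(\Z)$ is not virtually subdirectly decomposable. The only step requiring an argument rather than a citation is the first one, ensuring that passing to a finite-index subgroup cannot enlarge the rank of the center; everything else is routine nilpotent-group theory.
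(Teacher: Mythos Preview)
Your proof is correct and follows essentially the same approach as the paper: show that any finite-index subgroup $L$ has infinite cyclic center, invoke the fact that non-trivial normal subgroups of a nilpotent group meet the center, and conclude that two such subgroups must intersect inside $Z(L)\cong\Z$. The only difference is that you spell out the verification that $Z(L)$ is infinite cyclic (via the symplectic form) and the standard nilpotent-group fact, whereas the paper states the former without proof and cites \cite[1.2.8 i)]{Lennox-Rob} for the latter.
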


\begin{proof}
If $L$ is a finite index subgroup of $H = H_3(\Z)$, the center $Z(L)$ is infinite cyclic. Since any normal subgroup of $L$ intersects $Z(L)$ non-trivially  \cite[1.2.8 i)]{Lennox-Rob}, any finite family of non-trivial normal subgroups of $L$ contains a common non-trivial element of $Z(L)$.  \qedhere
\end{proof}

\begin{prop}\label{prop-n4-growth}
Let $G$ be finitely generated  virtually nilpotent group which is not virtually abelian. Then any subgroup $H$ isomorphic to $H_3(\Z)$ is a non-foldable subset of $G$. In particular $G$ has a Schreier growth gap $n^4$.  
\end{prop}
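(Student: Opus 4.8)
The plan is to reduce the statement to the two facts already established about $H_3(\Z)$: namely Lemma \ref{l-H3-indecomp}, which says $H_3(\Z)$ is not virtually subdirectly decomposable, and the general mechanism of Corollary \ref{cor-noeth-group-loc-embed}, which says that a Noetherian group that is not virtually subdirectly decomposable is itself a non-foldable subset of itself. The only gap between these facts and the claim is that $H$ sits inside the larger group $G$, so being non-foldable \emph{as a subset of $H$} must be promoted to being non-foldable \emph{as a subset of $G$}.

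First I would observe the following general principle: if $H \le G$ and $X$ is a faithful $G$-set, then $X$ need not be a faithful $H$-set, but it becomes one after discarding the kernel. More precisely, for each $H$-orbit closure phenomenon we can restrict attention to the $H$-action on $X$; the kernel $N$ of the $H$-action on $X$ is a normal subgroup of $H$, and the quotient $H/N$ acts faithfully on $X$. So the real task is: given that $H \cong H_3(\Z)$ is non-foldable in every \emph{faithful} $H$-set, deduce that any finite subset $\Sigma \subset H$ is non-folded in any $G$-set $X$ that is faithful as a $G$-set. The point where faithfulness of $G \acts X$ is used is exactly to control this kernel $N$: since $H_3(\Z)$ is not virtually subdirectly decomposable, by the argument in Lemma \ref{l-H3-indecomp} (any finite family of nontrivial normal subgroups of a finite-index subgroup of $H_3(\Z)$ has a common nontrivial central element), a single nontrivial normal subgroup $N \unlhd H$ cannot have trivial intersection with any other nontrivial normal subgroup; in particular the $G$-normal closure of a nontrivial $N$ would be a nontrivial normal subgroup of $G$ acting trivially on $X$, contradicting faithfulness. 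Hence if $N \neq 1$ we get a contradiction, forcing $N = 1$, i.e. $X$ is already a faithful $H$-set, and then non-foldability of $H$ in $X$ is immediate from the hypothesis.

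Concretely the steps are: (i) fix a subgroup $H \le G$ with $H \cong H_3(\Z)$ (such exists only after passing to a finite index subgroup to kill torsion, by Lemma \ref{l-subgroup-H3}, but here $H$ is given); (ii) let $X$ be a faithful $G$-set, let $N \unlhd H$ be the kernel of the restricted action $H \acts X$; (iii) argue that the normal closure $\langle\langle N \rangle\rangle_G$ acts trivially on $X$, hence is trivial by faithfulness, hence $N = 1$; here one may instead argue directly with confined subgroups: the stabilisers $H_x = H \cap G_x$ form a subset of $\sub(H)$ whose intersection is $N$, and if $N \neq 1$ then since it is normal in $H$ it contains (a finite-index piece of) the center, contradicting... — actually the cleanest route is simply $N \unlhd G$-normal-closure argument; (iv) conclude $X$ is a faithful $H$-set, apply Corollary \ref{cor-noeth-group-loc-embed} (via Lemma \ref{l-H3-indecomp}) to get that $H$ is non-foldable in $X$, which is exactly the statement that $H$ is a non-foldable subset of $G$; (v) the Schreier growth gap $n^4$ then follows from Lemma \ref{lem-exp-subset-growth} together with the classical fact $\vol_{H_3(\Z)}(n) \simeq n^4$ (the Bass--Guivarc'h exponent of $H_3(\Z)$ is $\alpha = 1 + 1 + 1 \cdot 1 + \ldots = 4$, since $\gamma_1/\gamma_2$ has rank $2$ and $\gamma_2/\gamma_3$ has rank $1$, giving $\alpha = 1\cdot 2 + 2 \cdot 1 = 4$), since $\relvol_{(G,H)}(n) = |H \cap B_e(n)| \simeq \vol_H(n)$.

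The main obstacle, and the only substantive point, is step (iii): showing that the kernel $N$ of the $H$-action on a faithful $G$-set must be trivial. One has to be careful that $N$ is normal in $H$ but not a priori normal in $G$; the resolution is that $\langle\langle N \rangle\rangle_G$ still acts trivially on $X$ only if $N$ acts trivially on all $G$-translates of $X$, which is automatic since $X$ is a single $G$-set — so $\langle\langle N \rangle\rangle_G \subseteq \ker(G \acts X) = 1$, giving $N = 1$ directly without even invoking Lemma \ref{l-H3-indecomp}. In fact this shows the cleaner statement that \emph{any} subgroup $H \le G$ that is non-foldable in all of its own faithful $H$-sets is automatically a non-foldable subset of $G$; the role of $H \cong H_3(\Z)$ is only to guarantee, via Corollary \ref{cor-noeth-group-loc-embed} and Lemma \ref{l-H3-indecomp}, that $H_3(\Z)$ has this self-non-foldability property. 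So the proof is genuinely short: combine Lemma \ref{l-subgroup-H3} (existence of $H$, if needed — here $H$ is hypothesised), Corollary \ref{cor-noeth-group-loc-embed} applied to $H_3(\Z)$ using Lemma \ref{l-H3-indecomp}, the kernel-triviality observation, and finally Lemma \ref{lem-exp-subset-growth} with the Bass--Guivarc'h value $\vol_{H_3(\Z)}(n) \simeq n^4$.
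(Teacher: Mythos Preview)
Your proof is correct and follows the same route as the paper: apply Corollary \ref{cor-noeth-group-loc-embed} to $H\cong H_3(\Z)$ (which is Noetherian and, by Lemma \ref{l-H3-indecomp}, not virtually subdirectly decomposable), observe that a faithful $G$-set is automatically a faithful $H$-set, and conclude via Lemma \ref{lem-exp-subset-growth} and $\vol_{H_3(\Z)}(n)\simeq n^4$.

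Two minor remarks. First, your step (iii) is overworked: the kernel $N$ of the $H$-action on $X$ consists of elements fixing every point of $X$, so tautologically $N\subseteq\ker(G\acts X)=\{1\}$; no normal closure is needed, and you eventually say this yourself. Second, the claim $\relvol_{(G,H)}(n)\simeq\vol_H(n)$ is stronger than required and not obviously true (subgroups of nilpotent groups can be distorted); you only need $\relvol_{(G,H)}(n)\succcurlyeq\vol_H(n)$, which holds because the inclusion $H\hookrightarrow G$ is Lipschitz for word metrics.
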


\begin{proof}
The first statement is a direct consequence of Corollary \ref{cor-noeth-group-loc-embed} and the previous lemmas, and the last statement follows since $H_3(\Z)$ has growth $\simeq n^4$.
\end{proof}

\subsection{Polycyclic groups}

Recall that Wolf theorem asserts that a  polycyclic group that is not virtually nilpotent has exponential growth \cite{Wolf}. Combined with Proposition \ref{prop-noetherian} this has the following consequence. 

\begin{cor} \label{cor-poly-growth}
	Suppose $G$ is a polycyclic group that is not virtually nilpotent. Then $G$ has a Schreier growth gap $\exp(n)$.
\end{cor}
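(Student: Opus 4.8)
The plan is to reduce the statement to Proposition \ref{prop-noetherian} (the structural decomposition of faithful actions of Noetherian groups) together with Wolf's theorem on polycyclic groups of exponential growth. So let $G$ be polycyclic and not virtually nilpotent, and let $X$ be an arbitrary faithful $G$-set. Since $G$ is polycyclic it is Noetherian, so Proposition \ref{prop-noetherian} applies: we obtain a finite index subgroup $G^0 \le G$ and normal subgroups $K_1,\dots,K_d \unlhd G^0$ with $\bigcap_i K_i = \{1\}$, quotients $Q_i = G^0/K_i$, and the growth identity $\vol_{G,X}(n) \simeq \max_{i=1}^d \vol_{Q_i}(n)$ from part \ref{item-neoth-growth}.

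The key step is to argue that at least one $Q_i$ has exponential growth. Suppose not: then every $Q_i$ is a polycyclic group of subexponential growth, hence by Wolf's theorem every $Q_i$ is virtually nilpotent. Since $G^0$ embeds into the finite direct product $Q_1 \times \cdots \times Q_d$ (part \ref{item-neoth-subdirect}), and a finite direct product of virtually nilpotent groups is virtually nilpotent, it would follow that $G^0$ — and therefore $G$, which contains $G^0$ with finite index — is virtually nilpotent, contradicting the hypothesis. Hence some $Q_{i_0}$ has exponential growth, and therefore $\vol_{G,X}(n) \succcurlyeq \vol_{Q_{i_0}}(n) \succcurlyeq \exp(n)$. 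Since $X$ was an arbitrary faithful $G$-set, this shows $G$ has a Schreier growth gap $\exp(n)$.

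The main (and really only) obstacle is verifying the permanence property used above: that a subgroup of a finite direct product of virtually nilpotent groups is virtually nilpotent, or equivalently that being virtually nilpotent passes to finite index overgroups and to finite direct products. This is standard — each $Q_i$ has a nilpotent subgroup $N_i$ of finite index, the product $\prod_i N_i$ is nilpotent of finite index in $\prod_i Q_i$, intersecting with the image of $G^0$ gives a nilpotent subgroup of finite index in $G^0$, and finite index nilpotent-by-finite is preserved under finite index extension — so the corollary is essentially immediate once Proposition \ref{prop-noetherian} and Wolf's theorem are in hand. One could alternatively phrase the whole argument through Corollary \ref{cor-noeth-group-loc-embed} after reducing to the non-virtually-subdirectly-decomposable case, but going directly through Proposition \ref{prop-noetherian} is cleaner.
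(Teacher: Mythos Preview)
Your proof is correct and follows essentially the same approach as the paper: apply Proposition \ref{prop-noetherian} and then use Wolf's theorem to force some $Q_i$ to have exponential growth. The only cosmetic difference is that the paper argues directly via growth (if $G^0$ has exponential growth and embeds in $\prod Q_i$, some $Q_i$ must have exponential growth), whereas you pass through the algebraic contrapositive (if every $Q_i$ were virtually nilpotent then so would be $G^0$); both are immediate consequences of Wolf's theorem and the embedding from part \ref{item-neoth-subdirect}.
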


\begin{proof}
The group is polycyclic, and hence Noetherian. If $X$ is a faithful $G$-set, we apply Proposition \ref{prop-noetherian}. Let $G^0$ and $K_1,\cdots K_d$ as in the conclusion. Being a finite index subgroup of $G$, the group $G^0$ has exponential growth. It follows that at least one of the groups $Q_i=G^0/K_i$ must have exponential growth, since $G^0$ embeds in their product.  Therefore $\max_i{\vol}_{Q_i}(n)\simeq \exp(n)$, and the conclusion follows from Proposition \ref{prop-noetherian}. \qedhere
\end{proof}

	The route taken above to prove Corollary \ref{cor-poly-growth} does not produce explicit non-foldable subsets. However this will be achieved in Section \ref{s-finite-rank}  in the more general setting of solvable groups of finite rank. Below we provide a simpler construction of non-foldable subsets in polycyclic groups, which already yield information on the asymptotic dimension of graphs of actions of polycyclic groups (Corollary  \ref{cor-poly-asdimX}). To this end, we introduce the following terminology (which is consistent with the terminology that we will use in \S  \ref{subsec-strong-irr}).

\begin{defin}
We say that a semi-direct product $G = \Z^k \rtimes \Z$ is \textbf{strongly irreducible} if every non-trivial subgroup of $\Z^k$ that is invariant under a finite index subgroup of $\Z$ has rank $k$.
\end{defin}

\begin{lem} \label{lem-irr-SDP-notdecomp}
Let $G = \Z^k \rtimes \Z$ be a strongly irreducible semi-direct product  with $k \geq 2$. Then $G$ is not virtually subdirectly decomposable.
\end{lem}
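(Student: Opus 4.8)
The plan is to prove the contrapositive: if $G = \Z^k \rtimes \Z$ is virtually subdirectly decomposable with $k \geq 2$, then $G$ is not strongly irreducible, i.e. there is a non-trivial subgroup of $\Z^k$ invariant under a finite index subgroup of $\Z$ whose rank is strictly less than $k$. So I assume some finite index subgroup $G^0 \le G$ admits non-trivial normal subgroups $M, N \unlhd G^0$ with $M \cap N = \{1\}$. Replacing $G^0$ by a further finite index subgroup (which only shrinks $M$ and $N$ to $M \cap G^0$, $N \cap G^0$, still non-trivial and normal in the smaller group, and still with trivial intersection) I may assume $G^0 = \Z^k \rtimes \langle t^m \rangle$ for some $m \geq 1$; write $A = \Z^k$, $\langle s \rangle = \langle t^m\rangle$ for the complement, so $G^0 = A \rtimes \langle s \rangle$.

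The key step is to understand the non-trivial normal subgroups of $G^0$. First I would observe that both $M \cap A$ and $N \cap A$ are $\langle s\rangle$-invariant subgroups of $A$ (being $G^0$-normal subgroups intersected with the normal subgroup $A$), and $(M \cap A) \cap (N \cap A) = \{1\}$; so if both are non-trivial we are done — indeed a non-trivial proper-rank $\langle s \rangle$-invariant subgroup would exist, but actually we need to be slightly careful since we want rank strictly less than $k$. If $M \cap A$ and $N \cap A$ are both non-trivial, then since $A \cong \Z^k$ has no two non-trivial subgroups of rank $k$ meeting trivially, at least one of them has rank $< k$, and it is $\langle s \rangle$-invariant, contradicting strong irreducibility (note $\langle s \rangle$ is finite index in $\langle t \rangle$). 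So the remaining case is that (at least) one of them, say $N \cap A = \{1\}$. Then $N$ embeds into $G^0/A \cong \Z$, so $N$ is infinite cyclic, generated by some element $g = a s^j$ with $j \neq 0$ (if $j = 0$ then $g \in A$, contradicting $N \cap A = \{1\}$). Now I use that $N$ is normal in $G^0$: for any $v \in A$, the commutator $[v, g] = v - s^j v \in N \cap A = \{1\}$, so $s^j$ acts trivially on $A$. But then the cyclic group $\langle s^j \rangle$ acts trivially on $A = \Z^k$, so \emph{every} subgroup of $A$ is invariant under the finite index subgroup $\langle s^j \rangle$ of $\langle t \rangle$; taking any rank-$1$ subgroup of $\Z^k$ (here $k \geq 2$ is used to ensure rank $1 < k$) contradicts strong irreducibility.

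The main obstacle — really the only point requiring care — is the bookkeeping around passing to finite index subgroups: I must make sure that after intersecting $G^0$ with $A \rtimes \langle t^m\rangle$ the subgroups $M, N$ survive as non-trivial normal subgroups with trivial intersection (this is routine: normality and trivial intersection are inherited, and a non-trivial finitely generated subgroup of a polycyclic group meets any finite index subgroup non-trivially, since it cannot be finite), and that "invariant under a finite index subgroup of $\Z$" in the definition of strong irreducibility is correctly matched with the various finite-index complements $\langle t^m \rangle$, $\langle s^j\rangle$ that appear. Once that is in place, the argument is the short case analysis above: either both $M \cap A, N\cap A$ are non-trivial and one has rank $< k$, or one of them is trivial forcing the corresponding $s$-power to act trivially on $A$, again producing a low-rank invariant subgroup. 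I would conclude by noting this contradicts strong irreducibility in all cases, completing the proof.
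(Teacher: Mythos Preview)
Your argument is correct and follows essentially the same route as the paper: both show that in (a finite index subgroup of) $G$, any non-trivial normal subgroup must meet the $\Z^k$ factor in a rank-$k$ subgroup, using that $[N,\Z^k]\subseteq N\cap\Z^k$ to handle the case $N\cap\Z^k=\{1\}$ by forcing some power of $t$ to act trivially. The paper streamlines the bookkeeping by first observing that every finite index subgroup of $G$ is again a strongly irreducible semi-direct product $\Z^k\rtimes\Z$ (your passage to $G^0=A\rtimes\langle s\rangle$ is this same reduction, though note $A$ is only a finite index subgroup of the original $\Z^k$, not literally equal to it---this does not affect your argument).
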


\begin{proof}
Observe that every finite index subgroup of $G$ is also a strongly irreducible semi-direct product. Hence it is enough to check that the intersection of two non-trivial normal subgroups of $G$  remains non-trivial. Observe also that the condition $k \geq 2$ ensures that $G$ is not virtually abelian.  If $N$ is a non-trivial normal subgroup of $G$, then $N \cap \Z^k$  is necessarily non-trivial, because otherwise $N$ would commute with $\Z^k$ and  $G$ would have a finite index abelian subgroup. So by the strongly irreducible assumption it follows that $N \cap \Z^k$ has finite index in $\Z^k$. In particular it follows immediately that if $M,N$ are two non-trivial normal subgroups of $G$,  then $M \cap N$ is non-trivial.
\end{proof}

\begin{lem} \label{lem-poly-metab-norm-subgroup}
Every polycyclic group $G$ that is not virtually abelian contains a (normal) subgroup that is metabelian and not virtually abelian.
\end{lem}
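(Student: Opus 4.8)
The plan is to produce the required normal subgroup inside a suitable term of the derived series of $G$. The only external ingredient will be the well-known fact that a finitely generated group with finite derived subgroup is virtually abelian; I use it twice.

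Write the derived series as $G = G^{(0)} \unrhd G^{(1)} \unrhd \cdots \unrhd G^{(\ell)} = \{1\}$, where $\ell$ is the derived length (finite since $G$ is solvable) and $G^{(i+1)} = [G^{(i)}, G^{(i)}]$. Since $G^{(0)} = G$ is not virtually abelian while $G^{(\ell)} = \{1\}$ is, I let $r$ be the largest index with $H := G^{(r)}$ not virtually abelian. Then $H$ is a subgroup of the polycyclic group $G$, hence polycyclic and in particular finitely generated, and not virtually abelian; while $H' = G^{(r+1)}$ is virtually abelian, finitely generated, and normal in $G$, being a term of the derived series. First I would note that $H'$ is infinite: otherwise $H$ would have finite derived subgroup, hence be virtually abelian, contradicting the maximality of $r$. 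Being finitely generated and virtually abelian, $H'$ contains a free abelian subgroup of finite index; since $H' \unlhd G$, that subgroup has only finitely many $G$-conjugates, each of finite index in $H'$, and their intersection is a free abelian subgroup $A \unlhd G$ of finite index in $H'$.

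Next I would pass to $\overline{G} := G/A$ and consider $\overline{H} := H/A$. Its derived subgroup is $(H/A)' = H'/A$ (using $A \leq H'$), which is finite because $A$ has finite index in $H'$; as $\overline{H}$ is finitely generated, the quoted fact shows $\overline{H}$ is virtually abelian. I then pick an abelian subgroup of finite index in $\overline{H}$ and intersect its finitely many $\overline{G}$-conjugates, all contained in $\overline{H}$ since $\overline{H} \unlhd \overline{G}$, to obtain an abelian subgroup $\overline{B} \unlhd \overline{G}$ of finite index in $\overline{H}$. Let $B \leq G$ be the preimage of $\overline{B}$ under $G \to \overline{G}$. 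Then $B \unlhd G$; moreover $A \leq B \leq H$ with $[H : B] = [\overline{H} : \overline{B}] < \infty$, so $B$ is not virtually abelian (a finite-index subgroup of a group which is not virtually abelian is again not virtually abelian); and $B$ is metabelian, since both $A$ and $B/A = \overline{B}$ are abelian. As $B \leq G$, this is the subgroup sought.

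The key idea is to locate the index $r$; once that is in place, and granting the quoted classical fact, the rest is bookkeeping, and I expect no genuine obstacle. The step requiring the most care is tracking normality in $G$ throughout — so that each passage to a normal core costs only finite index — together with checking finite generation at each stage, which is precisely where polycyclicity of $G$ is used (it guarantees that $G^{(r)}$ and $H'$ are finitely generated). For comparison, one could instead single out the virtually nilpotent case, where $\Fit(G)$ has finite index in $G$ and Lemma \ref{l-subgroup-H3} supplies a copy of $H_3(\Z)$ (metabelian and not virtually abelian); but the derived-series argument treats both cases uniformly and, unlike that alternative, directly yields a normal subgroup.
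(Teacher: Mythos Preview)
Your proof is correct and takes a genuinely different route from the paper's. The paper invokes the structural fact that every infinite polycyclic group contains an infinite free abelian normal subgroup \cite[1.3.9]{Lennox-Rob}: it picks such a subgroup $A \unlhd G$ of \emph{maximal} rank $n$, then a free abelian normal subgroup $B$ of $G/A$ of rank $m$, and sets $N$ to be the preimage of $B$. This $N$ is metabelian and normal; if it were virtually abelian it would yield a free abelian normal subgroup of $G$ of rank $m+n>n$, contradicting maximality of $n$.

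Your approach instead locates the last term $H=G^{(r)}$ of the derived series that is not virtually abelian, and builds the desired subgroup inside $H$ using twice the elementary fact that a finitely generated group with finite derived subgroup is virtually abelian. This avoids the citation and the rank-maximality trick, at the cost of a longer argument and two passages to normal cores. The paper's proof is shorter and conceptually tied to Hirsch length; yours is more self-contained and makes transparent where finite generation (polycyclicity) is actually needed. Both yield a normal subgroup, and the final remark in your write-up comparing with the $H_3(\Z)$ alternative is apt.
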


\begin{proof}
Since $G$ is infinite, $G$ admits an  infinite free abelian normal subgroup \cite[1.3.9]{Lennox-Rob}. Let $A$ be an infinite free abelian normal subgroup of maximal rank $n$. Again $G/A$ is infinite, so we may find an infinite free abelian normal subgroup $B$ in $G/A$ or rank $m$. Consider the preimage $N$ of $B$ in $G$, which is a metabelian normal subgroup of $G$. If $N$ was virtually abelian, it would admit a characteristic free abelian subgroup $C$ of rank $m+n$. In particular $C$ would be normal in $G$, contradicting the maximality of $n$. So this subgroup $N$ satisfies the conclusion.
\end{proof}

The following is an easy consequence of basic facts from linear algebra.

\begin{lem} \label{lem-poly-metab-nilpORirr}
	Let $G$ be a metabelian polycyclic group. Then either $G$ is virtually nilpotent; or contains a subgroup $H = \Z^k \rtimes \Z$ with $k \geq 2$ that is a strongly irreducible semi-direct product.
\end{lem}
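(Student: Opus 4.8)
Let $G$ be metabelian polycyclic, so there is a finitely generated abelian normal subgroup $A \unlhd G$ with $Q := G/A$ finitely generated abelian; up to passing to a finite index subgroup of $G$ (which changes nothing for the statement, since every finite index subgroup of a virtually nilpotent group is virtually nilpotent, and a strongly irreducible $H = \Z^k \rtimes \Z$ sitting inside a finite index subgroup sits inside $G$ too), I would arrange that $A$ is free abelian, that $Q = \Z^r \times F$ with $F$ finite, and in fact — again by passing to finite index — that $Q$ is free abelian of rank $r$. Then $A$ is a finitely generated $\Z Q$-module where $\Z Q \cong \Z[t_1^{\pm 1},\dots, t_r^{\pm 1}]$. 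The plan is: if the action of $Q$ on $A \otimes \Q$ is such that every element of $Q$ of infinite order acts with all eigenvalues roots of unity (equivalently, unipotently after passing to a further finite index subgroup), then $G$ is virtually nilpotent; otherwise I produce the subgroup $H = \Z^k \rtimes \Z$.

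\textbf{The virtually nilpotent case.} Suppose that for every $q \in Q$ of infinite order, the automorphism of $A \otimes \Q$ induced by $q$ has spectral radius $1$ and all eigenvalues roots of unity. Since $Q \cong \Z^r$ is finitely generated, passing to a finite index subgroup $Q_0 \le Q$ (and the corresponding finite index subgroup $G_0 \le G$ with $A \le G_0$) I may assume that each generator $t_i$ of $Q_0$ acts unipotently on $A \otimes \Q$; since these commuting unipotent automorphisms generate a unipotent subgroup of $\GL(A \otimes \Q)$, the whole $Q_0$ acts unipotently. A group that is (finitely generated abelian)-by-(finitely generated abelian) with the quotient acting unipotently on the normal subgroup tensored with $\Q$ is virtually nilpotent: indeed one can filter $A$ by the $\Z Q_0$-submodules $A_j = A \cap \ker((t_i - 1)^j : \text{all } i)$, a finite filtration with each $G_0$ acting trivially on $(A_j/A_{j-1}) \otimes \Q$, hence with each successive quotient central up to finite index, which forces $G_0$ (hence $G$) virtually nilpotent. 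Alternatively, invoke that a polycyclic group has exponential growth iff some $q \in Q$ acts on $A \otimes \Q$ with an eigenvalue off the unit circle (Milnor--Wolf), so the hypothesis of this case gives polynomial growth, hence virtual nilpotence by Gromov, or more elementarily by the Milnor--Wolf theorem itself.

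\textbf{Producing the strongly irreducible subgroup.} Now suppose $G$ is \emph{not} virtually nilpotent. By the dichotomy above there is $q \in Q$ of infinite order acting on $V := A \otimes \Q$ with an eigenvalue $\lambda$ with $|\lambda| \ne 1$ (or a root of a cyclotomic-free factor); fix such a $q$ and let $\varphi \in \GL(V)$ be the induced automorphism. Pick a $\varphi$-invariant $\Q$-subspace $W \le V$ on which $\varphi$ acts \emph{irreducibly} over $\Q$ and with $\varphi|_W$ not having all eigenvalues roots of unity — such a $W$ exists because the rational Jordan/primary decomposition of $V$ under $\varphi$ has at least one indecomposable block with a non-cyclotomic minimal polynomial, and inside it an irreducible invariant subspace; let $k := \dim_\Q W \ge 1$, and note $k \ge 2$ since $\varphi|_W$ acts with an eigenvalue off the unit circle so cannot be multiplication by $\pm 1$ (the only $1$-dimensional possibilities over $\Q$ that could occur as an irreducible rational factor with integer char poly would be $t \mp 1$). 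Now take $A_0 := A \cap W$, a finitely generated subgroup of $A$ of rank $k$ invariant under $\langle q \rangle$, and consider $H := A_0 \rtimes \langle \tilde q \rangle \cong \Z^k \rtimes \Z$ where $\tilde q \in G$ is any lift of $q$. It remains to check strong irreducibility of $H$: a nontrivial subgroup $B \le A_0$ invariant under a finite index subgroup $\langle q^m \rangle$ spans a nonzero $\varphi^m$-invariant, hence (since $\varphi|_W$ irreducible over $\Q$ implies $\varphi^m|_W$ has no proper invariant subspace unless... ) — here is the one point needing care: irreducibility of $\varphi$ on $W$ does not automatically give irreducibility of $\varphi^m$. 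I would handle this by choosing $W$ from the start so that the $\Q$-algebra $\Q[\varphi|_W]$ is a field $K$ (this is exactly the primary rational decomposition) and $W$ is $1$-dimensional over $K$; then any $\varphi^m$-invariant $\Q$-subspace is a $\Q[\varphi^m]$-submodule, and since $|\lambda| \ne 1$ the element $\varphi^m$ is not algebraic over $\Q$ of degree $< [K:\Q]$ in a way that would shrink things — more cleanly, $\Q[\varphi^m] = K$ as well (a non-cyclotomic algebraic number and its $m$-th power generate the same field up to finite index considerations, or: replace $q$ by a power from the outset so that this holds), whence $W$ is still $K$-one-dimensional and has no proper $\varphi^m$-invariant subspace. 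Thus $B$ spans $W$, so $B$ has finite index in $A_0$, giving strong irreducibility.

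\textbf{Main obstacle.} The delicate step is the last one: ensuring that the rank-$k$ invariant subgroup I extract remains \emph{strongly} irreducible, i.e.\ that no proper-rank subgroup is invariant under any finite index subgroup of $\Z$, not merely under the full $\Z$. The fix is to work not with a single irreducible factor of $\varphi$ but with a primary component, realizing $W$ as a $1$-dimensional vector space over the number field $K = \Q[\varphi|_W]$, and then observing that passing from $\varphi$ to $\varphi^m$ either keeps $K$ as the generated field (the generic case) or, in the finitely many bad cases where $\Q[\varphi^m] \subsetneq K$, one simply replaces $q$ by a fixed power at the outset to kill those. One must also check $k \ge 2$, which follows because an eigenvalue of $\varphi|_W$ is a non-cyclotomic algebraic integer (its char poly divides that of an integer matrix and it has absolute value $\ne 1$), hence has degree $\ge 2$ over $\Q$, so $\dim_\Q W = [K:\Q] \ge 2$. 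Everything else is routine linear algebra over $\Z Q$ and the standard structure theory of polycyclic groups cited in the excerpt.
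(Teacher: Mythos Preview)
Your overall strategy matches the paper's: pass to a finite-index subgroup so that $G$ sits in $1 \to \Z^d \to G \to \Z^r \to 1$, use the eigenvalue dichotomy to handle the virtually nilpotent case, and in the remaining case find $q \in Q$ acting on $V = A \otimes \Q$ with a non-root-of-unity eigenvalue, then extract a strongly irreducible $\Z^k \rtimes \Z$. The check that $k \ge 2$ is also the same as (implicitly) in the paper.

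The gap is exactly where you flag it. Your claim that ``a non-cyclotomic algebraic number and its $m$-th power generate the same field'' is false: take $\alpha = \sqrt{2}$, which is not a root of unity, yet $\alpha^2 = 2$ generates $\Q \subsetneq \Q(\sqrt{2})$. Likewise ``the finitely many bad cases where $\Q[\varphi^m] \subsetneq K$'' is wrong: in that same example the bad set is all even $m$. Your proposed fix (replace $q$ by a power) is in the right direction but as written it is a one-step move, whereas what you actually need is an iteration: after replacing $\varphi$ by $\varphi^{m_0}$ you must also shrink $W$ to a $\varphi^{m_0}$-irreducible subspace, which changes the field $K$, and then the problem can recur. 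The process terminates because the dimension drops each time, but you have not said this.

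The paper packages this cleanly as Lemma~\ref{l-strongly-irreducible-power}: first pass to a $\varphi$-invariant subspace $V'$ on which \emph{no} eigenvalue is a root of unity (one irreducible factor of the minimal polynomial suffices), and then among all pairs $(W,m)$ with $m \ge 1$ and $W \subset V'$ a non-zero $\varphi^m$-invariant subspace, choose one with $\dim_\Q W$ minimal. Minimality immediately gives strong irreducibility of $\varphi^m|_W$, and the eigenvalue condition on $V'$ is inherited by $W$. This sidesteps the field-theoretic bookkeeping entirely and is the argument you should use in place of your final paragraph.
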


\begin{proof}
Upon passing to a finite index subgroup, $G$ admits a normal subgroup $N$ isomorphic to $\Z^d$ such that $Q = G/N$ is isomorphic to $\Z^r$. Consider the associated representation $Q \to \GL(d, \Z)$. If every element of $Q$ has the property that all its eigenvalues are roots of unity then $Q$ has a finite index subgroup that is unipotent, and $G$ is virtually nilpotent, a contradiction. So we can find an element of $Q$ whose eigenvalues are not all roots of unity, and the conclusion follows by applying Lemma \ref{l-strongly-irreducible-power} below.
\end{proof}

\begin{lem}\label{lem-poly-special subgroup}
	Let $G$ be polycyclic group that is not virtually abelian. Then $G$ contains a subgroup isomorphic to the Heisenberg group $H_3(\Z)$; or a subgroup $H = \Z^k \rtimes \Z$ with $k \geq 2$ that is a strongly irreducible semi-direct product.
\end{lem}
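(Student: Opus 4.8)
The plan is to combine the two preceding reduction lemmas. First I would invoke Lemma \ref{lem-poly-metab-norm-subgroup}: since $G$ is polycyclic and not virtually abelian, it contains a normal subgroup $N$ that is metabelian and not virtually abelian. Since subgroups of polycyclic groups are polycyclic, $N$ is a metabelian polycyclic group that is not virtually abelian.

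Next I would apply Lemma \ref{lem-poly-metab-nilpORirr} to $N$. This gives a dichotomy: either $N$ is virtually nilpotent, or $N$ contains a subgroup $H = \Z^k \rtimes \Z$ with $k \geq 2$ that is a strongly irreducible semi-direct product. In the second case we are immediately done, since $H$ is then a subgroup of $G$ of the desired second type. In the first case, $N$ is virtually nilpotent but not virtually abelian, so Lemma \ref{l-subgroup-H3} applies to $N$ and produces a subgroup isomorphic to $H_3(\Z)$, which is then a subgroup of $G$ of the desired first type.

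I do not expect any real obstacle here: the statement is essentially the conjunction of the three already-established lemmas, with the only point to check being that the relevant hypotheses transfer along the chain (polycyclicity passes to subgroups, and "not virtually abelian" is guaranteed by the choice of $N$ and, in the strongly irreducible case, by the condition $k \geq 2$ built into Lemma \ref{lem-irr-SDP-notdecomp}'s setup). One should perhaps remark that in Lemma \ref{lem-poly-metab-nilpORirr} the hypothesis is just that $G$ (there $N$) is metabelian polycyclic, with no non-virtually-abelian assumption needed for the dichotomy itself — the non-virtually-abelian input is used only to rule out nothing, but rather it is Lemma \ref{l-subgroup-H3} that consumes it in the virtually nilpotent branch. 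So the proof is a two-line deduction.

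\begin{proof}
By Lemma \ref{lem-poly-metab-norm-subgroup}, $G$ contains a normal subgroup $N$ that is metabelian and not virtually abelian. As a subgroup of a polycyclic group, $N$ is polycyclic. Applying Lemma \ref{lem-poly-metab-nilpORirr} to $N$, we distinguish two cases. If $N$ contains a subgroup $H = \Z^k \rtimes \Z$ with $k \geq 2$ that is a strongly irreducible semi-direct product, then $H$ is also such a subgroup of $G$ and we are done. Otherwise $N$ is virtually nilpotent; since $N$ is not virtually abelian, Lemma \ref{l-subgroup-H3} shows that $N$ contains a subgroup isomorphic to $H_3(\Z)$, which is then a subgroup of $G$ of the required form.
\end{proof}
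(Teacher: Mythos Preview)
Your proof is correct and follows essentially the same approach as the paper: reduce to a metabelian polycyclic subgroup via Lemma~\ref{lem-poly-metab-norm-subgroup}, then apply the dichotomy of Lemma~\ref{lem-poly-metab-nilpORirr}, invoking Lemma~\ref{l-subgroup-H3} in the virtually nilpotent branch. The paper's proof is the same two-line deduction, only phrased slightly differently (it splits into the virtually nilpotent case first rather than stating the dichotomy via Lemma~\ref{lem-poly-metab-nilpORirr}).
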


\begin{proof}
By Lemma \ref{lem-poly-metab-norm-subgroup} it is enough to treat the case where $G$ is metabelian. In the virtually nilpotent case we use Lemma \ref{l-subgroup-H3}, and in the other case we invoke Lemma  \ref{lem-poly-metab-nilpORirr}.
\end{proof}

\begin{prop} \label{prop-expand-subgroup-poly}
	Let $G$ be polycyclic group that is not virtually abelian. Then any subgroup of $G$ as in Lemma \ref{lem-poly-special subgroup} is a non-foldable subset of $G$.
\end{prop}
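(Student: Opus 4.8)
The plan is to show that either type of subgroup appearing in Lemma \ref{lem-poly-special subgroup} is non-foldable in an arbitrary faithful $G$-set $X$, by combining the finite-index reduction with the two indecomposability lemmas already established. Let $X$ be a faithful $G$-set. First I would invoke Proposition \ref{prop-noetherian}: since $G$ is polycyclic, hence Noetherian, there is a finite index subgroup $G^0\le G$ and normal subgroups $K_1,\dots,K_d\unlhd G^0$ with $\bigcap_i K_i=\{1\}$, together with $G^0$-invariant subsets $X_i=\fix(K_i)$ covering $X$, such that $Q_i:=G^0/K_i$ is non-folded in $X_i$.

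Now let $H$ be a subgroup of $G$ as in Lemma \ref{lem-poly-special subgroup}, so $H\cong H_3(\Z)$ or $H=\Z^k\rtimes \Z$ strongly irreducible with $k\ge 2$. Replacing $H$ by $H\cap G^0$, which is of finite index in $H$ and still of the same type (for $H_3(\Z)$ this is because finite index subgroups of $H_3(\Z)$ are isomorphic to $H_3(\Z)$, as in the proof of Lemma \ref{l-H3-indecomp}; for $\Z^k\rtimes\Z$ this was observed in the proof of Lemma \ref{lem-irr-SDP-notdecomp}), it suffices — by the remark after Lemma \ref{lem-mapZk-inj} type reasoning, or more simply by monotonicity of being non-folded under passing from $H\cap G^0$ to $H$ together with the fact that a finite index overgroup of a non-foldable subset is non-foldable — to show that $H\cap G^0$ is non-folded in $X$ as a $G^0$-set. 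So I may as well assume $H\le G^0$.

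The key step: I claim $H$ is contained in some $X_i$-relevant quotient injectively, i.e.\ there exists $i$ such that $K_i\cap H=\{1\}$. Indeed, $H\cap K_i$ is a normal subgroup of $H$ for each $i$, and $\bigcap_i (H\cap K_i)=H\cap \bigcap_i K_i=\{1\}$. By Lemma \ref{l-H3-indecomp} (in the first case) or Lemma \ref{lem-irr-SDP-notdecomp} (in the second case), $H$ is not virtually subdirectly decomposable; in particular the intersection of any finite family of non-trivial normal subgroups of $H$ is non-trivial. Since the finite family $\{H\cap K_i\}_{i=1}^d$ has trivial intersection, at least one member must be trivial, say $H\cap K_j=\{1\}$. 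Then the composition $H\hookrightarrow G^0\twoheadrightarrow Q_j$ is injective, and $H\le G^0$ acts on $X_j=\fix(K_j)$ through this embedded copy of $H$ inside $Q_j$. Since $Q_j$ is non-folded in $X_j$ by Proposition \ref{prop-noetherian}\ref{item-neoth-locally-embeds}, and being non-folded is inherited by subsets of subgroups (if $Q_j$ is non-folded in $X_j$ then so is any subset of $Q_j$, directly from the definition), the copy of $H$ is non-folded in $X_j\subseteq X$. Hence $H$ is non-folded in $X$, and as $X$ was an arbitrary faithful $G$-set, $H$ is a non-foldable subset of $G$.

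The only delicate point is the finite-index bookkeeping at the start: one must make sure that ``non-foldable subset of $G$'' really does descend to and lift from finite index subgroups. For the lift, if $H\le G^0\le G$ with $[G:G^0]<\infty$ and $H$ is non-folded in every faithful $G^0$-set, one restricts an arbitrary faithful $G$-set $X$ to $G^0$: it remains faithful, so $H$ is non-folded in $X$ viewed as a $G^0$-set, hence as a $G$-set, since the orbital maps only involve $H$-orbits. For the passage from $H\cap G^0$ to $H$: if $\Sigma\subset H$ is finite, then $\Sigma$ meets finitely many cosets of $H\cap G^0$, and one uses translates; alternatively, and more cleanly, one notes that since $H\cong H_3(\Z)$ has polynomial growth $\simeq n^4$ and $H\cap G^0$ has finite index, the relative growth $\relvol$ is unchanged and Lemma \ref{lem-exp-subset-growth} already gives the desired $n^4$ lower bound from $H\cap G^0$ alone — so for the growth application it is harmless to just prove $H\cap G^0$ is non-foldable. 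I expect this routine checking to be the only real obstacle; the structural heart is the one-line pigeonhole using non-(virtual-)subdirect-decomposability from Lemmas \ref{l-H3-indecomp} and \ref{lem-irr-SDP-notdecomp}.
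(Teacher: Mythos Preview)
Your argument is correct in substance and rests on the same structural fact as the paper's (the non-virtual-subdirect-decomposability of $H$ from Lemmas~\ref{l-H3-indecomp} and~\ref{lem-irr-SDP-notdecomp}), but you take a detour that the paper avoids. The paper simply applies Corollary~\ref{cor-noeth-group-loc-embed} to $H$ itself: $H$ is Noetherian (being a subgroup of a polycyclic group) and not virtually subdirectly decomposable, so $H$ is non-foldable as an $H$-set; since any faithful $G$-set restricts to a faithful $H$-set and the orbital maps only involve elements of $H$, this immediately gives that $H$ is a non-foldable subset of $G$. No finite-index bookkeeping is needed.

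By contrast, you apply Proposition~\ref{prop-noetherian} to $G$ rather than to $H$, which forces you to pass to $H\cap G^0$ and then lift back. Your pigeonhole step (some $K_j\cap H=\{1\}$) is fine, but the lift from $H\cap G^0$ to $H$ is left vague; the ``translates'' sketch does not work as stated, and you end up retreating to the growth consequence. The lift can in fact be completed cleanly using that $H$ is torsion-free: if $H_0\le H$ has finite index and is non-folded in $X$, a subsequential limit $L$ of $H_{x_n}$ in $\sub(H)$ satisfies $L\cap H_0=\{1\}$, hence $L$ is finite, hence trivial, so $H$ is non-folded in $X$. Either add this observation, or---much more simply---apply Corollary~\ref{cor-noeth-group-loc-embed} directly to $H$ as the paper does.
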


\begin{proof}
	Let $H$ be a subgroup of $G$ that is isomorphic either to $H_3(\Z)$ or to a strongly irreducible semi-direct product $\Z^k \rtimes \Z$ with $k \geq 2$. In both cases $H$ is not virtually subdirectly decomposable (Lemma \ref{l-H3-indecomp} and Lemma \ref{lem-irr-SDP-notdecomp}), and hence $H$ is a non-foldable subset of $G$ by Corollary \ref{cor-noeth-group-loc-embed}.
\end{proof}

Recall that the \textbf{Hirsch length} $h(G)$ of a polycyclic group $G$ is the number of infinite cyclic factors appearing in a finite series of $G$ with cyclic factors. The following result was proven in \cite{Dranishnikov-Smith}.

\begin{thm} \label{thm-poly-asdim-hirsch}
If $G$ is a polycyclic group, then $\asdim(G) = h(G)$. 
\end{thm}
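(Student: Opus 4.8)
The plan is to establish the two inequalities $\asdim(G)\leq h(G)$ and $\asdim(G)\geq h(G)$ separately. Since asymptotic dimension is a quasi-isometry invariant and the Hirsch length is unchanged on passing to a finite-index subgroup, and since every polycyclic group has a torsion-free polycyclic subgroup of finite index, we may assume that $G$ is torsion-free. Then $G$ admits a series $1=G_0\leq G_1\leq\cdots\leq G_h=G$ in which each $G_i$ is normal in $G_{i+1}$ with $G_{i+1}/G_i\cong\Z$, and $h=h(G)$.

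For the upper bound I would induct on $h$, using the Hurewicz-type inequality for the asymptotic dimension of group extensions (Bell--Dranishnikov): if $1\to N\to\Gamma\to Q\to 1$ is exact with $\Gamma$ finitely generated, then $\asdim(\Gamma)\leq\asdim(N)+\asdim(Q)$. Applied to $N=G_{h-1}$ and $Q=G/G_{h-1}\cong\Z$, and using $\asdim(\Z)=1$ together with the inductive hypothesis $\asdim(G_{h-1})\leq h-1$, this gives $\asdim(G)\leq h$; the base case $h=0$ is trivial.

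The lower bound is the main difficulty, and I would obtain it via cohomological dimension. A torsion-free polycyclic group of Hirsch length $h$ is a Poincar\'e duality group of dimension $h$ (Bieri); in particular it has a finite classifying space and $H^{h}(G;\Z G)\cong\Z$, so $\mathrm{cd}_{\Q}(G)=h$. On the other hand, for any finitely generated group $\Gamma$ with $\asdim(\Gamma)<\infty$ one has $\mathrm{cd}_{\Q}(\Gamma)\leq\asdim(\Gamma)$: this follows from the vanishing of the coarse cohomology of $\Gamma$ in degrees above $\asdim(\Gamma)$, combined with the identification of the coarse cohomology of $\Gamma$ with $H^{*}(\Gamma;\Q\Gamma)$, which holds because $\Gamma$ has a finite classifying space whose universal cover is uniformly contractible and quasi-isometric to $\Gamma$. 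By the upper bound we already know $\asdim(G)<\infty$, so this applies to $G$ and yields $\asdim(G)\geq\mathrm{cd}_{\Q}(G)=h$, completing the proof.

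The hard part is thus the lower bound, which rests on two external inputs: that torsion-free polycyclic groups are duality groups of dimension equal to their Hirsch length, and the inequality $\mathrm{cd}_{\Q}\leq\asdim$ for groups of finite asymptotic dimension, whose proof uses coarse cohomology. A variant of the same argument replaces the cohomological input by the geometric statement that $G$, being the fundamental group of a closed aspherical $h$-manifold, is quasi-isometric to a contractible $h$-manifold, which has asymptotic dimension at least $h$.
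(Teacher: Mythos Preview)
The paper does not give its own proof of this theorem: it is stated as a result quoted from \cite{Dranishnikov-Smith}, with no argument supplied in the text. So there is no in-paper proof to compare against.

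Your sketch is a correct outline of the standard proof, and it is close in spirit to the argument in the cited reference. The upper bound by induction on the Hirsch length using the extension theorem $\asdim(\Gamma)\le \asdim(N)+\asdim(Q)$ is exactly how Dranishnikov--Smith handle that direction. For the lower bound they also pass through (co)homological dimension: a virtually polycyclic group of Hirsch length $h$ has a torsion-free subgroup of finite index which is the fundamental group of a closed aspherical $h$-manifold (a solvmanifold), hence has cohomological dimension $h$, and one then invokes the inequality $\mathrm{cd}\le \asdim$ for groups with finite classifying space. Your two variants of the lower bound (duality group / aspherical manifold) are both valid and essentially the same input phrased differently.

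One small point: you write ``Hurewicz-type inequality (Bell--Dranishnikov)''. The extension theorem you actually use, $\asdim(\Gamma)\le\asdim(N)+\asdim(Q)$, is slightly weaker than the Hurewicz-type statement and was already available in earlier work; either version suffices here. Also, for the inequality $\mathrm{cd}_\Q\le\asdim$ you need a finite-type classifying space, which you do have for torsion-free polycyclic groups, so the argument goes through.
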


\begin{cor} \label{cor-poly-asdimX}
Let $G$ be polycyclic group that is not virtually abelian. Then $\asdim(G,X) \geq 3$  for every faithful $G$-set $X$.
\end{cor}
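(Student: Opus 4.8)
The plan is to reduce to the case where $G$ is itself a subgroup of the special form supplied by Lemma \ref{lem-poly-special subgroup}, then compute the asymptotic dimension of that subgroup and invoke the monotonicity of $\asdim(G,X)$ under passage to finitely generated subgroups together with Proposition \ref{p-asdim-non-foldable}.

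First I would apply Lemma \ref{lem-poly-special subgroup}: since $G$ is polycyclic and not virtually abelian, it contains a subgroup $H$ that is either isomorphic to the Heisenberg group $H_3(\Z)$ or to a strongly irreducible semi-direct product $\Z^k\rtimes\Z$ with $k\ge 2$. By Proposition \ref{prop-expand-subgroup-poly}, such an $H$ is a non-foldable subset of $G$. Hence by Proposition \ref{p-asdim-non-foldable}, for every faithful $G$-set $X$ we have $\asdim(G,X)\ge\asdim(H)$, where $H$ carries the metric induced from a word metric on $G$ — but since $H$ is a finitely generated subgroup, this induced metric is quasi-isometric to a word metric on $H$ itself only if $H$ is undistorted; to be safe I would instead simply use that $H$ is itself a polycyclic group and that $\asdim(H,X)\le\asdim(G,X)$ by the monotonicity lemma for finitely generated subgroups, applied to the faithful $H$-set $X$ (the restriction of the $G$-action to $H$ is faithful since the $G$-action is). Then I invoke Corollary \ref{cor-noeth-group-loc-embed}: $H$ is not virtually subdirectly decomposable (Lemma \ref{l-H3-indecomp} or Lemma \ref{lem-irr-SDP-notdecomp}), so $\asdim(H,X)=\asdim(H)$ for every faithful $H$-set.

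It then remains to check $\asdim(H)\ge 3$ in both cases. By Theorem \ref{thm-poly-asdim-hirsch}, $\asdim(H)=h(H)$, the Hirsch length. For $H\cong H_3(\Z)$ one has $h(H)=3$, since $H_3(\Z)$ has a central series $1\le Z(H)\le H$ with $Z(H)\cong\Z$ and $H/Z(H)\cong\Z^2$. For $H\cong\Z^k\rtimes\Z$ with $k\ge 2$ one has $h(H)=k+1\ge 3$. In both cases $h(H)\ge 3$, hence $\asdim(H)\ge 3$, and chaining the inequalities gives $\asdim(G,X)\ge\asdim(H,X)=\asdim(H)=h(H)\ge 3$.

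The only genuinely delicate point is the one I flagged above: making sure the comparison between $\asdim$ of the ambient action and $\asdim$ of the subgroup is set up correctly, i.e. using the subgroup-monotonicity of $\asdim(\cdot,X)$ rather than Proposition \ref{p-asdim-non-foldable} with a possibly distorted metric on $\mathcal{L}$. Everything else is a bookkeeping assembly of results already proved: Lemma \ref{lem-poly-special subgroup} to find $H$, Corollary \ref{cor-noeth-group-loc-embed} (via Lemma \ref{l-H3-indecomp} / Lemma \ref{lem-irr-SDP-notdecomp}) to identify $\asdim(H,X)$ with $\asdim(H)$, and Theorem \ref{thm-poly-asdim-hirsch} to evaluate $\asdim(H)=h(H)\ge 3$. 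So I expect the proof to be a three-line corollary once the right lemmas are cited in the right order.
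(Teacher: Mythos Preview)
Your proof is correct and follows essentially the same route as the paper: find $H$ via Lemma~\ref{lem-poly-special subgroup}, use that $H$ is non-foldable, and compute $\asdim(H)=h(H)\ge 3$ via Theorem~\ref{thm-poly-asdim-hirsch}. The one difference is precisely at the step you flagged as delicate. The paper applies Proposition~\ref{p-asdim-non-foldable} directly with $\mathcal{L}=H$ inside $G$ and then observes that $(H,d_G|_H)$ is coarsely equivalent to $(H,d_H)$; this is automatic for any finitely generated subgroup of a finitely generated group, since both metrics are proper and the inclusion is Lipschitz, so your worry about distortion was unnecessary (distortion obstructs quasi-isometric embedding, not coarse embedding, and $\asdim$ is a coarse invariant). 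Your detour through subgroup-monotonicity of $\asdim(\cdot,X)$ and Corollary~\ref{cor-noeth-group-loc-embed} is a valid alternative, though note that what you actually obtain from it is $\asdim(H,X)\ge\asdim(H)$, not the equality you wrote --- which is all you need anyway.
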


\begin{proof}
	Let $H$ be a subgroup of $G$ that is isomorphic either to $H_3(\Z)$ or to a strongly irreducible semi-direct product $\Z^k \rtimes \Z$ with $k \geq 2$. According to Theorem \ref{thm-poly-asdim-hirsch} we have $\asdim(H) = 3$ if $H$ is $H_3(\Z)$ and $\asdim(H) = k+1$ if $H$ is $\Z^k \rtimes \Z$. In both case $\asdim(H) \geq 3$. Let $X$ be a faithful $G$-set. The subgroup $H$ being non-foldable by Proposition \ref{prop-expand-subgroup-poly}, Proposition \ref{p-asdim-non-foldable} then implies $\asdim(G,X) \geq \asdim(H,d_G) = \asdim(H) \geq 3$. In the middle equality we have used that the restriction of the word metric $d_G$ to $H$ is coarsely equivalent to the word metric on $H$, and $\asdim$ is an invariant of coarse equivalence. 
\end{proof}

\section{Strongly irreducible extensions and groups of finite rank}  \label{s-finite-rank}
The goal of this section is to prove Theorem \ref{t-strongly-irreducible}, which gives a criterion to find  non-foldable subsets in a group $G$ that can be written as an extension $1\to N\to G\to Q\to 1$, where $N$ is a nilpotent group of finite rank and the action of $Q$ satisfies a certain irreducibility condition. As an application, we will prove Theorem \ref{thm-intro-prufer} from the introduction. 

\subsection{Preliminaries on nilpotent groups} Before stating the main results of the section we need to  recall some preliminaries on nilpotent groups, based on Malcev theory.

 A group $G$ is \textbf{divisible} if for every $g\in G$ and every integer $n>0$ there is $h\in G$ such that $h^n=g$.  Assume that $\mk$ is a nilpotent Lie algebra over $\Q$. Then $\mk$ can be turned into a group with group law given by the Baker-Campbell-Hausdorff formula. The resulting group is a divisible torsion-free nilpotent group. The following theorem of Malcev \cite{Malcev49} is a converse to this construction (see also \cite{Stewart} for a more algebraic treatment).

\begin{thm}[Malcev]\label{t-Malcev-divisible}
Let $\mk$ be a torsion-free divisible nilpotent group. Then $\mk$ can be endowed with a unique structure of Lie algebra over $\Q$ such that the group law on $\mk$ coincides with the group law determined by the Baker-Campbell-Hausdorff formula. Moreover, every homomorphism between divisible nilpotent groups is also a homomorphism of Lie algebras, and vice versa 
\end{thm}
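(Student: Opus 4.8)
The plan is to prove this by induction on the nilpotency class $c$ of $\mk$, reducing at each step to a central extension, and to derive the uniqueness part from the observation that all the Lie operations are forced by the group law. The case $c\le 1$ is immediate: a torsion-free divisible abelian group is precisely a $\Q$-vector space, and equipped with the zero bracket it satisfies the Baker-Campbell-Hausdorff (BCH) identity $x\cdot y=x+y$; conversely any compatible Lie structure must have $x+y=xy$ and bracket zero, so uniqueness holds in this case.

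Before the inductive step I would record the key \emph{unique roots} property: in a torsion-free nilpotent group $G$, for every $n\ge 1$ the map $g\mapsto g^n$ is injective, and more generally if $g^n$ is central then $g$ is central (so the isolator of a subgroup is again a subgroup). This is classical and proved by induction on the class (see e.g.\ \cite[Ch.~1]{Lennox-Rob}). Combined with divisibility of $\mk$ it yields a well-defined action of $\Q$ on $\mk$: for $\lambda=p/q$ put $\lambda\cdot g:=(g^{1/q})^{p}$, where $g^{1/q}$ is the unique $q$-th root of $g$; one checks $(\lambda+\mu)\cdot g=(\lambda\cdot g)(\mu\cdot g)$ and $(\lambda\mu)\cdot g=\lambda\cdot(\mu\cdot g)$. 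This $\Q$-action will be the scalar multiplication of the sought Lie algebra, and it is forced: in any compatible Lie structure BCH gives $g^n=n\cdot g$ (all brackets vanishing), hence $g^{1/q}=(1/q)\cdot g$ by uniqueness of roots.

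For the inductive step, let $c\ge 2$ and $Z=Z(\mk)$. Using the unique-roots property one checks that $Z$ is divisible and torsion-free, hence a $\Q$-vector space, and that $\mk/Z$ is torsion-free (this is the one genuinely delicate point: $g^n\in Z$ forces $g\in Z$ by the isolator property), divisible, and nilpotent of class $\le c-1$. By the inductive hypothesis $\mk/Z$ carries a unique $\Q$-Lie algebra structure whose group law is BCH, and it remains to lift this to $\mk$. The point is that BCH, truncated at class $c$, can be inverted: there are universal group-word expressions, in the group multiplication together with the $\Q$-powers above, recovering the Lie bracket and the addition from the group operations. I would define $[x,y]_{\mathrm{Lie}}$ and $x+y$ on $\mk$ by these inverse-BCH words; they reproduce the inductively given structure on $\mk/Z$ and the vector space structure on $Z$, and the remaining checks (bilinearity, antisymmetry, the Jacobi identity, and the BCH identity $xy=\exp(\log x+\log y+\tfrac12[\log x,\log y]+\cdots)$) are identities among these words. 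The cleanest way to verify them is to check them once in the \enquote{free} model --- the BCH group of the free $\Q$-nilpotent Lie algebra of class $c$ --- and transfer them to $\mk$ by specialising along a group homomorphism from the free model onto the relevant tuple of elements of $\mk$; uniqueness of the divisible hull of a finitely generated torsion-free nilpotent group guarantees such homomorphisms exist. Alternatively one can bypass the free model and carry out the central-extension cocycle computation directly, which is elementary but tedious.

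The main obstacle is precisely this algebraic bookkeeping: organising the inverse-BCH formulas and verifying the Lie axioms without circularity (one must take care not to invoke the very correspondence being proved when manipulating the free model). Once the correspondence $(\mk,\cdot)\leftrightarrow(\mk,+,\lambda\cdot,[\,\cdot\,,\cdot\,])$ is in place, the assertion about homomorphisms is immediate: a Lie algebra homomorphism preserves $+$, $\lambda\cdot$ and $[\,\cdot\,,\cdot\,]$, hence the group law, since the latter is the Lie-polynomial given by BCH; conversely a group homomorphism preserves products and, by uniqueness of roots, $\Q$-powers, hence the inverse-BCH words defining $+$ and $[\,\cdot\,,\cdot\,]$, so it is a Lie algebra homomorphism. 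Uniqueness of the Lie structure follows from the same induction, each operation having been seen to be forced by the group law.
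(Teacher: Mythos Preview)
The paper does not give its own proof of this theorem: it is stated as a classical result of Malcev, with a citation to \cite{Malcev49} and to Stewart \cite{Stewart} for an algebraic treatment, and then used as a black box. So there is no proof in the paper to compare your proposal against.

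That said, your sketch follows the standard Lazard--Stewart route (induction on the class, unique roots giving a well-defined $\Q$-power operation, inverse-BCH words defining $+$ and $[\,\cdot\,,\cdot\,]$, verification of the Lie axioms via the free nilpotent model). Two small points of care. First, when you invoke \enquote{uniqueness of the divisible hull} to produce a homomorphism from the free model into $\mk$, note that the existence and uniqueness of the Malcev completion (Theorem~\ref{t-Malcev-completion} here) is usually deduced \emph{from} the correspondence you are proving, so this step should be argued directly: map the free nilpotent group of class $c$ to $\mk$ by the universal property, then extend to its divisible hull using only divisibility and unique roots in $\mk$. You flag this circularity yourself, but the reader should be told explicitly how it is avoided. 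Second, the identity $(\lambda+\mu)\cdot g=(\lambda\cdot g)(\mu\cdot g)$ that you \enquote{check} is not automatic in a nonabelian group; it holds here because all rational powers of $g$ lie in the cyclic divisible closure of $g$ and hence commute, and this deserves a sentence. With those two clarifications the outline is sound and is essentially what one finds in Stewart's account.
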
 

In the sequel, whenever $\mk$ is a torsion-free divisible nilpotent group, we will consider it also as a  Lie algebra  $\mk$ without mention. By the above theorem the Lie subalgebras of $\mk$ are precisely the divisible subgroups of $\mk$. The Lie bracket will be denoted $\llbracket\cdot, \cdot \rrbracket$, the notation $[\cdot, \cdot]$ being reserved for commutators in groups. Given a subset $S\subset \mk$, we will denote by $\Lie(S)$ the Lie subalgebra generated by $S$, so $\Lie(S)$ is the smallest divisible subgroup of $\mk$ containing $S$. We keep the notation $\langle S \rangle$ for the subgroup generated by $S$. The lower central series or $\mk$ as a Lie algebra is denoted $\gamma_i\mk$; this does not lead to confusion as it coincides with its lower central series of $\mk$ as a group. The abelianization of $\mk$ is denoted $\mk^{ab}$.

\begin{lem} \label{l-nilpotent-abelianisation}
Let $\mk$ be a nilpotent Lie algebra and $S\subset \mk$ be a set whose projection to $\mk^{ab}:=\mk/\llbracket\mk, \mk\rrbracket$ contains a linear basis of $\mk^{ab}$. Then $\Lie(S)=\mk$. 
\end{lem}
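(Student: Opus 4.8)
The plan is to induct on the nilpotency class $c$ of $\mk$, i.e. on the length of the lower central series $\mk = \gamma_1\mk \supseteq \gamma_2\mk \supseteq \cdots \supseteq \gamma_{c+1}\mk = 0$. Write $\hk = \Lie(S)$ for the Lie subalgebra generated by $S$; the goal is to show $\hk = \mk$. The base case $c=1$ is immediate: then $\mk = \mk^{ab}$ is abelian, so $\hk$ is the $\Q$-span of $S$, which by hypothesis contains a basis of $\mk$, hence $\hk = \mk$.

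For the inductive step, consider the quotient $\bar\mk = \mk/\gamma_c\mk$, which is nilpotent of class $c-1$. The image $\bar S$ of $S$ still projects onto a spanning set (in fact contains a basis) of $(\bar\mk)^{ab} = \mk^{ab}$, since $\gamma_c\mk \subseteq \gamma_2\mk = \llbracket\mk,\mk\rrbracket$ and the abelianizations of $\mk$ and $\bar\mk$ coincide. By the inductive hypothesis applied to $\bar\mk$, the subalgebra generated by $\bar S$ is all of $\bar\mk$; equivalently $\hk + \gamma_c\mk = \mk$. It therefore remains to show that $\gamma_c\mk \subseteq \hk$. The key point here is that $\gamma_c\mk = \llbracket \gamma_{c-1}\mk, \mk \rrbracket$ is central (since $\gamma_{c+1}\mk = 0$) and is spanned by iterated brackets $\llbracket x_1, \llbracket x_2, \ldots \llbracket x_{c-1}, x_c \rrbracket \cdots \rrbracket$ of elements of $\mk$. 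Using $\mk = \hk + \gamma_2\mk$ and multilinearity of the bracket, together with the fact that any bracket involving a factor from $\gamma_j\mk$ with $j \geq 2$ lands in $\gamma_{c+1}\mk = 0$ once enough factors are used, one reduces each such generator to a bracket of elements of $\hk$, which lies in $\hk$. Hence $\gamma_c\mk \subseteq \hk$, and combined with $\hk + \gamma_c\mk = \mk$ we get $\hk = \mk$.

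The main obstacle is the bookkeeping in the last step: one must check carefully that the iterated brackets spanning $\gamma_c\mk$ can be rewritten, modulo higher (hence vanishing) terms of the lower central series, as brackets of elements of $\hk$. The cleanest way to organize this is to prove the auxiliary claim that $\gamma_i\mk \subseteq \hk + \gamma_{i+1}\mk$ for all $i$ by descending induction on $i$ (starting from $i = c$, where $\gamma_{c+1}\mk = 0$ makes the claim $\gamma_c\mk \subseteq \hk$ the target, and using at each stage that $\gamma_i\mk = \llbracket\mk,\gamma_{i-1}\mk\rrbracket$ together with $\mk = \hk + \gamma_2\mk$ and the inductive control on $\gamma_{i-1}\mk$). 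Since each $\gamma_{i+1}\mk$ is eventually zero, this yields $\gamma_i\mk \subseteq \hk$ for all $i$, in particular $\mk = \gamma_1\mk \subseteq \hk$, completing the proof. Everything here is elementary Lie algebra manipulation; no input beyond Malcev's Theorem \ref{t-Malcev-divisible} (already used implicitly to make sense of $\Lie(S)$ as the smallest divisible subgroup containing $S$) is needed.
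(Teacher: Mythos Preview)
Your argument is correct and is essentially the paper's proof: the paper proves by (ascending) induction on $i$ that $\pi_i(\hk\cap\gamma_i\mk)=\gamma_i\mk/\gamma_{i+1}\mk$, which is exactly your auxiliary claim $\gamma_i\mk\subseteq\hk+\gamma_{i+1}\mk$ (note that if $x=h+z$ with $h\in\hk$ and $z\in\gamma_{i+1}\mk$, then automatically $h=x-z\in\gamma_i\mk$). One cosmetic point: the induction establishing $\gamma_i\mk\subseteq\hk+\gamma_{i+1}\mk$ is \emph{ascending} in $i$ (you use control on $\gamma_{i-1}\mk$ to get control on $\gamma_i\mk$), while the \emph{descending} step is the subsequent unwinding $\gamma_c\mk\subseteq\hk\Rightarrow\gamma_{c-1}\mk\subseteq\hk\Rightarrow\cdots$; your write-up merges these two inductions, but the mathematics is fine.
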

\begin{proof}
Denote $\pi_i\colon \gamma_{i} \mk\to \gamma_{i}\mk/\gamma_{i+1}\mk$ the quotient projection.  Set $\hk=\Lie(S)$. It is enough to show that $\pi_i(\hk\cap \gamma_i\mk)=\gamma_i\mk/\gamma_{i+1}\mk$ for every $i$. By assumption, this holds for $i=1$. Assume that it holds for all $j<i$. Fix $X\in \gamma_{i-1}\mk$ and $Y\in \mk$. By assumption we can find $\tilde{X}\in \hk\cap \mk_{i-1}, \tilde{Y}\in\hk$ such that $\pi_{i-1}(\tilde{X})=\pi_{i-1}(X)$ and $\pi_1(\tilde{Y})=\pi_1(Y)$. Then we have $\llbracket\tilde{X}, \tilde{Y}\rrbracket\in  \hk\cap \gamma_i\mk$ and $\pi_i(\llbracket\tilde{X}, \tilde{Y}\rrbracket)=\pi_i(\llbracket X, Y\rrbracket)$. Since $\gamma_i\mk$ is generated by  $\llbracket X, Y\rrbracket$ with $X\in \gamma_{i-1}\mk$ and $Y\in \mk$, the claim follows. \qedhere

\end{proof}

For a proof of the following result, we refer to \cite[2.1.1]{Lennox-Rob}

\begin{thm}[Rational Malcev completion] \label{t-Malcev-completion}
Let $N$ be a torsion-free nilpotent group. Then there exists a  divisible torsion-free nilpotent group $\mk_N$ and an injective group homomorphism  $\iota \colon N\to \mk_N$   such that  every element of $\mk_N$ has a power in $\iota(N)$. Moreover for if $(\mk', \iota')$ is another pair with this property, then there exists an isomorphism $\varphi\colon \mk'\to \mk_N$ such that $\varphi\circ \iota=\iota'$.  
 \end{thm}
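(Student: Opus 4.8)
I would first treat the case where $N$ is finitely generated and then bootstrap. By the classical embedding theorem for finitely generated torsion‑free nilpotent groups (Malcev; see \cite{Lennox-Rob}), fix an embedding $N\le \mathrm{UT}_m(\Z)\le \mathrm{UT}_m(\Q)$ into unitriangular matrices, and set
\[
\mk_N=\{\,g\in \mathrm{UT}_m(\Q)\ :\ g^k\in N\ \text{for some } k\ge 1\,\},
\]
the isolator of $N$ in $\mathrm{UT}_m(\Q)$, with $\iota$ the inclusion. Three standard points need checking: $\mk_N$ is a subgroup, since the isolator of a subgroup of a torsion‑free nilpotent group is a subgroup (\cite{Lennox-Rob}, §1.2--1.3); it is torsion‑free (being a subgroup of $\mathrm{UT}_m(\Q)$) and divisible, because $\mathrm{UT}_m(\Q)$ is divisible with unique roots, so any rational $k$‑th root of an element of $\mk_N$ again has a power in $N$ and hence lies in $\mk_N$; and every element of $\mk_N$ has a power in $N$ by definition. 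For arbitrary torsion‑free nilpotent $N$, write $N$ as the directed union of its finitely generated subgroups $N_\alpha$ (each torsion‑free nilpotent); using the uniqueness statement below, each inclusion $N_\alpha\hookrightarrow N_\beta$ induces a canonical injection $\mk_{N_\alpha}\hookrightarrow\mk_{N_\beta}$ compatible with the embeddings, and $\mk_N:=\varinjlim_\alpha \mk_{N_\alpha}$ is readily seen to be torsion‑free, divisible and nilpotent, with each of its elements having a power in $N$.

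\textbf{Uniqueness via a universal property.} I would deduce uniqueness from the assertion that $(\mk_N,\iota)$ is \emph{universal}: for every divisible torsion‑free nilpotent group $D$ and every homomorphism $f\colon N\to D$ there is a unique homomorphism $\tilde f\colon \mk_N\to D$ with $\tilde f\circ\iota=f$. Granting this, if $(\mk,\iota)$ and $(\mk',\iota')$ are two completions, apply the property in both directions to obtain homomorphisms $\psi\colon \mk\to\mk'$ and $\phi\colon \mk'\to\mk$ intertwining the embeddings; then $\phi\circ\psi$ extends $\iota$ and $\psi\circ\phi$ extends $\iota'$, so both are identities by the uniqueness clause, and $\psi$ is the required isomorphism. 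To build $\tilde f$: given $x\in\mk_N$, choose $k$ with $x^k=\iota(n)$, $n\in N$, and let $\tilde f(x)$ be the unique $k$‑th root of $f(n)$ in $D$ (unique because $D$ is torsion‑free nilpotent); independence of the choice of $(k,n)$ is immediate from uniqueness of roots. The same reasoning shows $\mk_N$ is spanned by $\iota(N)$ inside any ambient divisible nilpotent group, so by Theorem \ref{t-Malcev-divisible} one has $\mk_N=\Lie(\iota(N))$.

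\textbf{The main obstacle.} The only non‑formal step is proving that the root map $\tilde f$ is multiplicative (equivalently, that a homomorphism from $N$ into a divisible torsion‑free nilpotent group extends consistently over fractional powers). This is where nilpotency must be used quantitatively: either through the Hall--Petresco collection formula, which expresses $(xy)^k$ in terms of $x^k$, $y^k$ and iterated commutators of controlled weight, or by induction on the Hirsch length of $N$ (reducing finitely generated groups modulo the divisible centre of $\mk_N$, the quotient being the Malcev completion of a torsion‑free nilpotent group of smaller Hirsch length, and handling the abelian centre directly, where everything is a $\Q$‑vector space), the general case following by passage to the colimit as above. As a byproduct, by Theorem \ref{t-Malcev-divisible} group homomorphisms between divisible nilpotent groups are exactly Lie algebra homomorphisms, which gives an equivalent Lie‑theoretic phrasing of the whole uniqueness argument.
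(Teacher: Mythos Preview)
The paper does not give its own proof of this theorem: it simply refers the reader to \cite[2.1.1]{Lennox-Rob}. So there is no in-paper argument to compare against, and your outline is in fact more than what the authors provide.

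Your approach is correct and is essentially the standard one (and close to what one finds in Lennox--Robinson). The existence argument via the isolator of $N$ inside $\mathrm{UT}_m(\Q)$ is fine: your check of divisibility is right (if $y^n=x$ and $x^k\in N$ then $y^{nk}\in N$), and passing to a directed colimit for the non-finitely-generated case is the usual manoeuvre. The only organisational wrinkle is that your colimit step invokes uniqueness before you have stated it; this is harmless, since your universal-property argument does not require finite generation and could be placed first. You are also right to flag multiplicativity of $\tilde f$ as the genuine content: uniqueness of roots in torsion-free nilpotent groups gives a well-defined set map, and either Hall--Petresco or the induction on Hirsch length (using that the centre of $\mk_N$ is a $\Q$-vector space and that $\mk_N/Z(\mk_N)$ is again divisible torsion-free nilpotent) is a valid route to show it is a homomorphism. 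Your closing remark that $\mk_N=\Lie(\iota(N))$ anticipates exactly what the paper records as Proposition~\ref{p-isolator}\ref{i-lie-span}.
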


The group $\mk_N$ from Theorem \ref{t-Malcev-completion} is called the \textbf{rational Malcev completion} of $N$. In the sequel we will drop the embedding $\iota$ from the notation and consider $N$ as a subgroup of $\mk_N$. When $N$ is abelian, $\mk_N$ is simply $N\otimes \Q$. Note that it follows from Theorem \ref{t-Malcev-completion} that every automorphism of $N$ extends to an automorphism of $\mk_N$.

If $H$ is a subgroup of a group $G$, we denote by $I_G(H)$ the set of elements $g \in G$ such that there exists $n \geq 1$ such that $g^n \in H$. $I_G(H)$ is called the \textbf{isolator} of $H$ in $G$. When working with the rational Malcev completion it is useful to keep in mind the following facts, see \cite[\S 2.3]{Lennox-Rob}.

\begin{lem}\label{lem-isolator-elementary}
Let $N$ be a nilpotent group and $H$ a subgroup of $N$. Then:
\begin{enumerate}[label=\roman*)]
\item $I_N(H)$ is a  subgroup of $N$.
\item If $N$ is divisible, then so is $I_N(H)$.
\item If $N$ is finitely generated, then $H$ has finite index in $I_N(H)$.  
\end{enumerate}
\end{lem}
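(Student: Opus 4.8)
The plan is to prove the three items in turn, with item (i) carrying essentially all the content. For \emph{(i)}, closure of $I_N(H)$ under inversion is immediate from $(g^{-1})^n=(g^n)^{-1}$, so the substance is closure under products: given $x,y\in I_N(H)$ with $x^a\in H$ and $y^b\in H$, one must exhibit $k\geq 1$ with $(xy)^k\in H$. I would argue by induction on the nilpotency class $c$ of $N$. When $c=1$ the group $N$ is abelian, $H$ is normal, and $I_N(H)/H$ is exactly the torsion subgroup of the abelian group $N/H$, hence a subgroup. For the inductive step, set $C=\gamma_c(N)$, which is central, and pass to $\overline N=N/C$, of class $c-1$; by induction $I_{\overline N}(HC/C)$ is a subgroup containing $\overline x$ and $\overline y$, hence contains $\overline{xy}$, so $(xy)^m\in HC$ for some $m\geq 1$, say $(xy)^m=hz$ with $h\in H$ and $z\in C$. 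It then remains to check that $z$ has a power in $H$: taking $m$ to be a suitable multiple of $ab$ and applying the Hall--Petrescu collection formula in $\langle x,y\rangle$ writes $(xy)^m=x^my^mw$ with $x^m,y^m\in H$ and $w\in[\langle x,y\rangle,\langle x,y\rangle]$, a subgroup of strictly smaller nilpotency class; comparing the two expressions for $(xy)^m$ shows $z$ equals an element of $H$ times $w$, and the inductive hypothesis applied inside $[\langle x,y\rangle,\langle x,y\rangle]$ (together with the congruence $[x^a,y^b]\equiv[x,y]^{ab}$ modulo higher commutators) yields that $w$, hence $z$, has a power in $H$. This collection-formula bookkeeping is the one genuinely delicate point of the whole lemma, and I expect it to be the main obstacle; it is classical, and I would follow \cite[\S 2.3]{Lennox-Rob}, together with the fact proved there by the same kind of induction that the torsion elements of a nilpotent group form a subgroup, for the complete verification.

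For \emph{(ii)}, assume $N$ is divisible; by (i) the set $I_N(H)$ is already a subgroup. Let $g\in I_N(H)$ with $g^n\in H$, and let $m\geq 1$. Since $N$ is divisible there is $h\in N$ with $h^m=g$, and then $h^{mn}=(h^m)^n=g^n\in H$, so $h\in I_N(H)$. Thus every element of $I_N(H)$ has an $m$-th root inside $I_N(H)$, i.e.\ $I_N(H)$ is divisible.

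For \emph{(iii)}, assume $N$ is finitely generated and write $K=I_N(H)$, which is a subgroup by (i) and is itself finitely generated, since a subgroup of a finitely generated nilpotent group is polycyclic. Every element of $K$ has a power in $H$, and I would induct on the Hirsch length $h(K)$. If $h(K)=0$ then $K$ is finite and there is nothing to prove. If $h(K)\geq 1$ then $K^{ab}$ is infinite, so there is a surjection $\varphi\colon K\to\Z$; for $g\in K$ with $\varphi(g)\neq 0$ one has $\varphi(g^n)=n\varphi(g)\neq 0$, so such a $g$ has a power in $H$ only if $\varphi(H)\neq\{0\}$, and therefore $\varphi(H)=d\Z$ with $d\geq 1$, giving $[K:H\ker\varphi]=d<\infty$. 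Put $K'=\ker\varphi$ and $H'=H\cap K'$: any element of $K'$ has a power in $H$ which, lying in $K'$, lies in $H'$, and $h(K')=h(K)-1$, so by induction $[K':H']<\infty$; since $[H\ker\varphi:H]=[K':H']$, multiplying the two indices gives $[I_N(H):H]=[K:H]<\infty$.
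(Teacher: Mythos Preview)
The paper does not actually give its own proof of this lemma: it simply writes ``see \cite[\S 2.3]{Lennox-Rob}'' and moves on. So there is no paper proof to compare against, and your proposal in fact supplies considerably more than the paper does.

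Your arguments for (ii) and (iii) are correct and complete. The divisibility argument in (ii) is the obvious one. For (iii) your induction on the Hirsch length of $K=I_N(H)$ is clean: the key observations --- that $K$ is finitely generated (being a subgroup of a polycyclic group), that any element of $\ker\varphi$ with a power in $H$ automatically has that power in $H\cap\ker\varphi$, and that $[H\ker\varphi:H]=[\ker\varphi:H\cap\ker\varphi]$ --- are all handled correctly.

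For (i), your inductive scheme is the standard one and is sound in outline. One small point worth making explicit: once you have $(xy)^m=hz$ with $z\in C$ central and you rewrite $z=h'w$ with $h'\in H$ via Hall--Petrescu, the passage from ``$w$ has a power in $H$'' to ``$z$ has a power in $H$'' uses centrality of $z$: since $w=(h')^{-1}z$ and $z$ is central, $w^\ell=(h')^{-\ell}z^\ell$, so $w^\ell\in H$ forces $z^\ell\in H$. Also, the induction you invoke ``inside $[\langle x,y\rangle,\langle x,y\rangle]$'' is legitimate because $\gamma_2(\langle x,y\rangle)$ has class $\le c-1$ (from $\gamma_k(\gamma_2(G))\subseteq\gamma_{k+1}(G)$). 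You correctly flag the Hall--Petrescu bookkeeping as the one genuinely delicate step and defer to \cite[\S 2.3]{Lennox-Rob} for it --- which is precisely what the paper itself does for the entire lemma.
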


\begin{prop}\label{p-isolator}
Let $\mk$ be a divisible torsion-free nilpotent group. 
\begin{enumerate}[label=\roman*)]
\item \label{i-lie-span} If $H$ is a subgroup of $\mk$, then  $\Lie(H) = \operatorname{Vect}(H) =  I_\mk(H)$.
\item If $H_1, H_2$ are finitely generated subgroups of $\mk$ such that $\Lie(H_1)=\Lie(H_2)$, then $H_1$ and $H_2$ are commensurable. 
\end{enumerate}
\end{prop}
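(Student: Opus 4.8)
The plan is to prove \ref{i-lie-span} through the cyclic chain of inclusions
\[
\operatorname{Vect}(H) \ \subseteq\ \Lie(H) \ \subseteq\ I_\mk(H) \ \subseteq\ \operatorname{Vect}(H),
\]
and then to deduce the second assertion from \ref{i-lie-span} together with the elementary properties of isolators.

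For the first inclusion, $\operatorname{Vect}(H) \subseteq \Lie(H)$ holds simply because $\Lie(H)$ is a Lie subalgebra of $\mk$, in particular a $\Q$-linear subspace, and it contains $H$. For the second inclusion, recall from Lemma~\ref{lem-isolator-elementary} that $I_\mk(H)$ is a subgroup of $\mk$, and that it is divisible since $\mk$ is divisible; as $\Lie(H)$ is by definition the smallest divisible subgroup of $\mk$ containing $H$, we get $\Lie(H) \subseteq I_\mk(H)$. For the third inclusion, let $g \in I_\mk(H)$ and choose $n \geq 1$ with $g^n \in H$. By Theorem~\ref{t-Malcev-divisible} the group law on $\mk$ is given by the Baker--Campbell--Hausdorff formula; since all iterated brackets of $g$ with itself vanish, the $n$-th group power of $g$ coincides with the scalar multiple $ng$ for the Lie algebra structure. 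Hence $g = \tfrac{1}{n}\, g^n \in \operatorname{Vect}(H)$. Chaining the three inclusions yields the three equalities of \ref{i-lie-span}.

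For the second assertion, set $\hk := \Lie(H_1) = \Lie(H_2)$, so that by \ref{i-lie-span} we have $\hk = I_\mk(H_1) = I_\mk(H_2)$; in particular every element of $\hk$ admits a positive power lying in $H_1$, and likewise a positive power lying in $H_2$. Let $K := H_1 \cap H_2$. We claim that $I_{H_1}(K) = H_1$. Indeed, if $g \in H_1$ then $g \in \hk = I_\mk(H_2)$, so some power $g^m$ with $m \geq 1$ lies in $H_2$; as $g^m$ also lies in $H_1$, we obtain $g^m \in K$, whence $g \in I_{H_1}(K)$. Now $H_1$ is a finitely generated nilpotent group, so Lemma~\ref{lem-isolator-elementary} gives that $K$ has finite index in $I_{H_1}(K) = H_1$. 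By symmetry $K$ has finite index in $H_2$ as well, and therefore $H_1$ and $H_2$ are commensurable.

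The argument is essentially a bookkeeping exercise built on Malcev's theorem and the basic behaviour of isolators; the only point requiring a little care is that $\operatorname{Vect}(H)$ is a priori merely a $\Q$-subspace of $\mk$, not visibly a subgroup, which is why \ref{i-lie-span} is obtained through the cyclic chain above rather than by a direct comparison of the three objects.
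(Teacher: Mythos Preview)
Your proof is correct and follows essentially the same route as the paper: both establish \ref{i-lie-span} via the cyclic chain of inclusions among $I_\mk(H)$, $\operatorname{Vect}(H)$, and $\Lie(H)$ (the paper orders it as $I_\mk(H)\le \operatorname{Vect}(H)\le \Lie(H)$ and then closes the loop), and both deduce the second part by showing $I_{H_i}(H_1\cap H_2)=H_i$ and invoking Lemma~\ref{lem-isolator-elementary}. Your justification that $g^n = ng$ via the Baker--Campbell--Hausdorff formula makes explicit a step the paper leaves to the reader.
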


\begin{proof}
$I_\mk(H)$ is a subgroup of $\mk$ by Lemma \ref{lem-isolator-elementary}, and we have the inclusions $H\le I_\mk(H) \le \operatorname{Vect}(H) \le \Lie(H)$. But since $I_\mk(H)$ is divisible, Theorem \ref{t-Malcev-divisible} implies that $I_\mk(H)$ is  a Lie subalgebra of $\mk$. Since $I_\mk(H)$ contains $H$, we must have $I_\mk(H) =\Lie(H)$.

For the second statement, since each $H_i$ is finitely generated, $H_1 \cap H_2$ has finite index in $I_{H_i}(H_1 \cap H_2)$ by Lemma \ref{lem-isolator-elementary}. Now since $\Lie(H_1)=\Lie(H_2)$ we have $I_{H_i}(H_1 \cap H_2) = H_i$, so  $H_1 \cap H_2$ is indeed of finite index in both $H_1$ and $H_2$.
\end{proof}

Assume that $N$ is a torsion-free nilpotent group. Then the derived subgroup $[N, N]$ is always contained in $\llbracket \mk_N, \mk_N\rrbracket$, but in general it might be strictly smaller than $N\cap \llbracket \mk_N, \mk_N\rrbracket$: indeed  $N^{ab}$ may have non-trivial torsion, while $\mk_N/\llbracket \mk_N, \mk_N\rrbracket$ is a torsion-free group. For later use we record the following lemma, which clarifies the difference.

\begin{lem}\label{l-malcev-ab}
Let $N$ be a torsion-free nilpotent group, with abelianization map $\pi_{ab}\colon N\to N^{ab}$.  Then $\pi_{ab}^{-1}(T(N^{ab})) =N\cap \llbracket \mk_N, \mk_N\rrbracket$. In particular we have $N^{ab}/T(N^{ab}) \simeq N/ (N\cap \llbracket \mk_N, \mk_N\rrbracket)$ and $N^{ab}\otimes \Q\simeq \mk_N/\llbracket \mk_N, \mk_N\rrbracket$.\end{lem}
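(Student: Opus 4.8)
The plan is to unwind the definitions and use the basic structure of the rational Malcev completion $\mk_N$ together with the fact that $\mk_N/\llbracket \mk_N,\mk_N\rrbracket$ is torsion-free. First I would prove the inclusion $\pi_{ab}^{-1}(T(N^{ab}))\subseteq N\cap \llbracket\mk_N,\mk_N\rrbracket$: if $g\in N$ has image of finite order in $N^{ab}$, then $g^k\in [N,N]\subseteq \llbracket\mk_N,\mk_N\rrbracket$ for some $k\ge 1$, and since $\mk_N/\llbracket\mk_N,\mk_N\rrbracket$ is a torsion-free group (being the abelianization of a torsion-free divisible nilpotent Lie algebra, i.e. a $\Q$-vector space), the image of $g$ in this quotient must already be trivial, so $g\in \llbracket\mk_N,\mk_N\rrbracket$, hence $g\in N\cap\llbracket\mk_N,\mk_N\rrbracket$.

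For the reverse inclusion, suppose $g\in N\cap\llbracket\mk_N,\mk_N\rrbracket$. Then the image of $g$ in $N^{ab}$, pushed further into $N^{ab}\otimes\Q \simeq \mk_N/\llbracket\mk_N,\mk_N\rrbracket$ (this identification is exactly the last assertion, which I would establish in parallel—see below), is zero; equivalently the image of $g$ in $N^{ab}$ lies in the kernel of $N^{ab}\to N^{ab}\otimes\Q$, which is precisely $T(N^{ab})$. Hence $g\in \pi_{ab}^{-1}(T(N^{ab}))$. This gives the set equality, and then $N^{ab}/T(N^{ab}) = N/\pi_{ab}^{-1}(T(N^{ab})) = N/(N\cap\llbracket\mk_N,\mk_N\rrbracket)$ follows by the correspondence theorem.

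For the two remaining isomorphisms, I would argue as follows. Since every element of $\mk_N$ has a power in $N$, the natural map $N/(N\cap\llbracket\mk_N,\mk_N\rrbracket)\to \mk_N/\llbracket\mk_N,\mk_N\rrbracket$ is injective with image a subgroup such that every element of the target has a power in it; tensoring with $\Q$ (the target is already a $\Q$-vector space and the source is finitely generated abelian) then yields $N^{ab}\otimes\Q \simeq (N/(N\cap\llbracket\mk_N,\mk_N\rrbracket))\otimes\Q \simeq \mk_N/\llbracket\mk_N,\mk_N\rrbracket$, using that $N^{ab}\otimes\Q = (N^{ab}/T(N^{ab}))\otimes\Q$ since $T(N^{ab})$ is the torsion subgroup. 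Alternatively one can cite Lemma~\ref{l-nilpotent-abelianisation} or Proposition~\ref{p-isolator}\ref{i-lie-span} applied to a finite generating set of $N$ to see $\Lie(N)=\mk_N$ and that $\llbracket\mk_N,\mk_N\rrbracket = I_{\mk_N}([N,N])$, which identifies the quotient.

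I do not expect a serious obstacle here; the only point requiring care is making sure the word \enquote{torsion-free} is used correctly for the group $\mk_N/\llbracket\mk_N,\mk_N\rrbracket$—this is where Malcev theory (Theorem~\ref{t-Malcev-divisible}) is genuinely used, since a priori a quotient of a torsion-free nilpotent group need not be torsion-free, but here the quotient is the abelianization of a $\Q$-Lie algebra and hence a $\Q$-vector space. Keeping straight the distinction between $[N,N]$ and $N\cap\llbracket\mk_N,\mk_N\rrbracket$ (the content of the lemma) is the whole point, so I would state that contrast explicitly at the start.
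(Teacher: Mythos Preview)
Your forward inclusion is clean and correct, and in fact more explicit than the paper's argument (which hides this direction inside the universal property of $A=N^{ab}/T(N^{ab})$ as the largest torsion-free abelian quotient of $N$).

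However, as written your reverse inclusion and your last isomorphism are circular. For the reverse inclusion you invoke the identification $N^{ab}\otimes\Q\simeq\mk_N/\llbracket\mk_N,\mk_N\rrbracket$; but in the third paragraph you obtain that identification via $N^{ab}\otimes\Q\simeq\bigl(N/(N\cap\llbracket\mk_N,\mk_N\rrbracket)\bigr)\otimes\Q$, and this step requires knowing that the kernel of $N^{ab}\to N/(N\cap\llbracket\mk_N,\mk_N\rrbracket)$ is torsion, which is exactly the reverse inclusion you are trying to prove. (Also a small slip: you write ``the source is finitely generated abelian'', but $N$ is not assumed finitely generated; fortunately this is not needed.)

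The fix is easy and is precisely what the paper does: instead of appealing to the yet-unproven isomorphism, extend the canonical map $p\colon N\to A=N^{ab}/T(N^{ab})$ to a homomorphism $\mk_N\to\mk_A=A\otimes\Q$ by functoriality of the Malcev completion. Since $\mk_A$ is abelian, this extension vanishes on $\llbracket\mk_N,\mk_N\rrbracket$, so $N\cap\llbracket\mk_N,\mk_N\rrbracket\subseteq\ker p=\pi_{ab}^{-1}(T(N^{ab}))$. With both inclusions in hand, the remaining isomorphisms follow immediately, and your argument that $\bigl(N/(N\cap\llbracket\mk_N,\mk_N\rrbracket)\bigr)\otimes\Q\simeq\mk_N/\llbracket\mk_N,\mk_N\rrbracket$ then becomes legitimate. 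So after this reordering your proof coincides with the paper's.
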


\begin{proof}
Write $T=T(N^{ab})$ and $A = N^{ab}/T$. The canonical projection  $p\colon N\to A$ extends to an epimorphism  $ \mk_N \to \mk_A$ \cite[Cor.\ 2.41]{Baumslag-lecturenotes}, which must factor through $\mk_N/\llbracket\mk_N, \mk_N\rrbracket$. In particular $p$ descends to a map $p'\colon N/(N\cap \llbracket\mk, \mk\rrbracket)\to A$.  But since $N/(N\cap \llbracket\mk, \mk\rrbracket)$ is a torsion-free abelian quotient of $N$ and $A$ is the largest such quotient, the map $p'$ is an isomorphism. Moreover since $\mk_N/\llbracket \mk_N, \mk_n\rrbracket$ is the rational Malcev completion of $N/(N\cap \llbracket\mk, \mk\rrbracket)$, $p'$ extends to an isomorphism $\mk_N/\llbracket \mk_N, \mk_n\rrbracket \to \mk_A$. Since $\mk_A \simeq A \otimes \Q \simeq N^{ab}\otimes \Q$, the statement follows. \qedhere
\end{proof}

\subsection{Non-foldable subsets via strong irreducibility} \label{subsec-strong-irr}

Let $\corps$ be a field. Recall that a subgroup $G \subset \GL(n, \corps)$ is \textbf{irreducible} if $G$ does not preserve any proper non-trivial subspace of $\corps^n$, and \textbf{strongly irreducible} if $G$ does not preserve any finite union of non-trivial proper subspaces. Equivalently $G$ is strongly irreducible if every finite index subgroup of $G$ is irreducible. By extension we will say that a linear representation $\rho\colon G\to \GL(n,\corps)$ is (strongly) irreducible if its image is so. 

If $G$ is a subgroup of $\GL(n, \corps)$, we denote by $\overline{G}$ its Zariski closure, and by $\overline{G}\,^0$ the connected component of the identity in $\overline{G}$ with respect to the Zariski topology. Recall that  $\overline{G}\,^0$  can be equivalently defined as the unique Zariski-closed subset of $\overline{G}$ which contains the identity and which cannot be written as a union of closed strict subsets (not necessarily disjoint). The group $\overline{G}\,^0$ is a closed finite index subgroup of $\overline{G}$ and is contained in every closed finite index subgroup of $\overline{G}$.

We will need the following lemma, which may be compared with \cite[Lemma 3.13]{Glas-IRS}. 

\begin{lem}\label{l-strongly-irreducible}
	Suppose that $G\le \GL(n, \corps)$ is strongly irreducible. Then for every finite subset $\Sigma\subset \corps^n\setminus{0}$ of non zero vectors, and every finite collection $V_1, \cdots, V_m$ of strict subspaces of $\corps^n$, there is $g\in G$ such that $g(\Sigma)\cap V_i=\varnothing$ for every $i=1,\cdots, m$. 
\end{lem}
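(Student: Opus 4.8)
The strategy is to induct on the number $m$ of subspaces, reducing the general statement to the case $m = 1$, and within that case to induct on the size of $\Sigma$. For the base case of one vector $v$ and one strict subspace $V$, the set $W_v = \{ g \in G : gv \in V \}$ is the trace on $G$ of a Zariski-closed subset of $\GL(n,\corps)$ (defined by the vanishing of the linear coordinates cutting out $V$, pulled back along $g \mapsto gv$), so it suffices to show that $W_v$ is not all of $G$, i.e.\ that $G$ does not send $v$ into $V$. If it did, the $G$-orbit of $v$ would span a proper $G$-invariant subspace (the span of $Gv$ is $G$-invariant and contained in $V$), contradicting irreducibility of $G$ — and here strong irreducibility is not yet needed, plain irreducibility suffices for a single subspace.

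For the inductive step on $|\Sigma|$ with a single target $V$, write $\Sigma = \{v_1, \dots, v_k\}$. By induction there is $g_0 \in G$ with $g_0 v_i \notin V$ for $i \le k-1$. Now consider the set $Z = \{ g \in G : g g_0 v_i \in V \text{ for some } i \le k-1, \text{ or } g g_0 v_k \in V\}$. The first part is a finite union of sets each of which is a proper Zariski-closed subset of $G$ (proper because $g = e$ witnesses $g_0 v_i \notin V$), so $G \setminus Z$ is Zariski-dense, in particular nonempty once we also exclude the condition $g g_0 v_k \in V$ — but that last condition also defines a proper closed subset by the base case applied to the vector $g_0 v_k$. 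Hence some $g_1 \in G$ avoids all of them, and $g = g_1 g_0$ works. (The key point being used repeatedly: a finite union of proper Zariski-closed subsets of $G$ cannot be all of $G$, because $\overline{G}$ is irreducible — or, if one wants to avoid invoking irreducibility of $\overline G$ directly, one passes to $\overline{G}\,^0$, notes each closed condition either contains a whole coset of $\overline{G}\,^0$ or meets it in a proper closed subset, and uses that $\overline{G}\,^0$ is Zariski-irreducible by definition.) This is exactly where strong irreducibility enters: to run the argument on cosets we may need the base-case statement "$G$ does not send $w$ into a strict subspace $V'$" for the finite-index subgroup $\overline{G}\,^0 \cap G$, and that requires $V'$-invariance to fail for finite-index subgroups, which is precisely strong irreducibility.

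Finally, the induction on $m$: given $V_1, \dots, V_m$, by the $m-1$ case find $g_0$ with $g_0(\Sigma) \cap V_i = \varnothing$ for $i \le m-1$; then the conditions "$g g_0 \Sigma$ meets $V_i$" for $i \le m-1$ and "$g g_0 \Sigma$ meets $V_m$" are each finite unions of proper Zariski-closed subsets of $G$ (the former proper because $g=e$ works, the latter by the single-subspace case just proved), and a point outside their union gives the desired $g$. The main obstacle — and the only genuinely delicate point — is handling the passage between $G$ and its Zariski closure carefully: one must make sure that "proper subset of $G$ in the trace topology" really does come from a proper Zariski-closed subset of $\overline{G}$ (equivalently of $\overline{G}\,^0$ after passing to finite index), so that the finite-union argument is legitimate; everything else is routine linear algebra and the standard fact that an irreducible variety is not a finite union of proper closed subsets.
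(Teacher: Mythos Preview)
Your argument is essentially correct, but it is considerably more elaborate than needed, and the paper's proof shows why. The paper dispenses entirely with induction: assuming the conclusion fails for $\Sigma=\{w_1,\ldots,w_\ell\}$ and $V_1,\ldots,V_m$, one sets $Y_{i,j}=\{g\in \GL(n,\corps): g(w_i)\in V_j\}$, observes $G\subset\bigcup_{i,j}Y_{i,j}$, takes Zariski closures to get $\overline{G}\subset\bigcup_{i,j}Y_{i,j}$, and then uses irreducibility of $\overline{G}\,^0$ as a variety to conclude $\overline{G}\,^0\subset Y_{i,j}$ for some pair $(i,j)$. This immediately gives a proper $\overline{G}\,^0$-invariant subspace (the span of $\overline{G}\,^0 w_i$), contradicting representation-theoretic irreducibility of $\overline{G}\,^0$, which follows from strong irreducibility of $G$. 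All vectors and all subspaces are handled simultaneously; no induction on $|\Sigma|$ or on $m$ is required.

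Your double induction reproduces this same mechanism inside each inductive step --- the crucial point is always ``$\overline{G}\,^0$ cannot lie in any single $Y_{i,j}$'' --- so the induction buys nothing. One small correction: your first justification, that ``$\overline{G}$ is irreducible'', is false in general (it can have several components); only $\overline{G}\,^0$ is irreducible, as you yourself note in the alternative. Also, once you pass to $\overline{G}\,^0$ there is no need to reason about cosets: the inclusion $\overline{G}\,^0\subset\overline{G}\subset\bigcup Y_{i,j}$ is all you need before invoking irreducibility of $\overline{G}\,^0$.
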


\begin{proof}
Note that $\overline{G}$ is also strongly irreducible as it contains $G$, and hence $\overline{G}\,^0$ is irreducible. We argue by contradiction and assume that the conclusion fails for  $\Sigma=\{w_1,\cdots, w_\ell\}$ and subspaces $V_1,\cdots, V_m$. Write $Y_{i, j}=\{g\in \GL(n, \corps)\colon g(v_i)\in V_j\}$ for $i=1,\cdots, \ell$ and $j=1,\cdots, m$, so that  $G\subset \cup_{i, j} Y_{i, j}$. Since every set $Y_{i, j}$ is Zariski-closed, we have $\overline{G}\subset \cup_{i, j} Y_{i, j}$, and thus there exists $i, j$ such that $\overline{G}\,^0\subset Y_{i, j}$. In particular the linear span of $\overline{G}\,^0v_i$ is contained in $V_j$. So we have found a strict non-trivial $\overline{G}\,^0$-invariant subspace, contradicting that $\overline{G}\,^0$ is irreducible. \qedhere
\end{proof}

\begin{defin}
Assume  that $G$ is a group  which can be written as an extension
\begin{equation}\label{e-extension} 1\to N\to G\to Q\to 1.\end{equation}
The conjugation action of  $Q$ on $N^{ab}$  gives rise to a linear representation of $Q$ on the vector space $H_1(N, \Q)=N^{ab}\otimes \Q$ over $\Q$.  We say that the extension \eqref{e-extension} is \textbf{strongly irreducible}  if $\dim_\Q(N^{ab}\otimes \Q)<\infty$ and the associated linear representation $\rho\colon Q\to \GL(N^{ab}\otimes \Q)$ is strongly irreducible. 
\end{defin}

We are now ready to state the main result of this section. 

\begin{thm}\label{t-strongly-irreducible} Let $G$ be a group that can be written as a strongly irreducible extension \[1\to N\to G\to Q\to 1,\]
where $N$ is a nilpotent group.  Let $\mathcal{L}\subset N$ be a subset whose projection to $N^{ab}/T(N^{ab})$ is injective. Then $\mathcal{L}$ is a non-foldable subset of $G$. \end{thm}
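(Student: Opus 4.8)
The plan is to reduce the statement, via Lemma \ref{l-non-foldable-confined}, to a statement about subgroups of $G$ that are confined by $G$: it suffices to show that for every finite $\Sigma \subset \mathcal{L}$, setting $P = \{g^{-1}h : g, h \in \Sigma, g \neq h\}$ (a finite set of non-trivial elements of $N$, since the projection of $\mathcal{L}$ to $N^{ab}/T(N^{ab})$ is injective and in particular $\mathcal{L}$ injects into $N$), the intersection of all $H \in S_G(P, G)$ is non-trivial. So let $X = \sqcup_{H \in S_G(P,G)} G/H$ be the associated $G$-set, and assume for contradiction that the action on $X$ is faithful; I need to derive a contradiction from the fact that every point of $X$ is moved into $P$ by some element, i.e.\ every stabiliser meets $P$. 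The key observation is that $P \subset N$, and the elements of $P$ differ pairwise only by elements of $[N,N]$ up to torsion — more precisely, the images $\bar p$ of $p \in P$ in $N^{ab}/T(N^{ab})$, and hence in $V := N^{ab}\otimes\Q$, are non-zero (this is exactly the injectivity hypothesis on $\mathcal{L}$, since $p = g^{-1}h$ maps to the difference of two distinct elements in the injective image).

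Next I would exploit strong irreducibility. The conjugation action of $Q$ on $V$ is strongly irreducible by hypothesis, and the finite set $\{\bar p : p \in P\} \subset V \setminus \{0\}$ is a finite set of non-zero vectors. The heart of the argument is to show that if $x \in X$ has stabiliser $G_x$ meeting $P$, one can use the $G$-action (really the $N$-action, transported by $Q$) to move $x$ to a point whose stabiliser is forced to contain a fixed small piece of $N$, or to contradict faithfulness directly. Concretely: for each $x$, pick $p_x \in G_x \cap P$; its image $\bar p_x$ lies in a fixed finite subset $F = \{\bar p : p \in P\}$ of $V\setminus\{0\}$. Now the stabiliser of $qx$ (for $q$ in a lift of $Q$) meets $q P q^{-1}$, whose image in $V$ is $\rho(q) F$. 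Using Lemma \ref{l-strongly-irreducible} — applied with $\Sigma = F$ and with the family of subspaces being the (finitely many, by finite-dimensionality of $V$) proper subspaces spanned by subsets of $F$ that could arise — I want to arrange, by choosing $q$ appropriately, that the various translates $\rho(q)F$ are "in general position", and then play off stabilisers of different points in the same $G$-orbit against each other to produce a non-trivial element fixing a whole orbit.

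The cleanest route is probably this. Since $X$ is assumed faithful and the action restricted to $N$ is then also required to make sense of, consider the $N$-orbits. For a fixed orbit $G x_0$, the stabiliser $G_{x_0}$ contains some $p_0 \in P \subset N$. I would then translate: for any $g \in G$ mapping to $q \in Q$, the point $g x_0$ is stabilised by $g p_0 g^{-1}$, hence $N_{g x_0} \ni g p_0 g^{-1}$ whose image in $V$ is $\rho(q)\bar p_0$. Meanwhile $N_{g x_0}$ also contains an element of $P$ directly (by the definition of $X$), with image in $F$. If $\rho(q)\bar p_0 \notin F \cup \{0\}$ and more strongly if $\rho(q)\bar p_0$ is not in the span of the rest — which Lemma \ref{l-strongly-irreducible} lets me guarantee for a suitable $q$, since strong irreducibility forbids $\bigcup_q \rho(q)$ from preserving the finite union of proper subspaces spanned by subsets of $F$ — then $N_{g x_0}$ contains two elements of $N$ whose images in $V$ are linearly independent, so the image of $N_{g x_0}$ in $V$ has dimension $\geq 2$; iterating this forces the image of $N_{g x_0}$ in $V$ to be all of $V$, whence (by Lemma \ref{l-nilpotent-abelianisation}, after passing to the Malcev completion $\mathfrak{m}_N$ via Theorem \ref{t-Malcev-completion}, noting $V \simeq \mathfrak{m}_N/\llbracket\mathfrak{m}_N,\mathfrak{m}_N\rrbracket$ by Lemma \ref{l-malcev-ab}) the isolator of $N_{g x_0}$ in $N$ is all of $N$, i.e.\ $N_{g x_0}$ has finite index in $N$. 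Then $\bigcap_{g} N_{g x_0}$ is a finite-index, hence non-trivial, normal subgroup of $N$ acting trivially on the orbit $G x_0$; running this over all orbits and intersecting gives a non-trivial normal subgroup of $N$ in the kernel of the action on $X$, contradicting faithfulness.

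The main obstacle I anticipate is making the "general position" step genuinely work: I need one element $q \in Q$ (or finitely many) whose action spreads out the finite set $F \subset V$ enough that, orbit by orbit, I accumulate $\dim V$ linearly independent directions inside the image of a single stabiliser. The subtlety is that the stabiliser of a translated point contains $\rho(q)$ applied to an element I *chose* (namely $p_0$), plus an element of $F$ I do *not* control. Lemma \ref{l-strongly-irreducible} is exactly designed to dodge "bad" subspaces for the controlled part, but I must set up the finite family of forbidden subspaces correctly — the proper subspaces spanned by all subsets of $F$, together with translates, so that avoiding them forces independence — and check the induction on the dimension of the span closes. Beyond that, the reduction steps (Lemma \ref{l-non-foldable-confined}, the Malcev-theoretic identification of $V$, Lemma \ref{lem-isolator-elementary} for "full image $\Rightarrow$ finite index") are routine given the machinery already developed in the excerpt.
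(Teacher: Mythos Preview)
Your reduction via Lemma \ref{l-non-foldable-confined} and the idea of using Lemma \ref{l-strongly-irreducible} to put conjugates of $P$ in general position in $V$ are both correct and match the paper. The endgame, however, has two genuine gaps. First, the step ``the image of $N_{gx_0}$ in $V$ is all of $V$, hence $N_{gx_0}$ has finite index in $N$'' fails: the hypothesis is only $\dim_\Q(N^{ab}\otimes\Q)<\infty$, not that $N$ is finitely generated (think $N=\Z[1/p]^d$, where $\Z^d$ has full isolator but infinite index), so Lemma \ref{lem-isolator-elementary}(iii) does not apply. Second, ``running this over all orbits and intersecting'' is an uncontrolled infinite intersection over $S_G(P,G)$, and you give no reason it should be non-trivial. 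A minor third issue: you invoke $\mk_N$, but $N$ need not be torsion-free, so the Malcev completion of $N$ itself is not available; one must pass to $N/T(N)$.

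The paper avoids both main gaps with one move. Rather than arguing orbit-by-orbit, it fixes $g_1,\dots,g_d\in G$ in advance (via Lemma \ref{l-strongly-irreducible}) so that, with $P_k=g_kPg_k^{-1}$, every tuple $(h_1,\dots,h_d)\in P_1\times\cdots\times P_d$ projects to a basis of $\mk^{ab}$, where $\mk=\mk_{N/T(N)}$. Since each $H\in S_G(P,G)$ meets every $P_k$, each such $H$ contains one of the \emph{finitely many} finitely generated subgroups $H_\sigma=\langle h_1,\dots,h_d\rangle$, $\sigma\in P_1\times\cdots\times P_d$. It then suffices to show the finite intersection $\bigcap_\sigma H_\sigma$ is non-trivial. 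Here the paper never claims finite index in $N$: instead it uses Proposition \ref{p-isolator}(ii), which says that finitely generated subgroups with the same Lie span (here all of $\mk$, by Lemma \ref{l-nilpotent-abelianisation}) are commensurable, so their finite intersection has finite index in each and is therefore infinite. A short extra argument with a torsion-free finite-index subgroup of $\langle P_1,\dots,P_d\rangle$ handles the possible torsion in $N$.
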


\begin{proof}
In the sequel we fix a finite subset $P$ of $N$ such that the image of every element of $P$ in $N^{ab}/T(N^{ab})$ is non-trivial, and we aim to show $\bigcap_{H\in S_G(P, G)} H\neq \{1\}$. By Lemma \ref{l-non-foldable-confined} this is equivalent to the conclusion of the theorem. 

Set $M:=N/T(N)$,  and let $\mk:=\mk_{M}$  be the rational Malcev completion of $M$. By Lemma \ref{l-malcev-ab} we have an isomorphism  $\left(N^{ab}/T(N^{ab})\right)\otimes \Q \simeq \mk^{ab}$, so that the present assumption tells us that the representation $\rho\colon Q\to \GL(\mk^{ab})$ is strongly irreducible. Denote by $\pi\colon N\to \mk$  the composition of the quotient map from $N$ to $M$ and of the inclusion $M\hookrightarrow \mk$, and $\pi^{ab}$ be the post-composition of $\pi$ with $\mk \to \mk^{ab}$. Write $d=\dim_\Q (\mk^{ab})$.

We construct inductively a family of subsets $P_1, P_2, \cdots, P_k$ with $k \leq d$ with the following properties: \begin{itemize}
	\item for each $k$ there exists $g_k \in G$ such that $P_k = g_k P g_k^{-1}$;
	\item  for every  $\sigma = (h_1, \cdots, h_k) \in P_1 \times \cdots \times P_k$, the subspace $W_\sigma = \operatorname{Vect} (\pi^{ab}(h_1), \cdots, \pi^{ab}(h_k))$ is a $k$-dimensional subspace  of $\mk^{ab}$. 
\end{itemize}

For $k = 1$ we set $P_1=P$. Since $\pi^{ab}(h)$ is non-zero for every $h \in P$, the second condition above holds. Assume that for $k < d$ we have constructed $P_1, \cdots, P_k$ with the desired properties. When $\sigma$ ranges over $P_1 \times \cdots \times P_k$, the subspaces $W_\sigma$ form a finite collection of proper subspaces of $\mk^{ab}$. Since $\rho\colon Q\to \GL(\mk^{ab})$ is strongly irreducible and since the subset $P$ is finite, according to Lemma \ref{l-strongly-irreducible} we can find an element $q\in Q$ such that $\rho(q)(\pi^{ab}(P)) \cap W_\sigma=\varnothing$ for all $\sigma \in P_1\cdots \times \cdots P_k$.  Let $g_{k+1}\in G$ be a preimage of $q$ in $G$, and set $P_{k+1}= g_{k+1} P g_{k+1}^{-1}$, so that $\pi^{ab}(P_{k+1})=\rho(q)(\pi^{ab}(P))$. Then by construction the $\pi^{ab}$-image of  every $(k+1)$-tuple $(h_1,\cdots, h_{k+1})\in P_1\times \cdots \times P_{i+1}$ generates an $(k+1)$-dimensional subspace of $\mk^{ab}$. Continuing in this way up to $d$, we arrive at finite subsets $P_1, \cdots, P_d$ such that for any choice of $h_1\in P_1,\cdots, h_d\in P_d$, the elements $\pi^{ab}(h_1), \ldots, \pi^{ab}(h_d)$ form  a basis of $\mk^{ab}$. Hence by Lemma \ref{l-nilpotent-abelianisation} the elements  $\pi(h_1), \ldots, \pi(h_d)$ generate $\mk$ as a Lie algebra.

We write $\Sigma = P_1\times \cdots \times P_d$, and for $\sigma=(h_1,\ldots, h_d)\in \Sigma$, we denote by $H_\sigma$ the subgroup of $N$ generated by $h_1,\cdots, h_d$. Let $H\in S_G(P, G)$. Since $H$ intersects all conjugates of $P$, in particular $H$ intersects all the sets $P_i$. Hence for every $H\in S_G(P, G)$ there exists $\sigma \in \Sigma$ such that $H$ contains $H_\sigma$. Hence in order to terminate the proof it is enough to justify that $\bigcap_\sigma H_\sigma$ is non-trivial. Since all the subgroups $H_\sigma$ are finitely generated and verify $\Lie(\pi(H_\sigma))=\mk$, all the subgroups $\pi(H_\sigma)$ are commensurable by Proposition \ref{p-isolator}. So $\bigcap_\sigma \pi(H_\sigma)$ has finite index in each of them, and hence in particular is infinite. Let $\Delta=\langle P_1, \cdots, P_d\rangle$. Being a finitely generated nilpotent group, $\Delta$ is virtually torsion-free. Let $\Delta_0$ be a finite index subgroup of $\Delta$ such that $\Delta_0\cap T(N)=\{1\}$. So the restriction of $\pi$ to $\Delta_0$ is injective. For $\sigma\in \Sigma$ set $K_\sigma:=H_\sigma \cap \Delta_0$. Since $K_\sigma$ has finite index in $H_\sigma$, we still have that $\Lie(\pi(K_\sigma))=\Lie(\pi(H_\sigma))=\mk$ by Proposition \ref{p-isolator}. Thus the same argument as before tells us that $\bigcap_\sigma \pi(K_\sigma)$ is non-trivial. Since $\pi$ is injective on $\Delta_0$, this shows that $\bigcap_\sigma K_\sigma$ is non-trivial, and the proof is complete.   \qedhere
\end{proof}

\subsection{Solvable groups of finite rank}

The goal of this part is to  prove Theorem \ref{t-finite-rank-exp}. For this we need some preliminaries on solvable groups of finite rank, which will allow to reduce the proof of this theorem to the case of a group $G$ falling into the setting of Theorem \ref{t-strongly-irreducible}.

Given a group $G$, we denote $\Res(G)$ the \textbf{finite residual} of $G$, which is defined as the intersection of all  subgroups of finite index of $G$. Note that $\Res(G)$ is a normal subgroup of $G$,  and $G/\Res(G)$ is the largest residually finite quotient of $G$. We call a group \textbf{quasi-cyclic} if it is isomorphic to $\Z[\frac{1}{p}]/\Z$ for some prime $p$. 

The following proposition summarizes the main structural properties of solvable groups of finite rank. Proofs can be found in \cite[\S 5]{Lennox-Rob}.

\begin{thm}\label{thm-preliminaries-prufer}
Let $G$ be a finitely generated solvable group of finite rank. Set $R:=\Res(G)$, $N:=\Fit(G)$. Then the following hold:
\begin{enumerate}[label=(\roman*)]
\item $N$ is nilpotent and $Q = G/N$ is virtually abelian. 
\item $R$ is a direct product of finitely many quasi-cyclic groups.
\item  $G/R$ is virtually torsion-free. Moreover it is linear over $\Q$. 
\end{enumerate}
\end{thm}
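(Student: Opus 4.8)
We sketch the strategy; the three assertions are a compilation of classical facts from the structure theory of finitely generated solvable groups of finite rank, and the plan is to assemble them after the standard reduction to minimax groups, so the honest proof ultimately cites \cite[\S 5]{Lennox-Rob} for the hardest ingredients. The starting point is the theorem of Robinson (see \cite[Ch.\ 5]{Lennox-Rob}) that a finitely generated solvable group has finite rank if and only if it is \emph{minimax}, i.e.\ admits a finite subnormal series whose factors are cyclic or quasi-cyclic. The class of solvable minimax groups is closed under subgroups, quotients and extensions, and every such group has finite torsion-free rank and involves only finitely many primes. From now on $G$ is finitely generated solvable minimax.

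For the first assertion, the content is that a finitely generated solvable minimax group is nilpotent-by-(abelian-by-finite). I would first check that $N=\Fit(G)$ is nilpotent (not merely locally nilpotent): this is where the minimax hypothesis is used, ruling out infinite ascending products of nilpotent normal subgroups, so that $\Fit(G)$ is genuinely nilpotent. Then one shows $Q=G/N$ is virtually abelian: the centralizer $C_G(N)$ is essentially contained in $N$, so $Q$ acts almost faithfully on the rational Malcev completion $\mk_N$ of (the torsion-free part of) $N$ and hence embeds, up to a finite kernel, into an algebraic group of automorphisms; a solvable subgroup is virtually triangularizable by Lie--Kolchin, its unipotent part is forced back into $\Fit(G)$, and what remains is abelian-by-finite. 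These arguments are carried out in \cite[\S 5]{Lennox-Rob}.

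Finally, for the second and third assertions I would analyze $R=\Res(G)$ directly. Since $R$ lies in every finite-index subgroup, short arguments show $R\le\Fit(G)$ and that $R$ is abelian and divisible; being also minimax and torsion --- this last point is the delicate one, and one deduces it from the fact that $G/R$ is residually finite with the same torsion-free rank as $G$, which forces $R$ to be periodic --- it follows that $R$ is a direct product of finitely many quasi-cyclic groups, since a divisible periodic abelian minimax group has exactly this shape. For the third assertion, $G/R$ is by construction the largest residually finite quotient of $G$; a finitely generated residually finite solvable minimax group is virtually torsion-free, because its periodic radical is a \emph{finite} normal subgroup (residual finiteness kills the quasi-cyclic part of the otherwise Chernikov torsion subgroup), and then, by a theorem going back to Malcev, a finitely generated torsion-free solvable minimax group is linear over $\Q$ --- one embeds $\Fit$ through its Malcev completion and extends by the arithmetic action of the virtually abelian quotient. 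The main obstacle is precisely that these last two inputs --- the quasi-cyclic decomposition of the finite residual and the $\Q$-linearity of $G/R$ --- are substantial structural theorems rather than formal consequences; the value of the plan is to isolate which facts are needed and how they combine, while the theorems themselves are quoted from \cite[\S 5]{Lennox-Rob}.
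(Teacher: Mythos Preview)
The paper does not give its own proof of this theorem: it states the result and says ``Proofs can be found in \cite[\S 5]{Lennox-Rob}.'' Your proposal is a correct outline of the standard structure-theoretic argument and cites the same reference for the substantial ingredients, so in that sense you are doing exactly what the paper does, only with more commentary. There is nothing to compare on the level of mathematical approach --- both routes are ``quote Lennox--Robinson'' --- and your sketch of how the pieces fit together (minimax reduction, nilpotency of $\Fit(G)$, Lie--Kolchin for the quotient, divisibility and periodicity of $R$, $\Q$-linearity via Malcev completion) is accurate.
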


   The following is a classical consequence of a criterion of Hall. We include a proof for completeness. 
   
   \begin{prop} \label{p-finite-rank-non-virt-nilp}
   Let $G$ be a finitely generated solvable group of finite rank, and assume that $N$ is a nilpotent normal subgroup of $G$ such that $Q=G/N$ is virtually abelian.  Set $V=N^{ab}\otimes \Q$ and let  $\rho\colon Q\to \GL(V)$ be the associated linear representation. Then the following are equivalent. 
   \begin{enumerate}[label=\roman*)]
   \item \label{i-virt-nilp} The group $G$ is virtually nilpotent.
   \item \label{i-roots-unity} There exists $n\ge 0$ such that the complex eigenvalues of $\rho(q)$ are $n$th roots of unity for every  $q\in Q$.    \end{enumerate}
   
\end{prop}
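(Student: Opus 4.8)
The implication (i) $\Rightarrow$ (ii) is the easy direction. The plan is: assuming $G$ virtually nilpotent, pick a nilpotent subgroup of finite index, replace it by its normal core, and multiply by $N$; since the product of two normal nilpotent subgroups is nilpotent by Fitting's theorem, this produces a nilpotent normal subgroup $G_1 \unlhd G$ of finite index with $N \le G_1$. Writing $Q_1 = G_1/N \le Q$, the conjugation action of $G_1$ on $N^{ab}$ factors through $Q_1$, and for $g \in G_1$, $m \in N$ one checks by induction the identity $(\rho(g) - \mathrm{id})^k \bar{m} = \overline{[g,[g,\dots,[g,m]]]}$ (with $k$ copies of $g$) in additive notation on $N^{ab} \otimes \Q$; this vanishes as soon as $k$ exceeds the nilpotency class of $G_1$. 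Hence $\rho(q)$ is unipotent for $q \in Q_1$, and for arbitrary $q \in Q$ the power $\rho(q)^M$ is unipotent with $M$ the index of the normal core of $Q_1$ in $Q$, so the eigenvalues of $\rho(q)$ are $M$-th roots of unity; thus (ii) holds with $n = M$.

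For (ii) $\Rightarrow$ (i) I would proceed in two stages. First, a purely linear-algebraic reduction to the unipotent case: since $Q$ is finitely generated and virtually abelian, fix a finite-index abelian normal subgroup $A \unlhd Q$ and apply the multiplicative Jordan decomposition to the $\rho$-images of a finite generating set of $A$. Because $\rho(A)$ is abelian, the semisimple parts pairwise commute, and as their eigenvalues are $n$-th roots of unity they generate a finite group; the unipotent parts generate a unipotent group by Kolchin's theorem. Hence $\rho(A)$ contains a unipotent subgroup of finite index, and pulling back we obtain a finite-index normal subgroup $G_0 \unlhd G$ with $N \le G_0$ such that $Q_0 := G_0/N$ acts unipotently on $V = N^{ab} \otimes \Q$. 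Then the idea is to apply P. Hall's nilpotency criterion to $G_0$: since $N$ is nilpotent and normal in $G_0$, it suffices to prove that $G_0/[N,N]$ is virtually nilpotent (after which a further passage to finite index plus Hall's criterion yields that $G_0$, hence $G$, is virtually nilpotent). The group $E := G_0/[N,N]$ is a finitely generated metabelian group of finite rank, an extension of the finitely generated abelian group $Q_0$ by the abelian group $N^{ab}$ on which $Q_0$ acts unipotently after $\otimes\,\Q$; thus the augmentation ideal $I$ of $\Z Q_0$ acts nilpotently on $N^{ab} \otimes \Q$, and therefore $I^d N^{ab} \subseteq T(N^{ab})$ where $d = \dim_\Q V$.

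The hard part, and the only place where the finite-rank hypothesis is genuinely used, is to control the torsion of $N^{ab}$: the statement is false without finite rank (for instance $C_p \wr \Z$ is finitely generated solvable, satisfies (ii) vacuously since there $V = 0$, yet is not virtually nilpotent), precisely because the obstruction to nilpotency can be hidden in an unbounded torsion section. Here I would invoke the structure theorem for finitely generated solvable groups of finite rank (Theorem \ref{thm-preliminaries-prufer}): the finite residual $R = \Res(G)$ is a product of finitely many quasi-cyclic groups, it lies in every finite-index subgroup and in $\Fit(G)$, and $G/R$ is virtually torsion-free and linear over $\Q$. The plan is to reduce to $G/R$ — showing that $G$ is virtually nilpotent if and only if $G/R$ is, using the structural constraints on the action of a finitely generated finite-rank group on its finite residual — and then to run the argument of the previous paragraph for $G/R$, where virtual torsion-freeness forces $T(N^{ab})$ to be finite, so that $I^d N^{ab}$ is finite and Hall's criterion applies after one more passage to finite index. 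The main obstacle is thus the interplay between Hall's criterion, which demands genuine nilpotency, and the fact that $N$ need not be finitely generated and may carry torsion: the clean implication ``unipotent action $\Rightarrow$ nilpotent extension'' holds only modulo torsion, and upgrading it to the group level is exactly what forces the finite-rank structure theory into the argument rather than a self-contained module-theoretic computation.
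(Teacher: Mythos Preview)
Your proof of (i) $\Rightarrow$ (ii) is correct and matches the paper's argument. For (ii) $\Rightarrow$ (i), you assemble exactly the same ingredients as the paper---the Jordan decomposition to pass to a finite-index subgroup with unipotent action, Hall's nilpotency criterion to reduce to $N$ abelian, and Theorem \ref{thm-preliminaries-prufer} to control torsion---but you deploy the last one in a more complicated way than necessary.

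You propose to apply the structure theorem to the original $G$, work in $G/\Res(G)$ where virtual torsion-freeness is available, and then lift virtual nilpotency back across $\Res(G)$; you rightly flag this lift as requiring extra structural input. The paper sidesteps this entirely by reversing the order: reduce to $N$ abelian \emph{first} via Hall's criterion, so that the group under consideration (your $E = G_0/[N,N]$) is finitely generated and virtually metabelian, hence residually finite by Hall's theorem. Then $\Res(E)=1$, and Theorem \ref{thm-preliminaries-prufer} applied directly to $E$ gives that $E$ is virtually torsion-free. After modding out the (now finite) torsion of $N^{ab}$ and passing to a finite-index subgroup, your own unipotency argument finishes the proof with no need to analyse $\Res(G)$ or to lift anything. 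In short: once you have written down that $E$ is finitely generated metabelian of finite rank, residual finiteness of metabelian groups is the missing one-line observation that makes the detour through $G/\Res(G)$ unnecessary.
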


\begin{proof} 
\ref{i-virt-nilp} $\Rightarrow$ \ref{i-roots-unity}. Let $G_0$ be a finite index subgroup of $G$ containing $N$ that is nilpotent, and let $Q_0$ be its image in $Q$. Write $T=T(N^{ab})$  and $A=N^{ab}/T$, so that $V\simeq A \otimes \Q$. Let $\tilde{T}$ be the preimage of $T$ in $N$. Then $\tilde{T}$  is  normal in $G_0$ and  $H=G_0/\tilde{T}$ satisfies $A\unlhd H$ and $H/A=Q_0$. Then there exists $m$ such that for every $h\in H$ and $a\in A$ the $m$-fold iterated commutator $[h, a]_m:=[h, [h,\cdots, [h,a]\cdots]]$ is trivial. But in $V= A\otimes \Q$ we have the equality $[h, a]_m \otimes 1=(\rho(q)-1)^m v$ where $ v=a\otimes 1$ and $q$ is the projection of $h$ to $Q$. It follows that $\rho(Q_0)$ consists of unipotent elements, and since every element of $Q$ has a power in $Q_0$, the claim follows.

For the converse, suppose first that \ref{i-roots-unity} $\Rightarrow$  \ref{i-virt-nilp}  holds true in the case where $N$ is abelian. Since assumption  \ref{i-roots-unity} depends on $N^{ab}$ rather than $N$,  the group $G/N'$ still satisfies \ref{i-roots-unity}. Hence by the current assumption we infer that $G/N'$ is virtually nilpotent. Since $N$ is nilpotent, we deduce from a result of Hall \cite[1.2.17]{Lennox-Rob} that $G$ is also virtually nilpotent. Hence it is enough to see that \ref{i-roots-unity} $\Rightarrow$  \ref{i-virt-nilp}  is true when $N$ is abelian. In that case $G$ is virtually metabelian, and hence residually finite.  So by Theorem \ref{thm-preliminaries-prufer} $G$ is virtually torsion-free. Upon modding out by the torsion subgroup of $N$, we can assume that $N$ is torsion-free. Also upon passing to a finite index subgroup, \ref{i-roots-unity}  tells us that every element of $Q$ is unipotent. By a standard argument the group $Q$ acts unipotently on $V = N\otimes \Q$, and it follows that $G$ is nilpotent. \qedhere
 \end{proof}

In the sequel we say that a linear automorphisms $f\in \GL(V)$  is strongly irreducible if $\langle f\rangle$ is a strongly irreducible subgroup of $\GL(V)$, i.e. if $f^n$ does not preserve any non-zero proper subspace of $V$ for all $n\neq 0$. 

The following is a basic lemma from linear algebra.

\begin{lem}\label{l-strongly-irreducible-power}
	Consider a finite dimensional vector space $V$ over $\Q$, and $f \in \GL(V)$ such that the complex eigenvalues of $f$ are not all roots of unity. Then there exists $m>0$ and a non-zero subspace $W\subset V$ such that $W$ is invariant by $f^m$ and the restriction of $f^m$ to $W$ is strongly irreducible and has no root of unity as a complex eigenvalue. 
\end{lem}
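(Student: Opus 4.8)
The plan is to work with the rational Jordan decomposition of $f$ and to isolate a piece on which the action is "irrational enough" to be strongly irreducible. First I would pass to the complexification $V_\C = V \otimes \C$ and decompose it into generalized eigenspaces of $f$. Since not all eigenvalues of $f$ are roots of unity, at least one eigenvalue $\lambda$ satisfies $\lambda^n \neq 1$ for every $n > 0$, i.e. $\lambda$ is not a root of unity. The set of such eigenvalues is stable under the Galois action (because being a root of unity is a Galois-invariant condition), so the $f$-invariant rational subspace $U \subseteq V$ corresponding to the Galois orbit(s) of non-root-of-unity eigenvalues is defined over $\Q$ and non-zero; replacing $V$ by $U$, I may assume that no eigenvalue of $f$ is a root of unity.

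Next I would pass to a power to kill the obstruction coming from eigenvalues that differ by a root of unity. Concretely, let $\mu_1, \dots, \mu_r$ be the distinct eigenvalues of $f$ (over $\C$); the relation "$\mu_i/\mu_j$ is a root of unity" has only finitely many classes among the $\mu_i$, so there is $m > 0$ such that $\mu_i^m / \mu_j^m$ is a root of unity only when it equals $1$, i.e. $\mu_i^m = \mu_j^m$. I would then choose a single eigenvalue $\nu = \mu_i^m$ of $f^m$ and let $W \subseteq V$ be the smallest $\Q$-rational $f^m$-invariant subspace containing the generalized $\nu$-eigenspace of $f^m$ in $V_\C$; equivalently $W$ is the sum of the generalized eigenspaces for the Galois orbit of $\nu$. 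Then $W$ is non-zero, defined over $\Q$, $f^m$-invariant, and (by the choice of $m$) over $\C$ the eigenvalues of $f^m|_W$ are exactly the Galois conjugates of $\nu$, all of the same absolute value, none a root of unity. Further replacing $m$ by a multiple if needed, I may also arrange that $f^m|_W$ has no eigenvalue that is a root of unity (this is already guaranteed by the first reduction) and that the Galois orbit structure is stable under further powers.

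It remains to check that $g := f^m|_W$ is strongly irreducible, i.e. that for every $k \neq 0$, $g^k$ preserves no non-zero proper subspace of $W$. Suppose $g^k$ preserves $0 \neq W' \subsetneq W$. Then the eigenvalues of $g^k|_{W'}$ form a proper, Galois-stable, nonempty subset of the eigenvalues of $g^k$. But the eigenvalues of $g^k$ are $\{\nu^k_\sigma\}$ where $\nu_\sigma$ runs over the Galois conjugates of $\nu$; since these conjugates are permuted transitively by the Galois group and $x \mapsto x^k$ commutes with the Galois action, the set of $k$-th powers is again a single Galois orbit — so it has no proper nonempty Galois-stable subset, a contradiction. (One must also ensure the $k$-th power map doesn't identify distinct conjugates in a way that breaks transitivity; this is handled by the earlier choice of $m$, which forces $\nu_\sigma^k = \nu_{\sigma'}^k \Rightarrow \nu_\sigma = \nu_{\sigma'}$ after replacing $m$ by a suitable multiple, since $\nu_\sigma/\nu_{\sigma'}$ is not a root of unity unless equal to $1$.) Hence $g$ is strongly irreducible, and by construction its complex eigenvalues are not roots of unity, which is the desired conclusion.

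The main obstacle I anticipate is the bookkeeping around powers: one must choose $m$ so that simultaneously (i) eigenvalues that become equal under $f \mapsto f^m$ are exactly those differing by a root of unity, and (ii) this property is inherited by all further powers $g^k$, so that Galois-transitivity of the eigenvalue orbit is preserved under taking $k$-th powers. This is the delicate point and is what makes "strongly irreducible" (rather than merely "irreducible") come out; it is essentially the statement that the multiplicative group generated by the ratios $\nu_\sigma/\nu_{\sigma'}$ is torsion-free, which follows from no such ratio being a root of unity. Everything else — passing to complexification, extracting Galois-rational invariant subspaces, reading off invariant subspaces from eigenvalue sets — is standard linear algebra over $\Q$.
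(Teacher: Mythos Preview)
Your approach via Galois orbits is more explicit than the paper's, but there is a genuine gap in the strong irreducibility step. You take $W$ to be the sum of the \emph{generalized} eigenspaces of $f^m$ for the Galois orbit of $\nu$, and then argue that a proper $g^k$-invariant subspace $W'\subsetneq W$ would have an eigenvalue set forming a \emph{proper} Galois-stable subset of the eigenvalues of $g^k$. The word ``proper'' is unjustified: if the generalized $\nu$-eigenspace has dimension greater than one (equivalently, if $\nu$ occurs with multiplicity in the characteristic polynomial of $f^m$ --- and your passage to a power $m$ can itself create such multiplicities by collapsing eigenvalues that differ by a root of unity), then $\dim W > [\Q(\nu):\Q]$ and $g=f^m|_W$ already has proper invariant subspaces carrying the \emph{full} eigenvalue set, for instance $\ker q(g)$ where $q$ is the minimal polynomial of $\nu$ over $\Q$. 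Your careful choice of $m$ guarantees that the $k$-th power map is injective on the Galois orbit of $\nu$, but that only controls \emph{which} eigenvalues appear, not their multiplicities. The fix is straightforward: replace $W$ by a $g$-cyclic (equivalently, $\Q[g]$-irreducible) subspace, so that $\dim W=[\Q(\nu):\Q]$ and all eigenvalues are simple; then your Galois-orbit argument does prove strong irreducibility.

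By contrast, the paper avoids all of this bookkeeping with a short minimality argument. It first passes to $V'=\ker p_1(f)$ for an irreducible factor $p_1$ of the minimal polynomial having no root of unity among its complex roots (so $f|_{V'}$ has no root-of-unity eigenvalue), and then simply chooses among all pairs $(W,m)$, with $m\ge 1$ and $W\subset V'$ a non-zero $f^m$-invariant subspace, one where $\dim W$ is minimal. Strong irreducibility of $f^m|_W$ is then immediate from minimality, and no Galois theory is needed. Your route, once patched, has the merit of making $m$ and $W$ concrete; the paper's route is shorter and more robust.
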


\begin{proof}
	Let $p\in \Q[T]$ be the minimal polynomial of $f$, and $p=p_1^{r_1}\cdots p_k^{r_k}$ the factorisation of $p$ in $\Q[T]$, where the $p_i$s are irreducible and coprime. Then one of the irreducible factors of $p$, say $p_1$, does not admit any root of unity among its complex roots. The subspace $V'=\ker p_1(f)$  is $f$-invariant and non-zero, and the complex eigenvalues of $f|_{V'}$ are precisely the roots of $p_1$, thus none of them is a root of unity. Among pairs $(W, m)$, where $m\ge 1$ and $W\subset V'$ is a non-zero $f^m$-invariant subspace, choose one where $W$ has minimal dimension. Then by construction $f^m|_W$ is strongly irreducible, and since  $f|_{V'}$ does not admit any root of unity as a complex eigenvalue, the same holds for $f^m|_W$. \qedhere
\end{proof}

The following result, which might be of independent interest, will reduce the proof of Theorem \ref{t-finite-rank-exp} to finitely generated solvable groups of finite rank of a particular form.

\begin{prop}\label{prop-non-virt-nilp-subgroup}
Let $G$ be a finitely generated solvable group of finite rank which is not virtually nilpotent. Then $G$ admits a finitely generated subgroup $H$ such that: 
\begin{enumerate}[label=\roman*)]
	\item $H$ is not virtually nilpotent;
\item  $H$ splits as a semi-direct product $H=M\rtimes \langle t \rangle$, where $M$ is nilpotent;
\item the action of $t$ on  $M$ induces a strongly irreducible automorphism of  $M^{ab}\otimes \Q$.
\end{enumerate}
\end{prop}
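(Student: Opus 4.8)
The plan is to set up a chain of reductions that eventually lands inside a single torsion-free metabelian group to which Lemma \ref{l-strongly-irreducible-power} can be applied, and then to carefully re-assemble a finitely generated subgroup of the required semidirect-product form. First I would pass to the quotient $G/\Res(G)$, which by Theorem \ref{thm-preliminaries-prufer} is virtually torsion-free and linear over $\Q$; since $\Res(G)$ is a product of quasi-cyclic groups and hence nilpotent (indeed abelian), $G$ is virtually nilpotent if and only if $G/\Res(G)$ is, so replacing $G$ by $G/\Res(G)$ preserves the hypothesis that $G$ is not virtually nilpotent. After a further passage to a finite index subgroup I may assume $G$ is torsion-free and residually finite. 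Next, with $N=\Fit(G)$ and $Q=G/N$ virtually abelian, I set $V=N^{ab}\otimes\Q$ and look at $\rho\colon Q\to\GL(V)$. Since $G$ is not virtually nilpotent, Proposition \ref{p-finite-rank-non-virt-nilp} tells us that the complex eigenvalues of $\rho(q)$ fail to be roots of unity for some $q\in Q$; lifting $q$ (or a suitable power so that it lies in an abelian finite-index subgroup of $Q$, and so that its image in $\GL(V)$ is the relevant automorphism) to an element $t\in G$, the automorphism $f=\rho(q)\in\GL(V)$ has a complex eigenvalue that is not a root of unity.

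The heart of the argument is then to produce a finitely generated nilpotent subgroup $M\le N$, normalized by $t$, on which the induced action on $M^{ab}\otimes\Q$ is strongly irreducible. Here I would invoke Lemma \ref{l-strongly-irreducible-power}: it supplies an integer $m>0$ and a nonzero $f^m$-invariant subspace $W\subseteq V$ on which $f^m$ acts strongly irreducibly with no root of unity among its eigenvalues. Replacing $t$ by $t^m$, I may assume $W$ is $f$-invariant and $f|_W$ is strongly irreducible. The key point is that $W$, being $f$-invariant and defined over $\Q$, is spanned by $\Q$-points of $V=N^{ab}\otimes\Q$; I choose finitely many elements of $N$ whose images in $N^{ab}\otimes\Q$ form a $\Q$-basis of a lattice spanning $W$, and let $M$ be the subgroup of $N$ generated by these elements together with all their $t^{\pm 1}$-conjugates needed to obtain $t$-invariance. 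Since $N$ is nilpotent of finite rank, the orbit of a finite set under a single automorphism generates a finitely generated (nilpotent) subgroup — this is where finite rank is essential, as it prevents the conjugates from generating something infinitely generated; more precisely one argues inductively along the lower central series, using that each $\gamma_i(N)/\gamma_{i+1}(N)$ has finite rank and that a finitely generated $\Z[t,t^{-1}]$-module supported on a subspace where $t$ acts with eigenvalues that are algebraic units of bounded degree is finitely generated as a group. Then $H=M\rtimes\langle t\rangle$ is finitely generated, and by construction the action of $t$ on $M^{ab}\otimes\Q$ contains $W$ as a strongly irreducible subrepresentation — but I still need it to \emph{be} strongly irreducible on all of $M^{ab}\otimes\Q$, so I must choose $M$ minimally: take $M$ to be generated by a $t$-orbit of a single vector lifting a nonzero element of $W$, so that $M^{ab}\otimes\Q$ is a cyclic $\Q[t,t^{-1}]$-module contained in $W$, hence equals $W$ by irreducibility of $f|_W$, forcing the action to be strongly irreducible. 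Finally, $H$ is not virtually nilpotent, because by Proposition \ref{p-finite-rank-non-virt-nilp} applied to $H$ (with nilpotent normal subgroup $M$ and quotient $\langle t\rangle$ cyclic) virtual nilpotence would force the eigenvalues of $t$ on $M^{ab}\otimes\Q=W$ to be roots of unity, contradicting the choice of $W$.

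The main obstacle I anticipate is the finite-generation claim in the middle paragraph: given a finite subset of the nilpotent group $N$ of finite rank and a single normalizing automorphism $t$, one must show the generated $t$-invariant subgroup $M$ is finitely generated and that its abelianization tensored with $\Q$ is exactly the cyclic $\Q[t,t^{-1}]$-module one wants, not something larger coming from the lower central series. The clean way to handle this is to work first inside $\mk_M$, the rational Malcev completion, where the relevant notions (Lie subalgebra generated by a $t$-orbit, the abelianization) are linear-algebraic and one can use Lemma \ref{l-nilpotent-abelianisation} and Proposition \ref{p-isolator}; one builds a $t$-invariant Lie subalgebra $\mathfrak{h}\subseteq\mk_M$ whose abelianization is $W$, checks it is finite-dimensional (automatic) and then descends to a finitely generated subgroup of $N$ by taking the subgroup generated by finitely many $\Z$-points, exploiting that a lattice in a $\Q[t,t^{-1}]$-module on which $t$ has a fixed characteristic polynomial is finitely generated over $\Z[t,t^{-1}]$ and hence, together with the nilpotency, finitely generated as a group. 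I would present this reduction to Lie algebras explicitly, as it is the technically delicate step; everything else is bookkeeping with the structure theory already recalled in Theorem \ref{thm-preliminaries-prufer} and Proposition \ref{p-finite-rank-non-virt-nilp}.
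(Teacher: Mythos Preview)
Your overall strategy matches the paper's, but the crucial step has a real gap. You assert that if $M$ is generated by the $t$-orbit of a single lift $n\in N$ of a nonzero vector in $W$, then ``$M^{ab}\otimes\Q$ is a cyclic $\Q[t,t^{-1}]$-module contained in $W$''. What lies in $W$ is only the \emph{image} of $M^{ab}\otimes\Q$ under the natural map to $N^{ab}\otimes\Q$, and that map need not be injective when $N$ is nonabelian. For a concrete failure, take $\nk$ free $2$-step nilpotent on $e_1,e_2,e_3$ with $t_*$ acting on $\nk^{ab}$ so that $W=\operatorname{Vect}(\bar e_1,\bar e_2)$ is irreducible with minimal polynomial $p$ and $t_*\bar e_3=2\bar e_3$; choosing $n=e_1+\llbracket e_1,e_3\rrbracket$ one computes $p(t_*)n\ne 0$, a nonzero element of $\llbracket\nk,\nk\rrbracket$ lying outside $\llbracket\mk_M,\mk_M\rrbracket\subseteq\bigwedge^2 W$, so $\mk_M^{ab}$ strictly surjects onto $W$ and carries a reducible $t$-action. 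Your final paragraph senses this difficulty and proposes to ``build a $t$-invariant Lie subalgebra $\hk$ whose abelianisation is $W$'', but does not say how; neither the preimage of $W$ in $\nk$ nor the Lie algebra generated by a lift of $W$ achieves it.

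The paper's remedy is a minimality argument that sidesteps the lifting problem entirely. Rather than fixing $W$ first, one considers all pairs $(m,\hk)$ with $\hk\subset\nk$ a $g_*^m$-invariant Lie subalgebra on whose abelianisation $g_*^m$ has a non-root-of-unity eigenvalue, and chooses one with $\dim_\Q\hk$ minimal. If $g_*^m$ on $\hk^{ab}$ were not strongly irreducible, Lemma~\ref{l-strongly-irreducible-power} would produce $\ell>0$ and a proper $V\subsetneq\hk^{ab}$; the preimage $\hk'\subsetneq\hk$ of $V$ is again a Lie subalgebra, and since $(\hk')^{ab}$ surjects onto $V$ it inherits a non-root-of-unity eigenvalue, so $(m\ell,\hk')$ contradicts minimality. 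That is the missing idea. A secondary point: your reduction via $G/\Res(G)$ and a finite-index subgroup lands $H$ in a subquotient of $G$, not in $G$ itself; the paper instead mods out by $T(N)$, finds $H=M\rtimes\langle t\rangle$ there, and lifts back using that $T(N)$ is torsion so $\tilde M^{ab}\otimes\Q\simeq M^{ab}\otimes\Q$ for any finitely generated lift $\tilde M$.
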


\begin{proof} 
Set $N=\Fit (G)$, $Q=G/N$ and $\rho\colon G\to \GL(N^{ab}\otimes \Q)$ be the associated representation. Recall from Theorem \ref{thm-preliminaries-prufer} that $N$ is nilpotent and $Q$ is virtually abelian. Suppose for a moment that $N$ is torsion-free. Let $\nk$ be the rational Malcev completion of $N$, so that we can  identify $N^{ab}\otimes \Q$ with $\nk^{ab}$ as $\Q Q$-modules (see Lemma \ref{l-malcev-ab}). Since $G$ is not virtually nilpotent, there exists $q\in Q$ such that $\rho(q)$ has an eigenvalue which is not a root of unity (Proposition \ref{p-finite-rank-non-virt-nilp}). Let $g\in G$ be an element that projects to $q$. The automorphism induced by the conjugation action of $g$ on $N$ extends to a Lie algebra automorphism, denoted $g_*\in \aut(\nk)$.  

Let $\mathcal{E}$ be the set of pairs $(m, \hk)$, where $m>0$ is a integer and $\hk\subset \nk$ is a $g_*^m$-invariant Lie subalgebra with the property that the linear automorphism of $\hk/\llbracket \hk, \hk\rrbracket$ induced by $g_*^m$ has an eigenvalue which is not a root of unity. Note that $\mathcal{E}\neq \varnothing$ as it contains $(1, \nk)$. Choose a pair $(m, \hk)\in \mathcal{E}$ such that $\dim_\Q \hk$ is minimal.  We claim that $g_*^m$ induces a strongly irreducible element of $\GL(\hk/\llbracket \hk, \hk\rrbracket)$. By Lemma \ref{l-strongly-irreducible-power} applied to $g_*^m$, there exists  $\ell>0$ and a $g_*^{m\ell}$-invariant  subspace $V\subset \hk/\llbracket \hk, \hk\rrbracket$ such that the restriction of $g_*^{m\ell}$ to $V$ is strongly irreducible and has no root of unity as an eigenvalue. Then the preimage $\hk'\subset \hk$ of $V$ is a Lie subalgebra $\hk'$ of  $\hk$ such that $(\hk', m\ell)\in \mathcal{E}$, so the minimality of $\hk$ implies $\hk'=\hk$. Therefore $V$ is equal to $\hk/\llbracket\hk, \hk\rrbracket$, and $g_*^m$ is strongly irreducible on $\hk/\llbracket \hk, \hk\rrbracket$.  We have $\hk = \operatorname{Vect}(\hk \cap N)$ by Proposition \ref{p-isolator}, so there exist $n_1,\ldots,n_k \in N$ that form a basis of the $\Q$-vector space $\hk$. Let $t=g^m$. and $H = \langle n_1,\ldots,n_k, t\rangle = M \rtimes \langle t\rangle$, where $M = \langle t^i n_j t^{-i,} : i \in \Z, j = 1,\ldots,k\rangle $ is contained in $N$. Then $\hk$ is isomorphic to the rational Malcev completion of $M$, so by Lemma \ref{l-malcev-ab} we have $M^{ab}\otimes \Q\simeq \hk/\llbracket \hk, \hk\rrbracket$.  By Proposition \ref{p-finite-rank-non-virt-nilp}) the subgroup $H$ is not virtually nilpotent, and hence $H$ satisfies all the conclusions. 

Now for the general case, we can apply the previous argument to $G / T(N)$, and deduce the existence of a subgroup $H = M \rtimes \langle t \rangle$ of $G / T(N)$ with $M \leq \Fit(G / T(N)) =  \Fit (G)/T(N)$, that satisfies the conclusion. If $\tilde{H}$ is a finitely generated subgroup $G$  of the form $\tilde{H} = \tilde{M} \rtimes \langle \tilde{t}  \rangle$ with $\tilde{M}/T(N) = M$, then we have $\tilde{M}^{ab} \otimes \Q \simeq M^{ab} \otimes \Q$ since $T(N)$ is a torsion group, and it follows that $\tilde{H}$ satisfies the desired properties.
\end{proof}

\begin{thm} \label{t-finite-rank-exp}
Let $G$ be a finitely generated solvable group of finite rank, and suppose that $G$ is not virtually nilpotent. Then $G$ has a Schreier growth gap  $\exp(n)$. 
\end{thm}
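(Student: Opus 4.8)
The plan is to reduce Theorem~\ref{t-finite-rank-exp} to Theorem~\ref{t-strongly-irreducible} by exhibiting, inside $G$, a subgroup with a strongly irreducible extension structure whose nilpotent kernel has exponential relative growth along some non-foldable subset. By Proposition~\ref{prop-monoton}\,i), the growth of actions is monotone under passage to finitely generated subgroups, so it suffices to find \emph{one} finitely generated subgroup $H \le G$ together with a non-foldable subset $\mathcal{L}\subset H$ whose relative growth in $H$ is exponential; then $\mathcal L$ is non-folded in every faithful $H$-set, hence (since $X$ restricted to $H$ is faithful whenever $X$ is a faithful $G$-set) $\vol_{G,X}(n)\succcurlyeq \vol_{H,X}(n)\succcurlyeq \relvol_{(H,\mathcal{L})}(n)\succcurlyeq \exp(n)$ via Lemma~\ref{lem-exp-subset-growth}.

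First I would invoke Proposition~\ref{prop-non-virt-nilp-subgroup}: since $G$ is a finitely generated solvable group of finite rank that is not virtually nilpotent, it contains a finitely generated subgroup $H = M \rtimes \langle t \rangle$ with $M$ nilpotent, $H$ not virtually nilpotent, and the action of $t$ on $M^{ab}\otimes\Q$ strongly irreducible. This $H$ is exactly of the form to which Theorem~\ref{t-strongly-irreducible} applies: the extension $1 \to M \to H \to \langle t\rangle \to 1$ is strongly irreducible (the image of $\langle t\rangle$ in $\GL(M^{ab}\otimes\Q)$ is a strongly irreducible cyclic group, and $\dim_\Q(M^{ab}\otimes\Q) < \infty$ because $M$ has finite rank). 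Therefore any subset $\mathcal{L}\subset M$ whose projection to $M^{ab}/T(M^{ab})$ is injective is a non-foldable subset of $H$. It remains to choose such an $\mathcal{L}$ with exponential relative growth in $H$.

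The key point is that $M^{ab}\otimes \Q$, equipped with the action of the strongly irreducible automorphism $\overline{t}$, contains an orbit of a nonzero vector that spans the space and grows exponentially in the word metric of $H$: this is where the non-root-of-unity eigenvalue does the work. Concretely, pick $n_0\in M$ projecting to a nonzero vector $v_0 \in M^{ab}/T(M^{ab})$; because $\overline t$ has an eigenvalue off the unit circle (a consequence of $H$ not being virtually nilpotent together with Proposition~\ref{p-finite-rank-non-virt-nilp}) and is strongly irreducible, the projections of the elements $\{\, w\cdot n_0 \cdot w^{-1} : w \in \langle t\rangle,\ |w|\le n\,\}$ to $M^{ab}/T(M^{ab})$ — equivalently the $\overline t$-orbit points $\overline t^{\,j}\overline{n_0}$ for $|j|\le n$, together with their $\Z$-span inside $M^{ab}\otimes\Q$ — fill out a set whose cardinality in the lattice $M^{ab}/T(M^{ab})$ grows exponentially in $n$, since coordinates with respect to an eigenbasis grow like $|\lambda|^{j}$. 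Taking $\mathcal{L}$ to be a set of elements of $M$ mapping bijectively onto such an exponentially large, $t$-spread-out subset of $M^{ab}/T(M^{ab})$ (choosing one preimage per target point, and noting each such preimage lies within bounded $H$-distance of a word $t^{j} n_0^{\pm 1} t^{-j}$, hence within distance $\simeq n$ of the identity), we get $\relvol_{(H,\mathcal{L})}(n)\succcurlyeq\exp(n)$, and $\mathcal{L}$ is non-foldable in $H$ by the previous paragraph.

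The main obstacle I anticipate is the bookkeeping in the last step: making precise that one can select preimages in $M$ of an exponentially large piece of the $\overline t$-orbit span while keeping them within linear $H$-distance of the identity, and verifying that injectivity of the projection $\mathcal{L} \to M^{ab}/T(M^{ab})$ together with the exponential growth estimate survive. One clean way to organize this is to first reduce to the case $N$ (hence $M$) torsion-free (replacing $G$ by $G/T(\Fit(G))$ as in the proof of Proposition~\ref{prop-non-virt-nilp-subgroup}), then work in the rational Malcev completion $\hk$ of $M$ where $\overline t$ acts as a strongly irreducible Lie algebra automorphism with a non-unit eigenvalue, use an eigenbasis over $\overline\Q$ to get the exponential lower bound on lattice points $\{\sum_{|j|\le n} c_j \overline t^{\,j}\overline{n_0}\}$ with $c_j\in\{0,1\}$, and finally lift through the (finite-index-controlled) isolator to honest elements of $M$ using Proposition~\ref{p-isolator} and Lemma~\ref{lem-isolator-elementary}. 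Everything else — the passage to the subgroup $H$, the application of strong irreducibility, and the translation into a growth bound for actions — is supplied directly by the results established earlier in the paper.
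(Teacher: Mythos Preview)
Your overall strategy matches the paper's proof exactly: invoke Proposition~\ref{prop-non-virt-nilp-subgroup} to produce $H=M\rtimes\langle t\rangle$, apply Theorem~\ref{t-strongly-irreducible} so that any $\mathcal L\subset M$ injecting into $A:=M^{ab}/T(M^{ab})$ is non-foldable, and then exhibit such an $\mathcal L$ with exponential relative growth. The only substantive difference is in how that exponential relative growth is obtained.

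The paper handles this last step more cleanly than your eigenvalue computation. It passes to the quotient $\overline{H}:=A\rtimes\langle t\rangle$ of $H$, observes that $A\otimes\Q=M^{ab}\otimes\Q$ as $\Q[\langle t\rangle]$-modules, so Proposition~\ref{p-finite-rank-non-virt-nilp} ensures $\overline{H}$ is still not virtually nilpotent and hence has exponential growth by the Milnor--Wolf theorem. Since $\overline H/A\simeq\Z$ has linear growth, the relative growth of $A$ in $\overline H$ is exponential. One then takes $\mathcal L\subset M$ to be a section of $A$ in which each $a\in A$ is lifted to an element of minimal $S$-length (for $S$ a fixed finite generating set of $H$); this forces $\relvol_{(H,\mathcal L)}(n)$ to dominate the relative growth of $A$ in $\overline H$, which is exponential. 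No eigenbasis bookkeeping is needed.

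Your direct eigenvalue approach is morally correct but under-argued as written. The assertion that the sums $\sum_{|j|\le n} c_j\,\overline t^{\,j}\overline{n_0}$ with $c_j\in\{0,1\}$ yield exponentially many \emph{distinct} elements of $A$ is not immediate: $A$ has finite $\Q$-rank, so there are $\Z$-linear relations among the $\overline t^{\,j}\overline{n_0}$ as soon as $j$ exceeds $\dim_\Q(A\otimes\Q)$, and ``coordinates grow like $|\lambda|^j$'' does not by itself rule out massive collisions among these sums. Making this precise amounts to reproving the exponential-growth half of Milnor--Wolf for abelian-by-cyclic groups, which is exactly what the paper's quotient argument invokes as a black box. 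Separately, your parenthetical suggestion to ``replace $G$ by $G/T(\Fit(G))$'' is not a legitimate reduction here: passing to a quotient of $G$ does not preserve faithfulness of an arbitrary $G$-set, so a Schreier growth gap for the quotient does not transfer back to $G$. Fortunately this is unnecessary, since Proposition~\ref{prop-non-virt-nilp-subgroup} already hands you $H$ inside $G$ itself.
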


\begin{proof}
	Since the group $G$ is not virtually nilpotent, we can apply Proposition \ref{prop-non-virt-nilp-subgroup}. Let $H=M\rtimes \langle t\rangle$ be a subgroup of $G$ as in the conclusion of Proposition \ref{prop-non-virt-nilp-subgroup}. Theorem \ref{t-strongly-irreducible} applies to $H$, and implies that if $\mathcal{L}$ is a subset of $M$ that projects injectively to $A := M^{ab}/T(M^{ab})$, then $\mathcal{L}$ is a non-foldable subset of $H$. A fortiori $\mathcal{L}$ is a non-foldable subset of $G$.
	
	 Set $\overline{H}:=A\rtimes \langle t\rangle$. Since $A\otimes \Q=M^{ab}\otimes \Q$ as $\Q[\langle t\rangle]$-modules, Proposition \ref{p-finite-rank-non-virt-nilp} implies that the group $\overline{H}$ remains not virtually nilpotent. Thus $\overline{H}$ has exponential growth. Since $\overline{H}/A\simeq \Z$, it follows that the relative growth of $A$  in $\overline{H}$ is exponential. Choose a finite generating subset $S$ of $H$ and let $\overline{S}$ be its projection to $\overline{H}$. Let $\mathcal{L}\subset M$ be a set theoretic section of $A$ such that for every $a \in A$, the unique lift of $a$ in $\mathcal{L}$ has minimal length with respect to the word metric defined by $S$ among the elements of $M$ that project to $a$. Since the relative growth of $A$  in $\overline{H}$ is exponential, we have $\relvol_{(G,\mathcal{L})}(n) \simeq \exp(n)$. Since $\mathcal{L}$ is non-foldable according to the first paragraph, Lemma \ref{lem-exp-subset-growth} yields the conclusion.  \qedhere
\end{proof}

 \section{Confined subgroups and abelian normal subgroups}

  The goal of this section is to prove Proposition \ref{p-Neumann-abelien}, a key result which provides a tool to study confined subgroups of a group $G$ admitting an abelian normal subgroup $M$. It will be used later to show that under suitable conditions, a confined subgroup of $G$ must contain a large subgroup of $M$. 
 
 \begin{notation}
In all this section we fix once for all the following notation:\begin{itemize}
	\item $G$ is a group that can be written as an extension $1\rightarrow M \rightarrow G\rightarrow Q\rightarrow 1$, where $M$ is abelian. We denote by $\pi_Q$ the projection from $G$ to $Q$.
	\item If $H$ is a subgroup of $G$ and $q\in Q$, we set
	\[M_{q, H}:=\{m\in M \colon (1-q)m\in H\}.\]
\end{itemize}
 \end{notation}

 Recall that since $M$ is abelian, $M$ is naturally a $\mathbb{Z}Q$-module. As explained in \S \ref{subsec-notation}, in this setting we use additive notation for $M$.

  \begin{lem} \label{lem-MqH} 
 	Let $M,G,Q$ as above. For every subgroup $H$ of $G$ and $q\in Q$, the following hold:
 	\begin{enumerate}[label=\roman*)]
 		\item \label{i-Neumann-contained} $M_{q, H}$  is a subgroup of $M$, and $(1-q)M_{q, H}\le H$. 
 		\item \label{i-Neumann-submodule}  $M_{q, H}$ is a $\Z K$-submodule of $M$, where $K:=C_{\pi_Q(H)}(q)$. In particular if $q\in \pi_{Q}(H)$, then $M_{q, H}$ is a $\Z \langle q\rangle$-submodule of $M$.
 	\end{enumerate}
 \end{lem}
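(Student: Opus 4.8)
The plan is to unwind the definition of $M_{q,H}$ using two elementary facts: in additive notation $1-q$ defines a group endomorphism of the abelian group $M$, and conjugation in $G$ by an element $h$ acts on $M$ precisely as the module operation by $\pi_Q(h)$.

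For part \ref{i-Neumann-contained}, I would first note that $H\cap M$ is a subgroup of the abelian group $M$, and that $M_{q,H}$ is exactly the preimage of $H\cap M$ under the endomorphism $m\mapsto (1-q)m=m-qm$ of $M$. Since the preimage of a subgroup under a group homomorphism is a subgroup, $M_{q,H}\le M$. The inclusion $(1-q)M_{q,H}\le H$ is then immediate from the definition: $(1-q)M_{q,H}$ is the image of the subgroup $M_{q,H}$ under the homomorphism $1-q$, hence a subgroup of $M$, and it is contained in $H$ by construction.

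For part \ref{i-Neumann-submodule}, I would fix $k\in K=C_{\pi_Q(H)}(q)$ and $m\in M_{q,H}$ and show that $km\in M_{q,H}$. Since $k\in\pi_Q(H)$ I may pick $h\in H$ with $\pi_Q(h)=k$; then, because conjugation by $h$ realizes the operation by $k$ on $M$ (and everything takes place inside $M\le G$ as $M$ is abelian),
\[
h\,(1-q)m\,h^{-1}=k\big((1-q)m\big)=(k-kq)m=(k-qk)m=(1-q)(km),
\]
where the second equality uses the left-module axiom $k(qm)=(kq)m$ and the third uses $kq=qk$ in $Q$. Since $(1-q)m\in H$ and $h\in H$, the left-hand side lies in $H$, so $(1-q)(km)\in H$, i.e. $km\in M_{q,H}$. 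Thus $M_{q,H}$ is stable under the subgroup $K$ of $Q$; combined with part \ref{i-Neumann-contained} this is precisely the statement that $M_{q,H}$ is a $\Z K$-submodule of $M$. The final clause follows because $q\in\pi_Q(H)$ forces $q\in C_{\pi_Q(H)}(q)=K$, so $\langle q\rangle\le K$ and $M_{q,H}$ is a fortiori a $\Z\langle q\rangle$-submodule.

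There is no genuine obstacle here: the argument is short and formal. The only point requiring attention is the bookkeeping between the additive notation for the $\Z Q$-module $M$ and the multiplicative (conjugation) notation in $G$ — in particular, verifying the identity $h\,(1-q)m\,h^{-1}=(1-q)(km)$ and inserting the commutativity relation $kq=qk$ at exactly the right step, which is what makes the hypothesis $k\in C_{\pi_Q(H)}(q)$ (rather than merely $k\in\pi_Q(H)$) necessary.
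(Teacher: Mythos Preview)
Your proof is correct and follows essentially the same approach as the paper's: both arguments use that $M\cap H$ is stable under conjugation by $H$ (hence a $\Z\pi_Q(H)$-submodule), and that elements of $K$ commute with $1-q$ in $\Z Q$, so that $(1-q)(km)=k\big((1-q)m\big)\in M\cap H$. The only difference is cosmetic: the paper phrases the key step abstractly (``$f\in\Z K$ commutes with $1-q$ and $M\cap H$ is a $\Z K$-submodule''), while you pick an explicit lift $h\in H$ and unwind the conjugation.
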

 
 \begin{proof}
 \ref{i-Neumann-contained} is clear from the definitions. To prove \ref{i-Neumann-submodule}, set $K=C_{\pi_Q(H)}(q)$. Note that $M \cap H$ is normalized by $H$ and thus it is a $\Z \pi_Q(H)$-submodule of $M$. In particular it is a $\Z K$-submodule of $M$. Thus for $f\in \Z K$ and $m\in M_{q, H}$, since $f$ commutes with $1-q$ and $(1-q)m\in M \cap H$ we have $(1-q) fm=f(1-q)m\in M\cap H$, showing that $fm\in M_{q, H}$. Thus $M_{q, H}$ is a $\Z K$-submodule. If $q\in \pi_Q(H)$, then clearly $q\in K$, so that $M_{q, H}$ is a $\Z \langle q \rangle $-submodule. 
 \end{proof}

 The proof of the following proposition uses a well-known lemma of B.H.\ Neumann \cite{Neum54}, stating that if a group $G=\cup_{i=1}^r g_iH_i$ is the union of $r$ cosets of subgroups, then at least  one of the subgroups $H_i$ has finite index at most $r$. An argument of similar spirit appears in the proof of Proposition 3.8 in \cite{LBMB-sub-dyn}.
 
 \begin{prop} \label{p-Neumann-abelien} 
Let $M,G,Q$ as above. Let $P$ be a finite subset of $G\setminus \{1\}$ , and set $r:=|P|$.	Then for every $H\in S_G(P, M)$, there exists $q\in \pi_Q(P) \cap \pi_Q(H)$ such that $M_{q, H}$ is a finite index subgroup of $M$, and $(M : M_{q, H}) \leq r$.
 \end{prop}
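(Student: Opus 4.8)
The plan is to apply B.H. Neumann's lemma to a suitable covering of $M$ by cosets, arising from the hypothesis that $P$ is confining for $(H,M)$.

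First I would set up the covering. Write $P = \{p_1,\dots,p_r\}$. Since $P$ is confining for $(H,M)$, for every $m\in M$ we have $mHm^{-1}\cap P\neq\varnothing$, i.e. there is some $i$ and some $h\in H$ with $mhm^{-1}=p_i$. Now I use that $M$ is abelian and normal: writing things in $G$ and projecting to $Q$, the element $mhm^{-1}$ and $h$ have the same image $q:=\pi_Q(h)=\pi_Q(p_i)$ in $Q$. Moreover a direct computation in the extension $1\to M\to G\to Q\to 1$ shows $mhm^{-1} = (1-q)m\cdot h$ (in additive notation for $M$, using that conjugation of $h$ by $m$ multiplies the ``$M$-part'' in the expected way; concretely $mhm^{-1}h^{-1} = m - qm = (1-q)m \in M$). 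Hence $mhm^{-1}=p_i$ forces $h$ to lie in the coset of $M\cap H$ determined by $p_i$ — more precisely, fixing for each $i$ with $q_i:=\pi_Q(p_i)\in\pi_Q(H)$ a preimage $h_i\in H$ of $q_i$, the condition becomes $(1-q_i)m = p_i h_i^{-1} \bmod (M\cap H)$, which says $m$ lies in a single coset of $M_{q_i,H}$ in $M$ (using Lemma~\ref{lem-MqH}\ref{i-Neumann-contained} to see $M_{q_i,H}$ is the relevant subgroup: if $(1-q_i)m$ and $(1-q_i)m'$ agree mod $M\cap H$ then $(1-q_i)(m-m')\in H$, so $m-m'\in M_{q_i,H}$). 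Indices $i$ with $q_i\notin\pi_Q(H)$ contribute nothing, since then no $h\in H$ has $\pi_Q(h)=q_i$.

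So $M$ is covered by at most $r$ cosets of the subgroups $M_{q_i,H}$, $i$ ranging over those indices with $q_i\in\pi_Q(H)$. By Neumann's lemma, at least one of these subgroups, say $M_{q,H}$ with $q = q_{i_0}\in\pi_Q(P)\cap\pi_Q(H)$, has finite index in $M$, and that index is at most the number of cosets used, hence at most $r$. This is exactly the assertion of the proposition.

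The main obstacle I expect is the bookkeeping in the first step: carefully checking the identity $mhm^{-1}h^{-1}=(1-q)m$ in the (possibly non-split) extension, and then correctly identifying the fiber of the map $m\mapsto mhm^{-1}$ with a coset of $M_{q,H}$ rather than of $M\cap H$ or something larger. One has to be a little careful that, for a fixed $i$, different choices of $h\in H$ with $\pi_Q(h)=q_i$ differ by an element of $M\cap H$, and that translating $h$ by such an element translates $mhm^{-1}$ within $M\cap H\cdot p_i$ accordingly; this is why the relevant subgroup of $M$ collecting the valid $m$'s is $M_{q_i,H}$ and not merely $M\cap H$. Once this identification is pinned down, the rest is a direct invocation of Neumann's lemma, and the bound $(M:M_{q,H})\le r$ is immediate since we used at most $r=|P|$ cosets in total.
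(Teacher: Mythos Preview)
Your proposal is correct and follows essentially the same route as the paper: cover $M$ by the sets $\{m\in M: m^{-1}p_i m\in H\}$, observe each nonempty such set is a coset of $M_{q_i,H}$ (with the indices $q_i\notin\pi_Q(H)$ discarded), and apply Neumann's lemma. The paper phrases the coset identification slightly differently---it takes the subgroup $L_g$ generated by differences in $Y_g=\{m:mgm^{-1}\in H\}$ and then checks $L_g\le M_{q,H}$ via the identity $m(g^{-1}m^{-1}ng)n^{-1}=(1-q)(m-n)$---but this amounts to the same computation you carry out, and your bookkeeping (including the observation that $q_i(M\cap H)=M\cap H$ when $q_i\in\pi_Q(H)$, implicit in your reduction modulo $M\cap H$) is sound.
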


 \begin{proof}
Fix $g\in P$. Since all conjugates of $g$ by elements  of $M$ have the same projection to $Q$,  if $g$ has at least one such conjugate inside $H$, then $\pi_Q(g)\in \pi_q(H)$. This observation ensures that since $P$ is confining for $(H, M)$, the subset $P_H:=\{g\in P\colon \pi
 	_Q(g)\in \pi_Q(H)\}$ is already confining for $(H, M)$. 
 	
 	Now for $g\in P_H$, we write $Y_g=\{m\in M \colon mgm^{-1}\in H\}$. That $P_H$ is confining for $(H, M)$ means that $M=\bigcup_{P_H} Y_g$. Let $L_g$ be the subgroup of $M$ generated by the set of differences by elements in $Y_g$.  Then $Y_g$ is contained in a coset of $L_g$ and thus, by Neumann's Lemma, there exists $g_0\in P_H$ such that  $L_{g_0}$ has index at most $|P_H| \le r$ in $M$. So if we denote $q=\pi_Q(g_0)$, then in order to terminate the proof it is enough to check that $L_{g_0}\le M_{q, H}$. 
 	
 	Let $m, n\in Y_{g_0}$. Since $g_0^{-1}m^{-1}ng_0$ belongs to $M$ and $M$ is abelian, we have the equality
 	\begin{equation} \label{e-a-delta-gamma} m(g_0^{-1}m^{-1}ng_0)n^{-1}=mn^{-1}(g_0^{-1}m^{-1}ng_0)=(1-q)(m-n),\end{equation}
 	where in the first two terms we use multiplicative notation within the group $G$, while in the right-most term  we see $M$ as a $\Z Q$-module and  use additive notation. Since the first term in \eqref{e-a-delta-gamma} is the product of  $mg_0^{-1}m^{-1}$ and $ng_0n^{-1}$, which both belong to $H$,  we see that $m-n\in M_{q, H}$ for $m, n\in Y_{g_0}$. Thus $L_{g_0}\le M_{q, H}$, and the proof is complete. \qedhere
 	 \end{proof}

\section{Metabelian groups} \label{sec-metab}

\subsection{Non-foldable subsets in metabelian groups} \label{subsec-metab-non-foldable-tuples} The goal of this part is to prove Theorem \ref{thm-explicit-metab}, which will be our main tool to study growth of actions of metabelian groups. We need some preliminaries.

		If $Q$ is a finitely generated abelian group and $N$ is a $\Z Q$-module,  we denote by $C_Q(N)$ the centralizer of $N$ in $Q$, which is the set of $q \in Q$ such that $qn = n$ for all $n \in N$. By definition we have $C_Q(N) = Q \cap (1 + \ann(N))$ (the intersection is taken in $\Z Q$), where $\ann(N)$ is the annihilator of $N$ in $\Z Q$. In particular $\ann(N) = \left\lbrace 0 \right\rbrace$ implies $C_Q(N) = \left\lbrace 1 \right\rbrace$.

\begin{lem} \label{lem-virt-ab-FC}
	Let $Q = \langle q \rangle$ be a cyclic group, and  $M$ a finitely generated $\Z Q$-module. Assume that $q$ centralizes a subgroup $N$ of finite index in $M$. Then there is $n \geq 1$ such that $q^n$ centralizes $M$.
\end{lem}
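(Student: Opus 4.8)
The plan is to prove this by isolating the ``defect submodule'' $M':=(q-1)M$, showing it is \emph{finite}, and then promoting the resulting near-triviality of the $q$-action to an honest finite-order statement through a binomial computation with the operator $q-1$.

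First I would check that $M'=(q-1)M$ is finite. Multiplication by $q-1$ is a group endomorphism $\psi\colon M\to M$, $\psi(m)=(q-1)m$, and the hypothesis that $q$ centralizes $N$ says precisely that $(q-1)n=0$ for all $n\in N$, i.e.\ $N\subseteq\ker\psi$. Hence $\psi$ factors through $M/N$, so $M'=\psi(M)$ is a quotient of the finite group $M/N$; in particular $M'$ is a finite abelian group with $|M'|\leq[M:N]$.

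Next, since $Q=\langle q\rangle$ the element $q$ is a unit in the commutative ring $\Z Q$ (if $q$ has finite order $m$ then $q\cdot q^{m-1}=1$, and if $q$ has infinite order then $q^{-1}\in Q\subseteq\Z Q$), so $q$ acts bijectively on $M$; moreover $qM'=(q-1)(qM)=(q-1)M=M'$, so $q$ restricts to an automorphism of the finite group $M'$. As $\aut(M')$ is finite there is $\ell\geq1$ with $q^{\ell}$ acting trivially on $M'$. Set $r:=q^{\ell}$. Then, as operators on $M$, one has $(r-1)M=(q^{\ell}-1)M=(q-1)(1+q+\dots+q^{\ell-1})M\subseteq(q-1)M=M'$, while $(r-1)M'=0$; hence $(r-1)^2=0$ on $M$. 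By the binomial theorem, for every integer $e\geq1$ the operator $r^{e}=(1+(r-1))^{e}$ equals $1+e(r-1)$ on $M$, so $(r^{e}-1)M=e\cdot(r-1)M$. Taking $e=|M'|$ and using that $|M'|$ annihilates the finite abelian group $M'\supseteq(r-1)M$, we get $(r^{e}-1)M=0$, i.e.\ $q^{\,\ell|M'|}$ centralizes $M$; so $n=\ell|M'|$ works.

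There is no genuinely hard step, but the point I would flag is the following potential trap: it is immediate (once $M'$ is seen to be $q$-stable) that $q$ acts \emph{unipotently} on $M$, and over a field that would end the argument — yet a unipotent operator on a general abelian group need not have finite multiplicative order. What makes the lemma true is the stronger fact that $(q-1)M$ is \emph{finite}, which is exactly what the ``$\psi$ factors through $M/N$'' observation delivers, and which is what converts the unipotence $(r-1)^2=0$ into the valid identity $(1+(r-1))^{|M'|}=1$ on $M$. (Note this argument does not even use that $M$ is finitely generated; that hypothesis could also be exploited, by reducing to module generators and using that some power of $q$ fixes each of them, but the route above is cleaner.)
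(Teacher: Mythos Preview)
Your proof is correct and takes a genuinely different route from the paper's. The paper first exploits the finite-generation hypothesis: since $Q$ centralizes $N$, the $\Z Q$-module $N$ is finitely generated as an abelian group, hence so is $M$; then $q$ must act trivially on the free abelian quotient $M/T(M)$ (an automorphism of $\Z^r$ fixing a finite-index sublattice is the identity), and the paper concludes by taking $n$ to be the order of $q$ on the finite group $T(M)$. You bypass the structure of $M$ entirely: you show directly that $M'=(q-1)M$ is finite (as a quotient of $M/N$), pass to a power $r=q^\ell$ trivial on $M'$, and convert the nilpotence $(r-1)^2=0$ into finite order via the binomial identity $r^e=1+e(r-1)$.

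Your ``potential trap'' remark is well placed. The paper's final sentence asserts that the order of $q$ on $T(M)$ suffices, but triviality on $T(M)$ and on $M/T(M)$ does not by itself give triviality on $M$: take $M=\Z\oplus\Z/2$ with $q(a,b)=(a,\,\overline{a}+b)$, where $q$ is the identity on $T(M)=\Z/2$ yet $q\neq 1$ (though $q^2=1$). Your binomial step is exactly the missing ingredient; alternatively one can observe that the group of automorphisms of the finitely generated abelian group $M$ that are trivial modulo $T(M)$ is finite, which repairs the paper's line. Either way, your argument is self-contained and, as you note, does not use that $M$ is finitely generated over $\Z Q$, so it yields a slightly more general statement.
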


\begin{proof}
	$N$ is finitely generated as a $\Z Q$-module, and $Q$ centralizes $N$, so $N$ is finitely generated as an abelian group. Since $N$ has finite index in $M$, $M$ is also a finitely generated abelian group. The automorphism induced by $q$ on $M/T(M)$ centralizes the finite index subgroup $NT(M)/T(M)$ of $M/T(M)$, and hence is trivial since $M/T(M)$ is a finitely generated free abelian group. So the conclusion holds with $n$ equals to the order of the automorphism induced by $q$ on the finite group $T(M)$. \qedhere
\end{proof}

\begin{lem} \label{lem-finite-number-submodules}
	Let $Q$ be a finitely generated group, and $M$ a finitely generated $\Z Q$-module.	Fix $r \geq 1$. Let $S_{r}$  be the set of $\Z Q$-submodules $L$ of $M$ such that $L$ has index at most $r$ in $M$. Then $S_{r}$ is finite.
\end{lem}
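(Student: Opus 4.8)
The plan is to bound separately the possible isomorphism types of the quotient $M/L$ and, for each fixed quotient, the number of submodules producing it. The point is that every $L\in S_r$ gives a \emph{finite} quotient module, and finiteness of the quotient does all the work — we will not need $\Z Q$ to be Noetherian.

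First I would observe that if $L\in S_r$, then the $\Z Q$-module $M/L$ has underlying abelian group of order $(M:L)\le r$. Hence it suffices to prove two finiteness statements: (a) up to isomorphism there are only finitely many $\Z Q$-modules whose underlying abelian group has order at most $r$; and (b) for each such module $N$, there are only finitely many submodules $L\le M$ with $M/L\cong N$ as $\Z Q$-modules. For (a), recall first that there are only finitely many abelian groups $A$ of order at most $r$ up to isomorphism. For a fixed such $A$, a $\Z Q$-module structure on $A$ is the same datum as a group homomorphism $Q\to \aut(A)$ (a ring homomorphism $\Z Q\to \operatorname{End}(A)$ restricts to a monoid homomorphism $Q\to\operatorname{End}(A)$ which lands in $\aut(A)$ since $Q$ is a group, and conversely it extends $\Z$-linearly). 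Since $Q$ is finitely generated and $\aut(A)$ is a finite group, there are only finitely many such homomorphisms, because a homomorphism is determined by the images of a fixed finite generating set of $Q$. This gives (a); write $N_1,\dots,N_s$ for representatives of the isomorphism classes.

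For (b), fix $i$ and fix generators $m_1,\dots,m_k$ of $M$ as a $\Z Q$-module, which exist since $M$ is finitely generated over $\Z Q$. Any $\Z Q$-module homomorphism $\varphi\colon M\to N_i$ is determined by the tuple $(\varphi(m_1),\dots,\varphi(m_k))\in N_i^{\,k}$, so there are at most $|N_i|^{k}$ of them; in particular there are only finitely many submodules of $M$ arising as the kernel of a surjective homomorphism $M\to N_i$. Since every $L\in S_r$ is such a kernel for some $i\in\{1,\dots,s\}$, we conclude that $S_r$ is finite.

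There is no real obstacle here: the only points requiring care are the two uses of finite generation — of $Q$ in step (a), to bound the number of module structures on a finite group, and of $M$ over $\Z Q$ in step (b), to bound the number of homomorphisms onto a fixed finite module — and the elementary identification of $\Z Q$-module structures on a finite abelian group with homomorphisms $Q\to\aut(A)$.
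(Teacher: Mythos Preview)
Your argument is correct. Both steps (a) and (b) are valid: in (a) you correctly identify $\Z Q$-module structures on a finite abelian group $A$ with homomorphisms $Q\to\aut(A)$, and finite generation of $Q$ together with finiteness of $\aut(A)$ bounds these; in (b) finite generation of $M$ over $\Z Q$ bounds the number of $\Z Q$-homomorphisms into any fixed finite module.

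The paper takes a different and shorter route. It forms the semidirect product $G=M\rtimes Q$, observes that $G$ is finitely generated (by a $\Z Q$-generating set of $M$ together with a generating set of $Q$), and notes that each $L\in S_r$ yields the subgroup $L\rtimes Q\le G$ of index at most $r$. The classical fact that a finitely generated group has only finitely many subgroups of bounded index then finishes the proof in one line. Your argument is essentially an unpacking of that classical fact directly in the module setting: the usual proof of the group-theoretic statement goes by counting homomorphisms to a symmetric group, which is exactly the shape of your steps (a) and (b). So the paper's version is slicker by packaging everything into a known result, while yours is more self-contained and makes the two uses of finite generation (of $Q$ and of $M$) explicit and separate.
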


\begin{proof}
	The semi-direct product $G = M \rtimes Q $ is a finitely generated group, and for every $L \in S_{r}$ we have that $L \rtimes Q $ is a subgroup of $G$ of finite index at most $r$. Since $G$ is finitely generated it has only finitely many subgroups of index at most $r$. It follows that $S_{r}$ is finite.
\end{proof}

\begin{remark}
When $N$ is a $\Z Q$-module, the terminology \enquote{$N$ is torsion-free} might be ambiguous, as this might refer to the structure of abelian group or the module structure. In all the article when use this terminology we always mean that $N$ is torsion-free as an abelian group.
\end{remark}

\begin{lem} \label{lem-trick-non-zero-module}
	Let $Q$ be an abelian group, and $N$ a finitely generated $\Z Q$-module. Let $q_0 \in Q$ such that at least one of the following conditions hold:
	
	\begin{enumerate}[label=\roman*)]
		\item the abelian group $N$ is torsion-free and  $q_0\notin C_Q(N)$;
		
		\item $q_0^n\notin C_Q(N)$ for all $n \geq 1$.

	\end{enumerate}

	For $r\ge 1$, let $S_{r,q_0}$ be the set of $\Z \langle q_0 \rangle $-submodules $L$ of $N$ such that $L$ has index at most $r$ in $N$, and let $N_{r,q_0} := \bigcap_{L \in S_{r,q_0}} L$. Then $(q_0-1) N_{r, q_0}\neq \left\lbrace 0 \right\rbrace$. 
\end{lem}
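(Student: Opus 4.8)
The plan is to argue by contradiction: assume $(q_0-1)N_{r,q_0} = \{0\}$, i.e.\ $q_0$ acts trivially on $N_{r,q_0}$, and derive a contradiction with either hypothesis i) or ii). The first observation is that by Lemma \ref{lem-finite-number-submodules} (applied with the cyclic group $\langle q_0 \rangle$ in place of $Q$), the set $S_{r,q_0}$ of $\Z\langle q_0\rangle$-submodules of index at most $r$ is \emph{finite}, so that $N_{r,q_0}$ is a finite intersection of finite-index submodules, hence itself a $\Z\langle q_0\rangle$-submodule of $N$ of finite index (the index is at most the product of the finitely many indices, in particular finite). So our contradiction hypothesis says precisely that $q_0$ centralizes a finite-index $\Z\langle q_0\rangle$-submodule $N_{r,q_0}$ of $N$.

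Now I would feed this into Lemma \ref{lem-virt-ab-FC} (again with $Q$ replaced by $\langle q_0\rangle$, so $q$ there is $q_0$): since $q_0$ centralizes the finite-index submodule $N_{r,q_0}$ of $N$, there is some $n \geq 1$ such that $q_0^n$ centralizes all of $N$, i.e.\ $q_0^n \in C_Q(N)$. Under hypothesis ii) this is already the desired contradiction, since ii) asserts $q_0^n \notin C_Q(N)$ for every $n \geq 1$. Under hypothesis i), I have $N$ torsion-free as an abelian group and $q_0 \notin C_Q(N)$, and I need to upgrade $q_0^n \in C_Q(N)$ to $q_0 \in C_Q(N)$; this is exactly the torsion-free input. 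Concretely: $N$ is finitely generated as a $\Z Q$-module and $q_0^n$ acts trivially on it, so $N$ is finitely generated as a $\Z\langle q_0 \rangle$-module on which $q_0^n$ acts trivially, hence finitely generated as an abelian group; being torsion-free it is free abelian of finite rank, and the automorphism induced by $q_0$ on this free abelian group has finite order ($q_0^n$ acting as the identity), hence — since a finite-order automorphism of $\Z^k$ that is unipotent, or more simply: the proof of Lemma \ref{lem-virt-ab-FC} already shows a trivial action on the torsion-free part — is trivial. More cleanly, I can just invoke the argument inside Lemma \ref{lem-virt-ab-FC}: there the automorphism induced by $q_0$ on $M/T(M)$ was shown to be trivial as soon as $q_0$ centralizes a finite-index subgroup; here $T(N) = \{0\}$, so $q_0$ itself centralizes $N$, contradicting $q_0 \notin C_Q(N)$.

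I do not expect any serious obstacle here: the lemma is essentially a bookkeeping combination of Lemmas \ref{lem-finite-number-submodules} and \ref{lem-virt-ab-FC}, and the only point requiring a little care is making sure the finiteness of $S_{r,q_0}$ is applied with the cyclic group $\langle q_0\rangle$ (not $Q$) acting, so that $N_{r,q_0}$ is genuinely a finite intersection and hence of finite index — this is what lets Lemma \ref{lem-virt-ab-FC} apply. The two hypotheses i) and ii) are handled by the two ways of concluding from ``$q_0^n \in C_Q(N)$ for some $n$'': under ii) it is immediate, under i) one uses torsion-freeness to pass from a power of $q_0$ back to $q_0$.
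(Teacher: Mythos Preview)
Your argument has a genuine gap at the very first step. You apply Lemma \ref{lem-finite-number-submodules} with the cyclic group $\langle q_0\rangle$ in place of $Q$ to conclude that $S_{r,q_0}$ is finite, but that lemma requires $N$ to be finitely generated as a $\Z\langle q_0\rangle$-module. The hypothesis of Lemma \ref{lem-trick-non-zero-module} only gives that $N$ is finitely generated as a $\Z Q$-module, which is strictly weaker. For example, take $Q=\Z^2$ with generators $t_1,t_2$, let $N=\Z Q$, and $q_0=t_1$; then $N$ is free of infinite rank as a $\Z\langle q_0\rangle$-module, and one checks that $N_{2,q_0}=(t_1-1)N+2N$, which has infinite index in $N$ (the quotient is $\mathbb{F}_2[t_2^{\pm 1}]$). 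So your claim that $N_{r,q_0}$ has finite index fails, and with it the application of Lemma \ref{lem-virt-ab-FC}.

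The paper handles the two cases quite differently, precisely to avoid this issue. In case i) it bypasses Lemma \ref{lem-finite-number-submodules} altogether: since every subgroup of index at most $r$ contains $r!\,N$, one has $r!\,N\subset N_{r,q_0}$ directly, and then torsion-freeness gives $(q_0-1)N_{r,q_0}\supset r!(q_0-1)N\neq\{0\}$. In case ii) the paper first passes to a cyclic $\Z\langle q_0\rangle$-submodule $J_p=\Z\langle q_0\rangle\, x_p$ (for a suitable generator $x_p$ of $N$ as a $\Z Q$-module), on which Lemma \ref{lem-finite-number-submodules} \emph{does} apply since $J_p$ is singly generated over $\Z\langle q_0\rangle$; it then runs your contradiction argument inside $J_p$ and uses $N_{r,q_0}(J_p)\le N_{r,q_0}$ to conclude. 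The extra wrinkle is choosing $p$ so that no power of $q_0$ centralizes $J_p$, which uses the commutativity of $Q$.
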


\begin{proof}
	We first consider the case where $N$ is torsion-free. Since $k N \subset L$ for every $L \in S_{r,i}$, where $k = r!$, we have $k N \subset N_{r,i}$. Therefore $(q_0-1) N_{r,i} $ contains $(q_0-1) kN = k (q_0-1) N$, which is non-zero since $(q_0-1) N$ is non-zero and $N$ is torsion-free. So in that case the conclusion holds.
	
	We shall now assume that $q_0^n\notin C_Q(N)$ for all $n \geq 1$. We denote by $Q_0$ the subgroup of $Q$ generated by $q_0$. Since $N$ is a finitely generated $\Z Q$-module, there exist $x_1,\ldots,x_k \in N$ such that $N = \Z Q x_1 + \ldots + \Z Q  x_k$. For $1 \leq p \leq k$, let $J_p$ be the $\Z Q_0$-submodule generated by $x_p$. We shall first prove that there exists $p$  such that $(q_0^n-1) J_p \neq \left\lbrace 0 \right\rbrace$ for all $n \geq 1$. Suppose that this is not the case, i.e.\ for all $p$ there is $n_p \geq 1$ with $(q_0^{n_p}-1) J_p = \left\lbrace 0 \right\rbrace$. Then we have $(q_0^{n}-1) J_p = \left\lbrace 0 \right\rbrace$ for all $p$, where $n$ is the least common multiple of $(n_1,\ldots,n_k)$. Hence, since $Q$ is abelian, we deduce that we have $(q_0^{n}-1)  \Z Q x_p = \Z Q (q_0^{n}-1)   x_p = \left\lbrace 0 \right\rbrace$ for all $p$, and hence $(q_0^{n}-1) N = \left\lbrace 0 \right\rbrace$. This contradicts the assumption. So in the sequel we fix $p$ such that $(q_0^n-1) J_p \neq \left\lbrace 0 \right\rbrace$ for all $n \geq 1$.
	
	Let $S_{r,q_0}(J_p)$ be the set of $\Z Q_0$-submodules $L$ of $J_p$ such that $L$ has index at most $r$ in $J_p$, and let $N_{r,q_0}(J_p) := \bigcap_{L \in S_{r,q_0}(J_p)} L$. Since any $\Z Q_0$-submodule of $N$ of index at most $r$ in $N$ intersects $J_p$ along a $\Z Q_0$-submodule of $J_p$ of index at most $r$ in $J_p$, it follows that $N_{r,q_0}(J_p) \leq N_{r,q_0}$. Hence to prove the desired result it it is enough to prove that $(q_0-1) N_{r,q_0}(J_p) \neq \left\lbrace 0 \right\rbrace$. Now Lemma \ref{lem-finite-number-submodules} ensures that the set $S_{r,q_0}(J_p)$  is finite, and hence $N_{r,q_0}(J_p)$ has finite index in $J_p$. Therefore Lemma \ref{lem-virt-ab-FC} implies that if $(q_0-1) N_{r,q_0}(J_p) = \left\lbrace 0 \right\rbrace$, then there must exist $n \geq 1$ such that $(q_0^n-1)J_p = \left\lbrace 0 \right\rbrace$. By the definition of $p$ this cannot happen. So $(q_0-1) N_{r,q_0}(J_p) \neq \left\lbrace 0 \right\rbrace$, and the conclusion also holds in that case.
\end{proof}

We recall the following terminology from module theory. 
\begin{defin}
	A	$\Z Q$-module  $N$ is \textbf{uniform} if for every non-zero submodules $N_1,N_2$ of $N$, the submodule $N_1 \cap N_2$ is non-zero. 
\end{defin}

\begin{prop} \label{prop-construct-expand-metab}
	Let $G$ be a group that lies in a short exact sequence $1\to M \to G \to Q\to 1$, where $M,Q$ are abelian. Let $N$ be a finitely generated uniform $\Z Q$-submodule of $M$.  Let  $P=\{g_1,\cdots, g_r\}\subset G$ be a finite subset of $G$, and set $q_i=\pi_Q(g_i)$. Suppose that for every $i=1,\cdots, r$, at least one of the following conditions hold:
	
	\begin{enumerate}[label=\roman*)]
		\item \label{item-mod-ss-tor} the abelian group $N$ is torsion-free and  $q_i\notin C_Q(N)$;
		
		\item \label{item-mod-inf-cc} $q_i^n\notin C_Q(N)$ for all $n \geq 1$.
		
	\end{enumerate}

	Let $S_{r, i}$ be the set of $\Z \langle q_i\rangle $-submodules $L$ of $N$ of index at most $r$ in $N$, and set $N_{r, i}= \bigcap _{L \in S_{r,i}} L$. Then $J := \bigcap_{i=1}^r (q_i-1)N_{r,i}$ is a non-zero $\Z Q$-submodule of $M$, and $J$ is contained in $H$ for every $H \in S_G(P,N)$.
\end{prop}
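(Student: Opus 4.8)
The plan is to prove the two assertions separately: first that $J$ is a non-zero $\Z Q$-submodule of $M$, and then that $J\subseteq H$ for every $H\in S_G(P,N)$.

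For the first assertion, I would begin by upgrading each $N_{r,i}$ to a $\Z Q$-submodule. Although $N_{r,i}$ is defined as an intersection of $\Z\langle q_i\rangle$-submodules, the family $S_{r,i}$ is $Q$-invariant: since $Q$ is abelian, if $L$ is a $\Z\langle q_i\rangle$-submodule of $N$ of index at most $r$ then so is $qL$ for every $q\in Q$ (the map $x\mapsto qx$ is an automorphism of $N$ fixing nothing about the index, and $q_i(qL)=q(q_iL)=qL$). Hence $N_{r,i}=\bigcap_{L\in S_{r,i}}L$ is $Q$-invariant, and $(q_i-1)N_{r,i}$, being the image of $N_{r,i}$ under the $\Z Q$-module endomorphism $x\mapsto(q_i-1)x$ of $M$, is a $\Z Q$-submodule of $N$; so is the finite intersection $J$. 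Non-triviality is precisely where uniformity enters. Lemma \ref{lem-trick-non-zero-module}, applied to $q_0=q_i$ (using hypothesis \ref{item-mod-ss-tor} or \ref{item-mod-inf-cc}), gives $(q_i-1)N_{r,i}\neq\{0\}$ for each $i$; since $N$ is a uniform module, a finite intersection of non-zero submodules of $N$ is non-zero, so $J\neq\{0\}$ by induction on the number of factors.

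For the second assertion, fix $H\in S_G(P,N)$ and set $Y_i:=\{n\in N\colon ng_in^{-1}\in H\}$. Since $P$ is confining for $(H,N)$ one gets $N=\bigcup_{i=1}^r Y_i$. For each $i$ with $Y_i\neq\varnothing$, fix $n_i\in Y_i$ and let $L_i$ be the subgroup of $N$ generated by $\{m-n\colon m,n\in Y_i\}$, so that $Y_i\subseteq n_i+L_i$. Thus $N$ is covered by at most $r$ cosets $n_i+L_i$, and B.H.\ Neumann's lemma \cite{Neum54} yields an index $i_0$ with $[N:L_{i_0}]\le r$; moreover $Y_{i_0}\neq\varnothing$ forces $q_{i_0}\in\pi_Q(H)$. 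Now the identity from the proof of Proposition \ref{p-Neumann-abelien} (equation \eqref{e-a-delta-gamma}) shows that for $m,n\in Y_{i_0}$ the module element $(1-q_{i_0})(m-n)$ equals the product $(mg_{i_0}^{-1}m^{-1})(ng_{i_0}n^{-1})\in H$, whence $L_{i_0}\subseteq M_{q_{i_0},H}$. By Lemma \ref{lem-MqH}, $M_{q_{i_0},H}$ is a $\Z\langle q_{i_0}\rangle$-submodule of $M$ (as $q_{i_0}\in\pi_Q(H)$), so $N\cap M_{q_{i_0},H}$ is a $\Z\langle q_{i_0}\rangle$-submodule of $N$ of index at most $[N:L_{i_0}]\le r$, i.e.\ a member of $S_{r,i_0}$. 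Therefore $N_{r,i_0}\subseteq N\cap M_{q_{i_0},H}\subseteq M_{q_{i_0},H}$, and consequently $(q_{i_0}-1)N_{r,i_0}\subseteq(q_{i_0}-1)M_{q_{i_0},H}\subseteq H$ by Lemma \ref{lem-MqH}. Since $J\subseteq(q_{i_0}-1)N_{r,i_0}$ (as $J$ is the intersection over all indices), we conclude $J\subseteq H$.

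The genuinely delicate point is the interplay in the second part: Neumann's lemma supplies only a single good index $i_0$, but this suffices exactly because $J=\bigcap_i(q_i-1)N_{r,i}$ is contained in the single factor $(q_{i_0}-1)N_{r,i_0}$. The uniformity hypothesis plays the dual role of guaranteeing that this full intersection does not collapse to zero. Everything else is routine module bookkeeping---keeping track of which submodules are $\Z Q$-submodules and which are only $\Z\langle q_i\rangle$-submodules---together with the conjugation computation already isolated in Proposition \ref{p-Neumann-abelien}.
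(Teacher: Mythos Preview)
Your proof is correct and follows essentially the same approach as the paper. The only cosmetic difference is that for the second assertion the paper invokes Proposition~\ref{p-Neumann-abelien} directly (applied with $N$ in place of $M$) to obtain an index $i$ and an $L\in S_{r,i}$ with $(q_i-1)L\le H$, whereas you unfold that proposition inline via the sets $Y_i$, Neumann's lemma, and Lemma~\ref{lem-MqH}; the underlying argument is identical.
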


\begin{proof}
	We observe that when $i$ is fixed, the set $S_{r,i}$ is globally invariant by $Q$ since $Q$ is abelian. Therefore $N_{r,i}$ is a $\Z Q$-submodule of $N$, and hence so is $(q_i-1)N_{r,i}$. By the assumptions we may apply Lemma \ref{lem-trick-non-zero-module}, which ensures that $(q_i-1)N_{r,i}$ is non-zero for each $i$. The module $N$ being uniform by assumption, it follows that $J$ is non-trivial. 
	
	Now given $H \in S_G(P,N)$, Proposition \ref{p-Neumann-abelien} ensures that there exist $i$ and $L \in S_{r, i}$ such that $(q_i-1)L \le H$. So in particular we have $(q_i-1)N_{r,i} \leq H$, and hence $J \leq H$ for every $H \in S_G(P,N)$. 
\end{proof}

\begin{remark}
	If $M$ is a non-zero finitely generated $\Z Q$-module, $M$ always admits a (non-zero) uniform submodule. This follows from the existence of associated primes (see \S \ref{s-metabelian-torsion-free}).
\end{remark}

\begin{defin}
	Let $\pi : G \to Q$ be a group homomorphism. We say that a subset $\mathcal{L}$ of $G$ is a \textbf{lift} of a subset $\mathcal{L}'$ of $Q$ if for every $q \in \mathcal{L}'$ there is a unique $g \in \mathcal{L}$ such that $\pi(g)=q$.  
\end{defin}

\begin{notation}
	If $A,B$ are subsets of a group $G$, we write $A \cdot B = \left\lbrace a b \, : \, a \in A, b \in B \right\rbrace$.
\end{notation}

\begin{thm} \label{thm-explicit-metab}
	Let $G$ be a finitely generated metabelian group that is an extension $1 \to M \to G \to Q \to 1$, where $M$ is abelian and $Q$ is free abelian, and suppose that there exists a submodule $N$ of $M$ such that $N$ is uniform and $C_Q(N) = \left\lbrace 1 \right\rbrace$. Let  $\mathcal{L}$ be a lift of $Q$. Then: 
	
	\begin{enumerate}[label=\roman*)]
		\item \label{item-non-foldable-gene} $\mathcal{L}$ is a non-foldable subset of $G$.
		\item \label{item-non-foldable-tf} If moreover $N$ is torsion-free, then for every non-zero element $m$ of $N$, the subset $\mathcal{L} \cdot \mathcal{Z}$ is a non-foldable subset of $G$, where $\mathcal{Z}$ is the cyclic subgroup generated by $m$.
	\end{enumerate}
\end{thm}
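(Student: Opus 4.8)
We treat both parts via the reformulation of non-foldability in terms of confined subgroups (Lemma \ref{l-non-foldable-confined}): it suffices to show that for every finite subset $\Sigma$ of the relevant set, the intersection $\bigcap_{H\in S_G(P,G)}H$ is non-trivial, where $P=\{g^{-1}h : g,h\in\Sigma,\ g\neq h\}$. Throughout I use that $S_G(P,G)\subseteq S_G(P,N)$, so it is enough to produce a non-trivial subgroup inside $\bigcap_{H\in S_G(P,N)}H$. Part \ref{item-non-foldable-gene} is then immediate: if $\mathcal{L}$ is a lift of $Q$, distinct $g,h\in\Sigma$ have distinct images in $Q$, so $\pi_Q(g^{-1}h)\neq 1$; as $Q$ is free abelian this element has infinite order, hence $\pi_Q(g^{-1}h)^{n}\notin C_Q(N)=\{1\}$ for all $n\geq 1$. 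Thus condition (ii) of Proposition \ref{prop-construct-expand-metab} holds for every element of $P$, and that proposition yields a non-zero $\mathbb{Z}Q$-submodule $J\leq M$ contained in every $H\in S_G(P,N)$; so $\bigcap_{H\in S_G(P,G)}H\supseteq J\neq\{1\}$.

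For part \ref{item-non-foldable-tf}, after enlarging $\Sigma$ we may assume $\Sigma=\Sigma_0\cdot\{jm : |j|\leq n\}$ for a finite subset $\Sigma_0$ of $\mathcal{L}$ and some $n\geq 1$. Computing the differences, $P=P_A\sqcup P_B$, where $P_B=\{jm : 1\leq|j|\leq 2n\}$ is contained in the submodule $N':=\mathbb{Z}Q\,m\leq N$ — which is non-zero, torsion-free, finitely generated, and uniform because $N$ is — and $P_A$ consists of elements whose image in $Q$ is non-trivial (hence of infinite order). Applying Proposition \ref{prop-construct-expand-metab} to $P_A$ and $N$ exactly as in part \ref{item-non-foldable-gene} produces a fixed non-zero submodule $J_A\leq N$ with $J_A\leq H$ for every $H\in S_G(P_A,N)$.

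The crux is to pin down a single non-zero submodule lying in \emph{every} $H\in S_G(P,G)$. The mechanism is a finite-list trick: if one shows that each such $H$ contains at least one submodule from a fixed finite list $\mathcal{J}$ of non-zero submodules of $N$, then, $N$ being uniform, the finite intersection $J^{*}:=\bigcap_{J\in\mathcal{J}}J$ is a non-zero submodule contained in each member of $\mathcal{J}$, hence in every $H$, and we are done. To build such a list I argue by cases on a conjugate of $H$. If $H\in S_G(P_A,N)$, then $J_A\leq H$, so $J_A$ (or, more economically, the non-zero submodule $J_A\cap N'$) belongs to $\mathcal{J}$. Otherwise there is $n_0\in N$ with $n_0Hn_0^{-1}\cap P_A=\varnothing$; since $H\in S_G(P,N)$ this conjugate must meet $P_B$, and as conjugation by $N$ is trivial on $N'\supseteq P_B$ we conclude $jm\in H$ for some $1\leq|j|\leq 2n$. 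In this second case the plan is to feed $H$ into Proposition \ref{p-Neumann-abelien} (with $N$ in place of the abelian normal subgroup): as $H\in S_G(P,N)$ there is $q\in\pi_Q(P)\cap\pi_Q(H)$ with $N_{q,H}:=\{x\in N : (1-q)x\in H\}$ of index at most $|P|$ in $N$ and $(1-q)N_{q,H}\leq H$; if $q\neq 1$, then $N_{q,H}$ is a $\mathbb{Z}\langle q\rangle$-submodule of bounded index, so it contains the canonical submodule $N_{|P|,q}:=\bigcap\{L : L\ \text{a}\ \mathbb{Z}\langle q\rangle\text{-submodule of}\ N\ \text{of index}\ \leq|P|\}$, whence $(q-1)N_{|P|,q}\leq H$; this is a non-zero $\mathbb{Z}Q$-submodule by Lemma \ref{lem-trick-non-zero-module}(i) (here one uses that $N$ is torsion-free and $q\neq 1=C_Q(N)$), and it lies in the finite list $\{(q'-1)N_{|P|,q'} : q'\in\pi_Q(P)\setminus\{1\}\}$.

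The remaining obstacle — and the step I expect to be the main difficulty — is the degenerate subcase where Proposition \ref{p-Neumann-abelien} can only return $q=1$; this happens precisely when a $P_B$-element already lies in $H$ and trivialises the covering argument underlying that proposition. To handle it one must still extract a canonical non-zero submodule of $N'$ from such an $H$, exploiting that $H$ is confined: when $H$ fails to be confined via $P_A$ by $N'$ the redundant $P_B$-elements can be discarded from the relevant covering of $N'$, and one can iterate this conjugation–discarding procedure, using torsion-freeness of $N$ and uniformity of $N'$ to control the submodules produced, together with the fact that $J_A$, being a submodule, is normal in $G$ — so that as soon as some conjugate of $H$ becomes confined via $P_A$, the group $H$ itself contains $J_A$. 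It is exactly this interaction between the two halves of $P$ that makes essential use of the torsion-freeness hypothesis, absent in part \ref{item-non-foldable-gene}.
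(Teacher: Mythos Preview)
Part \ref{item-non-foldable-gene} is correct and is exactly the paper's argument.

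Part \ref{item-non-foldable-tf} is not complete. Your case split into ``$H\in S_G(P_A,N)$'' versus ``$H\notin S_G(P_A,N)$'' is the wrong dichotomy, and the attempt to invoke Proposition~\ref{p-Neumann-abelien} again in the second case is a detour that, as you yourself note, fails exactly when it matters (the only $q$ that proposition may return is $q=1$, coming from $P_B$). The ``conjugation--discarding procedure'' in the last paragraph is not a proof.

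The missing idea is the one you almost state but do not use: the condition ``$J_A\le H$'' is $G$-invariant, because $J_A$ is a $\Z Q$-submodule and hence normal in $G$. So split instead by this condition. If $J_A\not\le H$, then \emph{no} $G$-conjugate $gHg^{-1}$ lies in $S_G(P_A,N)$ (else $J_A\le gHg^{-1}=$ implies $J_A=g^{-1}J_Ag\le H$). Your own computation then applies to every $G$-conjugate: each $gHg^{-1}$ contains some $j_g m$ with $1\le|j_g|\le 2n$, hence contains $m':=(2n)!\,m$ (here you use torsion-freeness of $N$). Thus $g^{-1}m'g\in H$ for all $g\in G$, i.e.\ the $\Z Q$-submodule $N_2:=\Z Q\cdot m'$ lies in $H$. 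Now the finite list is simply $\{J_A,N_2\}$, both non-zero submodules of $N$; uniformity gives $J_A\cap N_2\neq\{0\}$ inside every $H\in S_G(P,N)$. This is precisely the paper's argument (with $P',P'',N_1,N_2$ in place of your $P_A,P_B,J_A,N_2$); once the case split is made $G$-invariant, there is no need for a second appeal to Proposition~\ref{p-Neumann-abelien}, and no ``degenerate subcase'' remains.
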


\begin{proof}
	In case where the group $N$ is torsion-free, we fix a  non-zero element $m$ of $N$, and denote by $\mathcal{Z}$ the subgroup generated by $m$. In that case we write $\mathcal{J} = \mathcal{L} \cdot \mathcal{Z}$. In case $N$ admits torsion, we set $\mathcal{J} = \mathcal{L}$. We shall prove that $\mathcal{J}$ is always a non-foldable subset of $G$.

	Fix a finite subset $\Sigma\subset \mathcal{J}$, and let $P=\{g^{-1}h\colon g, h\in \Sigma, g\neq h\}$. We will  prove that there is a non-zero $\Z Q$-submodule $L$ of $N$ such that $L$ is contained in $H$ for every $H\in S_G(P, G)$. This is enough to conclude according to  Lemma \ref{l-non-foldable-confined}. Note that since $N$ is a subgroup of $G$, we have the inclusion $S_G(P,G) \subset S_G(P,N)$, so it is enough to find a submodule $L$ contained in $H$ for every $H\in S_G(P, N)$.

	Suppose first that we make no torsion-free assumption for $N$, so that $\mathcal{J} = \mathcal{L}$. Then all the elements of $P$ have a non-trivial projection to $Q$ since $\mathcal{L}$ is a lift of $Q$. Since in addition $N$ is uniform, we are in position to apply Proposition  \ref{prop-construct-expand-metab} to the subset $P$ and the $\Z Q$-submodule $N$. Note that $G$ being finitely generated, $M$ is a finitely generated $\Z Q$-module \cite[11.1.1]{Lennox-Rob}. Since $\Z Q$ is a Noetherian ring \cite[Th.\ 1]{Hall}, it follows that $N$ is a finitely generated $\Z Q$-submodule of $M$. Condition \ref{item-mod-inf-cc} of the proposition is satisfied here because by assumption $C_Q(N) = \left\lbrace 1 \right\rbrace$ and $Q$ is free abelian. The conclusion of Proposition  \ref{prop-construct-expand-metab} therefore provides a non-zero $\Z Q$-submodule $L$ of $N$ that is contained in $H$ for  every $H \in S_G(P,N)$, as desired.

	We now deal with the case where $N$ is torsion-free. So here $\mathcal{J} = \mathcal{L} \cdot \mathcal{Z}$. Let $P'$ denote the elements of $P$ that have non-trivial projection to $Q$, and $P''$ the complement of $P'$ in $P$. The elements of $P''$ are precisely the elements of $P$ that belong to $\mathcal{Z}$.  Since $P''$ is finite, we may find an integer $s$ such that, if we set $m' = m^s$, then every subgroup of $G$ that intersects $P''$ contains $m'$. Since $H \cap P \neq \emptyset$ for all $H \in S_G(P, N)$, in particular for all such $H$ we have the following alternative: $m'  \in H$, or $H \cap P' \neq \emptyset$. Let $Y$ be the set of $H\in S_G(P, N)$ such that $m'  \notin H$. Since the group $N$ is abelian and $m' \in N$, the subset $Y$ in $N$-invariant. Therefore we deduce that we have $H \in S_G(P',N)$ for every $H\in Y$. As in the previous paragraph we can apply Proposition  \ref{prop-construct-expand-metab}, and find a non-zero $\Z Q$-submodule $N_1$ of $N$ which is contained in all  $H\in Y$. Let now $Z$ be the set of subgroups $H\in S_G(P, N)$ such that $N_1\not\le H$. By definition we have $Y \cap Z = \emptyset$. Then for every $H\in Z$, we have $m' \in H$ according to the above alternative. Since $N_1$  is a $\Z Q$-submodule, hence a normal subgroup of $G$,  the set $Z$ is $G$-invariant and $m'\in H$ for all $H\in Z$, and thus  the $\Z Q$-submodule $N_2$ generated by $m'$  satisfies $N_2\le H$  for every $H\in Z$. So  every subgroup in $S_G(P, N)$ contains  $N_1$ or $N_2$, and thus in all cases contains $N_1 \cap N_2$. Since $N_2$ is non-zero because $m' \neq 0$, we have that  $N_1 \cap N_2$ is non-zero since $N$ is uniform. Therefore all subgroups in $S_G(P, N)$ intersect non-trivially, as desired.
\end{proof}

\begin{cor} \label{cor-explicit-metab}
	Let $G$ be a finitely generated metabelian group that is an extension $1 \to M \to G \to Q \to 1$, where $M$ is abelian and $Q$ is free abelian of rank $d \geq 1$. Suppose that there exists a submodule $N$ of $M$ such that $N$ is uniform and $C_Q(N) = \left\lbrace 1 \right\rbrace$. Then whenever $\gamma_1,\ldots,\gamma_d \in G$ are lifts of generators of $Q$, $(\gamma_1,\ldots,\gamma_d)$ is non-foldable. If moreover $N$ is torsion-free, then $(m,\gamma_1,\ldots,\gamma_d)$ is non-foldable for every non-zero element $m$ of $N$. 
\end{cor}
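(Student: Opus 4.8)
The plan is to deduce this from Theorem \ref{thm-explicit-metab} via the reformulation of non-foldable tuples in Lemma \ref{lem-mapZk-inj}. Let $q_1,\ldots,q_d$ be the generators of $Q$ with $\pi_Q(\gamma_i)=q_i$, and consider the monomial map $\varphi\colon \Z^d\to G$, $(n_1,\ldots,n_d)\mapsto \gamma_d^{n_d}\cdots\gamma_1^{n_1}$. Post-composing with $\pi_Q$ gives $(n_1,\ldots,n_d)\mapsto q_1^{n_1}\cdots q_d^{n_d}$, which is a bijection onto $Q$ because $Q$ is free abelian on $q_1,\ldots,q_d$; in particular $\varphi$ is injective, and its image $\mathcal{L}:=\varphi(\Z^d)$ meets every $\pi_Q$-fibre over $Q$ in exactly one point, so $\mathcal{L}$ is a lift of $Q$ in the sense used in Theorem \ref{thm-explicit-metab}.

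First I would apply Theorem \ref{thm-explicit-metab}\ref{item-non-foldable-gene} to this lift $\mathcal{L}$: under the standing hypotheses that $N$ is uniform and $C_Q(N)=\{1\}$, it gives that $\mathcal{L}$ is a non-foldable subset of $G$, i.e.\ non-folded in every faithful $G$-set. Since $\varphi$ is injective and its image is $\mathcal{L}$, Lemma \ref{lem-mapZk-inj} then immediately yields that the $d$-tuple $(\gamma_1,\ldots,\gamma_d)$ is non-folded in every faithful $G$-set, which is exactly the assertion that it is non-foldable. This settles the first statement.

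For the second statement I would run the same argument one dimension higher. Assume $N$ is torsion-free and fix a non-zero $m\in N$; then $m$ has infinite order, so $\mathcal{Z}=\langle m\rangle$ is infinite cyclic. Consider $\psi\colon \Z^{d+1}\to G$, $(n_0,n_1,\ldots,n_d)\mapsto \gamma_d^{n_d}\cdots\gamma_1^{n_1}m^{n_0}$, whose image is precisely $\mathcal{L}\cdot\mathcal{Z}$. To check injectivity of $\psi$, apply $\pi_Q$: since $m\in M=\ker\pi_Q$ this recovers the injective map $(n_1,\ldots,n_d)\mapsto q_1^{n_1}\cdots q_d^{n_d}$, so any coincidence $\psi(n_0,n_1,\ldots)=\psi(n_0',n_1',\ldots)$ forces $n_i=n_i'$ for $i\ge 1$; cancelling the common prefix then forces $m^{n_0}=m^{n_0'}$, hence $n_0=n_0'$ by infinite order of $m$. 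By Theorem \ref{thm-explicit-metab}\ref{item-non-foldable-tf} the set $\mathcal{L}\cdot\mathcal{Z}$ is non-foldable, so Lemma \ref{lem-mapZk-inj} gives that $(m,\gamma_1,\ldots,\gamma_d)$ is non-foldable.

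There is no real obstacle here: the corollary is a translation of Theorem \ref{thm-explicit-metab} from the language of non-foldable subsets into that of non-foldable tuples. The only points needing (minor) care are checking that the monomial maps $\Z^d\to G$ and $\Z^{d+1}\to G$ are injective with images exactly $\mathcal{L}$, respectively $\mathcal{L}\cdot\mathcal{Z}$ — which follows from freeness of $Q$ and the infinite order of $m$ — so that Lemma \ref{lem-mapZk-inj} applies.
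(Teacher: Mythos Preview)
Your proof is correct and follows exactly the approach indicated in the paper, which simply states that the corollary is a direct consequence of Lemma \ref{lem-mapZk-inj} and Theorem \ref{thm-explicit-metab}. You have merely spelled out the details of why the monomial maps are injective with the required images, which is routine given that $Q$ is free abelian and $m$ has infinite order.
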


\begin{proof}
	This is a direct consequence of Lemma \ref{lem-mapZk-inj} and Theorem \ref{thm-explicit-metab}.
\end{proof}

In the case of split extensions Theorem \ref{thm-explicit-metab} also implies the following:

\begin{cor}\label{c-metab-asdim}
Let $G=M\rtimes Q$ be a finitely generated group, where $M$ is abelian and $Q$ is free abelian of rank $d\ge 1$. Suppose that there exists a $\Z Q$-submodule $N$ of $M$ such that $N$ is uniform and $C_Q(N)=1$. Then for every faithful $G$-set $X$ we have $\asdim(G, X)\ge d$.
\end{cor}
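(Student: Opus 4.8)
The plan is to deduce this directly from Theorem \ref{thm-explicit-metab} and Proposition \ref{p-asdim-non-foldable}, in the same spirit as the proof of Corollary \ref{cor-poly-asdimX}. Since here the extension splits as $G = M \rtimes Q$, I would fix the natural copy of $Q$ inside $G$, namely the complement in the semidirect product, and observe that $\pi_Q$ restricts to an isomorphism from this subgroup onto the quotient $Q$. In particular this subgroup is a \emph{lift} of $Q$ in the sense of the definition preceding Theorem \ref{thm-explicit-metab}. Part \ref{item-non-foldable-gene} of Theorem \ref{thm-explicit-metab} then applies verbatim, since the standing hypotheses here are precisely that $N$ is a uniform $\mathbb{Z}Q$-submodule of $M$ with $C_Q(N)=\{1\}$, and it yields that $\mathcal{L} := Q$ is a non-foldable subset of $G$.

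Next I would invoke Proposition \ref{p-asdim-non-foldable}, which gives $\asdim(G,X) \ge \asdim(\mathcal{L})$ for every faithful $G$-set $X$, where $\mathcal{L} = Q$ carries the restriction to $Q$ of a word metric $d_G$ on $G$. The only remaining point is to identify $\asdim(Q, d_G|_Q)$. Here I would use that $Q$ is a retract of $G$: the projection $\pi_Q\colon G \to Q$ is a group homomorphism that is a left inverse to the inclusion $Q \hookrightarrow G$. Choosing a finite generating set $S$ of $G$ whose image generates $Q$, both the inclusion and $\pi_Q$ are $1$-Lipschitz, so the restriction of $d_G$ to $Q$ coincides with (for a general choice of $S$, is bi-Lipschitz equivalent to) the word metric on $Q$. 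Since $Q \cong \mathbb{Z}^d$, we have $\asdim(Q)=d$ (for instance by Theorem \ref{thm-poly-asdim-hirsch}), and since $\asdim$ is a coarse invariant this gives $\asdim(Q, d_G|_Q) = d$, hence $\asdim(G,X) \ge d$ as claimed.

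I do not expect a genuine obstacle: the substance is entirely contained in Theorem \ref{thm-explicit-metab}, and what remains is the routine fact that a retract subgroup is undistorted. The one point requiring a little care is to phrase the non-foldable subset as the subgroup $Q$ itself, rather than as a tuple of generators via Corollary \ref{cor-explicit-metab}, since Proposition \ref{p-asdim-non-foldable} is stated for subsets and involves the asymptotic dimension of that subset with its induced metric.
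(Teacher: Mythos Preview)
Your proposal is correct and follows exactly the paper's approach: apply Theorem \ref{thm-explicit-metab} to get that $Q$ is non-foldable, then invoke Proposition \ref{p-asdim-non-foldable} and the fact that $\asdim(\Z^d)=d$. The only difference is that you spell out the retract argument showing that the induced metric on $Q$ is bi-Lipschitz equivalent to its own word metric, a detail the paper leaves implicit.
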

\begin{proof}
Theorem \ref{thm-explicit-metab} implies that $Q$ is a non-foldable subset of $G$. Thus the conclusion follows from Proposition \ref{p-asdim-non-foldable}, since $\asdim(Q)=d$. 
\end{proof}

\subsection{First applications} \label{s-metabelian-first-examples}We now proceed to  apply Theorem \ref{thm-explicit-metab} to some explicit families of finitely generated metabelian groups. We will appeal to the following trivial lemma.

\begin{lem} \label{lem-prime-SI}
	Let $Q$ be a finitely generated abelian group, and let $\p$ be a prime ideal of $\Z Q$. Consider the $\Z Q$-module $N = \Z Q / \p$. Then the following hold:\begin{enumerate}[label=\roman*)]
		\item $C_Q(N) = Q \cap (1 + \p)$.
		\item $N$ is uniform.
		\item the abelian group $N$ is torsion-free if and only if $\p \cap \Z =  \left\lbrace 0\right\rbrace $.
	\end{enumerate}
\end{lem}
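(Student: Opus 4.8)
The statement is elementary and follows directly from the definitions of the three notions involved together with basic commutative algebra of the Noetherian ring $\Z Q$. I would organize the proof into the three items.

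For item (i): by the general remark recalled just before Lemma \ref{lem-virt-ab-FC}, we have $C_Q(N) = Q \cap (1 + \ann(N))$, where $\ann(N)$ is the annihilator of $N = \Z Q/\p$ in $\Z Q$. But the annihilator of the cyclic module $\Z Q/\p$ is exactly $\p$ (if $f \cdot (1 + \p) = 0$ in $N$ then $f \in \p$, and conversely $\p$ annihilates $N$). Hence $\ann(N) = \p$ and $C_Q(N) = Q \cap (1 + \p)$, as claimed.

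For item (ii): recall that $N$ is uniform means any two non-zero submodules of $N$ intersect non-trivially. The submodules of $N = \Z Q/\p$ are exactly the ideals $I/\p$ with $\p \subseteq I \subseteq \Z Q$. If $I_1/\p$ and $I_2/\p$ are both non-zero, then $I_1, I_2$ are ideals strictly containing $\p$; since $\p$ is prime, $I_1 I_2 \not\subseteq \p$ (a product of two ideals not contained in a prime ideal is not contained in that prime ideal), and $I_1 I_2 \subseteq I_1 \cap I_2$, so $(I_1 \cap I_2)/\p \neq 0$. Thus $N$ is uniform. (Equivalently: $\Z Q/\p$ is a domain, hence $0$ is a uniform — even essential-in-itself — submodule, and any domain as a module over itself is uniform.)

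For item (iii): the abelian group $N = \Z Q/\p$ is torsion-free, by definition, if and only if for every non-zero integer $n$ and every $x \in \Z Q$, $nx \in \p$ implies $x \in \p$. Since $\p$ is prime, $nx \in \p$ forces $n \in \p$ or $x \in \p$; so $N$ is torsion-free if and only if no non-zero integer lies in $\p$, i.e.\ if and only if $\p \cap \Z = \{0\}$. Conversely if $\p \cap \Z \neq \{0\}$, say $0 \neq n \in \p$, then $1 + \p$ is a non-zero element of $N$ killed by $n$, so $N$ has torsion. This proves the equivalence.

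None of the three points presents a real obstacle; they are immediate consequences of the primeness of $\p$ and of the identification of annihilators and submodules of a cyclic module. The only mild subtlety worth spelling out is the product-of-ideals argument in item (ii), ensuring $I_1 \cap I_2 \neq \p$ whenever $I_1, I_2 \supsetneq \p$.
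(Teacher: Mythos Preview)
Your proof is correct and follows essentially the same approach as the paper's own proof: identify $\ann(N)=\p$ for (i), use that $\Z Q/\p$ is a domain so products of non-zero ideals are non-zero for (ii), and use primeness of $\p$ for (iii). The paper is more terse (it declares (i) and (iii) ``clear''), but the arguments are identical in substance.
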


\begin{proof}
	The annihilator of $N$ is $\p$, so the first statement is clear. The submodules of $N$ are the ideals of the ring $\Z Q / \p$. If $I,J$ are non-zero ideals, then $I J \leq I \cap J$ and $IJ$ is non-zero since $\Z Q / \p$ is a domain. So $N$ is uniform. The last statement is also clear.
\end{proof}

\subsubsection{Wreath products}

\begin{thm} \label{thm-wreath}
Consider the wreath product $G=A\wr \Z^d$, where $A$ is a non-trivial finitely generated abelian group. Then $\Z^d$ is a non-foldable subset of $G$, and if $t\in A$ is an element of infinite order then $\Z^d \cdot \langle t\rangle$ is a non-foldable subset of $G$. In particular:
\begin{enumerate}[label=\roman*)]
\item \label{i-wreath-i} If $A$ is finite, then $G$ has a Schreier growth gap $n^d$. 
\item \label{i-wreath-ii} If  $A$ is infinite, then $G$ has a Schreier growth gap $n^{d+1}$. 
\item \label{i-wreath-asdim}  every faithful $G$-set $X$ satisfies $\asdim(G, X)\ge d$.
\end{enumerate}
Moreover these bounds are sharp.
\end{thm}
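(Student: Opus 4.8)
The plan is to derive all the assertions from Theorem~\ref{thm-explicit-metab} and its corollaries, once a suitable submodule of the lamp group has been identified. Write $G=A\wr\Z^d=M\rtimes Q$ with $M=\oplus_{\Z^d}A$ abelian and $Q=\Z^d$ free abelian of rank $d\ge 1$, and view $M$ as a $\Z Q$-module. The task is to produce a $\Z Q$-submodule $N\le M$ that is uniform and satisfies $C_Q(N)=\{1\}$. If $t\in A$ is an element of infinite order, I would take $N=\oplus_{\Z^d}\langle t\rangle$; this is isomorphic as a $\Z Q$-module to $\Z Q=\Z Q/\p$ with $\p=\{0\}$, a prime ideal since $\Z Q$ is a domain, so Lemma~\ref{lem-prime-SI} shows that $N$ is uniform, torsion-free, and has $C_Q(N)=Q\cap(1+\p)=\{1\}$. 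If instead $A$ is finite, I would pick a subgroup $C\cong\Z/q$ with $q$ prime and take $N=\oplus_{\Z^d}C$, which as a $\Z Q$-module is $\Z Q/q\Z Q\cong\mathbb F_q[\Z^d]$; here $\p=q\Z Q$ is a prime ideal of $\Z Q$ whose intersection with $Q$ is trivial, so Lemma~\ref{lem-prime-SI} again gives that $N$ is uniform with $C_Q(N)=\{1\}$ (this time with torsion).

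Granting this, let $\gamma_1,\dots,\gamma_d$ be the standard generators of the natural copy of $Q=\Z^d$ in $G$, which is a lift of $Q$. For any $t\in A$ of infinite order, applying Theorem~\ref{thm-explicit-metab} (or Corollary~\ref{cor-explicit-metab}) with $N=\oplus_{\Z^d}\langle t\rangle$ shows that $\Z^d\cdot\langle t\rangle$ is non-foldable and that $(t,\gamma_1,\dots,\gamma_d)$ is non-foldable; and in all cases (using either choice of $N$) $\Z^d$ is non-foldable and $(\gamma_1,\dots,\gamma_d)$ is non-foldable. Lemma~\ref{lem-exp-tuple-growth} then yields $\vol_{G,X}(n)\succcurlyeq n^d$ for every faithful $G$-set $X$, proving i), and, when $A$ is infinite and we fix one $t$ of infinite order, $\vol_{G,X}(n)\succcurlyeq n^{d+1}$, proving ii). For iii), Corollary~\ref{c-metab-asdim} applies directly to $G=M\rtimes Q$ with the submodule $N$ above and gives $\asdim(G,X)\ge d$ for every faithful $G$-set $X$.

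It remains to check that the gaps in i) and ii) are sharp. When $A$ is finite, the standard wreath product action on $X_{\mathrm{st}}=\Z^d\times A$ is faithful and, by the growth computation in \S\ref{s-wreath-actions}, has $\vol_{G,X_{\mathrm{st}}}(n)\simeq\vol_{\Z^d}(n)\vol_A(n)\simeq n^d$; this also realizes iii), since $\Gamma(G,X_{\mathrm{st}})$ is quasi-isometric to a finite wedge of copies of $\Z^d$. When $A$ is infinite, the standard action is too large if $A$ has torsion-free rank $\ge 2$, so I would glue several lamplighter actions together. Write $A\cong\Z^k\oplus T(A)$ with $k\ge 1$, and fix finitely many surjections $\mu_1,\dots,\mu_r$ of $A$ onto $\Z$ or onto finite cyclic groups with $\bigcap_j\ker\mu_j=\{0\}$ — for instance the $k$ coordinate projections of $\Z^k$ and the projections of $T(A)$ onto its cyclic factors; necessarily at least one target is $\Z$, since otherwise $\bigcap_j\ker\mu_j$ would have finite index, hence be infinite. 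Each $\mu_j$ induces coordinatewise a $Q$-equivariant epimorphism $\Psi_j\colon A\wr\Z^d\to C_j\wr\Z^d$ with $\ker\Psi_j=\oplus_{\Z^d}\ker\mu_j$; let $Y_j$ be the standard wreath product action of $C_j\wr\Z^d$, seen as an $A\wr\Z^d$-set through $\Psi_j$. Since each standard action is faithful, the kernel of $A\wr\Z^d$ on $Y=\bigsqcup_j Y_j$ is $\bigcap_j\ker\Psi_j=\oplus_{\Z^d}\bigl(\bigcap_j\ker\mu_j\bigr)=\{1\}$, so $Y$ is faithful; and since the action on each $Y_j$ factors through the surjection $\Psi_j$, we have $\vol_{G,Y_j}(n)\simeq\vol_{C_j\wr\Z^d,Y_j}(n)\simeq\vol_{\Z^d}(n)\vol_{C_j}(n)$, which is $\simeq n^{d+1}$ if $C_j=\Z$ and $\simeq n^d$ otherwise. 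By Lemma~\ref{l-product-actions}, $\vol_{G,Y}(n)=\max_j\vol_{G,Y_j}(n)\simeq n^{d+1}$, as needed.

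The non-foldability statements and parts i)--iii) are quick once the submodule $N$ is on the table, so I expect the main obstacle to be the sharpness of ii) for infinite $A$ of torsion-free rank $\ge2$: the standard action then has the wrong growth, and one has to assemble several cyclic-lamp lamplighter actions into a disjoint union that is at once faithful and of growth exactly $n^{d+1}$. Checking faithfulness (i.e.\ that $\bigcap_j\oplus_{\Z^d}\ker\mu_j$ is trivial) and that no summand $Y_j$ exceeds growth $n^{d+1}$ is the delicate part.
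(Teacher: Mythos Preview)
Your argument for non-foldability and for parts \ref{i-wreath-i}--\ref{i-wreath-asdim} is essentially identical to the paper's: both identify the submodule $N=\oplus_{\Z^d}C_p$ (finite case) or $N=\oplus_{\Z^d}\langle t\rangle$ (infinite case), check uniformity and $C_Q(N)=\{1\}$ via Lemma~\ref{lem-prime-SI}, and then invoke Theorem~\ref{thm-explicit-metab} and its corollaries. One small slip: for the finite case you want $Q\cap(1+\p)=\{1\}$, not ``$\p\cap Q$ trivial''; and your appeal to Lemma~\ref{l-product-actions} at the end is not quite legitimate since $G$ is not a direct product of the $C_j\wr\Z^d$, though the formula $\vol_{G,\sqcup Y_j}(n)=\max_j\vol_{G,Y_j}(n)$ you actually need is immediate from the definition.

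The genuine difference is in the sharpness of \ref{i-wreath-ii}. The paper takes a more direct route: by Proposition~\ref{prop-virtually-abelian} there is a faithful $A$-set $Y$ with $\vol_{A,Y}(n)\simeq n$, and then one defines a single ``generalised standard action'' of $G$ on $X=\Z^d\times Y$ by $((a_b),b_1)\cdot(b_2,y)=(b_1+b_2,\,a_{b_1+b_2}\cdot y)$, which is faithful and has $\vol_{G,X}(n)\simeq n^{d+1}$. This avoids decomposing $A$ and passing to several quotient wreath products; in effect the disjoint-union trick you perform at the level of $A\wr\Z^d$ is done once and for all inside $A$ via Proposition~\ref{prop-virtually-abelian}, yielding a cleaner construction with the same growth. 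Your approach is correct and has the virtue of being very explicit, but the paper's packaging is shorter and also makes the sharpness of the asymptotic-dimension bound in \ref{i-wreath-asdim} transparent in the infinite case as well.
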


\begin{proof} Set $M=\oplus_{\Z^d} A$ and $Q=\Z^d$. 
Assume first that $A$ is finite, and choose an element $s\in A$ of prime order $p$. The $\Z Q$-submodule  $N$ generated by $S$ is  $N = \oplus_{\Z^d}  C_p \simeq \Z Q / (p)$. The module $N$ is indeed uniform by Lemma \ref{lem-prime-SI}, and clearly $C_Q(N)=\{1\}$. Thus we may apply Theorem \ref{thm-explicit-metab} conclusion of the theorem says that $\mathcal{L} = \Z^d$ is a non-foldable subset of $G$. The bound $\vol_{G, X}(n) \succcurlyeq  n^d$ follows from Lemma \ref{lem-exp-subset-growth} since $\relvol_{(G, \mathcal{L}) }(n) \simeq n^{d}$. This proves part \eqref{i-wreath-i}.

Now assume that $A$ is infinite and choose an element $s\in A$ of infinite order. Then the module generated by $s$ is $N=\oplus_Q\Z$, a free $\Z Q$-module of rank 1. It also follows from Theorem \ref{thm-explicit-metab} that the subset $\mathcal{L}=\Z^d \cdot \langle s \rangle$ is non-foldable (in particular, so is the subset $\Z^d$), and the claim on the growth follows again from Lemma \ref{lem-exp-subset-growth} since $\relvol_{(G, \mathcal{L})}(n)\simeq n^{d+1}$. The claim on asymptotic dimension follows similarly from Proposition \ref{p-asdim-non-foldable}, since $\asdim(\Z^d)=d$. 

Finally we justify that these bounds are sharp. If the group $A$ is finite, then the standard wreath product action of $G$  on $X=\Z^d\times A$ satisfies $\vol_{G, X}(n)\simeq n^d$ (\S \ref{s-wreath-actions}). If $A$ is infinite, then we may choose a faithful $A$-set $Y$ with $\vol_{A, Y}(n)\simeq n$ (see Proposition \ref{prop-virtually-abelian}). Then the action of $G$ on $X = B \times Y$, given by $((a_b),b_1) \cdot (b_2,y) = (b_1+b_2, a_{b_1b_2}y)$ is faithful, and $\vol_{G, X}(n)\simeq n^{d+1}$. Moreover in both cases the asymptotic dimension of the resulting action is $\asdim(G, X)=d$.  \qedhere
\end{proof}

\begin{remark}
 Theorem  \ref{thm-wreath} implies an analogous result when $A$ is an arbitrary finitely generated group,  because any non-trivial wreath product $G=A\wr \Z^d$ contains a subgroup isomorphic to either $C_p \wr \Z^d$ or $\Z \wr \Z^d$. However in this case the lower bounds obtained on the growth might not be sharp. 
\end{remark}

\subsubsection{Baumslag finitely presented groups} 
Here we consider the extended family of Baumslag groups $\Lambda_{p, d}$ from \S  \ref{subsec-Baumslag}, where $p$ is a prime and $d\ge 1$. Recall that $\Lambda_{p, d}=R_{p, d}\rtimes Q$,
	where   $Q=\Z^{2d}$ and $R_{p, d}:=\mathbb{F}_p[T_1,\ldots, T_d, T_1^{-1}, \ldots, T_d^{-1}, (1+T_1)^{1},\ldots (1+T_d)^{-1}]$. If we denote $t_1,\ldots, t_d, s_1,\ldots, s_d$ a basis of $\Z^{2d}$, then $t_i$ and $s_i$ act on $R_{p, d}$ respectively by multiplication by $T_i$ and $T_i+1$.

The ring $R_{p, d}$ is quotient of the ring $\Z Q\simeq \Z[T_1^{\pm 1},\cdots, T_{2d}^{\pm 1}]$ by the ideal generated by $p$ and by the polynomials $T_i-1 - T_{d+i}$ for $i=1,\ldots, d$, which is a prime ideal. Hence $R_{p, d}$ is a uniform $\Z Q$-module. Since moreover $C_Q(R_{p, d})=\{1\}$, all the assumption of Theorem \ref{thm-explicit-metab} are satisfied. We obtain:

\begin{thm}
For every   prime $p$ and every $d\ge 1$, 	$\Z^{2d}  $ is a non-foldable subset of $G = \Lambda_{p, d}$. In particular $G$ has a Schreier growth gap $n^{2d}$.
\end{thm}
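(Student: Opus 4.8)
The plan is to deduce the statement as a direct application of Theorem \ref{thm-explicit-metab}, taking $M = N = R_{p,d}$ and $Q = \Z^{2d}$ as in the discussion preceding the statement, so that the subset $\Z^{2d}$ appearing in the theorem is precisely the natural lift of $Q$, namely the complement copy of $\Z^{2d}$ in the semidirect product $\Lambda_{p,d} = R_{p,d}\rtimes\Z^{2d}$. The first task is thus to check the hypotheses of Theorem \ref{thm-explicit-metab}: that $G=\Lambda_{p,d}$ is a finitely generated metabelian group fitting in a short exact sequence $1\to M\to G\to Q\to 1$ with $M$ abelian and $Q$ free abelian, and that $N=R_{p,d}$ is a uniform $\Z Q$-submodule of $M$ with $C_Q(N)=\{1\}$.

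Each of these points is routine and most of the work has already been recorded above. Finite generation holds since $\Lambda_{p,d}$ is finitely presented (\S\ref{subsec-Baumslag}); the module $M=R_{p,d}$ is abelian, being an $\mathbb F_p$-vector space under addition, and $Q=\Z^{2d}$ is free abelian. For uniformity, $R_{p,d}$ is, by Baumslag's description, the quotient of $\Z Q\cong\Z[T_1^{\pm1},\dots,T_{2d}^{\pm1}]$ by the prime ideal $\mathfrak p$ generated by $p$ and the elements $T_i-1-T_{d+i}$; being a quotient of $\Z Q$ by a prime ideal, $R_{p,d}$ is a domain, hence a uniform $\Z Q$-module by Lemma \ref{lem-prime-SI}. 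Finally, $C_Q(N) = Q\cap(1+\mathfrak p)$ by Lemma \ref{lem-prime-SI}, so the triviality of $C_Q(N)$ is exactly the assertion that a monomial $\prod_i T_i^{a_i}(T_i+1)^{b_i}$ equals $1$ in $R_{p,d}$ only when all $a_i=b_i=0$, i.e.\ the multiplicative independence of $T_1,\dots,T_d,T_1+1,\dots,T_d+1$ in the fraction field, read off by comparing orders of vanishing at $T_i=0$ and $T_i=-1$. (Since $\mathfrak p\cap\Z = p\Z\neq\{0\}$, the module $R_{p,d}$ has torsion as an abelian group, so here one uses part \ref{item-non-foldable-gene} of Theorem \ref{thm-explicit-metab}, not the torsion-free refinement \ref{item-non-foldable-tf}.)

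With the hypotheses in place, Theorem \ref{thm-explicit-metab}\ref{item-non-foldable-gene} gives at once that the lift $\Z^{2d}$ of $Q$ is a non-foldable subset of $G$, which is the first assertion. For the Schreier growth gap, I would invoke Corollary \ref{cor-explicit-metab}: choosing lifts $\gamma_1,\dots,\gamma_{2d}\in G$ of a basis of $Q$ (the standard generators of the complement $\Z^{2d}$), the tuple $(\gamma_1,\dots,\gamma_{2d})$ is non-foldable, so Lemma \ref{lem-exp-tuple-growth} yields $\vol_{G,X}(n)\succcurlyeq n^{2d}$ for every faithful $G$-set $X$; alternatively, since $\Z^{2d}$ is undistorted in $G$ (being a retract), $\relvol_{(G,\Z^{2d})}(n)\simeq n^{2d}$ and one concludes via Lemma \ref{lem-exp-subset-growth}. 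I do not expect any genuine obstacle: all the substance has been isolated in Theorem \ref{thm-explicit-metab}, and the only step deserving a moment's care is the bookkeeping that singles out the correct uniform submodule $N=R_{p,d}$ and verifies $C_Q(N)=\{1\}$.
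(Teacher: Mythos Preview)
Your proposal is correct and follows essentially the same approach as the paper: verify that $R_{p,d}$ is the quotient of $\Z Q$ by a prime ideal (hence uniform via Lemma~\ref{lem-prime-SI}), check that $C_Q(R_{p,d})=\{1\}$, and apply Theorem~\ref{thm-explicit-metab}. You supply more detail than the paper does---in particular the multiplicative-independence argument for $C_Q(N)=\{1\}$ and the observation that one must use part~\ref{item-non-foldable-gene} rather than~\ref{item-non-foldable-tf}---but the structure is identical.
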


We note that this lower bound is sharp because $\Lambda_{p, d}$ admits a faithful transitive $G$-set $X$ with $\vol_{G, X}(n) \simeq n^{2d}$ (Proposition  \ref{prop-growth-ol}).

\subsubsection{Free metabelian groups} Let $d \geq 2$, and let $\mathbb{FM}_d$ be the fee metabelian group of rank $d$, i.e.\ the quotient of the free group $F_d$ by its second derived subgroup. If  $x_1,\ldots,x_d$ are free generators of $F_d$, then for simplicity we still denote  $x_1,\ldots,x_d$ their images in $\mathbb{FM}_d$. The group $Q = \mathbb{FM}_d / \mathbb{FM}_d'$ is free abelian of rank $d$.

\begin{thm} \label{thm-freemetab-growth}
	Let $G = \mathbb{FM}_d$. For every non-trivial $m \in G'$, the $(d+1)$-tuple $(m,x_1,\ldots,x_d)$ is non-foldable. In particular $G$ has a Schreier growth gap $n^{d+1}$. Moreover this bound is sharp.
\end{thm}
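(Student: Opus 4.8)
The plan is to deduce the statement from Corollary \ref{cor-explicit-metab}. Write $G=\mathbb{FM}_d$, let $M=G'=\mathbb{FM}_d'$ (abelian, since $G$ is metabelian) and $Q=G/G'\cong\Z^d$, so that $1\to M\to G\to Q\to 1$ is a short exact sequence with $M$ abelian and $Q$ free abelian of rank $d$, and the elements $x_1,\dots,x_d$ project to a basis of $Q$. Viewing $M$ as a $\Z Q$-module (in additive notation), a non-trivial $m\in G'$ is a non-zero element of $M$. By Corollary \ref{cor-explicit-metab} it is enough to produce, for each such $m$, a $\Z Q$-submodule $N$ of $M$ with $m\in N$ that is uniform, torsion-free as an abelian group, and satisfies $C_Q(N)=\{1\}$: this will give that $(m,x_1,\dots,x_d)$ is non-foldable, and then Lemma \ref{lem-exp-tuple-growth} yields $\vol_{G,X}(n)\succcurlyeq n^{d+1}$ for every faithful $G$-set $X$, i.e.\ the Schreier growth gap $n^{d+1}$.

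The key point, which I would isolate as a lemma, is that $M=G'$ is torsion-free as a $\Z Q$-module. Indeed, by the classical Magnus embedding \cite{Lennox-Rob}, $G$ embeds into $W:=\big(\bigoplus_{i=1}^d \Z Q\big)\rtimes Q$ in such a way that $M=G'$ is carried $\Z Q$-linearly onto a submodule of the free $\Z Q$-module $(\Z Q)^d$ (on $G'$ the embedding is given by the Fox derivatives with respect to the $x_i$, pushed forward along $\Z G\to \Z Q$). Since $\Z Q\cong\Z[t_1^{\pm1},\dots,t_d^{\pm1}]$ is an integral domain, $(\Z Q)^d$, and hence its submodule $M$, is torsion-free as a $\Z Q$-module. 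Now fix a non-zero $m\in M$ and set $N:=\Z Q\cdot m$. Torsion-freeness gives $\ann_{\Z Q}(m)=\{0\}$, so $N\cong\Z Q$ as a $\Z Q$-module. Consequently $N$ is torsion-free as an abelian group; it is uniform, because the product of two non-zero ideals of the domain $\Z Q$ is a non-zero ideal contained in their intersection; and $C_Q(N)=Q\cap(1+\ann(N))=Q\cap\{1\}=\{1\}$. Applying Corollary \ref{cor-explicit-metab} with this $N$ gives that $(m,x_1,\dots,x_d)$ is non-foldable, as wanted.

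For sharpness I would exhibit a faithful $G$-set of growth $\simeq n^{d+1}$, again using the Magnus embedding. The diagonal map on the $Q$-coordinate, $\big((a_i)_i,q\big)\mapsto\big((a_i,q)\big)_{i=1}^d$, embeds $W$ into $\prod_{i=1}^d(\Z Q\rtimes Q)$, and each factor $\Z Q\rtimes Q$ is isomorphic to the wreath product $\Z\wr\Z^d$. Composing with the Magnus embedding gives an injection $G\hookrightarrow\prod_{i=1}^d(\Z\wr\Z^d)$. By \S\ref{s-wreath-actions} the standard wreath product action of $\Z\wr\Z^d$ is faithful of growth $\simeq n^{d+1}$, so by Lemma \ref{l-product-actions} the group $\prod_{i=1}^d(\Z\wr\Z^d)$ admits a faithful set $X$ on which it has growth $\simeq n^{d+1}$. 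Restricting this action along $G\hookrightarrow\prod_{i=1}^d(\Z\wr\Z^d)$ produces a faithful $G$-set whose growth is $\preccurlyeq n^{d+1}$ by Proposition \ref{prop-monoton}; together with the lower bound just proved, it is $\simeq n^{d+1}$.

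The main obstacle is conceptual rather than computational: one has to use the Magnus embedding at the level of $\Z Q$-modules in order to conclude that $M=G'$ is torsion-free \emph{as a $\Z Q$-module}, not merely as an abelian group. This is precisely what forces the cyclic submodule $\Z Q\cdot m$ to be free of rank one, hence uniform with trivial centralizer in $Q$, which is what lets Corollary \ref{cor-explicit-metab} accommodate an \emph{arbitrary} non-trivial $m\in G'$. Once this is in place the non-foldability is a formal consequence of Corollary \ref{cor-explicit-metab}, and the sharpness statement is a routine construction from the Magnus embedding combined with Lemma \ref{l-product-actions}.
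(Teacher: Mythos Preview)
Your proof is correct and follows essentially the same approach as the paper: both use the Magnus embedding to show that every non-trivial $m\in G'$ has trivial annihilator in $\Z Q$, so that the cyclic submodule $N=\Z Q\cdot m$ is free of rank one, and then apply Corollary~\ref{cor-explicit-metab}. For sharpness the paper is slightly more direct---it observes that $\Z^d\wr\Z^d$ sits as a finite index subgroup of $\Z\wr\Z^d$, hence $G$ embeds into $\Z\wr\Z^d$ itself and one restricts the standard wreath product action---whereas you pass through the product $\prod_{i=1}^d(\Z\wr\Z^d)$ via Lemma~\ref{l-product-actions}; both routes are valid and yield the same bound.
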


\begin{proof}
Recall that the Magnus embedding is an injective homomorphism from  $\mathbb{FM}_d$ to the wreath product $\mathbb{Z}^d \wr Q $ \cite{Magnus-embed}. From this it is easy to see that  every non-trivial $m \in \mathbb{FM}_d'$ has trivial annihilator in $\Z Q$, so that the $\Z Q$-module $N$ generated by $m$ is free of rank one. In particular $C_Q(N) = \left\lbrace 1 \right\rbrace$ and $N$ is uniform. Therefore Corollary \ref{cor-explicit-metab} applies and yields the conclusion. The only thing that remains to be justified is the last claim. Since $\Z^d \wr \Z^d$ is isomorphic to a (finite index) subgroup of $ \Z \wr \Z^d$, the Magnus embedding implies in particular that $G = \mathbb{FM}_d$ embeds in $\Z \wr \Z^d$. Since the standard wreath product action of $ \Z \wr \Z^d$ has growth $n^{d+1}$, by restricting to $G$ we obtain a faithful $G$-set $X$ with $\vol_{G, X}(n) \simeq n^{d+1}$. So the lower bound $n^{d+1}$ from the statement is sharp.
\end{proof}

\subsection{Growth of actions and Krull dimension} \label{s-Krull}

We recall the following definitions. 

\begin{defin}
	Let $A$ be a commutative ring with unit. The Krull dimension of $A$, written $\dim(A)$, is the supremum of the lengths of all chains of prime ideals of $A$, where the length of the chain $\p_0 \subsetneq \cdots \subsetneq \p_{n}$ is $n$. The Krull dimension of a non-zero module $M$ over $A$ is defined as $\dim(M) = \dim(A/\ann(M))$. 
\end{defin}

We will need the following proposition proven by Jacoboni \cite[Prop.\ 4.2]{Lison}.

\begin{prop} \label{prop-lison}
	Let $Q$ be a finitely generated free abelian group, and $M$ a finitely generated $\Z Q$-module of Krull dimension $k$. Then one can find a subgroup $Q_0 \leq Q$ and $m \in M$ such that at least one of the following holds:
	\begin{enumerate}[label=\roman*)]
		\item $Q_0$ has rank $k-1$ and the $\Z Q_0$-module generated by $m$ is a free module;
		\item $Q_0$ has rank $k$ and the $\Z Q_0$-module generated by $m$ is isomorphic to $\mathbb{F}_p Q_0$ for some prime number $p$.
	\end{enumerate}
\end{prop}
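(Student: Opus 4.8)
The plan is to reduce the statement to the case of a cyclic module and then to invoke the dimension theory of finitely generated $\Z$-algebras. Since $\Z Q$ is Noetherian \cite{Hall}, the module $M$ has only finitely many associated primes, and every minimal prime of the support $\operatorname{Supp}(M)=V(\ann(M))$ is among them; choosing such a minimal prime $\p$ that realises the dimension, we get $\dim(\Z Q/\p)=\dim(\Z Q/\ann(M))=k$. Being associated, $\p=\ann_{\Z Q}(m)$ for some $m\in M$, so the cyclic submodule $\Z Q\cdot m$ is isomorphic to $A:=\Z Q/\p$, an integral domain that is finitely generated as a $\Z$-algebra and has Krull dimension $k$. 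Any $\Z Q_0$-submodule of $\Z Q\cdot m$ is a fortiori a $\Z Q_0$-submodule of $M$, so it suffices to produce a subgroup $Q_0\le Q$ of the required rank for which the $\Z Q_0$-module generated by $m$ (equivalently, by $1\in A$) has the required form.

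Next I would split according to the characteristic of the domain $A$. Write $t_1,\dots,t_d$ for a basis of $Q$ and $\bar t_i$ for its image in $A$, so that $\operatorname{Frac}(A)$ is generated over its prime field by $\bar t_1,\dots,\bar t_d$. The dimension theory of finitely generated algebras over $\Z$ gives $\dim A=1+\operatorname{trdeg}_{\Q}\operatorname{Frac}(A)$ when $\operatorname{char}(A)=0$, and $\dim A=\operatorname{trdeg}_{\mathbb{F}_p}\operatorname{Frac}(A)$ when $\operatorname{char}(A)=p$; hence $\operatorname{Frac}(A)$ has transcendence degree $k-1$ over $\Q$ in the first case and $k$ over $\mathbb{F}_p$ in the second. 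Since the $\bar t_i$ generate this field, one can extract from them a transcendence basis of that size, say (after relabelling) $\bar t_1,\dots,\bar t_e$ with $e=k-1$ or $e=k$, and set $Q_0=\langle t_1,\dots,t_e\rangle\le Q$, a free abelian group of rank $e$.

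It then remains to identify the $\Z Q_0$-submodule generated by $m$. The ring homomorphism $\Z Q_0\to A$ sending $t_i\mapsto\bar t_i$ (in characteristic $p$, its factorisation through $\mathbb{F}_p Q_0$) is injective: a vanishing $\Z$-linear combination of Laurent monomials in the $\bar t_i$ becomes, after multiplying by a suitable monomial, an honest polynomial relation, which must be trivial by algebraic independence. Therefore the $\Z Q_0$-module generated by $m$ is the image of this map, namely a copy of $\Z Q_0$ (a free module of rank one, with $Q_0$ of rank $k-1$) when $\operatorname{char}(A)=0$, and a copy of $\mathbb{F}_p Q_0$ (with $Q_0$ of rank $k$) when $\operatorname{char}(A)=p$, which is exactly one of the two asserted alternatives. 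The only delicate input is the commutative algebra invoked in the second step: the fact that the Krull dimension of a finitely generated $\Z$-algebra domain equals the transcendence degree of its fraction field plus one in characteristic $0$, but equals it on the nose in positive characteristic. This dichotomy is precisely what forces the two cases in the statement, and it is where the hypothesis that the base ring is $\Z$ rather than a field genuinely enters.
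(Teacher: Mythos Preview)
Your argument is correct. The paper does not actually prove this proposition: it quotes it as \cite[Prop.\ 4.2]{Lison} (Jacoboni) and uses it as a black box. Your proof follows the natural route---pick an associated prime $\p$ of $M$ minimal in $\operatorname{Supp}(M)$ and realising the dimension, pass to the cyclic module $\Z Q\cdot m\cong \Z Q/\p$, and then use the dichotomy $\dim A = 1+\operatorname{trdeg}_\Q\operatorname{Frac}(A)$ versus $\dim A = \operatorname{trdeg}_{\mathbb{F}_p}\operatorname{Frac}(A)$ for finitely generated $\Z$-algebra domains to extract a transcendence basis among the images $\bar t_i$ of a basis of $Q$. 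Each step is standard commutative algebra and is carried out correctly; in particular, the injectivity of $\Z Q_0\to A$ (resp.\ $\mathbb{F}_p Q_0\to A$) follows, as you say, from algebraic independence after clearing denominators, and this identifies $\Z Q_0\cdot m$ with $\Z Q_0$ (resp.\ $\mathbb{F}_p Q_0$). This is almost certainly the same argument as in Jacoboni's paper, as there is essentially only one natural proof here.
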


For a proof of the following, see \cite[\S 2.2.3]{Lison}. 

\begin{prop-def}
	Let $G$ be a finitely generated metabelian group, and suppose $G$ is not virtually abelian. If $1 \to M \to G \to Q \to 1$ is a short exact sequence of groups with $M,Q$ abelian, then the Krull dimension of $M$ as a $\Z Q$-module is a positive integer that does not depend on the choice of $M,Q$. This integer is the \textbf{Krull dimension} of $G$.
\end{prop-def}

\begin{thm} \label{t-krull}
	Let $G$ be a finitely generated metabelian group, and suppose that $G$ is not virtually abelian. If $G$ has Krull dimension $k$, then there exists a non-foldable $k$-tuple $(g_1,\ldots,g_k)$ in $G$. In particular $G$ has a Schreier growth gap $n^{k}$.
\end{thm}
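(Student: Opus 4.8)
The strategy is to place a suitable finitely generated subgroup of $G$ in the setting of Corollary~\ref{cor-explicit-metab}, using Jacoboni's Proposition~\ref{prop-lison} to produce the submodule that witnesses the Krull dimension. Throughout I use the following remark (already implicit in \S\ref{s-finite-rank}): a non-foldable subset, and hence a non-foldable tuple, of a subgroup $H\le G$ is non-foldable in $G$. Indeed, for a finite $\Sigma\subseteq H$ and $P=\{g^{-1}h:g\ne h\in\Sigma\}\subseteq H$, every $K\in S_G(P,G)$ satisfies $K\cap H\in S_H(P,H)$ (because $P\subseteq H$), so $\bigcap_{K\in S_G(P,G)}K\supseteq\bigcap_{L\in S_H(P,H)}L\ne\{1\}$ by Lemma~\ref{l-non-foldable-confined} applied in $H$; applying Lemma~\ref{l-non-foldable-confined} again in $G$ gives the claim for subsets, and Lemma~\ref{lem-mapZk-inj} then gives it for tuples, injectivity of the map $\Z^k\to H\hookrightarrow G$ being unchanged. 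Hence it suffices to exhibit a non-foldable $k$-tuple inside some finitely generated subgroup of $G$; the Schreier growth gap $n^k$ then follows from Lemma~\ref{lem-exp-tuple-growth}.

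First I would normalise the extension. Since $G$ is not virtually abelian, $Q:=G/G'$ has positive torsion-free rank; let $G_1\le G$ be the preimage of a finite-index free abelian subgroup of $Q$. Then $G_1$ is finitely generated, metabelian, not virtually abelian, and sits in $1\to M\to G_1\to \Z^d\to 1$ with $M:=G'$ abelian. A finite (hence integral) ring extension preserves the Krull dimension of a finitely generated module, so $M$ has Krull dimension $k$ over $\Z[\Z^d]$ as well, i.e.\ $\dim_{\mathrm{Krull}}(G_1)=k$. By the remark above it is enough to find a non-foldable $k$-tuple in (a finitely generated subgroup of) $G_1$; so I rename $G_1$ to $G$ and assume from now on that $G$ sits in $1\to M\to G\to Q\to 1$ with $M$ abelian and $Q\cong\Z^d$ free abelian, $M$ a finitely generated $\Z Q$-module of Krull dimension $k$.

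Now apply Proposition~\ref{prop-lison} to $M$, obtaining $Q_0\le Q$ and a nonzero $m\in M$ realising alternative (i) or (ii); write $r:=\rk(Q_0)\in\{k-1,k\}$. Set $N:=\Z Q_0\cdot m\le M$, choose lifts $\gamma_1,\dots,\gamma_r\in G$ of a basis of $Q_0$, and put $H:=\langle m,\gamma_1,\dots,\gamma_r\rangle$. Then $H$ is a finitely generated metabelian group with exact sequence $1\to M'\to H\to Q_0\to 1$, where $M':=H\cap M$ is abelian and $Q_0$ is free abelian; moreover $N$ is a $\Z Q_0$-submodule of $M'$, since the $H$-conjugates of $m$ span exactly $N$ and lie in $H\cap M$. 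The point that makes this work without finite-generation trouble is that Corollary~\ref{cor-explicit-metab} only asks $N$ to be \emph{a} submodule of $M'$, not all of it; in particular the (possibly non-trivial) commutators $[\gamma_i,\gamma_j]\in M'$ are irrelevant, and one need not pass to the full — possibly non-finitely-generated — preimage of $Q_0$ in $G$.

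It remains to verify the module hypotheses in each case. In alternative (i), $N\cong\Z Q_0=\Z[\Z^{k-1}]$ is free of rank one; this is a domain, so $N$ is a uniform torsion-free $\Z Q_0$-module, and $\ann_{\Z Q_0}(N)=0$ gives $C_{Q_0}(N)=\{1\}$. (When $r=0$, which forces $k=1$, this just says $m$ has infinite order and one concludes since an infinite cyclic subgroup is a non-foldable subset, cf.\ \S\ref{subsec-non-foldable}.) If $r=k-1\ge 1$, Corollary~\ref{cor-explicit-metab} gives that the $k$-tuple $(m,\gamma_1,\dots,\gamma_{k-1})$ is non-foldable in $H$. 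In alternative (ii), $N\cong\mathbb{F}_p Q_0$ and $r=k\ge 1$; the group ring of a torsion-free abelian group over a field is a domain, so $N$ is uniform, and $\ann_{\Z Q_0}(N)=p\,\Z Q_0$ gives $C_{Q_0}(N)=Q_0\cap(1+p\,\Z Q_0)=\{1\}$ (for $q\in Q_0\setminus\{1\}$ the element $q-1$ has a unit coefficient). Here Corollary~\ref{cor-explicit-metab} gives that the $k$-tuple $(\gamma_1,\dots,\gamma_k)$ is non-foldable in $H$. In both cases the tuple is non-foldable in $G$ by the opening remark, and Lemma~\ref{lem-exp-tuple-growth} yields the Schreier growth gap $n^k$. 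The only genuinely delicate point is the one stressed above — allowing $N$ to be a proper submodule of $H\cap M$, so that $H$ may be taken finitely generated; the remaining verifications are routine module theory and the invariance of Krull dimension under finite-index passage.
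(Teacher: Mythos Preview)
Your proof is correct and follows essentially the same strategy as the paper: apply Jacoboni's Proposition~\ref{prop-lison}, pass to the subgroup generated by $m$ and lifts of a basis of $Q_0$, and invoke Corollary~\ref{cor-explicit-metab}. You are in fact more careful than the paper on two points: you first pass to a finite-index subgroup so that the abelian quotient is free (as required by Proposition~\ref{prop-lison}), and you correctly observe that the kernel $H\cap M$ may strictly contain $N$ (because of commutators $[\gamma_i,\gamma_j]$), whereas the paper writes $1\to N\to G_0\to Q_0\to 1$ --- but as you note, Corollary~\ref{cor-explicit-metab} only needs $N$ to be a submodule, so both arguments go through.
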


\begin{proof}
	Note that if $k=1$ then there is nothing to prove because every element of $G$ of infinite order satisfies the conclusion. So we assume $k \geq 2$. Write $M=G'$ and $Q=G/G'$, and apply Proposition \ref{prop-lison}. Let $Q_0$ and $m$ as in the conclusion of the proposition, and let $N$ be the $\Z Q_0$-submodule generated by $m$. We choose lifts $\gamma_1,\ldots,\gamma_d \in G$ of generators of $Q_0$, where $d \geq 1$ is the rank of $Q_0$, and we denote by $G_0$ the subgroup of $G$ generated by $m$ and $\gamma_1,\ldots,\gamma_d$. Then we have a short exact sequence $1 \to N \to G_0 \to Q_0 \to 1$. Since the module $N$ is uniform and satisfies $C_{Q_0}(N) = \left\lbrace 1 \right\rbrace$  since $N$ is either a free module or isomorphic to $\mathbb{F}_p Q_0$ for some prime number $p$, we can apply Corollary \ref{cor-explicit-metab}. In case $N \simeq \mathbb{F}_p Q_0$ we have $d=k$ and by the corollary $(\gamma_1,\ldots,\gamma_k)$ is non-foldable; and in case $N$ is free we have $d=k-1$ and again by the corollary $(m,\gamma_1,\ldots,\gamma_{k-1})$ is non-foldable. So in both cases we have found a non-foldable $k$-tuple, and hence the proof is complete.
\end{proof}

\subsection{Torsion-free metabelian groups} \label{s-metabelian-torsion-free}

In the sequel $A$ is a commutative ring with unit. The radical of an ideal $I$ of $A$ is denoted $\rad(I)$. Let $M$ be a module over $A$. Given $x \in M$, the annihilator of $x$ is denote $\ann(x)$. The annihilator of $M$ is denoted $\ann(M)$.  A prime ideal $\p$ of $A$ is \textbf{associated} with $M$ if there exists $x \in M$ such that $\p = \ann(x)$. We denote by $\ass(M)$ the set of associated prime ideals of $M$. We will use the following basic facts, a proof of which can be found for instance in \cite{Bourbaki-alg-comm-3-4}:

\begin{enumerate}
	\item If $A$ is Noetherian and $M$ is non-zero, then $\ass(M)$ is not empty. 
	\item \label{item-ass-ext}	If $N$ is a submodule of a module $M$, every associated prime of $N$ is associated with $M$, and every associated prime of $M$ is associated with $N$ or $M/N$.
	\item \label{item-radical} If $A$ is Noetherian and $M$ is a Noetherian $A$-module, then $\rad(\ann(M)) = \bigcap_{\p \in \ass(M)} \p$.
	\item \label{item-series} If $A$ is Noetherian and $M$ is a Noetherian $A$-module, then there exists a series of submodules $0 = M_0 \subsetneq M_1 \subsetneq \ldots \subsetneq M_n = M$ such that each $M_{i+1}/M_i$ is isomorphic to $A / \p_{i+1}$ for some prime ideal $\p_{i+1}$, and $\ass(M) \subseteq \left\lbrace \p_{1}, \ldots, \p_{n} \right\rbrace $.
\end{enumerate}

\begin{lem} \label{lem-associatedprime}
	Let $Q$ be a finitely generated abelian group. Let $\P$ be a property of $\Z Q$-modules such that the zero module has $\P$, and $\P$ is stable under taking finite direct product of modules, submodules, quotient modules, and extensions.
	
	Let $M$ be a finitely generated $\Z Q$-module that does not have $\P$. Then $M$ admits a submodule $N$ that is isomorphic to $\Z Q / \p$ for some prime ideal $\p$ of $\Z Q$ such that $N$ does not have $\P$ either.
\end{lem}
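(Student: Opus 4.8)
The plan is to use the fact that $\Z Q$ is Noetherian (by Hall's theorem, cited earlier) and to proceed by induction on the length of a prime-factor series for $M$ as provided by fact \eqref{item-series} above. First I would invoke \eqref{item-series}: since $M$ is a finitely generated $\Z Q$-module and $\Z Q$ is Noetherian, there is a chain $0 = M_0 \subsetneq M_1 \subsetneq \cdots \subsetneq M_n = M$ with each successive quotient $M_{i}/M_{i-1} \cong \Z Q / \p_i$ for a prime ideal $\p_i$. I argue by induction on $n$. If $n = 1$, then $M \cong \Z Q / \p_1$ is itself of the desired form, and since $M$ does not have $\P$ we take $N = M$.

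For the inductive step, suppose $n \geq 2$. Consider the submodule $M_1 \cong \Z Q/\p_1$ and the quotient $M/M_1$, which admits the shorter series $0 = M_1/M_1 \subsetneq M_2/M_1 \subsetneq \cdots \subsetneq M_n/M_1 = M/M_1$ of length $n-1$. Since $\P$ is stable under extensions, and $M$ fails $\P$, at least one of $M_1$ and $M/M_1$ must fail $\P$. If $M_1$ fails $\P$, then $N := M_1 \cong \Z Q/\p_1$ works directly. If instead $M/M_1$ fails $\P$, then by the inductive hypothesis applied to $M/M_1$ (which is a finitely generated $\Z Q$-module, being a quotient of $M$) there is a submodule $\overline{N} \leq M/M_1$ with $\overline{N} \cong \Z Q/\p$ for some prime ideal $\p$ and $\overline{N}$ failing $\P$. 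Pull $\overline{N}$ back to a submodule $N'$ of $M$ containing $M_1$, so that $N'/M_1 \cong \Z Q/\p$; this $N'$ is not yet of the required form, so one more step is needed.

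The remaining point — which I expect to be the only mildly delicate part — is to extract from $N'$ an honest submodule isomorphic to $\Z Q/\p'$ for some prime $\p'$, still failing $\P$. For this I apply \eqref{item-series} again to the finitely generated module $N'$ to get a series $0 = N'_0 \subsetneq N'_1 \subsetneq \cdots \subsetneq N'_m = N'$ with $N'_j/N'_{j-1} \cong \Z Q/\p'_j$. As before, since $N'$ fails $\P$ and $\P$ is stable under extensions, running along this series shows that some successive quotient $N'_j/N'_{j-1}$ fails $\P$. But this quotient is a subquotient of $M$; to conclude I instead observe that it suffices to find \emph{some} submodule of $M$ of the right form failing $\P$, and I can iterate: among all submodules $L$ of $M$ that fail $\P$, pick one of minimal length in its prime-factor series (the length is a well-defined nonnegative integer by \eqref{item-series} and Noetherianity). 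If that minimal length were $\geq 2$, the extension-stability argument above applied to $L$ would produce a strictly shorter submodule or quotient failing $\P$; the quotient case is handled by passing to the preimage argument once more, but since quotients of $M$ are again finitely generated we may as well phrase the whole induction on the pair $(M, \text{series length})$ from the start. Cleanly: induct on $n$, and in the quotient case apply the inductive hypothesis to $M/M_1$ to get $\overline N \cong \Z Q/\p$ failing $\P$; the preimage $N'$ of $\overline N$ has a two-step series $0 \subsetneq M_1 \subsetneq N'$, and now re-apply the inductive hypothesis to $N'$ (length $\leq 2 \leq n$, but with a genuinely simpler structure — in fact one may instead note directly that $\ass(N') \subseteq \ass(M_1) \cup \ass(\overline N)$ and choose $\p' \in \ass$ of the quotient part). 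To avoid circularity I would set up the induction on $n$ so that the quotient $M/M_1$ has strictly smaller $n$, handle it by hypothesis, and then handle $N'$ by a separate direct argument: $N'$ fits in $0 \to M_1 \to N' \to \Z Q/\p \to 0$, so by extension-stability either $M_1 \cong \Z Q/\p_1$ fails $\P$ (done) or $\Z Q/\p$ fails $\P$ (done), which terminates the proof with no further recursion.
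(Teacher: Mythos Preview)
Your final ``clean'' argument has a genuine gap. In the extension $0 \to M_1 \to N' \to \Z Q/\p \to 0$, when you conclude ``$\Z Q/\p$ fails $\P$ (done)'', you are not done: $\Z Q/\p$ appears here as a \emph{quotient} of $N'$, not as a submodule of $M$. The lemma asks for a submodule of $M$ isomorphic to some $\Z Q/\p'$, and nothing you have written produces one in this case. Your induction therefore reduces the general problem to the length-$2$ case (which is correct), but then your direct argument for the length-$2$ case is exactly the length-$2$ instance of the same unproved statement. Notice also that your argument never uses the closure of $\P$ under finite products or submodules --- a sign that the essential content of the lemma has been bypassed rather than addressed.

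Your parenthetical remark about associated primes is closer to the truth but is not justified as stated: from $\ass(N') \subseteq \{\p_1,\p\}$ you cannot simply ``choose $\p' \in \ass$ of the quotient part'', since a priori it may be that $\ass(N') = \{\p_1\}$ only. The paper's proof confronts precisely this possibility. Taking $i$ minimal with $M_i$ failing $\P$, it shows that if $\p_i \notin \ass(M_i)$ then $\ass(M_i) = \ass(M_{i-1})$, whence $\rad(\ann(M_{i-1})) = \bigcap_{j<i}\p_j \subset \p_i$; then $\Z Q/\p_i$ is a quotient of $\Z Q/\bigcap_{j<i}\p_j$, which embeds in $\prod_{j<i}\Z Q/\p_j$ and so has $\P$ by closure under products, submodules, and quotients. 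This forces $M_i$ to have $\P$ by extension closure, a contradiction. Hence $\p_i \in \ass(M_i)$ after all, yielding an actual submodule $\Z Q x \cong \Z Q/\p_i$ of $M$. This use of products and submodules is exactly the missing ingredient in your plan.
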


\begin{proof}
	Consider a series $0 = M_0 \subsetneq M_1 \subsetneq \ldots \subsetneq M_n = M$ as in (\ref{item-series}) above. So $M_{i+1}/M_i$ is isomorphic to $\Z Q / \p_{i+1}$, and $\ass(M) \subseteq \left\lbrace \p_{1}, \ldots, \p_{n} \right\rbrace $. Let $i$ be the least integer such that $M_i$ does not have $\P$. By the assumption on $M$ such an integer exists, and we have $ 1 \leq i \leq n$.

	Since $M_i$ admits $\Z Q / \p_{i}$ as a quotient module, we have $\ann(M_i) \subset \p_i$. hence $\rad(\ann(M_i))$ is contained in $\p_i$. We claim that $\p_i \in \ass(M_i)$. Suppose for a contradiction that this is not the case. Then $M_{i-1}$ and $M_i$ have the same associated primes (because every prime associated with $M_i$ but not with $M_{i-1}$ would be associated with $M_{i} / M_{i-1}$ by (\ref{item-ass-ext}), and $M_{i} / M_{i-1}$ has $\p_{i}$ as only associated prime since $M_{i} / M_{i-1}$ is isomorphic to $\Z Q / \p_{i}$). So by property (\ref{item-radical})  we deduce that $\rad(\ann(M_{i-1})) \subset \p_i$, and in particular $I := \bigcap_{j=1}^{i-1} \p_i \subset \p_i$. Now the module $\Z Q/I$ embeds in $\prod_{j=1}^{i-1} \Z Q/\p_j$. By the definition of the integer $i$, each module  $\Z Q/\p_j$ has $\P$, and since $\P$ is stable under taking products and submodules, $\Z Q/I$ also has $\P$. Therefore, being a quotient of $\Z Q/I$, the module $\Z Q/\p_i$ also has  $\P$, and finally so does $M_i$ as an extension of two modules $M_{i-1}$ and $\Z Q/\p_i$ that have $\P$. This is a contradiction. So we indeed have $\p_i \in \ass(M_i)$, and the statement holds with $N = \Z Q x$, where $x \in M_i$ is such that $\ann(x) = \p_i$. 
\end{proof}

\begin{lem} \label{prop-metab-fp}
	Let $G$ be a finitely generated metabelian group, and assume that $G$ is not polycyclic.  Choose abelian groups $M,Q$ such that $G$ is  an extension $1 \to M \to G \to Q \to 1$. Then there exists a submodule $N$ of  $M$ whose underlying abelian group is infinitely generated, and such that $N$ is isomorphic to $\Z Q / \p$ for some prime ideal $\p$ of $\Z Q$.
\end{lem}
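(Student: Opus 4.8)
The plan is to deduce this from Lemma~\ref{lem-associatedprime}, applied with $\P$ the property of $\Z Q$-modules ``the underlying abelian group is finitely generated''. First I would check that $\P$ satisfies the hypotheses of that lemma: the zero module has $\P$; submodules and quotient modules of a finitely generated abelian group are finitely generated; a finite direct product of finitely generated abelian groups is finitely generated; and an extension of one finitely generated abelian group by another is finitely generated. These are all elementary facts about abelian groups, so this step is routine.

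Next I would argue that $M$ does \emph{not} have $\P$. Indeed, suppose $M$ were finitely generated as an abelian group. The group $Q = G/M$ is a quotient of the finitely generated group $G$, hence finitely generated abelian. Then $G$ would be an extension of a finitely generated abelian group by a finitely generated abelian group, and such a group is polycyclic (it admits a subnormal series with finitely generated abelian, hence polycyclic, factors). This contradicts the hypothesis that $G$ is not polycyclic. Hence the underlying abelian group of $M$ is infinitely generated.

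Finally, since $G$ is finitely generated metabelian, $M$ is a finitely generated $\Z Q$-module by \cite[11.1.1]{Lennox-Rob}, so Lemma~\ref{lem-associatedprime} applies and produces a submodule $N \le M$ isomorphic to $\Z Q/\p$ for some prime ideal $\p$ of $\Z Q$ with $N$ not having $\P$; that is, the underlying abelian group of $N$ is infinitely generated, which is exactly the desired conclusion. The only points needing a little care — and they are minor — are the verification that a finitely-generated-abelian-by-finitely-generated-abelian group is polycyclic (which gives that $M$ does not have $\P$) and making sure the background hypotheses used inside Lemma~\ref{lem-associatedprime} (namely that $\Z Q$ is Noetherian and that $M$ is finitely generated over $\Z Q$) are all available in the present setting; beyond that the argument is a direct invocation of Lemma~\ref{lem-associatedprime}.
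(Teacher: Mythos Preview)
Your proposal is correct and follows essentially the same approach as the paper: apply Lemma~\ref{lem-associatedprime} with $\P$ equal to ``the underlying abelian group is finitely generated'', after observing that $M$ is a finitely generated $\Z Q$-module and that $G$ not being polycyclic forces $M$ to be infinitely generated as an abelian group. The paper's proof is terser but identical in substance.
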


\begin{proof}
	$M$ is a finitely generated $\Z Q$-module, and the assumption that $G$ is not polycyclic is equivalent to saying that $M$ is infinitely generated as an abelian group. The statement follows by applying  Lemma \ref{lem-associatedprime} with the property $\P$ equals \enquote{$M$ is finitely generated as an abelian group}.   
\end{proof}

\begin{thm} \label{thm-metab-notorsion}
	Let $G$ be a finitely generated torsion-free metabelian group, and suppose that $G$ is not virtually abelian. Then there exists a pair $(g_1,g_2)$ that is non-foldable.
\end{thm}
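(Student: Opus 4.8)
The plan is to distinguish two cases according to whether $G$ is polycyclic, i.e.\ whether $G' = [G,G]$ is finitely generated as an abelian group.

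\emph{The polycyclic case.} Here torsion-freeness plays no role. Since $G$ is polycyclic and not virtually abelian, Lemma~\ref{lem-poly-special subgroup} produces a subgroup $H\le G$ isomorphic either to the Heisenberg group $H_3(\Z)$ or to a strongly irreducible semidirect product $\Z^k\rtimes\Z$ with $k\ge 2$, and Proposition~\ref{prop-expand-subgroup-poly} guarantees that such an $H$ is a non-foldable subset of $G$. A subset of a non-foldable subset is again non-foldable, so it suffices to produce a pair whose associated map $\Z^2\to G$ from Lemma~\ref{lem-mapZk-inj} is injective and has image inside $H$. For $H = H_3(\Z)$ with standard generators $a,b$ one takes $(g_1,g_2)=(a,b)$: the composite $\Z^2\to H_3(\Z)\to H_3(\Z)^{ab}\cong\Z^2$ is the identity, so $(n_1,n_2)\mapsto b^{n_2}a^{n_1}$ is injective. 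For $H = \Z^k\rtimes\langle t\rangle$ one takes $(g_1,g_2)=(v,t)$ with $v\in\Z^k$ nonzero, injectivity of $(n_1,n_2)\mapsto t^{n_2}v^{n_1}$ being seen by projecting to $\Z^k\rtimes\langle t\rangle\to\langle t\rangle$. In both cases Lemma~\ref{lem-mapZk-inj} shows the pair is non-folded in every faithful $G$-set.

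\emph{The non-polycyclic case.} Now $G'$ is infinitely generated as an abelian group. Replacing $G$ by the finite-index subgroup $\pi^{-1}(F)$, where $\pi\colon G\to G^{ab}$ and $F\le G^{ab}$ is a finite-index free abelian subgroup, we may assume $G$ sits in a short exact sequence $1\to M\to G\to Q\to 1$ with $M=G'$ abelian, $Q$ free abelian of rank $r\ge1$, $M$ finitely generated as a $\Z Q$-module and infinitely generated as an abelian group, and $M$ torsion-free (being a subgroup of $G$). If $\dim_{\mathrm{Krull}}(G)\ge 2$, then Theorem~\ref{t-krull} already provides a non-foldable $k$-tuple with $k\ge 2$, from which, restricting to the first two coordinates (this preserves being non-folded by Lemma~\ref{lem-mapZk-inj}, as the image only shrinks), one extracts a non-foldable pair. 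So assume $\dim_{\mathrm{Krull}}(G)=1$. The aim is then to find a finitely generated subgroup $H\le G$ with an abelian normal subgroup $M_H\lhd H$ such that $H/M_H$ is free abelian of some rank $d\ge1$ and $M_H$ has a submodule $N$ that is uniform, torsion-free, and satisfies $C_{H/M_H}(N)=\{1\}$; granting this, Theorem~\ref{thm-explicit-metab}\ref{item-non-foldable-tf} together with Corollary~\ref{cor-explicit-metab} make $(m,\gamma_1,\dots,\gamma_d)$ non-foldable in $H$ for any nonzero $m\in N$, hence $(m,\gamma_1)$ is a non-foldable pair of $H$, and therefore of $G$ — a non-foldable subset of a subgroup is non-foldable in the ambient group, as used in the proof of Theorem~\ref{t-finite-rank-exp}.

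To obtain such $(H,M_H,N)$ I would run a descent on the rank of the acting free abelian group, starting from $(G,M,Q)$. Given a triple of the required shape (minus the last condition), Lemma~\ref{prop-metab-fp} (or Lemma~\ref{lem-associatedprime} with $\mathcal P=$\enquote{finitely generated as an abelian group}) yields a submodule $N\cong\Z[\Z^d]/\mathfrak p$ with $\mathfrak p$ prime and $N$ infinitely generated as an abelian group; $N$ is uniform (Lemma~\ref{lem-prime-SI}) and torsion-free. If $C_{\Z^d}(N)=\{1\}$ we are done. Otherwise $C:=C_{\Z^d}(N)=\Z^d\cap(1+\mathfrak p)$ has rank $s\ge1$; since $N$ is torsion-free, $\mathfrak p\cap\Z=\{0\}$, so $R:=\Z[\Z^d]/\mathfrak p$ is a characteristic-zero, one-dimensional, finitely generated domain, i.e.\ a localization of an order in a number field, and its fraction field contains only finitely many roots of unity. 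This is the key point, because it forces $R$ to be finitely generated \emph{as a module} over $\Z V$ for any free abelian $V\le\Z^d$ with $V\cap C=\{0\}$ and $V+C$ of finite index (the finitely many torsion units coming from the torsion of $\Z^d/C$ only contribute a module-finite extension, and $d-s\ge1$ as otherwise $R$, hence $N$, would be finitely generated as an abelian group). One then forms the next triple from $(N,V)$, realized up to finite index inside $H/M_H$ and the corresponding subgroup of $H$, with the module shrunk to $N$: the rank drops from $d$ to $d-s<d$, the module stays infinitely generated as an abelian group, and in rank one a module $\Z[t^{\pm1}]/\mathfrak p$ with $t^a\equiv 1\pmod{\mathfrak p}$ for some $a\ne 0$ would be a quotient of $\Z[t]/(t^a-1)$, hence finitely generated as an abelian group — impossible — so the descent terminates with trivial centralizer.

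I expect the main obstacle to be exactly this descent, specifically realizing the subquotient $(N,V)$ as an honest finitely generated subgroup of $G$ with an abelian normal subgroup whose quotient is \emph{genuinely} free abelian; finite generation after restricting the acting group to $V$ is precisely what the characteristic-zero / finitely-many-roots-of-unity argument secures, and this is the one place where the hypothesis that $G$ is torsion-free enters essentially (consistently with the lamplighter $C_p\wr\Z$ admitting faithful actions of linear growth).
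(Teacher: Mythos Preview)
Your polycyclic case matches the paper's. Your non-polycyclic case is correct in outline but substantially more elaborate than necessary, and the paper's argument shows why the Krull-dimension split and the descent can both be dispensed with.

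After producing $N\cong \Z Q/\mathfrak p$ with $N$ infinitely generated as an abelian group (your Lemma~\ref{prop-metab-fp} step), the paper simply observes that $d:=\operatorname{rank}(Q/C_Q(N))\ge 1$, picks any free abelian $Q_1\le Q$ of rank $d$ with $Q_1\cap C_Q(N)=\{1\}$, lifts its generators to $g_1,\dots,g_d$, and sets $G_1=\langle m,g_1,\dots,g_d\rangle$ where $m$ generates $N$ as a $\Z Q$-module. The point you are working hard to secure by descent---that the centralizer becomes trivial---is automatic here: since $N$ is cyclic over $\Z Q$, the condition $Q_1\cap C_Q(N)=\{1\}$ is literally the statement that no nontrivial $q\in Q_1$ fixes $m$, hence $C_{Q_1}(N_1)=\{1\}$ for the cyclic $\Z Q_1$-submodule $N_1:=\Z Q_1\cdot m$. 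Moreover $\mathfrak p_1:=\mathfrak p\cap\Z Q_1$ is prime, so $N_1\cong\Z Q_1/\mathfrak p_1$ is uniform by Lemma~\ref{lem-prime-SI}, and $G_1$ is finitely generated by construction, so Corollary~\ref{cor-explicit-metab} applies directly to give that $(m,g_1,\dots,g_d)$ is non-foldable. No module-finiteness of $N$ over $\Z Q_1$ is needed, because the theorem is applied to $G_1$ (which is finitely generated), not to $N$ viewed as a $\Z Q_1$-module.

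Your route does work: the integrality argument showing $R=\Z[\Z^d]/\mathfrak p$ is module-finite over the image of $\Z V$ (because each $\bar t_i^{\,n}$ lies in that image once $V+C$ has finite index) is sound, and the rank descent terminates as you say. The appeal to roots of unity in the fraction field is a red herring, though---the integrality holds regardless of Krull dimension---and the phrase ``realized up to finite index'' would need to be made precise along the lines of forming $H=\langle\text{generators of }N\text{ over }\Z V,\ \text{lifts of }V\rangle$. So your argument is salvageable, but the paper's single-step passage to $Q_1$ replaces the entire Krull-dimension dichotomy and the iterated descent.
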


\begin{proof}
In case where $G$ is polycyclic, Proposition \ref{prop-expand-subgroup-poly} provides a subgroup $H$ of $G$ isomorphic either to $H_3(\Z)$ or $\Z^k \rtimes \Z$ with $k \geq 2$, such that $H$ is a non-foldable subset of $G$. It immediately follows that if $g_1,g_2$ are two non-trivial elements of $H$ such that the subgroups generated by $g_1$ and $g_2$ intersect trivially, then the pair $(g_1,g_2)$ is non-foldable.

	Now suppose that $G$ is not polycyclic. Choose abelian groups $M,Q$ such that $G$ lies in an extension $1 \to M \to G \to Q \to 1$, and choose $N$ as in Lemma \ref{prop-metab-fp}. So $N$ is isomorphic to $\Z Q / \p$ for some prime ideal $\p$ of $\Z Q$, and $N$ is infinitely generated as an abelian group. This last property ensures that the rank $d$ of $Q/ C_Q(N)$ is at least one. Let $Q_1$ be a free abelian subgroup of rank $d$ of $Q$ such that $Q_1$ intersects $C_Q(N)$ trivially. If $m \in N$ is a generator of $N$ as a $\Z Q$-module, that $Q_1$ intersects $C_Q(N)$ trivially is equivalent to saying that no non-trivial element of $Q_1$ centralizes $m$. Let $g_1,\ldots,g_d \in G$ be lifts of generators of $Q_1$, and let $G_1$ be the subgroup of $G$ generated by $m$ and $g_1,\ldots,g_d $. The group $G_1$ lies in a short exact sequence $1 \to L \to G_1 \to Q_1 \to 1$, and we denote by $N_1 \subseteq L$ the $\Z Q_1$-module generated by $m$. The annihilator $\p_1$ of $m$ in  $\Z Q_1$ equals $\p \cap \Z Q_1$, and hence is a prime ideal of $\Z Q_1$. Moreover $Q_1$ acts faithfully on $N_1$ because no non-trivial element of $Q_1$ centralizes $m$. So the $\Z Q_1$-module $N_1$ satisfies $C_{Q_1}(N_1) = \left\lbrace 1 \right\rbrace$ and $N_1$ is uniform by Lemma \ref{lem-prime-SI}. Therefore we may apply Corollary \ref{cor-explicit-metab}  to the group $G_1$. Since $N$ is torsion-free, the conclusion says that $(m,g_1,\ldots,g_d)$ is non-foldable in $G_1$. A fortiori $(m,g_1,\ldots,g_d)$ is also non-foldable in $G$. In particular the pair $(m,g_i)$ is non-foldable for all $i$, and we have proved the statement.  
\end{proof}

\subsection{Finitely presented metabelian groups}

In this section we show how our results from \S \ref{subsec-metab-non-foldable-tuples}, combined with results of Bieri and Strebel, provide the existence of expansive pairs in finitely presented metabelian groups. 

Let $Q$ be a finitely generated abelian group, and $M$ a finitely generated $\Z Q$-module. Following Bieri--Strebel \cite{BS78,BS80}, for $v \in \mathrm{Hom}(Q,\R)$ we denote by $Q_v$ the set of $q \in Q$ such that $v(q) \geq 0$, and by $\Sigma_M$ the set of $v$ such that $M$ is a finitely generated $\Z Q_v$-module. The module $M$ is called \textbf{tame} if $\mathrm{Hom}(Q,\R) = \Sigma_M \cup - \Sigma_M$. 
The main result of \cite{BS80} states that if $G$ is a metabelian group and $1\to M\to G\to Q\to 1$ is a short exact sequence where $M, Q$ are abelian, then $G$ is finitely presented if and only if $M$ is a tame $\Z Q$-module.

We record the following properties, proven in \cite[Proposition 2.5]{BS80}. 
\begin{enumerate}

\item \label{BS1} A submodule of a tame $\Z Q$-module is tame. 

\item \label{BS2} If $G$ is a tame $\Z Q$-module and $R\le Q$ is a finite index subgroup of $Q$, then $M$ is a tame $\Z R$-module.

\end{enumerate}

\begin{thm} \label{thm-metab-presfin}
	Let $G$ be a finitely presented metabelian group that is not virtually abelian. Then there exists a pair $(g_1,g_2)$ that is non-foldable.
\end{thm}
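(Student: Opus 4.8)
The plan is to split into two cases according to whether $G$ is polycyclic, and in the non-polycyclic case to produce a suitable cyclic submodule of $M$ and feed it into Corollary~\ref{cor-explicit-metab}, using the Bieri--Strebel tameness criterion to rule out the one bad configuration.

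In the polycyclic case, Proposition~\ref{prop-expand-subgroup-poly} already furnishes a subgroup $H\le G$ isomorphic to $H_3(\Z)$ or to a strongly irreducible $\Z^k\rtimes\Z$ with $k\ge 2$, which is a non-foldable subset of $G$. In either case I would pick $g_1,g_2\in H$ such that $(n_1,n_2)\mapsto g_2^{n_2}g_1^{n_1}$ is injective on $\Z^2$: for $H_3(\Z)$ take the two non-central generators, and for $\Z^k\rtimes\Z$ take a non-trivial element of $\Z^k$ together with a generator of the acting $\Z$. Since the image of this injection is a subset of the non-foldable set $H$, Lemma~\ref{lem-mapZk-inj} gives that $(g_1,g_2)$ is non-foldable in $G$, settling this case.

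Now assume $G$ is not polycyclic, and fix an extension $1\to M\to G\to Q\to 1$ with $M,Q$ abelian. By Lemma~\ref{prop-metab-fp} there is a submodule $N\le M$ with $N\cong\Z Q/\p$ for a prime ideal $\p$ of $\Z Q$ and with $N$ infinitely generated as an abelian group; since $G$ is finitely presented, $M$ is tame, hence $N$ is tame by property~\ref{BS1}. Let $m$ be a $\Z Q$-generator of $N$ and set $d=\operatorname{rank}(Q/C_Q(N))$; since $N$ is infinitely generated, $d\ge 1$. I would then choose $q_1,\dots,q_d\in Q$ whose images in $Q/C_Q(N)$ are $\Q$-linearly independent, so that $Q_1:=\langle q_1,\dots,q_d\rangle\cong\Z^d$ and $Q_1\cap C_Q(N)=\{1\}$, pick lifts $\gamma_1,\dots,\gamma_d\in G$ of the $q_i$, and set $G_1=\langle m,\gamma_1,\dots,\gamma_d\rangle$. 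Then $G_1$ is finitely generated and metabelian, it lies in an extension $1\to M_1\to G_1\to Q_1\to 1$ with $M_1=G_1\cap M$, and $N_1:=\Z Q_1\cdot m\le M_1$ is a uniform $\Z Q_1$-module (it is $\cong\Z Q_1/(\p\cap\Z Q_1)$ with $\p\cap\Z Q_1$ prime) with $C_{Q_1}(N_1)=Q_1\cap C_Q(N)=\{1\}$ by Lemma~\ref{lem-prime-SI}. Applying Corollary~\ref{cor-explicit-metab} to $G_1$: if $\p\cap\Z=\{0\}$, i.e.\ $N$ is torsion-free as an abelian group, then $N_1$ is torsion-free and the corollary gives that $(m,\gamma_1,\dots,\gamma_d)$ is non-foldable in $G_1$, hence in $G$, and restricting the defining map to the first two coordinates shows $(m,\gamma_1)$ is a non-foldable pair; if $\p\cap\Z=(p)$ for a prime $p$, it suffices to know $d\ge 2$, for then the corollary gives that $(\gamma_1,\dots,\gamma_d)$ is non-foldable and a pair is again extracted by restriction.

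The crux, and the step I expect to be the main obstacle, is exactly the claim that $d\ge 2$ when $\p\cap\Z=(p)$. Here I would argue by contradiction: if $d=1$, write $\bar Q=Q/C_Q(N)\cong\Z\oplus T$ with $T$ finite, obtain an internal splitting $Q=\langle t\rangle\oplus C$ with $t$ mapping to a generator of the free part of $\bar Q$, and observe that $N$, being an infinite cyclic $\mathbb{F}_p\bar Q$-module, is a finitely generated torsion-free $\mathbb{F}_p[t^{\pm1}]$-module containing a copy of $\mathbb{F}_p[t^{\pm1}]$, while $C$ acts on $N$ through the finite quotient $T$. A direct computation of the Bieri--Strebel invariant then shows $\Sigma_N=\operatorname{Hom}(Q,\R)\setminus\{v:\ v|_{C}=0,\ v\neq 0\}$, a complement of a punctured line, so that $\Sigma_N\cup-\Sigma_N\neq\operatorname{Hom}(Q,\R)$ and $N$ fails to be tame --- contradicting property~\ref{BS1}, since $N$ is a submodule of the tame module $M$. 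This forces $d\ge 2$ and completes the argument. (Alternatively one could quote the Bieri--Strebel description of $\Sigma$ for cyclic modules in terms of valuations of $\operatorname{Frac}(\Z Q/\p)$, which over a rank-one group is a global function field carrying valuations of opposite signs on $t$, to reach the same conclusion.)
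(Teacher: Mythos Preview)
Your proof is correct, and in the polycyclic case and the case $d\ge 2$ it coincides with the paper's argument. The genuine difference is in how the rank-one case is handled. The paper treats the situation where $\overline{Q}=Q/C_Q(N)$ is virtually cyclic by passing to a cyclic subgroup $Q_1$, invoking the Bieri--Strebel theorem that the finitely presented group $N\rtimes Q_1$ is an ascending HNN-extension, and then using a result from \cite{BS78} to deduce that the torsion subgroup of $N$ is finite (hence trivial, since $N\cong\Z Q/\p$); this reduces to the torsion-free case already settled in Theorem~\ref{thm-metab-notorsion}. You instead argue the contrapositive directly from the definition of tameness: when $\p\supset (p)$ and $d=1$, you exhibit a nonzero $v\in\operatorname{Hom}(Q,\R)$ with both $v$ and $-v$ outside $\Sigma_N$, contradicting tameness of the submodule $N$. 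Your computation is sound once the splitting is chosen so that $C=\ker(Q\to\overline{Q}/T)$ (this guarantees that $C$ acts on $N$ through a finite quotient, which is what makes $N$ finitely generated over $\mathbb{F}_p[t^{\pm1}]$); note also that you only need, and only establish, that the punctured line $\{v:v|_C=0,\ v\neq 0\}$ lies in the complement of $\Sigma_N$, not the full description of $\Sigma_N$ you assert. The trade-off is that the paper's route quotes more from \cite{BS78} but stays closer to the structure theory of finitely presented metabelian groups, whereas your route is more elementary and self-contained at the cost of a short direct computation with $\Sigma_N$.
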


\begin{proof}
	The polycyclic case has already been treated in Theorem  \ref{thm-metab-notorsion}, so in the sequel we assume that $G$ is not polycyclic. Choose abelian groups $M,Q$ such that $G$ is  an extension $1 \to M \to G \to Q \to 1$, and choose $N$ as in Proposition \ref{prop-metab-fp}. 	Since the group $G$ is finitely presented, $M$ is a tame $\Z Q$-module according to \cite{BS80}. Therefore $N$ is also tame by\eqref{BS1} above. In the sequel we write $\overline{Q} = Q/ C_Q(N)$, which is an infinite abelian group. Note $N$ is naturally a tame $\Z \overline{Q}$-module.
	
	We first consider the case when $\overline{Q}$ is virtually cyclic. We choose an infinite cyclic subgroup $Q_1 \leq Q$ of finite index such that no non-trivial element of $Q_1$ centralizes $N$. By \eqref{BS2} above, $N$ is a finitely generated tame $\Z Q_1 $-module. Again by the main result of \cite{BS80}  the subgroup $G_1:=N\rtimes Q_1$ is finitely presented. According to Theorem A in \cite{BS78} this implies that $G_1$ splits as an  HNN-extension over some finitely generated subgroup of $N$. Since $G_1$ has no free subgroups, this HNN-extension is necessarily ascending. In this situation by Proposition 3.3 in \cite{BS78}  the torsion subgroup of $N$ is finite, and hence trivial here because $N$ is isomorphic to $\Z Q / \p$ for some prime ideal $\p$. Therefore in this situation the group $G_1  $  is torsion-free. Since $G_1$ is  not virtually abelian, $G_1$ falls under the scope of Theorem  \ref{thm-metab-notorsion}, and the existence of a non-foldable pair in is thus guaranteed.

	Now  we consider the case where the torsion-free rank $d$ of $\overline{Q}$ is at least $2$. As in the proof of Theorem  \ref{thm-metab-notorsion}, we choose a torsion-free subgroup $Q_1$ inside $Q$ of rank $d$ such that no non-trivial element of $Q_1$ centralizes $N$. We choose lifts $g_1,\ldots,g_d \in G$ of generators of $Q_1$ to $G$; and consider the subgroup $G_1$ of $G$ generated by $m$ and $g_1,\ldots,g_d $, where $m$ is a generator of $N$ as a $\Z Q$-module. Repeating the argument from the end of the proof of Theorem \ref{thm-metab-notorsion}, we apply Corollary \ref{cor-explicit-metab} to $G_1$, and deduce that $(g_1,\ldots,g_d)$ is non-foldable. This completes the proof.
\end{proof}

\begin{cor} \label{cor-metab-presfinie-growth}
	Let $G$ be a finitely presented metabelian group that is not virtually abelian. Then $G$ has a Schreier growth gap $n^2$. If moreover $G$ is torsion-free, then $G$ has a Schreier growth gap  $n^3$.
\end{cor}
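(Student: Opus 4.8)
The first assertion is immediate: Theorem~\ref{thm-metab-presfin} produces a non-foldable pair $(g_1,g_2)$ in $G$, and Lemma~\ref{lem-exp-tuple-growth} then gives $\vol_{G,X}(n)\succcurlyeq n^2$ for every faithful $G$-set $X$. So the plan is to concentrate on the torsion-free case and, according to the structure of $G$, either exhibit a \emph{non-foldable triple} (which yields the gap $n^3$ via Lemma~\ref{lem-exp-tuple-growth}) or locate a subgroup falling under Theorem~\ref{t-finite-rank-exp}. I would organize this along the three cases used in the proof of Theorem~\ref{thm-metab-presfin}: $G$ polycyclic; $G$ not polycyclic with $\overline{Q}$ of torsion-free rank $\geq 2$; and $G$ not polycyclic with $\overline{Q}$ virtually cyclic.

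Suppose first that $G$ is polycyclic and not virtually abelian. Then Proposition~\ref{prop-expand-subgroup-poly} provides a non-foldable subgroup $H$ of $G$ isomorphic either to $H_3(\Z)$ or to a strongly irreducible semi-direct product $\Z^k\rtimes\Z$ with $k\geq 2$. Since any subset of a non-foldable set is non-foldable, it suffices to exhibit three elements of $H$ whose associated map $\Z^3\to H$ (as in Lemma~\ref{lem-mapZk-inj}) is injective. For $H=\langle a,b,c\mid [a,b]=c,\ [a,c]=[b,c]=1\rangle$ one takes $(g_1,g_2,g_3)=(a,b,c)$: a short computation gives $c^{n_3}b^{n_2}a^{n_1}=a^{n_1}b^{n_2}c^{n_3-n_1n_2}$, and composing the bijection $(n_1,n_2,n_3)\mapsto(n_1,n_2,n_3-n_1n_2)$ with the normal form of $H_3(\Z)$ shows injectivity. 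For $H=\Z^k\rtimes\Z$ one takes $g_1,g_2\in\Z^k$ linearly independent and $g_3$ a generator of the $\Z$-factor, where injectivity is straightforward. In either case $(g_1,g_2,g_3)$ is a non-foldable triple and Lemma~\ref{lem-exp-tuple-growth} gives the gap $n^3$.

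Now assume $G$ is not polycyclic. Following the proof of Theorem~\ref{thm-metab-presfin}, write $1\to M\to G\to Q\to 1$ with $M,Q$ abelian and choose a submodule $N\cong\Z Q/\p$ of $M$ that is not finitely generated as an abelian group (Lemma~\ref{prop-metab-fp}); since $G$ is finitely presented, $M$ and hence $N$ is a tame $\Z Q$-module, and since $G$ is torsion-free, $N$ is torsion-free, so $\p\cap\Z=\{0\}$. Let $\overline{Q}=Q/C_Q(N)$, which is infinite. If the torsion-free rank $d$ of $\overline{Q}$ is at least $2$, then, exactly as in the proof of Theorem~\ref{thm-metab-presfin}, one obtains a finitely generated subgroup $G_1=\langle m,g_1,\dots,g_d\rangle$ sitting in $1\to N_1\to G_1\to Q_1\to 1$, with $N_1$ the $\Z Q_1$-submodule of $N$ generated by a non-zero $m$; here $N_1$ is uniform, torsion-free, and satisfies $C_{Q_1}(N_1)=\{1\}$. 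The torsion-free form of Corollary~\ref{cor-explicit-metab} then shows that $(m,g_1,\dots,g_d)$ is a non-foldable $(d+1)$-tuple, and since $d+1\geq 3$, Lemma~\ref{lem-exp-tuple-growth} again yields the gap $n^3$.

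The remaining case, which I expect to be the main obstacle, is $\overline{Q}$ virtually cyclic --- the situation typified by $\mathrm{BS}(1,n)=\Z[1/n]\rtimes\Z$, where the non-foldable-tuple method saturates at $n^2$. As in the proof of Theorem~\ref{thm-metab-presfin}, the reduction here yields a subgroup $G_1=N\rtimes Q_1$ with $Q_1\cong\Z$ that is finitely presented and torsion-free, and not polycyclic (because $N$ is not finitely generated as an abelian group) nor virtually abelian. The key new point I would prove is that $N$ has finite torsion-free rank: $N$ is tame and finitely generated as a $\Z Q_1\cong\Z[t^{\pm1}]$-module, so by tameness it is finitely generated over $\Z[t]$ or over $\Z[t^{-1}]$; since $t$ acts invertibly on $N$ while multiplication by $t$ is not invertible on a free $\Q[t]$-module, the $\Q[t]$-module $N\otimes\Q$ has no free part and is therefore finite-dimensional. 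Hence $N$, and with it $G_1$, has finite Pr\"ufer rank; being finitely generated and not polycyclic, $G_1$ is not virtually nilpotent, so Theorem~\ref{t-finite-rank-exp} applies and gives $G_1$ --- and therefore $G$, by monotonicity (Proposition~\ref{prop-monoton}) --- a Schreier growth gap $\exp(n)$, which in particular is $\succcurlyeq n^3$. Combining the three cases completes the proof; the delicate steps are citing the structural reduction of Theorem~\ref{thm-metab-presfin} correctly and the finiteness of the torsion-free rank of $N$ via the Bieri--Strebel invariant.
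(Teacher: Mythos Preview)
Your proof is correct and follows the same three-case decomposition as the paper's own argument. The differences are only in execution. In the polycyclic case the paper simply cites Proposition~\ref{prop-n4-growth} and Corollary~\ref{cor-poly-growth} (obtaining a gap $n^4$ or $\exp(n)$ directly), whereas you work a bit harder to extract an explicit non-foldable triple from the non-foldable subgroup $H$; your way is slightly more laborious but fine. In the virtually cyclic case the paper concludes that $G_1$ has finite rank by observing that the ascending HNN-extension over a finitely generated subgroup of $N$ (from \cite{BS78}) exhibits $N$ as an ascending union of finitely generated torsion-free abelian groups of bounded rank; your alternative, arguing that $N\otimes\Q$ is a finitely generated $\Q[t]$-module on which $t$ acts invertibly and hence has no free summand, is a clean self-contained way to reach the same conclusion. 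Either route feeds correctly into Theorem~\ref{t-finite-rank-exp}.
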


\begin{proof}
	The first statement directly follows from Theorem \ref{thm-metab-presfin}. In order to prove the second statement, we assume that $G$ is torsion-free and follow the proof of Theorem \ref{thm-metab-presfin}. In the polycyclic case, any faithful $G$-set verifies  $\vol_{G, X}(n)  \succcurlyeq  n^4$ according to Proposition \ref{prop-n4-growth} and Corollary  \ref{cor-poly-growth}. Suppose now $G$ is not polycyclic. We retain the notation $N,Q,m, \ldots$ as above. In case $\overline{Q}$ is virtually cyclic, as in the previous proof we find a cyclic subgroup $Q_1$ of $Q$ such that $G_1 =  N \rtimes Q_1 $ is a finitely presented subgroup of $G$ which splits as an HNN-extension over a torsion-free finitely generated subgroup of $N$. It follows that  $N$ is an ascending union of finitely generated torsion-free abelian groups whose rank is bounded, and that $G_1$ is a group of finite rank. Hence in that case the conclusion is provided by Theorem \ref{t-finite-rank-exp}. In case $\overline{Q}$ has torsion-free rank $d \geq 2$, we choose $g_1,\ldots,g_d \in G$ whose projections generate a free abelian subgroup $Q_1$ of $Q$ in which $N$ has trivial centralizer, and we obtain from Corollary \ref{cor-explicit-metab} that $(m,g_1,\ldots,g_d)$ is non-foldable.  
\end{proof}

\begin{remark}
	When $G$ is finitely presented and torsion-free (and not virtually abelian), although faithful $G$-actions have growth $\succcurlyeq n^3$ by Corollary  \ref{cor-metab-presfinie-growth}, $G$ does not always admit a non-foldable triple $(g_1,g_2,g_3)$. For example for the Baumslag-Solitar groups $\mathrm{BS}(1,n) \simeq \Z [1/n] \rtimes \Z$, $n\geq 2$, it is not hard to see that for every triple $(g_1,g_2,g_3)$, the map $\Z^3 \to \mathrm{BS}(1,n), \, \, (n_1, n_2, n_3 ) \mapsto g_3^{n_3}  g_2^{n_2} g_1^{n_1}$, is non-injective. In particular no triple can be non-foldable (Lemma \ref{lem-mapZk-inj}). 
\end{remark}

\section{Torsion-free solvable groups} \label{sec-torsionfree}

Based on the results from the previous sections, we make the following conjecture:

\begin{conjecturebis} \label{conj-torsion-free}
	Let $G$ be a finitely generated solvable group which is virtually torsion-free, and not virtually abelian. Then $G$ has a Schreier growth gap  $n^2$. 
\end{conjecturebis}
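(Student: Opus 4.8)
The plan is to first reduce to a single genuinely new configuration using the results already proved in this paper, and then attack that configuration by combining the confined--subgroup machinery of Sections~4--5 with a lifting argument in the spirit of Theorem~\ref{thm-intro-solv-torsion-free}. First I would use monotonicity (Proposition~\ref{prop-monoton}): if $G$ contains \emph{any} finitely generated subgroup that is metabelian and not virtually abelian, then Theorem~\ref{t-intro-metab-torsion-free} already yields the gap $n^2$; this disposes of the virtually nilpotent--by--abelian case (Corollary~\ref{cor-intro-lineargroup}), the polycyclic case (Proposition~\ref{prop-intro-noeth}), and the finite rank case (Theorem~\ref{thm-intro-prufer}), where one even gets $n^4$ or $\exp(n)$. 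Passing to a finite-index torsion-free subgroup (Proposition~\ref{prop-finite-index}), one may therefore assume that $G$ is finitely generated, torsion-free, solvable, not virtually nilpotent, of infinite Pr\"ufer rank, and such that every finitely generated metabelian subgroup of $G$ is virtually abelian; in particular $G$ has derived length $d\ge 3$, and the metabelian subquotient $G^{(d-2)}$ is locally virtually abelian.

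The next step is structural: produce a normal subgroup $N\unlhd G$ such that $Q:=G/N$ is still not virtually abelian and falls under an already-proved case (so that $Q$ has a gap $n^2$, e.g.\ $Q$ metabelian, or $Q$ with torsion-free Fitting quotients as in Theorem~\ref{thm-intro-solv-torsion-free}), while $N$ is well controlled --- abelian or nilpotent, and ``tame'' enough that the module machinery applies. Natural candidates for $N$ are the terms of the Fitting series or of the derived series, and the point to establish is that the hypothesis that every finitely generated metabelian subgroup is virtually abelian forces the $\Z[G/G^{(d-2)}]$-module structure of $G^{(d-2)}$ (and hence of $N$) to be extremely constrained; combined with torsion-freeness this should pin down the shape of $N$ to something amenable to the arguments of Section~5.

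For the descent, given a faithful $G$-set $X$ I would pass to the closure $\mathcal S(X)\subseteq\sub(G)$ of the point stabilizers (Lemma~\ref{lem-growth-closure}). Faithfulness gives $\bigcap_{H\in\mathcal S(X)}H=\{1\}$, but one needs more: applying Proposition~\ref{p-Neumann-abelien} to the abelian layers of $N$, together with the associated-prime and uniform-submodule machinery (Lemmas~\ref{lem-associatedprime},~\ref{lem-prime-SI},~\ref{lem-trick-non-zero-module}), which is available in characteristic $p$ as well, one should show that any subgroup confined by $N$ contains a nonzero submodule of a suitably chosen uniform submodule of $N$. This would force the following dichotomy: either the stabilizers in $\mathcal S(X)$ already ``see'' $N$, in which case $X$ dominates a faithful $Q$-set and one invokes the known gap for $Q$; or else a non-foldable tuple of $Q$, lifted along a lift $\mathcal L$ of a free abelian subgroup of $Q$ of rank $\ge 2$ exactly as in Theorem~\ref{thm-explicit-metab}, already produces a non-foldable pair in $G$, giving the bound $n^2$ via Lemma~\ref{lem-exp-tuple-growth}.

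The main obstacle is the torsion in the successive quotients of the Fitting (or derived) series: to apply Malcev theory and the structure theory of nilpotent groups one wants to quotient out the torsion subgroup $T$ of a term $N$, but then a faithful $G$-set need no longer have stabilizers avoiding $T$, and the module $N^{ab}\otimes\Q$ one would like to exploit can collapse, destroying the uniform submodule on which everything rests. Making the lifting mechanism of Theorem~\ref{thm-intro-solv-torsion-free} work \emph{without} the torsion-free-Fitting-quotients hypothesis --- i.e.\ running the confined-subgroup argument simultaneously through mixed characteristic-$0$ and characteristic-$p$ layers --- is the crux. A second, more foundational difficulty is that it is not even clear such groups exist beyond the virtually abelian ones: deciding whether torsion-freeness together with ``every finitely generated metabelian subgroup is virtually abelian'' already forces virtual abelianness, and if not, describing the structure of the resulting groups, is a prerequisite for the whole plan and is where I expect most of the work to lie.
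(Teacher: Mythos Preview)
The statement you are attempting to prove is stated in the paper as a \emph{conjecture}, not a theorem: the paper does not give a proof of it in full generality. What the paper does prove are the partial cases you yourself invoke as ingredients (metabelian, nilpotent-by-abelian and linear groups via Corollary~\ref{c-metabelian-subgroup} and Theorem~\ref{t-torsion-free-nilp-by-abelian}; torsion-free Fitting series via Theorem~\ref{thm-fitting-torsionfree}; quasi-transitive actions via Theorem~\ref{t-torsion-free-quasitransitive}). Your proposal is not a proof either: it is a strategy outline, and you explicitly identify two unresolved obstacles at the end. So there is no correct proof to compare.

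That said, your reductions and your lifting/dichotomy scheme are exactly the mechanisms the paper develops in \S\ref{subsec-lift-exp}--\S\ref{subsec-fittingseries} (Propositions~\ref{prop-lift-confine}, \ref{prop-lift-nice-subsets}, \ref{prop-lift-fitting}), and the paper's terminology for your residual case (``every finitely generated metabelian subgroup is virtually abelian'') is \emph{restricted} (see Proposition~\ref{prop-no-metab-sbgp-restriction}). The paper shows that in a restricted group with torsion-free Fitting subgroup, $\Fit(G)$ is abelian and FC-central, and then lifts non-foldable pairs from $G/\Fit(G)$ --- but this needs $\Fit(G)$ torsion-free at each stage, which is precisely your first obstacle and the reason the paper imposes the torsion-free-Fitting-quotients hypothesis in Theorem~\ref{thm-fitting-torsionfree}.

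On your second obstacle, the paper directly answers it against you: the Remark after Corollary~7.3 states that restricted, torsion-free, non-virtually-abelian solvable groups \emph{do} exist (announced as work in preparation). So the hope that the residual configuration is vacuous is not available; the conjecture is genuinely open, and the torsion-in-Fitting-quotients issue you flag is the actual gap separating the known cases from the general one.
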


The purpose of this section is to establish the following results:

\begin{itemize}
	\item Conjecture \ref{conj-torsion-free} is true  if $G$ admits a nilpotent normal subgroup $N$ such that $G/N$ is virtually abelian. This includes in particular solvable linear groups.
	\item Conjecture \ref{conj-torsion-free} is true under a strengthening of the torsion-free assumption on $G$, see Theorem \ref{thm-fitting-torsionfree}.
	\item Conjecture \ref{conj-torsion-free} is true if we restrict to actions with finitely many orbits, see Theorem \ref{t-torsion-free-quasitransitive}.
\end{itemize}

\subsection{Nilpotent-by-abelian groups}

 First recall that we have proven in Theorem \ref{thm-metab-notorsion} that the above conjecture is true when $G$ is a metabelian group. This has the following straightforward consequence:
 
 \begin{cor} \label{c-metabelian-subgroup}
 Let $G$ be a torsion-free solvable group, and assume that $G$ admits a finitely generated metabelian subgroup which is not virtually abelian. Then $G$ contains an expansive pair. In particular Conjecture \ref{conj-torsion-free} is true for $G$. 
 \end{cor}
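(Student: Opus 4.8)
The plan is to reduce the statement to Theorem \ref{thm-metab-notorsion} by means of an elementary monotonicity observation. Let $H \le G$ be a finitely generated metabelian subgroup that is not virtually abelian. Since $G$ is torsion-free, $H$ is torsion-free as well, so $H$ satisfies all the hypotheses of Theorem \ref{thm-metab-notorsion}; that theorem produces a non-foldable pair $(g_1, g_2)$ in $H$.

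The next step is to check that a non-foldable pair of a subgroup $H\le G$ remains non-foldable in $G$. This is immediate: if $X$ is any faithful $G$-set, then the restriction of the action to $H$ is still faithful, since the kernel of the $H$-action is contained in the kernel of the $G$-action, which is trivial; hence $X$ is a faithful $H$-set. Since $(g_1,g_2)$ is non-folded in every faithful $H$-set, it is in particular non-folded in $X$; as $X$ was an arbitrary faithful $G$-set, the pair $(g_1,g_2)$ is non-foldable in $G$. One could record this monotonicity as a short lemma on a par with Proposition \ref{prop-monoton}, but it requires nothing beyond the definition of non-foldability.

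Finally, for the \enquote{in particular} clause, note first that $G$ cannot be virtually abelian, since otherwise its subgroup $H$ would be virtually abelian; thus the hypotheses of Conjecture \ref{conj-torsion-free} are met. By Lemma \ref{lem-exp-tuple-growth}, the existence of a non-foldable pair in $G$ forces $\vol_{G, X}(n)\succcurlyeq n^2$ for every faithful $G$-set $X$, i.e.\ $G$ has a Schreier growth gap $n^2$. I expect no genuine obstacle in this argument: all the real content is in Theorem \ref{thm-metab-notorsion}, and the reduction above only uses that the restriction of a faithful action to a subgroup is again faithful.
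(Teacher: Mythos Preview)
Your argument is correct and matches the paper's intent: the paper states this corollary as a \enquote{straightforward consequence} of Theorem \ref{thm-metab-notorsion} without giving a separate proof, and what you have written is precisely the routine unpacking of that claim. The only step with any content is the monotonicity of non-foldability under passing to overgroups, which you handle correctly via the observation that a faithful $G$-set restricts to a faithful $H$-set.
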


We recall the following:

\begin{lem} \label{lem-nilp-virtab-tf}
Let $N$ be a torsion-free nilpotent group. If $N$ is virtually abelian, then $N$ is abelian.
\end{lem}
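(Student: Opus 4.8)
The plan is to reduce the statement to a triviality about the rational Malcev completion, exploiting its uniqueness (Theorem \ref{t-Malcev-completion}). Let $A\le N$ be an abelian subgroup of finite index. Being a subgroup of the torsion-free nilpotent group $N$, the group $A$ is itself torsion-free nilpotent, so Theorem \ref{t-Malcev-completion} applies to $A$ and produces a divisible torsion-free nilpotent group $\mk_A$ containing $A$ root-densely; since $A$ is abelian, $\mk_A=A\otimes\Q$ is abelian. On the other hand Theorem \ref{t-Malcev-completion} applied to $N$ gives $\mk_N$ with $N\le \mk_N$, and I will show that $\mk_N$ is also a Malcev completion of $A$.

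The key step is the claim that every element of $\mk_N$ has a power lying in $A$ (viewing $A\subseteq N\subseteq \mk_N$). Indeed, given $x\in \mk_N$, Theorem \ref{t-Malcev-completion} yields $n_1\ge 1$ with $x^{n_1}\in N$; since $A$ has only finitely many cosets in $N$, among $x^{0}A,\, x^{n_1}A,\,\dots,\,x^{[N:A]\,n_1}A$ two coincide, whence $x^{n_2 n_1}\in A$ for some $n_2\ge1$. Thus $\mk_N$ is a divisible torsion-free nilpotent group in which $A$ embeds root-densely, so the uniqueness clause of Theorem \ref{t-Malcev-completion} (applied to the group $A$) provides an isomorphism $\mk_N\cong \mk_A$. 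Hence $\mk_N$ is abelian, and therefore so is its subgroup $N$, which is the desired conclusion.

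I do not expect a genuine obstacle here; the only points demanding a little care are that $A$ need not be normal in $N$ and that $N$ is not assumed finitely generated, so the claim "$x$ has a power in $A$" must be obtained by the coset pigeonhole argument above rather than by treating $N/A$ as a finite group, and one should invoke Theorem \ref{t-Malcev-completion} exactly in the form stated, which carries no finite-generation hypothesis. (Alternatively, one could argue directly inside $\mk_N$: for each $g\in N$ some power $g^m$ lies in $A$, so $A\subseteq C_{\mk_N}(g^m)=C_{\mk_N}(g)$, using Theorem \ref{t-Malcev-divisible} together with the standard fact that centralizers of elements are isolated divisible subgroups of $\mk_N$; then $A$ is a central subgroup of finite index in $N$, so $[N,N]$ is finite by Schur's theorem, hence trivial since $N$ is torsion-free.)
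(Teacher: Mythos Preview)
Your argument is correct, but it takes a genuinely different route from the paper. The paper argues directly inside $N$ using uniqueness of roots: given $g,h\in N$, some powers $g^m,h^n$ commute; then $(g^m h g^{-m})^n=h^n$ forces $g^m h g^{-m}=h$ by unique extraction of roots in torsion-free nilpotent groups (\cite[2.1.2]{Lennox-Rob}), and a second application gives $[g,h]=1$. This is a two-line proof requiring only one classical fact.

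Your main argument instead passes through the rational Malcev completion: you observe that $\mk_N$ is simultaneously a Malcev completion of the finite-index abelian subgroup $A$, and conclude by the uniqueness clause of Theorem~\ref{t-Malcev-completion} that $\mk_N\cong A\otimes\Q$ is abelian. This is perfectly valid and conceptually clean, but it imports more machinery than the statement demands. Your parenthetical alternative is in fact much closer to the paper's proof: the equality $C(g^m)=C(g)$ that you invoke \emph{is} precisely the unique-roots step the paper uses (and holds already in $N$, without appealing to $\mk_N$ or Theorem~\ref{t-Malcev-divisible}); the subsequent appeal to Schur's theorem is then an unnecessary detour, since once $A$ is central one can simply repeat the root-extraction argument to conclude directly that any two elements of $N$ commute.
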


\begin{proof}
Let $g,h \in N$. Since $N$ is virtually abelian, there are $m,n \geq 1$ such that $g^m$ and $h^n$ commute. Then $g^m h g^{-m}$ and $h$ have the same $n$-th power, and hence $g^m h g^{-m} = h$ since $N$ is torsion-free \cite[2.1.2]{Lennox-Rob}.  So $h^{-1} g h$ and $g$ have the same $m$-th power, and again $h^{-1} g h= g$.
\end{proof}

Recall that a group is nilpotent-by-abelian if $G$ admits a nilpotent normal subgroup $N$ such that $G/N$ is abelian.  Corollary \ref{c-metabelian-subgroup} readily implies the following. 
 
 \begin{thm} \label{t-torsion-free-nilp-by-abelian}
Let $G$ be a finitely generated torsion-free group that is nilpotent-by-abelian, and not virtually abelian. Then $G$ contains a non-foldable pair. In particular Conjecture \ref{conj-torsion-free} is true for $G$.
\end{thm}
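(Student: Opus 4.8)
The strategy is to reduce to the metabelian case already handled in Theorem \ref{thm-metab-notorsion} (via Corollary \ref{c-metabelian-subgroup}), by exhibiting inside $G$ a finitely generated metabelian subgroup that is not virtually abelian. So let $N \trianglelefteq G$ be a nilpotent normal subgroup with $G/N$ abelian. First I would dispose of the case where $G$ itself is virtually metabelian: if $N$ is virtually abelian, then since $G$ is torsion-free and $N$ is nilpotent, Lemma \ref{lem-nilp-virtab-tf} (applied after passing to a torsion-free nilpotent subgroup, which here is $N$ itself) forces $N$ to be abelian, so $G$ is metabelian, not virtually abelian, and torsion-free — and then Theorem \ref{thm-metab-notorsion} applies directly. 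Hence we may assume $N$ is not virtually abelian.

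Now the point is to find a finitely generated metabelian subgroup of $G$ that is not virtually abelian. The natural candidate is built from $N' = [N,N]$: since $N$ is nilpotent and not virtually abelian, by Lemma \ref{l-subgroup-H3} (after passing to a torsion-free finite index subgroup, which is automatic here) $N$ contains a copy $H$ of the Heisenberg group $H_3(\Z)$. This $H$ is metabelian and not virtually abelian, and it is finitely generated. Then $H \le N \le G$, and Corollary \ref{c-metabelian-subgroup} immediately gives that $G$ contains a non-foldable pair, hence a Schreier growth gap $n^2$. Concretely, Corollary \ref{c-metabelian-subgroup} says: a torsion-free solvable group containing a finitely generated metabelian non-virtually-abelian subgroup contains a non-foldable pair; we have just produced such a subgroup, namely $H \cong H_3(\Z)$. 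The conclusion that $\vol_{G,X}(n) \succcurlyeq n^2$ for every faithful $G$-set $X$ then follows from Lemma \ref{lem-exp-tuple-growth} (a non-foldable pair gives growth $\succcurlyeq n^2$).

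I do not expect any serious obstacle here: the theorem is essentially a corollary packaging Theorem \ref{thm-metab-notorsion}, Lemma \ref{lem-nilp-virtab-tf}, and Lemma \ref{l-subgroup-H3}. The only point requiring a little care is the dichotomy "$N$ virtually abelian versus not": in the first branch one must check that torsion-freeness of $G$ genuinely forces $N$ abelian so that $G$ is honestly metabelian (not merely virtually so), which is exactly the content of Lemma \ref{lem-nilp-virtab-tf}; in the second branch one needs that "not virtually abelian" for the nilpotent group $N$ passes down to the finitely generated subgroup $H_3(\Z)$, which is immediate since $H_3(\Z)$ is itself not virtually abelian. Thus the proof is short.

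\begin{proof}
Let $N \trianglelefteq G$ be a nilpotent normal subgroup with $G/N$ abelian. Since $G$ is torsion-free, so is $N$. If $N$ is virtually abelian, then $N$ is abelian by Lemma \ref{lem-nilp-virtab-tf}, so $G$ is metabelian; being torsion-free and not virtually abelian, it contains a non-foldable pair by Theorem \ref{thm-metab-notorsion}. Otherwise $N$ is a finitely generated torsion-free nilpotent group which is not virtually abelian, so by Lemma \ref{l-subgroup-H3} it contains a subgroup $H$ isomorphic to $H_3(\Z)$. Then $H$ is a finitely generated metabelian subgroup of $G$ which is not virtually abelian, so $G$ contains a non-foldable pair by Corollary \ref{c-metabelian-subgroup}. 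In either case, Lemma \ref{lem-exp-tuple-growth} yields $\vol_{G, X}(n) \succcurlyeq n^2$ for every faithful $G$-set $X$, i.e.\ $G$ has a Schreier growth gap $n^2$.
\end{proof}
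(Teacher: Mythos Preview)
Your proof is correct and follows essentially the same route as the paper's: split according to whether $N$ is (virtually) abelian, and in the non-abelian branch extract a Heisenberg subgroup via Lemma \ref{l-subgroup-H3}, then invoke Corollary \ref{c-metabelian-subgroup}.

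One small slip: you assert that $N$ is finitely generated, but this need not hold (e.g.\ take $N=H_3(\Z[1/2])$ inside $N\rtimes \Z$ with the generator of $\Z$ acting by $(a,b,c)\mapsto (2a,2b,4c)$). The statement of Lemma \ref{l-subgroup-H3} does assume finite generation, so as written the citation is not quite legitimate. The fix is immediate: if $N$ is not abelian, pick non-commuting $g,h\in N$ and apply Lemma \ref{l-subgroup-H3} to the finitely generated torsion-free nilpotent group $\langle g,h\rangle$, which is non-abelian and hence (by Lemma \ref{lem-nilp-virtab-tf}) not virtually abelian. The paper's proof has the same informal citation; in any case the proof of Lemma \ref{l-subgroup-H3} does not use finite generation once torsion-freeness is known.
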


\begin{proof} 
By Corollary \ref{c-metabelian-subgroup} it is enough to show that $G$ contains a finitely generated subgroup $H$ which is metabelian and not virtually abelian. Let $N$ be a nilpotent normal subgroup of  $G$ such that $G/N$ is abelian. If $N$ is not abelian, then $N$ is not virtually abelian by Lemma \ref{lem-nilp-virtab-tf}, and we can choose $H$ to be a copy of the Heisenberg group in $N$  (Lemma \ref{l-subgroup-H3}). If $N$ is abelian, then $G$ is metabelian, and we take $H=G$.  \qedhere 
\end{proof}

Since solvable linear groups are virtually nilpotent-by-abelian by a theorem of Malcev \cite[3.1.8]{Lennox-Rob},  Theorem \ref{t-torsion-free-nilp-by-abelian} implies:

\begin{cor} \label{cor-linear-case-quadratic}
Let $G$ be a finitely generated solvable linear group which is virtually torsion-free. If $G$ is not virtually abelian, then $G$ contains a non-foldable pair. In particular Conjecture \ref{conj-torsion-free} is true for $G$. 
\end{cor}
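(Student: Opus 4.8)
The plan is to deduce this from Theorem \ref{t-torsion-free-nilp-by-abelian} by passing to a suitable finite-index subgroup of $G$. By a theorem of Malcev \cite[3.1.8]{Lennox-Rob}, every finitely generated solvable linear group is virtually nilpotent-by-abelian, so $G$ has a finite-index subgroup $G_1$ admitting a nilpotent normal subgroup $N$ with $G_1/N$ abelian. On the other hand, by hypothesis $G$ has a finite-index torsion-free subgroup $G_2$. I would then set $G_0 = G_1 \cap G_2$, which has finite index in $G$. The group $G_0$ is finitely generated (being of finite index in a finitely generated group), torsion-free (being a subgroup of $G_2$), and nilpotent-by-abelian: indeed $G_0 \cap N$ is a normal subgroup of $G_0$ which is nilpotent, being a subgroup of the nilpotent group $N$, and $G_0/(G_0 \cap N)$ embeds in the abelian group $G_1/N$. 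Moreover $G_0$ is not virtually abelian, since otherwise $G$ would be virtually abelian as $G_0$ has finite index.

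Having arranged this, I would apply Theorem \ref{t-torsion-free-nilp-by-abelian} to $G_0$, which provides a non-foldable pair $(g_1, g_2)$ in $G_0$. The remaining point is to upgrade this to a non-foldable pair of $G$. This follows from the simple observation that if $X$ is a faithful $G$-set then $X$ is also a faithful $G_0$-set, because the kernel of the $G_0$-action on $X$ is contained in the kernel of the $G$-action on $X$, which is trivial. Hence for every faithful $G$-set $X$ the pair $(g_1, g_2)$ is non-folded in $X$, that is, $(g_1, g_2)$ is a non-foldable pair of $G$. Finally, Lemma \ref{lem-exp-tuple-growth} gives $\vol_{G, X}(n) \succcurlyeq n^2$ for every faithful $G$-set $X$, which is exactly the assertion of Conjecture \ref{conj-torsion-free} for $G$.

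There is no genuine obstacle here: the argument is essentially bookkeeping with the operations ``finite-index subgroup'' and ``subgroup'', together with the stability of the properties of being torsion-free, nilpotent-by-abelian, and not virtually abelian under these operations, the reduction already carried out in Theorem \ref{t-torsion-free-nilp-by-abelian}, and Malcev's theorem as input. The only slightly non-formal point worth isolating is that restricting a faithful group action to an arbitrary subgroup leaves it faithful, which is what allows a non-foldable pair of the finite-index subgroup $G_0$ to serve as a non-foldable pair of the ambient group $G$ (the analogous statement at the level of growth of actions being the content of Proposition \ref{prop-finite-index}).
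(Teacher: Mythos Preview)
Your argument is correct and follows the same approach as the paper: invoke Malcev's theorem to pass to a finite-index subgroup that is torsion-free and nilpotent-by-abelian, then apply Theorem \ref{t-torsion-free-nilp-by-abelian}. The paper's proof is a single sentence that leaves the finite-index passage implicit, whereas you have spelled out the details (in particular the easy but necessary observation that a non-foldable pair in a subgroup remains non-foldable in the ambient group, since restriction of a faithful action to any subgroup is faithful).
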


\begin{remark}
The above results are entirely based on the case of metabelian groups, proven in the previous section. However in general the reduction to metabelian subgroups is not enough to prove Conjecture \ref{conj-torsion-free}. As we show in a separate work in preparation, there exist finitely generated torsion-free solvable groups that are not virtually abelian and with the property that all finitely generated metabelian subgroups are virtually abelian.
\end{remark}

\subsection{Lifting non-foldable subsets} \label{subsec-lift-exp}

The goal of this paragraph is to prove Proposition \ref{prop-lift-nice-subsets}. This result enables, under suitable conditions, to lift a non-foldable subset from a quotient to a non-foldable subset of the ambient group. For solvable groups, this mechanism can be applied inductively, and  will be used in \S \ref{subsec-fittingseries}.

We fix the following notation:  $G$ is a group that is an extension $1 \to A \to G \to Q \to 1$, where $A$ is abelian and torsion-free.  For $r \geq 1$, we denote by $A_r$ the intersection of all subgroups of $A$ of index at most $r$. The subgroup $A_r$ contains $(r!)$-powers of all elements of $A$, and hence is non-trivial. Note also that $A_r$ is a characteristic subgroups of $A$. In particular $A_r$ is $Q$-invariant.

\begin{lem} \label{lem-QHr}
	Retain the above notation.
		\begin{enumerate}[label=\roman*)]
		\item If $H$ is a subgroup of $G$, then \[Q_{H,r} = \left\lbrace q \in Q : (q-1) A_r \leq H \right\rbrace \] is a subgroup of $Q$.
		\item The map $\sub(G) \to \sub(Q)$, $H \mapsto Q_{H,r}$, is equivariant, where $G$ acts on  $\sub(Q)$ by conjugation via the projection $G \to Q$. 
	\end{enumerate}

\end{lem}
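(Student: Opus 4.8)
The plan is to translate both statements into module-theoretic statements inside the abelian group $A$, exploiting that $A_r$ is a $\Z Q$-submodule of $A$: it is characteristic in $A$, and the conjugation action of $G$ on $A$ is by automorphisms and factors through $Q=G/A$ since $A$ is abelian. Throughout I write the $Q$-action on $A$ additively, so that for $q\in Q$ and $a\in A$ the element $(q-1)a$ lies in $A$; note that $(q-1)A_r$ is the image of the group homomorphism $A_r\to A$, $a\mapsto(q-1)a$, hence a subgroup of $A$, and since $qA_r=A_r$ it is in fact a subgroup of $A_r$.

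For (i), fix $H\le G$ and set $B:=A\cap H$, a subgroup of $A$. As $(q-1)A_r\subseteq A$ for every $q$, we have $Q_{H,r}=\{q\in Q:(q-1)A_r\subseteq B\}$, and of course $1\in Q_{H,r}$. To prove closure under products, take $q,q'\in Q_{H,r}$ and $m\in A_r$ and expand $(qq'-1)m=q\bigl((q'-1)m\bigr)+(q-1)m$. Here $(q-1)m\in B$; and $m':=(q'-1)m$ lies in $B\cap A_r$, so $qm'=(q-1)m'+m'\in B$, whence $(qq'-1)m\in B$. The only point that uses the submodule property of $A_r$ is exactly this bootstrap step: membership $q\in Q_{H,r}$ forces $q$ to carry $B\cap A_r$ into $B$; this is the crux of the argument. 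For closure under inverses, given $q\in Q_{H,r}$ and $m\in A_r$, put $n:=q^{-1}m\in A_r$; then $(q^{-1}-1)m=n-m=-(q-1)n\in B$ because $(q-1)n\in B$. Hence $Q_{H,r}$ is a subgroup of $Q$. (Only commutativity of $A$ is used, not of $Q$.)

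For (ii), write $\bar g=\pi_Q(g)$ and unwind the definition: $q\in Q_{gHg^{-1},r}$ iff $(q-1)A_r\subseteq gHg^{-1}$ iff $g^{-1}\bigl((q-1)A_r\bigr)g\subseteq H$. Conjugation by $g^{-1}$ on the subset $(q-1)A_r$ of $A$ is the module action of $\bar g^{-1}$, so the last condition reads $\bar g^{-1}(q-1)A_r\subseteq H$; using the identity $\bar g^{-1}(q-1)=(\bar g^{-1}q\bar g-1)\bar g^{-1}$ in $\Z Q$ together with $\bar g^{-1}A_r=A_r$, this rewrites as $(\bar g^{-1}q\bar g-1)A_r\subseteq H$, i.e.\ $\bar g^{-1}q\bar g\in Q_{H,r}$. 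Therefore $Q_{gHg^{-1},r}=\bar g\,Q_{H,r}\,\bar g^{-1}=\pi_Q(g)\,Q_{H,r}\,\pi_Q(g)^{-1}$, which is precisely equivariance of $H\mapsto Q_{H,r}$ with respect to the conjugation action on $\sub(G)$ and the conjugation action on $\sub(Q)$ via $\pi_Q$.

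All the computations are elementary; the only places calling for a little care are the bootstrap step in (i) and keeping track of the fact that conjugating a subset of $A$ by an element $g\in G$ realizes the module action of $\pi_Q(g)$. I do not expect any genuine obstacle.
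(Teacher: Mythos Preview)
Your proof is correct and follows essentially the same route as the paper: both arguments rest on the $Q$-invariance of $A_r$ together with a telescoping identity in $\Z Q$, and your treatment of (ii) is virtually identical to the paper's. The only cosmetic difference is that the paper handles (i) in one stroke via the identity $(q_1 q_2^{-1}-1)a=(q_1-1)q_2^{-1}a-(q_2-1)q_2^{-1}a$, whereas you verify closure under products and inverses separately.
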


\begin{proof}
The fact that $Q_{H,r}$ is a subgroup follows from the fact that $A_r$ is $Q$-invariant and the equality \[ (q_1 q_2^{-1} - 1) a = (q_1  - 1) q_2^{-1} a - (q_2  - 1) q_2^{-1} a, \] which holds for all $q_1, q_2 \in Q$ and $a \in A$. 

To see that $H \mapsto Q_{H,r}$ is equivariant, fix $g \in G$ and take $q \in Q_{gHg^{-1},r}$. Using module notation, that $(q-1) A_r$ lies in $gHg^{-1}$ can be rewritten as $\pi(g)^{-1} (q-1) A_r \leq H$. Now we have \[ \pi(g)^{-1} (q-1) A_r  = (\pi(g)^{-1} q \pi(g)-1) A_r \] since $A_r$ is $Q$-invariant, so we deduce $\pi(g)^{-1} q \pi(g) \in Q_{H,r}$, or equivalently $q \in \pi(g) Q_{H,r} \pi(g)^{-1}$. This shows $Q_{gHg^{-1},r} \leq \pi(g) Q_{H,r} \pi(g)^{-1}$, and the same argument also shows that equality holds.
\end{proof}

\begin{prop} \label{prop-lift-confine} Retain the notation from Lemma \ref{lem-QHr}.
Suppose that the group $G$ is an extension $1 \to A \to G \to Q \to 1$, where $A$ is abelian, torsion-free, and such that $A = C_G(A)$. Suppose $P$ is a finite subset of $Q \setminus \left\lbrace 1\right\rbrace $, and $P_G$ is a finite subset of $G$ of cardinality $r$ such that $\pi(P_G) = P$, where $\pi$ is the projection $G \to Q$. Then: \begin{enumerate}[label=\roman*)]
	\item $Q_{H,r} \in S_Q(P,Q)$ for all $H \in S_G(P_G,G)$.
	\item If \[ \bigcap_{L \in S_Q(P,Q)} \!  \!  \!  \!  \!  L \neq \left\lbrace 1 \right\rbrace, \] then \[ \bigcap_{H \in S_G(P_G,G)} \!  \!  \!  \!  \!  H \neq \left\lbrace 1 \right\rbrace . \] 
\end{enumerate}
\end{prop}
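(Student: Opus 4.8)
The plan is to derive both parts from Neumann's lemma in the form of Proposition~\ref{p-Neumann-abelien} applied to the abelian normal subgroup $A$ (playing the role of $M$ there), combined with the equivariance of $H\mapsto Q_{H,r}$ recorded in Lemma~\ref{lem-QHr}. The hypothesis $A=C_G(A)$ will be used only at one point, to guarantee that $C_Q(A)$ is trivial, which is what will make the submodule extracted at the end non-trivial.

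For part (i), I would fix $H\in S_G(P_G,G)$ and an arbitrary $g\in G$, and first note that the conjugate $H':=gHg^{-1}$ still has all its $G$-conjugates (hence, a fortiori, all its $A$-conjugates) meeting $P_G$, so $H'\in S_G(P_G,A)$. Since $r=|P_G|$, Proposition~\ref{p-Neumann-abelien} (with $M=A$) then supplies an element $q\in\pi(P_G)\cap\pi(H')=P\cap\pi(H')$ such that $A_{q,H'}:=\{a\in A:(1-q)a\in H'\}$ has index at most $r$ in $A$. As $A_r$ is by definition contained in every subgroup of $A$ of index at most $r$, we get $A_r\le A_{q,H'}$ and hence $(q-1)A_r\le H'$ (the sign is immaterial since $H'$ is a subgroup), i.e.\ $q\in Q_{H',r}\cap P$. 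Finally Lemma~\ref{lem-QHr}(ii) identifies $Q_{H',r}$ with $\pi(g)\,Q_{H,r}\,\pi(g)^{-1}$; since $g\in G$ was arbitrary and $\pi$ is onto, this shows that every $Q$-conjugate of $Q_{H,r}$ meets $P$, which is exactly the assertion $Q_{H,r}\in S_Q(P,Q)$.

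For part (ii), I would put $N:=\bigcap_{L\in S_Q(P,Q)}L$, non-trivial by hypothesis. Part (i) gives $N\le Q_{H,r}$ for every $H\in S_G(P_G,G)$, so each such $H$ contains $(q-1)A_r$ for all $q\in N$, hence contains the subgroup $B\le A$ generated by $\{(q-1)a:q\in N,\ a\in A_r\}$. The only step I expect to require more than bookkeeping is checking that $B\ne\{1\}$: if $B$ were trivial, every $q\in N$ would act trivially on $A_r$, and since $A_r$ has finite index in the torsion-free abelian group $A$ this forces $q$ to act trivially on $A\otimes\Q$, hence on $A$; thus $N\subseteq C_Q(A)$. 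But $A=C_G(A)$ makes $C_Q(A)$ trivial, since a lift $g\in G$ of an element of $C_Q(A)$ centralizes $A$ and therefore lies in $C_G(A)=A$, forcing its image in $Q$ to be trivial. This contradicts $N\ne\{1\}$, so $B$ is a non-trivial subgroup of $A$ contained in every $H\in S_G(P_G,G)$, and $\bigcap_{H\in S_G(P_G,G)}H\ne\{1\}$ follows.
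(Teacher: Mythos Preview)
Your argument is essentially the same as the paper's, and is correct except for one slip. In part (ii) you assert that ``$A_r$ has finite index in the torsion-free abelian group $A$''; this need not hold, since $A$ is not assumed finitely generated (for instance if $A=\bigoplus_{\N}\Z$ and $r=2$ then $A_2=2A$ has infinite index). What is true, and is all you need, is that $(r!)A\subseteq A_r$: if $q$ acts trivially on $A_r$ then $(r!)(q-1)a=(q-1)(r!)a=0$ for every $a\in A$, and torsion-freeness gives $(q-1)a=0$, so $q\in C_Q(A)=\{1\}$. With this fix your proof goes through; the paper argues identically, just picking a single non-trivial $q$ rather than the whole subgroup $N$.
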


\begin{proof}
By Proposition \ref{p-Neumann-abelien}, for every $H$ in $S_G(P_G,G)$ there exists $g \in P_G$ such that $(\pi(g)-1)A_r \leq H$. This means that $\pi(g) \in Q_{H,r}$. So we have $Q_{H,r} \cap P \neq \emptyset$ for all $H \in S_G(P_G,G)$. Since $H \mapsto Q_{H,r}$ is equivariant, this implies that  $Q_{H,r} \in S_Q(P,Q)$ for all $H \in S_G(P_G,G)$. This proves the first statement. For the second, the assumption implies that there exists a non-trivial element $q$ such that $q \in Q_{H,r}$ for all $H \in S_G(P_G,G)$. This means that $(q-1)A_r \leq H$ for all $H \in S_G(P_G,G)$. Since $q$ is non-trivial, $(q-1)A$ is non-zero according to the assumption that $A = C_G(A)$. Since $A$ has no torsion by assumption, $(q-1)A_r$ is also non-trivial. Since $(q-1)A_r$ is contained in $\bigcap_{S_G(P_G,G)} H$, we obtain the conclusion.
\end{proof}

\begin{prop} \label{prop-lift-nice-subsets}
	Suppose that the group $G$ is an extension $1 \to A \to G \to Q \to 1$, where $A$ is abelian, torsion-free, and such that $A = C_G(A)$. Suppose that  $\mathcal{L} \subset Q$ is a non-foldable subset of $Q$. Let $\mathcal{L}_G \subset G$ be a lift of $\mathcal{L}$. Then $\mathcal{L}_G $ is a non-foldable subset of $G$.
\end{prop}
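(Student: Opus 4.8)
The plan is to verify the non-foldability of $\mathcal{L}_G$ directly from the characterization in Lemma \ref{l-non-foldable-confined}: I must show that for every finite subset $\Sigma \subset \mathcal{L}_G$, setting $P_G = \{g^{-1}h : g, h \in \Sigma, g \neq h\}$, we have $\bigcap_{H \in S_G(P_G, G)} H \neq \{1\}$. The idea is to push the problem down to $Q$ and then lift back using Proposition \ref{prop-lift-confine}. First I would observe that since $\mathcal{L}_G$ is a lift of $\mathcal{L}$, the projection $\pi \colon G \to Q$ restricts to a bijection from $\Sigma$ onto a finite subset $\Sigma' \subset \mathcal{L}$; moreover, because distinct elements of $\Sigma$ project to distinct elements of $\Sigma'$, every element $g^{-1}h$ of $P_G$ has nontrivial image $\pi(g)^{-1}\pi(h)$ in $Q$. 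Hence $P := \pi(P_G) = \{q^{-1}q' : q, q' \in \Sigma', q \neq q'\}$ is exactly the set of differences of $\Sigma'$ sitting inside $Q \setminus \{1\}$, and $\pi$ maps $P_G$ onto $P$.

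The one subtlety is that the cardinality of $P_G$ may exceed that of $P$ (several elements of $P_G$ could have the same projection), whereas Proposition \ref{prop-lift-confine} is stated for a subset $P_G$ with $\pi(P_G) = P$ of cardinality exactly $r = |P_G|$. I would handle this by simply keeping $r = |P_G|$ throughout and noting that the proof of Proposition \ref{prop-lift-confine} only uses that $\pi(P_G) = P$ and that $r = |P_G|$ bounds the relevant index — so the statement applies verbatim with this $r$, even if $P$ is strictly smaller. (Alternatively, one may pass to a subset $P_G'$ of $P_G$ mapping bijectively onto $P$; since $S_G(P_G', G) \supseteq S_G(P_G, G)$, a uniform nontrivial intersection over the larger family $S_G(P_G', G)$ gives one over $S_G(P_G, G)$.) The hypothesis that $\mathcal{L}$ is non-foldable in $Q$ means, again by Lemma \ref{l-non-foldable-confined}, precisely that $\bigcap_{L \in S_Q(P, Q)} L \neq \{1\}$ for this very $P$, since $P$ is the set of differences of the finite subset $\Sigma' \subset \mathcal{L}$.

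With this in place, the second conclusion of Proposition \ref{prop-lift-confine} immediately yields $\bigcap_{H \in S_G(P_G, G)} H \neq \{1\}$, which is exactly condition \ref{i-conf} of Lemma \ref{l-non-foldable-confined} for $\mathcal{L}_G$ and $\Sigma$. Since $\Sigma$ was an arbitrary finite subset of $\mathcal{L}_G$, this shows $\mathcal{L}_G$ is non-foldable in $G$, completing the proof. I expect no serious obstacle here: the real content has already been extracted into Proposition \ref{prop-lift-confine} (which in turn rests on the Neumann-type Proposition \ref{p-Neumann-abelien} and on the hypotheses $A = C_G(A)$ and $A$ torsion-free, used there to guarantee $(q-1)A_r \neq 0$ for nontrivial $q$). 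The only point requiring a line of care is the bookkeeping around the cardinality $r$ and the dictionary between the ``confined subgroup'' and ``non-foldable subset'' formulations via Lemma \ref{l-non-foldable-confined}.
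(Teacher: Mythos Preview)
Your proposal is correct and takes essentially the same approach as the paper, which simply states that the result follows by combining Proposition \ref{prop-lift-confine} with Lemma \ref{l-non-foldable-confined}. Your expanded version fills in exactly the right details; note that your ``subtlety'' about cardinalities is not actually an issue, since Proposition \ref{prop-lift-confine} only requires $r = |P_G|$ and $\pi(P_G) = P$, with no constraint that $|P| = r$.
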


\begin{proof}
This follows by combining Proposition \ref{prop-lift-confine} with Lemma \ref{l-non-foldable-confined}. \qedhere
\end{proof}

\subsection{Torsion-free Fitting series} \label{subsec-fittingseries}

Observe that if the successive quotients $N_{i+1}/N_i$ in a series $\{1\}=N_0\unlhd N_1\unlhd \cdots \unlhd N_k=G$ are all torsion-free, then the group $G$ is evidently torsion-free. In general the converse does not hold, in the sense that some torsion-free solvable groups do not admit a series with torsion-free abelian factors. The goal of this paragraph is to prove Conjecture \ref{conj-torsion-free} under the assumption that the factors in the Fitting series of $G$ are torsion-free. Recall that the Fitting series of a solvable group $G$ is defined inductively by $F_0(G) = \left\lbrace 1\right\rbrace $ and $F_{i+1}(G) / F_i(G) = \Fit(G/F_i(G))$. So $F_1(G) = \Fit(G)$, and the series eventually reaches $G$ since $G$ is solvable. The Fitting length of $G$ is the least $\ell$ such that $F_\ell(G) = G$. 


\begin{thm} \label{thm-fitting-torsionfree}
Let $G$ be a finitely generated solvable group, and assume that all successive quotients $F_{i+1}(G) / F_i(G)$ in the Fitting series of $G$ are torsion-free. If $G$ is not virtually abelian, then $G$ contains a non-foldable pair. In particular Conjecture \ref{conj-torsion-free} is true for $G$.
\end{thm}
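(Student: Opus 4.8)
The plan is to argue by induction on the Fitting length $\ell$ of $G$, peeling off $\Fit(G)$ at each step and using the lifting mechanism of Proposition \ref{prop-lift-nice-subsets} to climb the Fitting series. Two preliminary remarks. First, the hypothesis forces $G$ to be torsion-free. Second, non-foldability passes from a subgroup to the ambient group: if $\mathcal{L}\subset H\le G$ is non-foldable in $H$, then for a finite $\Sigma\subset\mathcal{L}$ with associated difference set $P\subset H$ one has $K\mapsto K\cap H\in S_H(P,H)$ for every $K\in S_G(P,G)$ (using $P\subset H$), so $\bigcap_{S_G(P,G)}K\supseteq\bigcap_{S_H(P,H)}K\neq\{1\}$ and Lemma \ref{l-non-foldable-confined} applies. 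Thus at each stage it suffices to exhibit a non-foldable pair in a well-chosen subgroup, or to lift one from $G/\Fit(G)$; and the final conclusion about the growth gap then follows from Lemma \ref{lem-exp-tuple-growth}.

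Set $A=\Fit(G)$, which is torsion-free by hypothesis. \emph{Case $A$ non-abelian.} Pick $x,y\in A$ with $[x,y]\neq 1$; each lies in a nilpotent normal subgroup of $G$, and the product of two such is nilpotent, so $\langle x,y\rangle$ is a finitely generated non-abelian nilpotent torsion-free subgroup of $G$, hence not virtually abelian by Lemma \ref{lem-nilp-virtab-tf}. By Lemma \ref{l-subgroup-H3} it contains a copy of $H_3(\Z)$, which is finitely generated, torsion-free, metabelian and not virtually abelian, so Theorem \ref{thm-metab-notorsion} provides a non-foldable pair inside it, hence in $G$. This settles in particular the base case $\ell=1$, where $G=\Fit(G)$ is nilpotent and, being not virtually abelian, non-abelian by Lemma \ref{lem-nilp-virtab-tf}.

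\emph{Case $A$ abelian.} The key point is that then $A=C_G(A)$. Indeed, write $C=C_G(A)$; then $A\le Z(C)$, and $Z(C)$ is an abelian normal subgroup of $G$, hence $Z(C)\le\Fit(G)=A$, so $Z(C)=A$. If $C\neq A$ then $C$ is a non-abelian solvable group, say of derived length $m\ge 2$; its last non-trivial derived term $C^{(m-1)}$ is an abelian normal subgroup of $G$, hence $C^{(m-1)}\le A=Z(C)$, so $[C^{(m-2)},C^{(m-2)}]=C^{(m-1)}\le Z(C)$ shows $C^{(m-2)}$ is nilpotent of class $\le 2$; being a normal nilpotent subgroup of $G$ it lies in $\Fit(G)=A$, which is abelian, forcing $C^{(m-1)}=[C^{(m-2)},C^{(m-2)}]=1$, a contradiction. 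So $A$ is torsion-free abelian and self-centralizing. Put $Q=G/A$: it is finitely generated solvable, its Fitting series is $F_i(Q)=F_{i+1}(G)/A$, so its successive quotients $F_{i+2}(G)/F_{i+1}(G)$ are still torsion-free, and $\ell(Q)=\ell-1$. If $Q$ is virtually abelian, let $Q_0\le Q$ be abelian of finite index and $G_0\le G$ its preimage: then $G_0$ is a finitely generated torsion-free metabelian group of finite index in $G$, hence not virtually abelian, and Theorem \ref{thm-metab-notorsion} yields a non-foldable pair in $G_0\subseteq G$. If $Q$ is not virtually abelian, the induction hypothesis gives a non-foldable pair $(\bar g_1,\bar g_2)$ in $Q$; by Lemma \ref{lem-mapZk-inj} the set $\mathcal{L}=\{\bar g_2^{\,n_2}\bar g_1^{\,n_1}:n_1,n_2\in\Z\}$ is a non-foldable subset of $Q$. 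Choosing lifts $g_i\in G$ of $\bar g_i$, the set $\{g_2^{\,n_2}g_1^{\,n_1}:n_1,n_2\in\Z\}$ is a lift of $\mathcal{L}$ (the map $(n_1,n_2)\mapsto\bar g_2^{\,n_2}\bar g_1^{\,n_1}$ being injective), so Proposition \ref{prop-lift-nice-subsets}, applicable since $A=C_G(A)$ is torsion-free abelian, shows it is non-foldable in $G$; by Lemma \ref{lem-mapZk-inj} once more, $(g_1,g_2)$ is a non-foldable pair in $G$.

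The only genuinely delicate point is the self-centralizing claim $C_G(\Fit(G))=\Fit(G)$ in the abelian case; as sketched, it uses only that abelian and class-$2$ nilpotent normal subgroups lie in $\Fit(G)$, and I expect it to be the technical heart of the argument. The remaining work is routine bookkeeping: checking that the Fitting series descends correctly to $G/\Fit(G)$ with the torsion-free property preserved, so that the induction is legitimate, and verifying that the two terminal cases ($\Fit(G)$ non-abelian, and $G/\Fit(G)$ virtually abelian) reduce to the metabelian Theorem \ref{thm-metab-notorsion} through $H_3(\Z)$ and through a finite-index metabelian subgroup respectively.
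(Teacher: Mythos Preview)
Your proof is correct and follows essentially the same approach as the paper: induction on the Fitting length, handling the case where $\Fit(G)$ is non-abelian via a Heisenberg subgroup and Theorem \ref{thm-metab-notorsion}, and in the abelian case either reducing to a finite-index metabelian subgroup (when $Q$ is virtually abelian) or lifting a non-foldable pair from $Q$ via Proposition \ref{prop-lift-nice-subsets}. The only cosmetic differences are that the paper packages the dichotomy through the ``restricted'' terminology and Propositions \ref{prop-no-metab-sbgp-restriction}--\ref{prop-lift-fitting}, and cites the self-centralizing property $C_G(\Fit(G))=\Fit(G)$ from \cite[1.2.10]{Lennox-Rob} rather than proving it; your direct argument for this fact is fine, but it is a standard result for solvable groups rather than the technical heart of the proof.
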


To prove Theorem \ref{thm-fitting-torsionfree} we need some further preliminaries. We will use  the following result of Lennox (see \cite[2.3.14]{Lennox-Rob}).

\begin{prop} \label{prop-Lennox-isolator}
	Let $G$ be a finitely generated solvable group, and $H$ a subgroup of $G$ such that for every $g \in G$ there is $n \geq 1$ such that $g^n \in H$. Then $H$ has finite index in $G$.
\end{prop}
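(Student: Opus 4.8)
The plan is to induct on the derived length $\ell$ of $G$. The base case $\ell\le 1$ is when $G$ is abelian: then $H$ is normal, and the hypothesis says exactly that the finitely generated abelian group $G/H$ is a torsion group, hence finite. For the inductive step I set $A=G^{(\ell-1)}$, the last non-trivial term of the derived series, an abelian normal subgroup with $G/A$ of derived length $\ell-1$. The image of $H$ in $G/A$ still contains a power of every element, so by induction $HA/A$ has finite index in $G/A$, i.e.\ $HA$ has finite index in $G$; replacing $G$ by the (finitely generated) subgroup $HA$, I may assume $G=HA$. Then $K:=H\cap A$ is normalized by $H$ (both factors being $H$-invariant) and, since $G=HA$ and $A$ is abelian, $K$ is in fact normal in $G$. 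Passing to $G/K$ I reduce to a split extension $G=A\rtimes Q$, where $Q\cong G/A$ is a complement of derived length $\ell-1$, $A$ is abelian and normal, and the hypothesis survives. In module notation it reads: for every $q\in Q$ and $a\in A$ there is $n\ge 1$ with $(1+q+\cdots+q^{n-1})a=0$. The goal becomes to prove that $A$ is finite.

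The next step is to secure finiteness conditions. First, $A$ is finitely generated as a $\Z Q$-module: writing $G=\langle g_1,\ldots,g_r\rangle$ and letting $A_0\le A$ be the $\Z Q$-submodule generated by the $A$-components of the $g_i$, the set $A_0Q$ is a subgroup containing every $g_i$, so $A_0Q=G$ and thus $A_0=A$. Taking $q=1$ in the hypothesis shows every element of $A$ has finite order; applying this to a module generating set and using $Q$-invariance, $A$ has bounded exponent $N$. Decomposing $A$ into its primary components, each of which is characteristic hence $Q$-invariant, I may assume $A$ is a $p$-group of exponent $p^e$; the filtration $A\supseteq pA\supseteq\cdots\supseteq p^eA=0$ has layers that are quotients of $A/pA$, so it suffices to prove $A/pA$ is finite. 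Thus I am reduced to showing that a finitely generated $\mathbb{F}_pQ$-module $A$ satisfying the above condition is finite-dimensional over $\mathbb{F}_p$.

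When $Q$ is abelian (the metabelian case for $G$), $\mathbb{F}_pQ$ is a commutative Noetherian ring \cite[Th.\ 1]{Hall}, and the condition passes to every subquotient module (lift, annihilate, project). Choosing a series $0=M_0\subsetneq\cdots\subsetneq M_n=A$ with $M_{i+1}/M_i\cong \mathbb{F}_pQ/\p_{i+1}$, exactly as in the proof of Lemma \ref{lem-associatedprime}, each factor $D:=\mathbb{F}_pQ/\p$ is a domain inheriting the condition. For a fixed nonzero $\bar a\in D$ and any $q$, the relation $(1+q+\cdots+q^{n-1})\bar a=0$ forces $1+\bar q+\cdots+\bar q^{\,n-1}=0$ in $D$; multiplying by $(\bar q-1)$ gives $\bar q^{\,n}=1$, so every $\bar q$ with $q\in Q$ is a root of unity in $D$. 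Since $Q$ generates $D$ as a finitely generated $\mathbb{F}_p$-algebra and each generator is a root of unity, hence algebraic over $\mathbb{F}_p$, the domain $D$ is a finite field. Consequently every factor $M_{i+1}/M_i$ is finite, whence $A$ is finite, settling the abelian case.

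For general solvable $Q$ I would run a secondary induction on the derived length of $Q$, the abelian case just treated serving as the base. Choosing an abelian normal subgroup $B\unlhd Q$ with $Q/B$ of smaller derived length, the condition restricted to $B$ forces (by the same domain/root-of-unity computation, applied prime by prime) that $B$ acts on $A$ with all eigenvalues roots of unity, so that the $B$-invariants $A^B$ and $B$-coinvariants become $\mathbb{F}_p[Q/B]$-modules to which the inductive hypothesis can be applied, reducing finiteness of $A$ to these lower-complexity factors. The main obstacle is that this bookkeeping must be carried out in the \emph{non-commutative} ring $\mathbb{F}_pQ$ and through subgroups such as $B$ or derived subgroups, which in a finitely generated solvable group need not themselves be finitely generated, so the relevant coefficient rings are a priori non-Noetherian. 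The way around it is to preserve finite generation as a module over the top quotient at every stage (the trick of the second paragraph) and to combine it with Hall's finiteness theorem for abelian normal sections of finitely generated solvable groups \cite{Hall} and the associated-prime machinery of \S\ref{s-metabelian-torsion-free}. This is the technical heart of the argument and the reason the result is genuinely non-trivial; a complete treatment, via the structure theory of isolators in finitely generated solvable groups, is given in \cite[2.3.14]{Lennox-Rob}.
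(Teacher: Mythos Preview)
The paper does not prove this proposition at all: it is stated with attribution to Lennox and a bare citation to \cite[2.3.14]{Lennox-Rob}. There is therefore no ``paper's proof'' to compare against; in effect both you and the paper ultimately defer to the same reference.

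That said, your proposal does substantially more than the paper. Your reduction (induction on derived length, passage to $G=HA$, then to the split extension $G/K=A\rtimes Q$ with $K=H\cap A$) is correct, and your treatment of the metabelian case ($Q$ abelian) is complete and clean: the condition does pass to subquotients, the prime filtration of the finitely generated $\mathbb{F}_pQ$-module is available since $\mathbb{F}_pQ$ is commutative Noetherian, and on each factor $\mathbb{F}_pQ/\mathfrak{p}$ the domain argument forcing every $\bar q$ to be a root of unity---hence the factor to be a finite field---is valid. This already yields a self-contained proof of the proposition when $G$ is metabelian, which the paper does not provide.

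For general solvable $Q$ your sketch via a secondary induction on the derived length of $Q$, using $B$-invariants and $B$-coinvariants for an abelian normal $B\unlhd Q$, is not a complete argument: you would still need to pass from finiteness of $A^B$ and $A_B$ to finiteness of $A$, and the intermediate subgroup $B$ need not be finitely generated, so $\mathbb{F}_pB$ is a priori non-Noetherian and the prime-filtration step is unavailable there. You identify this obstacle honestly and close by citing \cite[2.3.14]{Lennox-Rob}, exactly as the paper does. So there is no error, only an acknowledged gap in the general case, and your endpoint coincides with the paper's.
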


 Recall that by Corollary \ref{c-metabelian-subgroup}, in order to prove Conjecture \ref{conj-torsion-free}, there is no loss of generality in restricting to solvable groups whose finitely generated metabelian subgroups are all virtually abelian. It is convenient to introduce the following ad-hoc terminology.

 \begin{defin}
 We say that a  solvable group $G$ is \textbf{restricted} if every finitely generated metabelian subgroup of $G$ is virtually abelian.  
 \end{defin} 

\begin{prop} \label{prop-no-metab-sbgp-restriction}
Let $G$ be a finitely generated restricted solvable group. Suppose  $\Fit(G)$ is torsion-free. Then the following hold: \begin{enumerate}[label=\roman*)]
\item  $\Fit(G)$ is abelian and contained in the FC-center of $G$. 
\item If $\Fit(G)$ is finitely generated, then $G$ is virtually abelian. 
\end{enumerate}
\end{prop}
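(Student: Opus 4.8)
We set $A=\Fit(G)$ throughout.

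\emph{Stage 1: $A$ is abelian.} First I would recall that by Fitting's theorem a product of finitely many nilpotent normal subgroups of $G$ is nilpotent, so $A$ is locally nilpotent; in particular every $2$-generated subgroup of $A$ is a finitely generated nilpotent group, and it is torsion-free since $A$ is. If $A$ were non-abelian I would choose $x,y\in A$ with $[x,y]\neq 1$; by Lemma \ref{lem-nilp-virtab-tf} the torsion-free nilpotent group $\langle x,y\rangle$ is then not virtually abelian, so Lemma \ref{l-subgroup-H3} yields a copy of $H_3(\Z)$ inside $\langle x,y\rangle\leq G$. But $H_3(\Z)$ is finitely generated, metabelian, and (again by Lemma \ref{lem-nilp-virtab-tf}) not virtually abelian, contradicting that $G$ is restricted. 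Hence $A$ is abelian.

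\emph{Stage 2: $A\leq \FC(G)$.} Write $Q=G/A$, regard $A$ as a $\Z Q$-module via conjugation, and fix $a\in A$; the goal is to show that $C_Q(a):=\{q\in Q:qa=a\}$ has finite index in $Q$, equivalently that the $G$-conjugacy class of $a$ is finite. I would distinguish two cases. If every $q\in Q$ has a power in $C_Q(a)$, then Proposition \ref{prop-Lennox-isolator}, applied to the finitely generated solvable group $Q$ and the subgroup $C_Q(a)$, gives $[Q:C_Q(a)]<\infty$, and we are done. Otherwise there is $q\in Q$ — necessarily of infinite order — whose orbit $\langle q\rangle\cdot a$ is infinite; choosing $g\in G$ above $q$ and putting $H=\langle a,g\rangle$, the subgroup $H\cap A$ is abelian and normal in $H$ while $H/(H\cap A)\cong\langle q\rangle$ is infinite cyclic, so $H$ is finitely generated metabelian. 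It then suffices to check that $H$ is not virtually abelian, which contradicts restrictedness: if $H\cap A$ is not finitely generated this is immediate since a finitely generated virtually abelian group is Noetherian; and if $H\cap A$ is finitely generated, a finite-index abelian normal subgroup of $H$ would meet $H\cap A$ in a finite-index subgroup centralized by some $g^k$, forcing $q^k$ to act trivially on that subgroup and hence (as $H\cap A\leq A$ is torsion-free) on all of $H\cap A$, so $q^k a=a$, contradicting that $\langle q\rangle\cdot a$ is infinite. Thus the first case always occurs and $a\in\FC(G)$.

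\emph{Stage 3: reducing part (ii).} Assume now $A$ is finitely generated, so by Stage 1 it is free abelian of finite rank and, by Stage 2, $A\leq\FC(G)$; hence $C_G(A)=\bigcap_i C_G(a_i)$ (over a finite generating set $(a_i)$ of $A$) has finite index in $G$, and it is normal in $G$ because $A$ is. Set $G_0=C_G(A)$, so that $A\leq Z(G_0)$; since $Z(G_0)$ is characteristic in $G_0\trianglelefteq G$ it is an abelian normal subgroup of $G$, hence contained in $\Fit(G)=A$, so $Z(G_0)=A$. It is enough to prove $G_0=A$, since then $A$ is abelian of finite index in $G$. Let $Q_0=G_0/A$ and let $B\leq G_0$ be the preimage of $\Fit(Q_0)$. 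Because $A$ is central in $G_0$, every finitely generated subgroup of $B$ is a central extension of a finitely generated nilpotent group, hence nilpotent, so $B$ is locally nilpotent; moreover $B$ is normal in $G$. If $B$ is nilpotent, then $B\leq\Fit(G)=A$, so $\Fit(Q_0)=1$, and since $Q_0$ is solvable this forces $Q_0=1$, i.e. $G_0=A$. So part (ii) is reduced to showing that $B$ is nilpotent.

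\emph{The main obstacle.} Here I would exclude the two ways a locally nilpotent group can fail to be nilpotent. If $B$ contained finitely generated torsion-free subgroups of unbounded nilpotency class — in particular a non-abelian one — Lemma \ref{l-subgroup-H3} would again produce $H_3(\Z)\leq G$, contradicting restrictedness; so it suffices to prove that $B$ is torsion-free. The torsion subgroup $T$ of $B$ (a subgroup, as $B$ is locally nilpotent) is a locally finite normal subgroup of $G$ meeting the torsion-free group $A$ trivially, and the hard part will be to rule out $T\neq 1$: one has to extract from such a $T$, using that $G$ is finitely generated, a finitely generated metabelian subgroup of $G$ that is not virtually abelian — for instance a copy of $C_p\wr\Z$, or a finitely generated subgroup with infinite torsion subgroup — contradicting once more that $G$ is restricted. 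Identifying the right such subgroup inside a non-trivial locally finite normal subgroup, and thereby closing the argument, is the step I expect to require the most care.
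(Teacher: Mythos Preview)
Your Stages~1 and~2 are correct and essentially identical to the paper's argument for part~(i); your Stage~2 is in fact more carefully justified than the paper's, which simply asserts that $\langle a,g\rangle$ is not virtually abelian.

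For part~(ii), however, you have taken an unnecessarily complicated detour and left a genuine gap. After your Stage~2 you already know that $A=\Fit(G)$ is finitely generated and contained in $\FC(G)$, so $C_G(A)=\bigcap_i C_G(a_i)$ has finite index in $G$. At this point the paper simply invokes the classical fact that in any solvable group one has $C_G(\Fit(G))\leq \Fit(G)$ \cite[1.2.10]{Lennox-Rob}. This gives $C_G(A)\leq A$, hence $A$ has finite index in $G$, and the proof is finished in one line.

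Your route instead passes to $G_0=C_G(A)$, then to $Q_0=G_0/A$, then to the preimage $B$ of $\Fit(Q_0)$, and finally reduces to ruling out a nontrivial locally finite normal subgroup $T\leq B$ with $T\cap A=1$. You correctly identify this as the obstacle and leave it open. Note that $[T,A]\leq T\cap A=1$, so $T\leq C_G(\Fit(G))$; thus any direct attack on your obstacle is essentially equivalent to proving the inclusion $C_G(\Fit(G))\leq\Fit(G)$ that you are avoiding. There is no advantage to the detour: just cite the standard fact and conclude.
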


\begin{proof}
By Lemma \ref{l-subgroup-H3}, every nilpotent subgroup of $\Fit(G)$ is virtually abelian, and hence abelian by Lemma \ref{lem-nilp-virtab-tf}. Now if $N_1,N_2$ are two abelian normal subgroups of $G$, then $N_1N_2$ is nilpotent by Fitting's theorem, and hence abelian. So all abelian normal subgroups of $G$ commute, and hence $\Fit(G)$ is abelian.

Now let $a \in \Fit(G)$, and let $H$ be the centralizer of $a$ in $G$. If there is $g \in G$ such that $g^n \notin H$ for all $n \geq 1$, then $a$ and $g$ generate a non-virtually abelian subgroup of $G$, that is metabelian because $\Fit(G)$ is abelian. This is impossible by our assumption. So for all $g \in G$ there is $n \geq 1$ such that $g^n \in H$, and hence by Proposition \ref{prop-Lennox-isolator} the subgroup $H$ has finite index in $G$. So $a$ has a finite conjugacy class, and $\Fit(G)$ is indeed contained in the FC-center of $G$. 

Finally if $\Fit(G)$ is finitely generated, then it follows from the previous paragraph that $C_G(\Fit(G))$ is a finite index subgroup of $G$. But for $G$ is solvable, $C_G(\Fit(G))$ is always contained in $\Fit(G)$  \cite[1.2.10]{Lennox-Rob}. So $\Fit(G)$ has finite index in $G$, and $G$ is virtually abelian.
\end{proof}

\begin{prop} \label{prop-lift-fitting}
	Let $G$ be a finitely generated solvable group such that $\Fit(G)$ is torsion-free. If $G / \Fit(G)$ admits a non-foldable pair, then so does $G$.
\end{prop}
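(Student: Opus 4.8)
The plan is to argue by a case distinction according to whether $\Fit(G)$ is abelian, using the lifting mechanism of \S\ref{subsec-lift-exp} in one case and a direct Heisenberg-subgroup argument in the other.

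First, suppose $\Fit(G)$ is abelian. I would set $A=\Fit(G)$ and $Q=G/\Fit(G)$, so $G$ is an extension $1\to A\to G\to Q\to 1$ with $A$ abelian and torsion-free by hypothesis; since $G$ is solvable we have $C_G(\Fit(G))\le \Fit(G)$ (\cite[1.2.10]{Lennox-Rob}), and as $A$ is abelian this gives $C_G(A)=A$. Let $(\bar g_1,\bar g_2)$ be a non-foldable pair of $Q$. By Lemma \ref{lem-mapZk-inj} the map $\Z^2\to Q$, $(n_1,n_2)\mapsto \bar g_2^{n_2}\bar g_1^{n_1}$, is injective with image $\mathcal L'$ a non-foldable subset of $Q$. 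Pick lifts $g_1,g_2\in G$ and put $\mathcal L_G=\{g_2^{n_2}g_1^{n_1}:(n_1,n_2)\in\Z^2\}$; since the projection $G\to Q$ carries $\mathcal L_G$ bijectively onto $\mathcal L'$, the set $\mathcal L_G$ is a lift of $\mathcal L'$, hence is non-foldable in $G$ by Proposition \ref{prop-lift-nice-subsets}. Applying Lemma \ref{lem-mapZk-inj} again, this time to the pair $(g_1,g_2)$ in $G$ (the associated map $\Z^2\to G$ is injective because its composition with $G\to Q$ is, and its image is exactly $\mathcal L_G$), I conclude that $(g_1,g_2)$ is a non-foldable pair of $G$.

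Next, suppose $\Fit(G)$ is not abelian; here the hypothesis on the quotient is not used. Since any finitely generated subgroup of $\Fit(G)$ lies in a finite product of nilpotent normal subgroups of $G$, which is nilpotent by Fitting's theorem, $\Fit(G)$ is locally nilpotent. Choosing $x,y\in\Fit(G)$ with $[x,y]\ne 1$, the subgroup $H=\langle x,y\rangle$ is finitely generated, nilpotent, non-abelian and torsion-free, hence not virtually abelian by Lemma \ref{lem-nilp-virtab-tf}, hence contains a subgroup $\Delta\cong H_3(\Z)$ by Lemma \ref{l-subgroup-H3}; its center $Z(\Delta)=\langle c\rangle$ is infinite cyclic. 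The main point is that $\Delta$ is a non-foldable subset of $G$: given a faithful $G$-set $X$, if the kernel of the $\Delta$-action on $X$ were non-trivial it would meet $Z(\Delta)$ non-trivially (\cite[1.2.8 i)]{Lennox-Rob}), so some $c^m$ with $m\ge 1$ would act trivially on $X$; but $c$ has infinite order in $G$, so $c^m\ne 1$, contradicting faithfulness of $X$. Thus $X$ is a faithful $\Delta$-set, and since $\Delta\cong H_3(\Z)$ is Noetherian and not virtually subdirectly decomposable (Lemma \ref{l-H3-indecomp}), Corollary \ref{cor-noeth-group-loc-embed} shows $\Delta$ is non-folded in $X$. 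As $X$ was arbitrary, $\Delta$ — and hence any subset of it — is non-foldable in $G$; taking free generators $a,b$ of $\Delta$ and invoking Lemma \ref{lem-mapZk-inj} (the map $\Z^2\to\Delta\subset G$, $(n_1,n_2)\mapsto b^{n_2}a^{n_1}$, is injective with image non-folded in every faithful $G$-set) yields a non-foldable pair $(a,b)$ of $G$.

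I expect the only delicate step to be the transfer in the second case: passing from the intrinsic non-foldability of $H_3(\Z)$, which comes from the Noetherian theory of Section \ref{s-noetherian}, to non-foldability inside the possibly non-Noetherian group $G$. This hinges on the elementary observation that a faithful $G$-action automatically restricts to a faithful action of the Heisenberg subgroup, because any nontrivial normal subgroup of $H_3(\Z)$ contains a nontrivial (infinite-order) central element. Everything else is a routine combination of Lemma \ref{lem-mapZk-inj}, Proposition \ref{prop-lift-nice-subsets}, and standard structure theory of finitely generated solvable groups.
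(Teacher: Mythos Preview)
Your proof is correct. The case distinction you use (whether $\Fit(G)$ is abelian) differs from the paper's (whether $G$ is \emph{restricted}, i.e.\ all finitely generated metabelian subgroups are virtually abelian). In the overlap --- $\Fit(G)$ abelian --- both arguments are identical, applying Proposition~\ref{prop-lift-nice-subsets} to $A=\Fit(G)$ via $C_G(\Fit(G))\le\Fit(G)$. In the complementary case the paper simply appeals to Theorem~\ref{thm-metab-notorsion} (through Corollary~\ref{c-metabelian-subgroup}), while you instead locate a copy of $H_3(\Z)$ inside the torsion-free locally nilpotent group $\Fit(G)$ and use Corollary~\ref{cor-noeth-group-loc-embed}. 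Your route is more self-contained and has the virtue that the Heisenberg subgroup lives in $\Fit(G)$, so its torsion-freeness comes directly from the stated hypothesis; the paper's appeal to Theorem~\ref{thm-metab-notorsion} tacitly needs the metabelian subgroup to be torsion-free, which is only clear when $G$ itself is torsion-free.

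One remark: the step you flag as ``delicate'' --- showing a faithful $G$-set restricts to a faithful $\Delta$-set --- is actually immediate, with no need for the center-meets-normal-subgroup argument. If $h\in\Delta\subset G$ acts trivially on $X$ then $h=1$ simply because $X$ is a faithful $G$-set. Your argument via $Z(\Delta)$ is correct but redundant. Likewise the phrase ``free generators'' for $a,b\in\Delta$ is a slight misnomer; what matters, and what you verify, is that $(n_1,n_2)\mapsto b^{n_2}a^{n_1}$ is injective.
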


\begin{proof}
	Note that the assumption that $G / \Fit(G)$ admits a non-foldable pair implies that $G$ is not virtually abelian. If $G$ is not restricted, then we can invoke Theorem \ref{thm-metab-notorsion}. If $G$ is restricted, then $A = \Fit(G)$ is abelian by Proposition \ref{prop-no-metab-sbgp-restriction}, and we have $A = C_G(A)$ \cite[1.2.10]{Lennox-Rob}. Hence we may apply Proposition \ref{prop-lift-nice-subsets}, which implies that any pair $(g_1,g_2)$ that is a lift of a non-foldable pair $(q_1,q_2)$ of $Q$, is a non-foldable pair of $G$.
\end{proof}

\begin{proof}[Proof of Theorem \ref{thm-fitting-torsionfree}]
Recall that when $G$ is not restricted, the conclusion follows from Theorem \ref{thm-metab-notorsion}. We argue by induction on the Fitting length $\ell$. Since $G$ is finitely generated, the case $\ell=1$ corresponds to the case where $G$ is nilpotent. Since $G$ is not virtually abelian, Proposition \ref{prop-no-metab-sbgp-restriction} says that in that case $G$ cannot be restricted, and hence the statement is true. Suppose now that $G$ has length $\ell+1$, $\ell \geq 1$.  According to Proposition \ref{prop-no-metab-sbgp-restriction}, we may assume that $\Fit(G)$ is abelian. If $Q = G/\Fit(G)$ is virtually abelian, then $G$ is virtually metabelian, and again the conclusion holds. Now if $Q$ is not virtually abelian, then $Q$ satisfies all the assumptions of the theorem. Hence in that case we can apply the induction hypothesis to $Q$, and the conclusion then follows from  Proposition  \ref{prop-lift-fitting}. 
\end{proof}

\subsection{Quasi-transitive actions}

Here we prove Conjecture \ref{conj-torsion-free} in the special case of transitive actions. Actually the same argument extends without much effort to quasi-transitive actions. In the sequel we call a $G$-set $X$ \textbf{quasi-transitive} if the action of $G$ on $X$ has finitely many orbits. 

 \begin{thm} \label{t-torsion-free-quasitransitive}
	Let $G$ be a finitely generated torsion-free solvable groups, not virtually abelian. Let $X$ be a faithful quasi-transitive $G$-set. Then $\vol_{G, X}(n) \succcurlyeq n^2$. 
\end{thm}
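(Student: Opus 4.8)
The plan is to reduce to a structured situation via the notion of \emph{restricted} group and the Fitting subgroup, and then to combine two lower bounds for $\vol_{G,X}(n)$: one coming from $A:=\Fit(G)$ and one from the quotient $Q:=G/A$.

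First I would set up the reductions. By Corollary \ref{c-metabelian-subgroup} we may assume $G$ is restricted (every finitely generated metabelian subgroup is virtually abelian); otherwise $G$ already contains a non-foldable pair and $\vol_{G,X}(n)\succcurlyeq n^2$ for every faithful $G$-set. Under this assumption Proposition \ref{prop-no-metab-sbgp-restriction} gives that $A$ is abelian and contained in $\FC(G)$, and since $G$ is not virtually abelian $A$ is not finitely generated. Two further structural facts will be recorded. (a) Since $C_G(A)\subseteq\Fit(G)=A$ for solvable groups, $Q$ acts faithfully on $A$, and since $A\subseteq\FC(G)$ all orbits of this action are finite; consequently $A$ has \emph{infinite torsion-free rank} (if it had finite rank $k$, then $Q$ would embed as a finitely generated subgroup of $\GL(k,\Q)$ all of whose elements have finite order, hence be finite by Schur–Burnside, making $G$ virtually abelian). (b) Likewise $Q$ is infinite and \emph{not virtually abelian}: a finite index subgroup of $G$ containing $A$ and mapping onto a finite index abelian subgroup of $Q$ would be finitely generated and metabelian, hence — $G$ being restricted — virtually abelian, again forcing $G$ virtually abelian. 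In particular $\vol_G(n)\succcurlyeq n^2$ and $\vol_Q(n)\succcurlyeq n^2$ by the growth dichotomy for finitely generated solvable groups.

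Next I would reduce to transitive actions and set up the two bounds. Writing $X=\bigsqcup_{i=1}^r G/H_i$ with $\bigcap_i\operatorname{core}_G(H_i)=\{1\}$, we have $\mathcal S(X)=\bigcup_i\overline{\{gH_ig^{-1}\}}$; if $\{1\}\in\mathcal S(X)$ then $\vol_{G,X}(n)\succcurlyeq\vol_G(n)\succcurlyeq n^2$ by Lemma \ref{lem-growth-closure}, and we are done, so we may assume every $H_i$ is confined, and it suffices to find one index $i$ with $\vol_{G,G/H_i}(n)\succcurlyeq n^2$. Fix $i$ and put $B_i:=A\cap H_i$; then $H_i$ and $A$ both normalise $B_i$, so $N_G(B_i)\supseteq AH_i$, and $\operatorname{core}_G(B_i)=\operatorname{core}_G(H_i)\cap A$, so $\bigcap_i\operatorname{core}_G(B_i)=\{1\}$. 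The $G$-equivariant surjection $G/H_i\to Q/\bar H_i$ (with $\bar H_i=AH_i/A$) yields $\vol_{G,G/H_i}(n)\succcurlyeq\vol_{Q,Q/\bar H_i}(n)$, while the $A$-orbit of $eH_i$ in $\Gamma(G,G/H_i)$ contributes $\vol_{G,G/H_i}(n)\succcurlyeq\relvol_{(G,\mathcal L_i)}(n)$, where $\mathcal L_i$ is a minimal-length lift of $A/B_i$; a two–independent–directions computation in the abelian group $A/B_i$ shows that if its torsion-free rank is $\ge 2$ then this last bound is $\succcurlyeq n^2$, and we are done. It therefore remains to exclude the case that for \emph{every} $i$ the rank of $A/B_i$ is $\le 1$ and $\vol_{Q,Q/\bar H_i}(n)$ is subquadratic. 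Writing $A$ as the directed union of its finitely generated $G$-invariant subgroups $A_\alpha$ (possible because $A\subseteq\FC(G)$), one checks $N_G(B_i\cap A_\alpha)\supseteq N_G(B_i)$ and $\operatorname{core}_G(B_i\cap A_\alpha)=\operatorname{core}_G(B_i)\cap A_\alpha$, so $A_\alpha$ embeds in $\prod_{i=1}^r A_\alpha/\operatorname{core}_G(B_i\cap A_\alpha)$, a product of groups of rank at most $[G:N_G(B_i)]$; thus $\operatorname{rank}(A_\alpha)\le\sum_i[G:N_G(B_i)]$ for all $\alpha$, and since $A$ has infinite rank some $[G:N_G(B_i)]$, hence $[Q:\bar H_i]$, is infinite, while still $\vol_{Q,Q/\bar H_i}(n)$ is subquadratic.

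The plan is then to iterate this whole machine on $Q$, which has strictly smaller Fitting length: in the base case $Q$ is finitely generated virtually nilpotent and not virtually abelian, and Proposition \ref{prop-n4-growth} together with Corollary \ref{cor-polynomial-growth} shows that any transitive $Q$-set of subquadratic growth must in fact make $Q$ virtually abelian, a contradiction. \textbf{The main obstacle is precisely making this induction rigorous}: $Q$ is in general not torsion-free, need not be restricted, and the action $Q\acts Q/\bar H_i$ need not be faithful, so one cannot simply re-invoke the statement of the theorem for $Q$. The resolution I would pursue is to carry along the confinement data through the induction — namely, to show that $\bar H_i$ is itself confined in $Q$ (which follows from the confinedness of $H_i$ in $G$ after replacing the confining subset of $G$ by one avoiding $A$, using that $B_i$ has trivial $G$-core and that conjugates of $B_i$ span only finitely many lines of $A\otimes\Q$ because $A\subseteq\FC(G)$) — and to re-apply Corollary \ref{c-metabelian-subgroup} at each stage so as to stay in the restricted case; then the compactness reduction, the infinite-rank/$\FC$ argument, and the base–fiber estimate above can be repeated until the Fitting length drops to $1$.
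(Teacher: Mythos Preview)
Your reductions up through the restricted case and the structural facts about $A=\Fit(G)$ are fine and match the paper. The two-direction estimate (if $\operatorname{rank}_\Q(A/B_i)\ge 2$ then $\vol_{G,G/H_i}\succcurlyeq n^2$, and otherwise pass to $Q/\bar H_i$) is also correct as far as it goes. The genuine gap is exactly where you flag it: the induction on Fitting length. After your rank argument you only obtain that \emph{some} $[Q:\bar H_i]$ is infinite; the action of $Q$ on that single coset space is typically unfaithful, $Q$ need not be torsion-free, and there is no reason $Q$ should remain restricted. Your proposed fix (``carry the confinement data'') is not a proof: confinedness of $\bar H_i$ in $Q$ does not follow from confinedness of $H_i$ in $G$ without precisely the kind of argument that already requires controlling the intersection $H_i\cap A$, and even granting confinedness you would still need a version of the theorem for $Q$ that you do not have. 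The base case you invoke (virtually nilpotent $Q$) is also not available, since in the restricted situation $Q=G/\Fit(G)$ need not be virtually nilpotent.

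The paper avoids induction on Fitting length altogether. Assuming $\vol_{G,X}(n)\nsucccurlyeq n^2$, it proves a dichotomy for each point $x$ (Proposition~\ref{prop-subquadratic-robinson}): either $A\cap G_x$ is \emph{absorbing} in $A$ (every element of $A$ has a power in $G_x$), or $p_Q(G_x)$ has finite index in $Q$. The second alternative is then upgraded on each orbit, via compactness in $\sub(G)$ and Proposition~\ref{p-Neumann-abelien}, to the statement that for a fixed finite-index $L\le Q$ and each $q\in L$ some $r(q-1)A$ lies in every stabiliser (Proposition~\ref{prop-restricted-transitive}). The first alternative, combined with $A\subseteq\FC(G)$ and Lemma~\ref{lem-chab-closure-FC-central}, yields a normal subgroup $N\le A$ with $A/N$ torsion acting trivially on that orbit. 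Finally, intersecting the normal subgroups produced on the finitely many orbits and using that $A$ is torsion-free gives a nontrivial normal subgroup acting trivially on all of $X$, contradicting faithfulness. The point is that everything happens at the level of $G$ and $A$; no recursion into $Q$ is needed.
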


The proof requires some additional ingredients. 

\begin{defin}
A subgroup $H$ of a group $G$ is \textbf{absorbing} if for every $g \in G$, there is $n \geq 1$ such that $g^n \in H$.
\end{defin}

In the sequel, given  $f, g\colon \N\to \N$ we write $f(n)\nsucccurlyeq g(n)$ for the negation of $f(n) \succcurlyeq  g(n)$. Thus $f(n)\nsucccurlyeq g(n)$ if and only if  $\liminf_{n\to \infty} \frac{f(n)}{g(n)} =0$.

\begin{prop} \label{prop-subquadratic-robinson}
Let $G$ be a finitely generated group, and $N$ a normal subgroup of $G$ such that $Q = G/N$ is solvable, and denote $p_Q$ the projection of $G$ to $Q$. Let $X$ a $G$-set such that $\vol_{G, X}(n)\nsucccurlyeq n^2$. Then for every $x \in X$, at least one of the following hold:
\begin{enumerate}[label=\roman*)]
	\item $N \cap G_x$ is absorbing in $N$.
	\item $p_Q(G_x)$ has finite index in $Q$;

\end{enumerate}
\end{prop}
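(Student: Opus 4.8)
\textbf{Proof plan for Proposition \ref{prop-subquadratic-robinson}.}

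The plan is to argue by contradiction: fix $x \in X$ and suppose that neither alternative holds, i.e.\ $H := N \cap G_x$ is not absorbing in $N$, and $p_Q(G_x)$ has infinite index in $Q$. From the first failure we obtain an element $a \in N$ such that $a^k \notin H$ for every $k \geq 1$; equivalently the cyclic subgroup $\langle a \rangle$ meets $G_x$ trivially, so that the orbital map $n \mapsto a^n x$ is injective on $\Z$. The point of this step is to produce one ``direction'' in the Schreier graph along which the orbit of $x$ grows linearly and which lives inside $N$.

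Next I would exploit the second failure to produce a second, transverse direction coming from $Q$. Since $Q$ is solvable and $p_Q(G_x)$ has infinite index, I would like to find $t \in G$ whose image $q = p_Q(t)$ generates a cyclic subgroup of $Q$ that is ``independent'' of $p_Q(G_x)$ in a strong enough sense — for instance, so that the powers $q^j$ all lie outside $p_Q(G_x)$, after possibly replacing $q$ by a suitable power. Here solvability of $Q$ (hence of any quotient, and the availability of large abelian sections) should let me invoke a Lennox-type isolator argument as in Proposition \ref{prop-Lennox-isolator}/\ref{prop-no-metab-sbgp-restriction}: if every power of every element of $Q$ eventually landed in $p_Q(G_x)$, then $p_Q(G_x)$ would be absorbing in $Q$, hence of finite index by Proposition \ref{prop-Lennox-isolator}, contradicting our assumption. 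So there is $t \in G$ with $t^j \notin G_x N$ (equivalently $p_Q(t)^j \notin p_Q(G_x)$) for all $j \geq 1$, which makes the orbital map $j \mapsto t^j x$ injective on $\Z$ and, crucially, puts the points $t^j x$ in pairwise distinct $N$-orbits of $x$ — in particular distinct from all the points $a^n x$ except $x$ itself.

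The main step is then to combine these two directions into a genuinely two-dimensional family of points in the orbit of $x$, yielding $\vol_{G,X}(n) \succcurlyeq n^2$ and the desired contradiction. The natural candidate is the family $\{\, t^j a^n x \,:\, |j|, |n| \leq m \,\}$, which sits in the ball of radius $C m$ around $x$ for a constant $C$ depending on the chosen generators. To see these points are pairwise distinct I would argue as follows: applying $p_Q$, the point $t^j a^n x$ lies in the coset-orbit determined by $p_Q(t)^j p_Q(G_x)$, so two such points with different values of $j$ (in the range $[-m,m]$, where all differences $p_Q(t)^{j-j'}$ avoid $p_Q(G_x)$) lie in different $N$-orbits and are distinct; and for fixed $j$, $t^j a^n x = t^j a^{n'} x$ forces $a^{n-n'} \in G_x \cap N = H$, whence $n = n'$ by the choice of $a$. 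This gives $(2m+1)^2$ distinct points in $B_x(Cm)$, so $\vol_{G,X}(n) \succcurlyeq n^2$, contradicting the hypothesis.

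I expect the genuine obstacle to be the second step — extracting the transverse direction $t$ with the property that \emph{no} positive power of $p_Q(t)$ lies in $p_Q(G_x)$, rather than merely $p_Q(t)$ itself. The naive choice of $t$ with $p_Q(t) \notin p_Q(G_x)$ is not enough, since a power could fall back in, which would destroy the count above (the points $t^j a^n x$ could start coinciding across different $j$). The fix is precisely the absorbing/finite-index dichotomy via Proposition \ref{prop-Lennox-isolator}: the subgroup of $Q$ consisting of elements some power of which lies in $p_Q(G_x)$ is, one checks, a subgroup (using commutativity on a suitable abelian section of the solvable group $Q$, or an induction on the derived length), and if it equalled $Q$ it would make $p_Q(G_x)$ absorbing, hence finite index — contrary to assumption. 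A further technical point to handle with care is that $p_Q(G_x)$ need not be normal, so one should phrase the transversality in terms of double cosets $p_Q(G_x)\backslash Q$ or simply in terms of the orbit map to $Q/p_Q(G_x)$; but this does not change the structure of the argument.
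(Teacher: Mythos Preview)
Your proof is correct and follows essentially the same approach as the paper, just with the logic reversed: the paper assumes (i) fails and shows directly that for every $q\in Q$ some power of $q$ lies in $p_Q(G_x)$ (using the subquadratic growth to force a collision in the family $\{g_2^i g_1^j x\}$), then invokes Proposition \ref{prop-Lennox-isolator}; you take the contrapositive of Proposition \ref{prop-Lennox-isolator} first to obtain $t$, and then build the $(2m+1)^2$ distinct points. The ingredients and the key computation are identical.

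One remark: your parenthetical claim that the set $\{q\in Q: q^n\in p_Q(G_x)\text{ for some }n\ge 1\}$ is a subgroup is not needed anywhere in your argument (and is not obvious for general solvable $Q$). You already have what you want from the bare contrapositive of Proposition \ref{prop-Lennox-isolator}: if $p_Q(G_x)$ had a power of every element of $Q$, it would be absorbing, hence of finite index. So you can simply drop that sentence.
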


\begin{proof}
Fix $x \in X$, and suppose that $N \cap G_x$ is not absorbing in $N$. By definition this means that there exists $g_1 \in N$ such that $g_1^n \notin G_x$ for every non-zero integer $n$. We want to show that $p_Q(G_x)$ has finite index in $Q$. According to Proposition \ref{prop-Lennox-isolator}, it suffices to prove that $p_Q(G_x)$ is absorbing in $Q$ since $Q$ is a finitely generated solvable group. So we shall prove that for every $q \in Q$, there are $n \geq 1$ and $g \in G_x$ such that $p_Q(g) = q^n$. Clearly it suffices to treat the case where $q$ has infinite order. Let $g_2 \in G$ such that $p_Q(g_2)=q$. The subset $\mathcal{L} = \left\lbrace g_2^i g_1^j \, : \, i,j \in \Z \right\rbrace$ satisfies $\relvol_{(G, \mathcal{L})}(n) \succcurlyeq n^2$ since $g_1 \in N$ and $p_Q(g_2)=q$ has infinite order in $Q$. Since $X$ satisfies $\vol_{G, X}(n)\nsucccurlyeq n^2$ by assumption, by Lemma \ref{lem-exp-subset-growth} it cannot be the case that $\mathcal{L}$ is non-folded in $X$. Hence there exist $(i,j) \neq (k,\ell)$ such that $G_x$ contains $g = (g_2^i g_1^j)^{-1} g_2^k g_1^\ell = g_1^{-j} g_2^{-i} g_2^k g_1^\ell$. If $i = k$, then necessarily $j \neq \ell$, and hence $G_x$ contains a non-trivial power of $g_1$, which is impossible. So $n = i - k$ is non-zero. Since $g \in G_x$ and $p_Q(g) = p_Q(g_2^n)$, we have obtained the desired conclusion. 
\end{proof}

\begin{lem} \label{lem-chab-closure-FC-central}
Let $G$ be a group, and $N$ be a normal subgroup of $G$ contained in the FC-center of $G$. Let $H,K \in \sub(G)$ such that there is a sequence of conjugates of $K$ that converges to $H$ in $\sub(G)$. If $N \leq H$, then $N \leq K$. 
\end{lem}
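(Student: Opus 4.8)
The plan is to exploit the fact that a finite conjugacy class is a \emph{conjugation-invariant finite subset} of $G$, and that such subsets behave especially well with respect to the topology on $\sub(G)$. Fix $x\in N$; the goal is to show $x\in K$. Let $C$ denote the $G$-conjugacy class of $x$. Since $N$ is normal and contains $x$, we have $C\subseteq N$; and since $N$ lies in the FC-center of $G$, the set $C$ is finite. Note that, being a conjugacy class, $C$ satisfies $g^{-1}Cg=C$ for every $g\in G$.

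Now write $H=\lim_n g_nKg_n^{-1}$ in $\sub(G)$. Since $C\subseteq N\leq H$, every element of $C$ belongs to $H$, hence (convergence in $\sub(G)\subseteq\{0,1\}^G$ being pointwise) for each $c\in C$ one has $c\in g_nKg_n^{-1}$ for all large $n$. As $C$ is finite, there is $n_0$ with $C\subseteq g_nKg_n^{-1}$ for all $n\geq n_0$. Conjugating by $g_n^{-1}$ and using $g_n^{-1}Cg_n=C$, we obtain $C=g_n^{-1}Cg_n\subseteq K$. In particular $x\in C\subseteq K$, which is what we wanted.

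I do not expect any genuine obstacle: the only subtlety is that conjugating a \emph{single} element $x\in H$ would only show that \emph{some} $G$-conjugate of $x$ lies in $K$, which is not enough; passing to the whole (finite) conjugacy class $C$ and using its conjugation-invariance is exactly what makes the argument work, and finiteness of $C$ is what lets us upgrade pointwise convergence to the uniform inclusion $C\subseteq g_nKg_n^{-1}$ for all large $n$.
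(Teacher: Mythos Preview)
Your proof is correct and essentially identical to the paper's own argument: both use that the finite conjugacy class $C=x^G$ lies in $H$, so the open condition ``contains $C$'' is satisfied by some conjugate $g_nKg_n^{-1}$, and conjugation-invariance of $C$ then gives $C\subseteq K$. The paper phrases the openness step as ``the set of subgroups containing $g^G$ is an open neighbourhood of $H$,'' while you spell out the pointwise-convergence-plus-finiteness version, but the content is the same.
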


\begin{proof}
Let $g \in N$, and let $g^G$ be the conjugacy class of $g$. Since  $N$ is normal and $N \leq H$, $g^G$ is contained in $H$. Since $g^G$ is finite, it follows that the set of subgroups of $G$ containing $g^G$ forms an open neighbourhood of $H$ in $\sub(G)$. Hence it follows from the assumption that there is a conjugate of $K$ that contains $g^G$, which is equivalent to saying that $K$ contains $g^G$. Since $g$ was arbitrary, we have  $N \leq K$. 
\end{proof}

\begin{prop} \label{prop-restricted-transitive}
	Suppose $G$ is a finitely generated solvable group, and $A$ is an abelian normal subgroup of $G$ contained in the FC-center of $G$.  Let $Q = G/A$. Let $X$ be a transitive $G$-set such that $\vol_{G, X}(n)\nsucccurlyeq n^2$. Then at least one of the following holds:\begin{enumerate}[label=\roman*)]
		\item there exists a normal subgroup $N$ of $G$ such that $N \leq A$, $A/N$ is a torsion group, and $N$ acts trivially on $X$;
		\item there exists a finite index subgroup $L$ of $Q$ such that for all $q \in L$, there is $r \geq 1$ such that $r (q-1) A \leq G_x$ for all $x \in X$.
	\end{enumerate}
\end{prop}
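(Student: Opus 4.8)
The plan is to extract a dichotomy from Proposition~\ref{prop-subquadratic-robinson} and then, in the hard branch, to combine a non-folding argument with Proposition~\ref{p-Neumann-abelien}. Set $K := \ker(G \acts X)$ and $N := A \cap K$; this is a normal subgroup of $G$ contained in $A$, acting trivially on $X$, and (as $A$ is abelian normal) a $\Z Q$-submodule of $A$. If $A/N$ is a torsion group then alternative (i) holds, and if $Q$ is finite then alternative (ii) holds with $L=\{1\}$; so assume $A/N$ is not torsion and $Q$ is infinite, and fix $x_0 \in X$, $H := G_{x_0}$. Apply Proposition~\ref{prop-subquadratic-robinson} with the normal subgroup $A$ (valid since $Q=G/A$ is solvable): for each $x$, either $A \cap G_x$ is absorbing in $A$ or $p_Q(G_x)$ has finite index in $Q$. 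Both properties are invariant along the (single) $G$-orbit $X$, since conjugation is an automorphism of $A$ preserving absorbing subgroups and sending finite-index subgroups of $Q$ to finite-index subgroups. If $A \cap G_x$ were absorbing for all $x$, then for each $a \in A$ one chooses $m$ with $(a^m)^G = \{b^m : b \in a^G\} \subseteq A\cap H$ (using that $a^G$ is finite, as $A \subseteq \FC(G)$), so $a^m$ lies in every conjugate of $H$, i.e.\ $a^m \in \bigcap_g gHg^{-1} = K$ by transitivity, hence $a^m \in N$; this makes $A/N$ torsion, a contradiction. So $p_Q(G_x)$ has finite index in $Q$ for all $x$; put $L_0 := p_Q(H)$. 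The same orbit-invariance shows $A \cap H$ is not absorbing, so we fix $a_1 \in A$ with $a_1^n \notin H$ for all $n \neq 0$ (in particular $a_1$ has infinite order).

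Now fix $q \in L_0$ of infinite order (it exists as $L_0$ is infinite) and a lift $h \in H$ of $q$. The map $\Z^2 \to G$, $(i,j) \mapsto h^i a_1^j$, is injective (because $q$ and $a_1$ have infinite order and $a_1 \in A$), so its image $\mathcal{L}$ satisfies $\relvol_{(G,\mathcal{L})}(n) \succcurlyeq n^2$; since $\vol_{G,X}(n) \nsucccurlyeq n^2$, Lemma~\ref{lem-exp-subset-growth} forbids $\mathcal{L}$ from being non-folded in $X$, yielding $s \geq 1$ such that $\Sigma := \{h^i a_1^j : |i|,|j|\leq s\}$ is folded at every point of $X$, i.e.\ for each $x$ the orbital map is non-injective on $\Sigma$. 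Let $P$ be the finite set of non-trivial differences $(h^i a_1^j)^{-1}(h^{i'}a_1^{j'})$ arising from $\Sigma$; then $P$ is confining for the action, so every $G_x$, in particular $H$, lies in $S_G(P,A)$. Proposition~\ref{p-Neumann-abelien} (with $M=A$) then gives $q_0 \in \pi_Q(P)$ with $\{a \in A : (1-q_0)a \in H\}$ of index $\leq r := |P|$ in $A$, hence $r!\,(1-q_0)A \subseteq H$. A short check shows $q_0 \neq 1$: the element of $P$ realizing Neumann's lemma in the proof of Proposition~\ref{p-Neumann-abelien} cannot have trivial $Q$-image, since those differences are non-zero powers of $a_1$ and their witness sets in $A$ are empty (no non-zero power of $a_1$ lies in $H$). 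As $\pi_Q(P) \subseteq \{q^m : |m|\leq 2s\}$, we conclude $r!\,(q^k-1)A \subseteq H$ for some integer $k = k(q)$ with $0 < |k| \leq 2s$.

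To globalize, observe that $L := \{q \in Q : \exists\, r\geq 1,\ r(q-1)A \subseteq K\}$ is a subgroup of $Q$: this follows from $q_1q_2-1 = q_1(q_2-1)+(q_1-1)$ and $q^{-1}-1 = -q^{-1}(q-1)$ together with the fact that $N = A\cap K$ is a $\Z Q$-submodule (note $r(q-1)A \subseteq K$ forces $r(q-1)A \subseteq N$). Alternative (ii) is precisely the statement $[Q:L] < \infty$. If $Q$ is abelian this is now immediate from the previous paragraph: $D := r!\,(q^k-1)A$ is then a $\Z Q$-submodule, hence normal in $G$, so $D = gDg^{-1} \subseteq gHg^{-1}$ for all $g$ and thus $D \subseteq \bigcap_g gHg^{-1} = K$; hence $q^k \in L$ for every infinite-order $q \in L_0$, and since $L_0$ is finitely generated abelian this forces $[Q:L] < \infty$.

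The hard part will be the general case, where $Q$ need not be abelian — which is exactly the situation relevant for the intended application (e.g.\ $A = \Fit(G)$, $Q = G/\Fit(G)$). There $D$ is not $G$-invariant, so ``$\subseteq H$'' does not promote to ``$\subseteq K$'' for free, and the $q$-dependent exponents $k(q)$ resist being collected into a single finite-index subgroup. I expect this to be resolved by an induction on the derived length of $Q$, re-applying the proposition to a suitable subquotient of $G$ in which the obstructing layer of $Q$ is collapsed, possibly together with an averaging over the finitely many $G$-conjugacy classes occurring in $D$ (where the $\FC(G)$ hypothesis is used once more). Executing this last step is the substance of the proof.
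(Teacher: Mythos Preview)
Your handling of the first alternative (the absorbing case) is essentially correct and close in spirit to the paper's, though the paper phrases it via Lemma~\ref{lem-chab-closure-FC-central}. The genuine gap is in the second alternative, and your suggested fix (induction on the derived length of $Q$) is not how the proof goes and would be hard to carry out: the difficulty you identify---that $r!(q^k-1)A$ is contained in $H=G_{x_0}$ but not obviously in $K$, and that the exponent $k=k(q)$ is uncontrolled---is real, and neither problem is resolved by peeling off abelian layers of $Q$.

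The missing idea is to work in the closure $\mathcal{S}(X)=\overline{\{G_x:x\in X\}}\subset\sub(G)$ and exploit its compactness twice. First, Lemma~\ref{lem-growth-closure} propagates the bound $\vol_{G,G/H}(n)\nsucccurlyeq n^2$ to every $H\in\mathcal{S}(X)$, so Proposition~\ref{prop-subquadratic-robinson} applies to each such $H$; assuming alternative (i) fails, one gets that $p_Q(H)$ has finite index for every $H\in\mathcal{S}(X)$, and a simple open-cover argument then produces a \emph{uniform} finite-index subgroup $L\le Q$ with $L\le p_Q(H)$ for all $H\in\mathcal{S}(X)$. Second, fixing $q\in L$, for each $H\in\mathcal{S}(X)$ one picks $h_H\in H$ with $\pi_Q(h_H)=q$; the open sets $\{K:h_H\in K\}$ cover $\mathcal{S}(X)$, so by compactness there is a \emph{finite} set $P=\{h_1,\dots,h_r\}$ with $\pi_Q(h_i)=q$ for all $i$ that meets every $H\in\mathcal{S}(X)$. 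Since $\mathcal{S}(X)$ is $G$-invariant, $P$ is confining for $(H,G)$, hence for $(H,A)$. Now Proposition~\ref{p-Neumann-abelien} applies with $\pi_Q(P)=\{q\}$ a singleton, so there is no uncontrolled exponent: one gets directly $[A:M_{q,H}]\le r$, whence $r!(q-1)A\le H$ for every $H\in\mathcal{S}(X)$, in particular for every $G_x$. This is exactly alternative (ii), with no hypothesis on $Q$ beyond solvability. Your construction of the confining set via a non-folding argument is what forces $\pi_Q(P)$ to spread over $\{q^m:|m|\le 2s\}$ and creates the exponent problem; building $P$ by compactness from lifts of a \emph{fixed} $q$ avoids it entirely.
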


\begin{proof}
Note that since $A$ is abelian, a subgroup $B$ of $A$ is absorbing if and only if $A/B$ is a torsion group. Let \[ \mathcal{S}(X) = \overline{ \left\lbrace G_x  \, : \, x \in X \right\rbrace } \subseteq \sub(G). \] By Lemma \ref{lem-growth-closure}, we have that $\vol_{G, G/H}(n)\nsucccurlyeq n^2$ for every $H \in \mathcal{S}(X) $. Hence we are in position to apply Proposition  \ref{prop-subquadratic-robinson} to the action of $G$ on $G/H$, with $N = A$. Suppose that there exists $H \in \mathcal{S}(X) $ such that the first alternative of Proposition  \ref{prop-subquadratic-robinson} holds, that is $A \cap H$ is absorbing in $A$. Then for every finitely generated subgroup $M$ of $A$ that is normal in $G$, there exists a finite index subgroup $M'$ of $M$ that is normal in $G$ and such that $M' \leq H$. Now since $A$ is contained in the FC-center of $G$, the finitely generated subgroups $M$ of $A$ that are normal in $G$ exhaust $A$. Hence it follows that there is a normal subgroup $N$ of $G$ such that $N$ is absorbing in $A$ and $N \leq H$. Fix $x \in X$. Since the $G$-action on $X$ is transitive, $\mathcal{S}(X)$ is equal to the closure of the $G$-orbit of $G_x$ in $\sub(G)$. Since $N$ is contained in the FC-center of $G$, $H \in \mathcal{S}(X)$ and $N \leq H$, Lemma \ref{lem-chab-closure-FC-central} says that $N$ must be contained in $G_x$. It follows that $N$ acts trivially on $X$, and hence the first conclusion holds in this case. 

Hence in the sequel we assume that for every $H \in \mathcal{S}(X) $, the second alternative of Proposition  \ref{prop-subquadratic-robinson} holds, that is $p_Q(H)$ has finite index in $Q$, where $Q = G/A$. Observe that for $H \in \mathcal{S}(X) $, since  $p_Q(H)$ is finitely generated, there exists an open neighbourhood $\mathcal{U}$ of $H$ in $\mathcal{S}(X) $ such that $p_Q(K) \geq p_Q(H)$ for every $K \in \mathcal{U}$. Since $\mathcal{S}(X) $ is a compact space, $\mathcal{S}(X) $ can be covered by finitely many of these open sets, and hence that there is a finite index subgroup $L$ of $Q$ such that $p_Q(H)$ contains $L$ for every $H \in \mathcal{S}(X) $. Fix a non-trivial element $q \in L$. We have that for every $H\in \mathcal{S}(X)$ there exists $h \in H$ such that $p_Q(h) = q$. Since the condition of containing $h$ is an open condition, using compactness again we obtain a finite subset $P = \left\lbrace h_1,\ldots,h_r\right\rbrace $ such that $p_Q(h_i) = q$ for all $i$ and every $H \in \mathcal{S}(X) $ intersects $P$. So $P$ is confining for $(H,G)$ for all $H \in \mathcal{S}(X) $. So the conclusion follows from Proposition \ref{p-Neumann-abelien}. 
\end{proof}

\begin{proof}[Proof of Theorem \ref{t-torsion-free-quasitransitive}]
The case where the group is metabelian has already been treated in Theorem \ref{thm-metab-notorsion}, regardless of the number of orbits. This implies that if $G$ is not restricted, then the statement holds true. So in the sequel we assume that $G$ is restricted. By Proposition \ref{prop-no-metab-sbgp-restriction} the subgroup $A = \Fit(G)$ is abelian, and $A$ is contained in the FC-center of $G$. Let $Q = G/A$.
	
Suppose for a contradiction that $\vol_{G, X}(n)\nsucccurlyeq n^2$. Let $X_1,\ldots,X_m$ be the $G$-orbits in $X$. We have $\vol_{G, X_i}(n)\nsucccurlyeq n^2$ for all $i$ (Proposition \ref{prop-monoton}), so we can apply Proposition \ref{prop-restricted-transitive} to each $X_i$. Let $X_1,\ldots,X_\ell$ be the components that satisfy the first conclusion of the proposition.  So for all $i \in \left\lbrace 1,\ldots,\ell \right\rbrace $ there exists a normal subgroup $N_i$ of $G$ that is an absorbing subgroup of $A$ and such that $N_i$ acts trivially on $X_i$. For $j \in \left\lbrace \ell + 1,\ldots, m \right\rbrace $, there is a finite index subgroup $L_i$ of $Q$ such that for every $q \in L_i$, there is $r_j \geq 1$ such that $r_j(q-1) A$ acts trivially on $X_j$. Note that $M_1 := \bigcap N_i$ is also an absorbing subgroup of $A$, and $M$ acts trivially on $\cup_{i \leq \ell} X_i$. Moreover since $G$ is not virtually abelian, the group $Q$ is infinite, and hence $L := \bigcap L_j$ is non-trivial. Fix a non-trivial element $q$ of $L$. Then we can find $r \geq 1$ such that $r (q-1) A$ acts trivially on $\cup_{j > \ell} X_j$. Since $A$ is equal to its own centralizer in $G$ \cite[1.2.10]{Lennox-Rob} and $q$ is non-trivial, we have that $(q-1) A$ is non-trivial. Since $A$ is torsion-free, it follows that $r (q-1) A$  is also non-trivial. Hence if $M_2$ is the normal subgroup of $G$ generated by $r (q-1) A$, then $M_2$ is non-trivial and $M_2$ acts trivially on $\cup_{j > \ell} X_j$. The subgroup $M_1 \cap M_2$ is an absorbing subgroup of $M_2$, and hence is non-trivial. Since $M_1 \cap M_2$ acts trivially on $X$, we have reached the desired contradiction. 
\end{proof}

\subsection{Higman-type extensions}


In this paragraph we focus on the following specific class of groups:

\begin{defin}
We call a group $G$ a \textbf{Higman type extension} if there is a finitely generated free group $F_d$ and a normal subgroup $N$ of $F_d$ such that $G = F_d/N'$. 
\end{defin}



So $G$ is an extension of the group $Q=F_d/N$ by the abelian normal subgroup $N/N'$. Our choice of terminology comes from the article \cite{Higman-torsionfree}, where Higman showed that the group $G$ is always torsion-free. 



\begin{prop} \label{prop-F/N'}
Let $G = F_d/N'$ be a Higman type extension. Let $L$ be a  torsion-free abelian subgroup of group $Q=F_d/N$, and let $\mathcal{L}$ be a lift of $L$ to $G$. Then the following hold:
	\begin{enumerate} [label=\roman*)]
		\item $\mathcal{L}$ is a non-foldable subset of $G$;
		\item $\vol_{G, X}(n) \succcurlyeq f_{Q,L}(n)$ for every faithful $G$-set $X$.
	\end{enumerate}
\end{prop}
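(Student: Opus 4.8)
I want to reduce Proposition \ref{prop-F/N'} to the metabelian machinery of Section \ref{sec-metab}, specifically to Theorem \ref{thm-explicit-metab} (or its corollary \ref{cor-explicit-metab}), applied not to $G=F_d/N'$ itself but to a suitable metabelian subgroup. The key structural input is Higman's observation that $G$ is torsion-free, together with the fact that $G$ sits in an extension $1\to N/N'\to G\to Q\to 1$ with $Q=F_d/N$, where $M:=N/N'$ is an abelian normal subgroup, hence a $\Z Q$-module. The heart of the matter is that $M$ is, as a $\Z Q$-module, torsion-free, and in fact quite ``large'': for the free group $N$ of infinite rank (when $N$ has infinite index), $M=N/N'$ behaves like a free module, and in particular admits uniform submodules on which $Q$ acts with trivial centralizer in the right directions. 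I expect the cleanest route is: pick the torsion-free abelian subgroup $L\le Q$ of rank $d'=\operatorname{rk}(L)$, lift generators $q_1,\dots,q_{d'}$ of $L$ to elements $\gamma_1,\dots,\gamma_{d'}\in G$, and produce a single nonzero $m\in M$ whose $\Z L$-annihilator is a prime ideal $\p\subset \Z L$ with $\p\cap\Z=\{0\}$ and $C_L(\Z L/\p)=\{1\}$; then the subgroup $G_1=\langle m,\gamma_1,\dots,\gamma_{d'}\rangle$ is metabelian with $1\to N_1\to G_1\to L\to 1$, $N_1$ the (torsion-free, uniform) $\Z L$-module generated by $m$, and Corollary \ref{cor-explicit-metab} gives that $(m,\gamma_1,\dots,\gamma_{d'})$, hence the lift $\mathcal L$ of $L$ together with the cyclic group $\langle m\rangle$, is non-foldable in $G_1$ and a fortiori in $G$.

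**Carrying it out.** First I would set $M=N/N'$ and note $M$ is torsion-free as an abelian group: this is where Higman's theorem \cite{Higman-torsionfree} enters (or one argues directly that $N/N'$ embeds in a suitable completion). Second, restrict the module structure to $\Z L\subseteq \Z Q$. The crucial claim is that there exists $m\in M$, $m\ne 0$, such that the $\Z L$-annihilator $\ann_{\Z L}(m)$ is a prime ideal $\p$ with $\p\cap\Z=\{0\}$ and $L\cap(1+\p)=\{1\}$, i.e.\ no nontrivial $q\in L$ satisfies $qm=m$. To get this I would invoke the theory of associated primes: $M$ is a nonzero $\Z Q$-module, restrict to $\Z L$ (a polynomial ring $\Z[t_1^{\pm1},\dots,t_{d'}^{\pm1}]$ since $L\cong\Z^{d'}$, a Noetherian ring), and since $M$ is torsion-free as an abelian group, any associated prime of it over $\Z L$ meets $\Z$ trivially (an associated prime is $\ann(x)$, and if it contained a nonzero integer $k$ then $kx=0$, contradicting torsion-freeness). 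One must further ensure $L$ acts faithfully on $\Z L/\p$; if some nontrivial $q\in L$ fixed $m$, one replaces $L$ by the (still full-rank, since we may choose things carefully, or shrink and re-run) subgroup on which the action is faithful — but actually the freeness of $N$ forces faithfulness directly: in $F_d/N'$ conjugation of $N/N'$ by $Q$ reflects the $Q$-action on $H_1(N;\Z)$, and for a free group $N$ this action has no nontrivial kernel on appropriate orbits. I would isolate this as a small lemma: \emph{for some $m\in M\setminus\{0\}$, the $\Z L$-module $N_1=\Z L\cdot m$ satisfies $C_L(N_1)=\{1\}$ and $N_1$ is torsion-free and uniform.} Uniformity is immediate from Lemma \ref{lem-prime-SI} once $N_1\cong\Z L/\p$ with $\p$ prime. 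Then Corollary \ref{cor-explicit-metab}, applied to $G_1=N_1\rtimes L$ viewed inside $G$ (or to the subgroup generated by $m$ and lifts of generators of $L$, whose quotient by the $\Z L$-span of $m$ is $L$), shows $(m,\gamma_1,\dots,\gamma_{d'})$ is non-foldable in $G_1$, hence in $G$; by Lemma \ref{lem-mapZk-inj} this gives that $\mathcal L\cdot\langle m\rangle$ — and a fortiori $\mathcal L$ — is non-foldable in $G$, which is part (i). Part (ii) follows from Lemma \ref{lem-exp-subset-growth}: since $\mathcal L$ is a lift of $L$ it is mapped bijectively to $L$, so $\relvol_{(G,\mathcal L)}(n)\ge \relvol_{(G,L)}(n)\succcurlyeq f_{Q,L}(n)$ up to the usual constants (here $f_{Q,L}$ is the relative growth of $L$ in $Q$, and the lift is $1$-Lipschitz onto with fibers of size one), and non-foldedness of $\mathcal L$ in any faithful $X$ gives $\vol_{G,X}(n)\succcurlyeq\relvol_{(G,\mathcal L)}(n)\succcurlyeq f_{Q,L}(n)$.

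**The main obstacle.** The delicate point is producing the element $m\in M$ with all three properties simultaneously — prime annihilator over $\Z L$, trivial intersection of that prime with $\Z$ (torsion-freeness of $N_1$ as an abelian group), and trivial $L$-centralizer. Torsion-freeness of $M$ as an abelian group handles the second automatically via associated primes, but getting $C_L(N_1)=\{1\}$ requires using the \emph{freeness} of $N$ in an essential way: one needs that the $Q$-action on $H_1(N;\Z)=N/N'$, restricted to $L$, does not kill $L$ on the orbit of the chosen $m$. I expect to handle this by choosing $m$ to be (the class of) a basis element of the free abelian group $N/N'$ corresponding to a generator of $N$, and observing that the $L$-translates of this generator under the $Q=F_d/N$-action are ``independent'' in a way that makes the annihilator a prime ideal with the required faithfulness — this is essentially the computation underlying the Magnus embedding $F_d/N'\hookrightarrow (N/N')\wr Q$, so I would phrase the argument through that embedding, exactly as in the proof of Theorem \ref{thm-freemetab-growth}. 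Once the embedding is in hand, $m$ having free (rank-one) $\Z L$-span, hence $C_L=\{1\}$ and uniformity, is immediate, and the rest is a direct citation of Corollary \ref{cor-explicit-metab} and Lemma \ref{lem-exp-subset-growth}.
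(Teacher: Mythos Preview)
Your reduction to Corollary \ref{cor-explicit-metab} has a genuine gap: it implicitly assumes that $L$ is finitely generated. You write ``pick the torsion-free abelian subgroup $L\le Q$ of rank $d'=\operatorname{rk}(L)$, lift generators $q_1,\dots,q_{d'}$ of $L$'', and then plan to invoke associated primes over the Noetherian ring $\Z L$ and apply Corollary \ref{cor-explicit-metab} to the extension $1\to N_1\to G_1\to L\to 1$. But Proposition \ref{prop-F/N'} carries no finite-generation hypothesis on $L$, and in the intended application (Theorem \ref{thm-free-solvable}) one takes $L=Q^{(\ell-2)}$, the last nonzero derived term of a free solvable group, which is infinitely generated. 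For such $L$ the ring $\Z L$ is not Noetherian, associated primes need not exist, and Corollary \ref{cor-explicit-metab} (which requires $Q$ free abelian of finite rank) does not apply.

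Even when $L$ is finitely generated, producing the single element $m\in M=N/N'$ with the three properties you list is not as automatic as suggested. Faithfulness of $M$ as a $\Z Q$-module (Passi) gives $\ann_{\Z L}(M)=0$, but this does not imply the existence of an individual $m$ with $\ann_{\Z L}(m)=0$, nor even one whose associated prime $\p$ satisfies $L\cap(1+\p)=\{1\}$. The Magnus embedding you invoke only takes the specific form $F_d/N'\hookrightarrow \Z^d\wr Q$ in the free-metabelian case $N=F_d'$; for general Higman-type extensions the $\Z Q$-module structure of $N/N'$ is not that of a free module, and the ``basis element'' argument does not go through.

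The paper bypasses all of this by working directly with Proposition \ref{p-Neumann-abelien}. Given a finite $\Sigma\subset\mathcal{L}$ and $P$ the set of differences, one observes that for each $H\in S_G(P,M)$ some factor $(\pi_Q(b)-1)M_r$ lies in $H$; since $L$ is abelian these factors commute, so the \emph{product} $q=\prod_{b\in P}(\pi_Q(b)-1)\in\Z L$ satisfies $qM_r\le H$ for every such $H$. The domain property of $\Z L$ (valid for any torsion-free abelian $L$) gives $q\neq 0$, and Passi's faithfulness result gives $qM_r\neq 0$. No uniform submodule, no associated prime, no finite-rank hypothesis is needed. This is both simpler and strictly more general than your route.
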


\begin{proof}
Let $\Sigma$ be a finite subset of $\mathcal{L}$, and $P =\{g^{-1}h\colon g, h\in P, g\neq h\}$. Note that ell elements of $P$ have a non-trivial projection to $Q$. We shall prove that there is a non-trivial normal subgroup of $G$ that is contained in $H$ for every $H \in S_G(P,N/N')$. By Lemma \ref{l-non-foldable-confined}, this will prove that $\mathcal{L}$ is a non-foldable subset of $G$. 

The subgroup $M = N/N'$ is a torsion-free abelian normal subgroup of $G$. Let $r$ be the cardinality of $P$, and let $M_r = (r!) M$. According to Proposition \ref{p-Neumann-abelien}, for every $H \in S_G(P,M)$ there exists $b \in P$ such that $(\pi_Q(b)-1) M_r \leq H$. Since $\pi_Q(b)$ belongs to $L$ and $L$ is abelian, it follows that if we set $q =  \prod_P (\pi_Q(b)-1)$, then $q M_r$ is contained in $H$ for every $H \in S_G(P,M)$. Now since $L$ is torsion-free abelian, the group ring $\Z L$ has no zero divisors. Since $\pi_Q(b)$ is non-trivial for every $b \in P$, it follows that $q$ is non-zero in $\Z L \subseteq \Z Q$. Moreover according to \cite{Passi-annihilators}, $M = N/N'$ is a faithful $\Z Q$-module. Hence $qM \neq \left\lbrace  0\right\rbrace $, and since $M$ is torsion-free we also have $qM_r \neq \left\lbrace  0\right\rbrace $. Since $q M_r \leq H$ for every $H \in S_G(P,M)$, this terminates the proof of the first statement. 

By considering for every element of $L$ a preimage in $G$ of minimal word length, we see that one can choose a lift $\mathcal{L}$ of $L$ such that $f_{Q,L}(n) \simeq \relvol_{(G, \mathcal{L})}(n)$. Hence the second statement follows from the first together with Lemma  \ref{lem-exp-subset-growth}.
\end{proof}

\begin{remark}
The result from \cite{Passi-annihilators} used in the above proof shows that if $Q$ is not a torsion group, then $G$ always contains a copy of $\Z \wr \Z$. In particular solvable Higman-type extensions satisfy Conjecture \ref{conj-torsion-free}. 
\end{remark}

The previous result is most useful when the group $Q$ is not too small. We illustrate this for free solvable groups (compare with Theorem \ref{thm-freemetab-growth}):

\begin{thm}\label{thm-free-solvable}
	Let $G = \mathbb{FS}_{d,\ell}$ be the free solvable group  of rank $d \geq 2$ and length $\ell \geq 3$. Then $G$ has a Schreier growth gap $\exp(n)$.
\end{thm}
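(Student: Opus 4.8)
The plan is to derive the statement from Proposition~\ref{prop-F/N'}. First I would write $G=\mathbb{FS}_{d,\ell}=F_d/N'$ with $N=F_d^{(\ell-1)}$, so that $N'=F_d^{(\ell)}$ and $G$ is a Higman type extension whose base quotient is $Q:=F_d/N=\mathbb{FS}_{d,\ell-1}$. By part~ii) of Proposition~\ref{prop-F/N'}, every faithful $G$-set $X$ satisfies $\vol_{G,X}(n)\succcurlyeq f_{Q,L}(n)$ for \emph{any} torsion-free abelian subgroup $L$ of $Q$. Hence it is enough to exhibit one such $L$ whose relative growth in $Q$ is $\succcurlyeq\exp(n)$; then $\vol_{G,X}(n)\succcurlyeq\exp(n)$ for all faithful $X$, which is the asserted Schreier growth gap (the reverse inequality $\vol_{G,X}(n)\preccurlyeq\exp(n)$ being automatic from $\vol_{G,X}(n)\preccurlyeq\vol_G(n)$).

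To produce $L$, the key observation is that $Q=\mathbb{FS}_{d,\ell-1}$ is itself a Higman type extension: taking $N_0:=F_d^{(\ell-2)}$ we have $N_0'=F_d^{(\ell-1)}$ and $Q=F_d/N_0'$, with base quotient $F_d/N_0=\mathbb{FS}_{d,\ell-2}$. Since $\ell\ge 3$, this base quotient is nontrivial (as $d\ge 2$) and torsion-free — it equals $\Z^d$ when $\ell=3$ and is a free solvable group otherwise, torsion-free by Higman's theorem — so in particular it is not a torsion group. Therefore the remark following Proposition~\ref{prop-F/N'} (which rests on the faithfulness of the relation module $N_0/N_0'$ over $\Z[F_d/N_0]$, after Passi) applies to $Q$ and yields a subgroup $H\le Q$ isomorphic to $\Z\wr\Z$. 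I would then take $L$ to be the base subgroup $\bigoplus_\Z\Z$ of this copy of $\Z\wr\Z$; it is torsion-free abelian, and sits inside $Q$.

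It then remains to check that $f_{Q,L}(n)\succcurlyeq\exp(n)$. Since $H$ is finitely generated and $L\le H\le Q$, the inclusion $H\hookrightarrow Q$ is Lipschitz for the word metrics, so $f_{Q,L}(n)\succcurlyeq f_{H,L}(n)$, and one is reduced to the elementary fact that the base subgroup of $\Z\wr\Z$ is exponentially distorted: writing $a$ for the standard generator of the lamp group at the identity and $t$ for the shift, for each $k$ the $2^{k+1}$ pairwise distinct base elements $\prod_{n\in S}t^{n}at^{-n}$, $S\subseteq\{0,1,\dots,k\}$, are each represented by a word of length $\le 3k+1$ in $\{a^{\pm 1},t^{\pm 1}\}$ (sweep once from position $0$ to position $k$, toggling the lamps indexed by $S$ along the way, then return), so $f_{H,L}(3k+1)\ge 2^{k+1}$ and hence $f_{H,L}(n)\succcurlyeq\exp(n)$. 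Combining this with Proposition~\ref{prop-F/N'}~ii) gives $\vol_{G,X}(n)\succcurlyeq f_{Q,L}(n)\succcurlyeq\exp(n)$ for every faithful $G$-set $X$, completing the argument. I do not expect a serious obstacle here: the substantive input is Proposition~\ref{prop-F/N'} together with the Passi-type faithfulness result behind the $\Z\wr\Z$ remark, and the only point that requires (minor) care is recognizing that $\mathbb{FS}_{d,\ell-1}$ is again a Higman type extension with non-torsion base quotient exactly when $\ell\ge 3$, which is what makes the exponentially-distorted abelian subgroup available to feed into Proposition~\ref{prop-F/N'}.
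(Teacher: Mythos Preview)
Your proof is correct and follows essentially the same line as the paper: both write $G=F_d/N'$ with $N=F_d^{(\ell-1)}$, $Q=\mathbb{FS}_{d,\ell-1}$, and apply Proposition~\ref{prop-F/N'} to a torsion-free abelian subgroup $L\le Q$ of exponential relative growth. The only difference is cosmetic: the paper takes $L=Q^{(\ell-2)}$ directly and checks the exponential relative growth by the explicit computation of Lemma~\ref{lem-Free-sol-exp-growth}, while you instead pass through the Passi-based remark to find a copy of $\Z\wr\Z$ in $Q$ and take $L$ to be its base group --- but that base group lies in $N_0/N_0'=Q^{(\ell-2)}$ anyway, and your distortion estimate is the same lamplighter count as in the lemma.
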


In the proof we will use the following easy lemma.


\begin{lem} \label{lem-Free-sol-exp-growth}
Let $Q = \mathbb{FS}_{d,\ell}$ be the free solvable group  of rank $d \geq 2$ and length $\ell \geq 2$, and let $L = Q^{(\ell-1)}$. Then $\relvol_{(Q,L)}(n) \simeq \exp(n)$.
\end{lem}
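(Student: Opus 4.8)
The plan is to establish that the free solvable group $Q = \mathbb{FS}_{d,\ell}$ has exponential relative growth of its last non-trivial derived subgroup $L = Q^{(\ell-1)}$. I would argue by induction on $\ell$, reducing the case of length $\ell$ to the case of length $\ell - 1$, with the base case $\ell = 2$ being the free metabelian group.

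For the base case $\ell = 2$, the group $Q = \mathbb{FM}_d$ is the free metabelian group, and $L = Q'$ is its derived subgroup. Via the Magnus embedding $\mathbb{FM}_d \hookrightarrow \Z^d \wr Q^{ab}$ with $Q^{ab} \simeq \Z^d$, the derived subgroup $Q'$ maps into $\oplus_{\Z^d} \Z$. A direct computation with the Magnus embedding shows that the commutators $[x_i, x_j]$ and their conjugates by group elements generate, inside $\oplus_{\Z^d}\Z$, a submodule that is ``as large as possible'': concretely, the images of the elements $w [x_1, x_2] w^{-1}$ for words $w$ of length $\leq n$ in $\mathbb{FM}_d$ are distinct (for $d \geq 2$) and have word length $O(n)$ in $\mathbb{FM}_d$. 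Since the number of such words $w$ grows exponentially (the base $\Z^d$ already has polynomial growth but we are counting preimages through conjugation, and crucially the commutator subgroup is infinitely generated with exponentially many elements in each ball — this follows from the Magnus embedding picture where conjugates of $[x_1,x_2]$ correspond to translates of a generator of $\oplus_{\Z^d}\Z$, but one must be careful: this only gives polynomial count). The correct base-case argument is rather that the relative growth of $Q'$ in $\mathbb{FM}_d$ is at least exponential because $\mathbb{FM}_d$ has exponential growth while $\mathbb{FM}_d/Q' = \Z^d$ has polynomial growth, so a counting argument forces $f_{\mathbb{FM}_d, Q'}(n) \succcurlyeq \exp(n)/n^d \simeq \exp(n)$.

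For the inductive step, I would use the same mechanism: write $Q = \mathbb{FS}_{d,\ell}$, so $Q / Q^{(\ell-1)} \simeq \mathbb{FS}_{d,\ell-1}$, and let $\overline{L} = (Q/Q^{(\ell-1)})^{(\ell-2)}$. By induction $f_{\mathbb{FS}_{d,\ell-1}, \overline{L}}(n) \simeq \exp(n)$. Now I claim $f_{Q, Q^{(\ell-1)}}(n) \succcurlyeq \exp(n)$: the quotient map $Q \to Q/Q^{(\ell-1)}$ is $1$-Lipschitz for suitable generating sets, it sends $Q^{(\ell-1)}$ to $\{1\}$ — wait, that goes the wrong way. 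Instead, observe $Q^{(\ell-2)}$ surjects onto $\overline{L}$ with kernel $Q^{(\ell-1)}$, so $f_{Q, Q^{(\ell-2)}}(n) \succcurlyeq f_{\mathbb{FS}_{d,\ell-1}, \overline{L}}(n) \simeq \exp(n)$, and the more direct route is: $Q$ has exponential growth and $Q/Q^{(\ell-1)}$ has the property that its $(\ell-2)$-th derived subgroup is small relative to it... Actually the cleanest path is simply to note that $Q = \mathbb{FS}_{d,\ell}$ has exponential growth for $d \geq 2$, while $Q^{ab} = \Z^d$, and more generally one knows (or proves by the wreath-product-iterated structure, e.g. $\mathbb{FS}_{d,\ell}$ embeds into iterated wreath products) that the relative growth of $Q^{(\ell-1)}$ is controlled from below; but the efficient argument uses that $Q/Q^{(\ell-1)} = \mathbb{FS}_{d,\ell-1}$, whose growth we may assume is known, together with the fact that $\mathrm{vol}_Q(n) \preccurlyeq \mathrm{vol}_{Q/Q^{(\ell-1)}}(n) \cdot f_{Q, Q^{(\ell-1)}}(n)$ (a ball in $Q$ is covered by fibers over a ball in the quotient, each fiber a translate of a ball in $Q^{(\ell-1)}$). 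Since for $\ell \geq 3$ the group $\mathbb{FS}_{d,\ell}$ has strictly faster growth exponential-wise than any bound forced only by $\mathbb{FS}_{d,\ell-1}$ would contradict — more precisely one shows $\mathrm{vol}_Q(n) \succcurlyeq \exp(n^{1/\ell})$ type bounds are too weak, so I'd instead directly show $f_{Q,Q^{(\ell-1)}}(n) \succcurlyeq \exp(n)$ by exhibiting exponentially many distinct short elements of $Q^{(\ell-1)}$.

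The main obstacle is producing, explicitly, exponentially many elements of $Q^{(\ell-1)}$ lying in a ball of radius $Cn$ in $Q = \mathbb{FS}_{d,\ell}$. I expect the cleanest way is the inequality $\mathrm{vol}_{Q}(n) \preccurlyeq \mathrm{vol}_{Q/Q^{(\ell-1)}}(Cn) \cdot f_{Q, Q^{(\ell-1)}}(Cn)$, combined with the known fact that $\mathbb{FS}_{d,\ell}$ has exponential growth for $d \geq 2$ (true already for $\mathbb{FM}_d = \mathbb{FS}_{d,2}$, hence for all $\ell \geq 2$ by monotonicity of growth under quotients going the right way — $\mathbb{FM}_d$ is a quotient of $\mathbb{FS}_{d,\ell}$, so $\mathrm{vol}_{\mathbb{FS}_{d,\ell}}(n) \succcurlyeq \mathrm{vol}_{\mathbb{FM}_d}(n) \simeq \exp(n)$). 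Then if $f_{Q,Q^{(\ell-1)}}(n)$ were sub-exponential, we would need $\mathrm{vol}_{Q/Q^{(\ell-1)}} = \mathrm{vol}_{\mathbb{FS}_{d,\ell-1}}$ to be exponential, which holds, so this argument alone does not close the loop — it must be supplemented. The resolution: apply the inequality with the roles arranged so that $Q^{(\ell-1)}$ is the \emph{bottom} abelian piece and use that it is a faithful $\Z[\mathbb{FS}_{d,\ell-1}]$-module (as in Proposition \ref{prop-F/N'}, since $\mathbb{FS}_{d,\ell} = F_d / N'$ with $N = F_d^{(\ell-1)}$), so by Proposition \ref{prop-F/N'} with $L = \mathbb{FS}_{d,\ell-1}^{(\ell-2)}$ lifted into $Q$, one gets $f_{Q, Q^{(\ell-1)} \cdot (\text{lift})}(n) \succcurlyeq f_{\mathbb{FS}_{d,\ell-1}, \mathbb{FS}_{d,\ell-1}^{(\ell-2)}}(n)$, and the latter is $\simeq \exp(n)$ by induction — with the base case $\ell - 1 = 2$ handled by the Magnus-embedding computation for $\mathbb{FM}_d$ showing $f_{\mathbb{FM}_d, \mathbb{FM}_d'}(n) \simeq \exp(n)$, which itself follows from the exponential growth of $\mathbb{FM}_d$ and the fact that $\mathbb{FM}_d / \mathbb{FM}_d' = \Z^d$ has only polynomial growth. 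This is the step I expect to require the most care, since one must track word-length distortion when lifting $L$ from $\mathbb{FS}_{d,\ell-1}$ back into $\mathbb{FS}_{d,\ell}$, but as in Proposition \ref{prop-F/N'} one simply chooses minimal-length representatives, giving $f_{Q,L}(n) \simeq f_{\mathbb{FS}_{d,\ell-1}, \mathbb{FS}_{d,\ell-1}^{(\ell-2)}}(n)$ up to the equivalence $\simeq$.
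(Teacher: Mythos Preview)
Your proposal does not close. The base case $\ell=2$ is fine: the pigeonhole inequality $\vol_Q(n)\le \vol_{Q/L}(n)\cdot \relvol_{(Q,L)}(2n)$ applied with $Q=\mathbb{FM}_d$ and $L=Q'$ (so $Q/L=\Z^d$ has polynomial growth while $Q$ has exponential growth) does yield $\relvol_{(Q,L)}(n)\simeq \exp(n)$. The problem is the inductive step. For $\ell\ge 3$ the quotient $Q/Q^{(\ell-1)}=\mathbb{FS}_{d,\ell-1}$ itself has exponential growth, so the same pigeonhole inequality gives no information. Your attempted rescue via Proposition~\ref{prop-F/N'} is a misapplication: that proposition bounds $\vol_{G,X}(n)$ for faithful $G$-sets $X$, not the relative growth $\relvol_{(Q,L)}$ of a subgroup, and in any case the subset you end up discussing (a lift of $(\mathbb{FS}_{d,\ell-1})^{(\ell-2)}$, i.e.\ a section living inside $Q^{(\ell-2)}$) is not the subgroup $L=Q^{(\ell-1)}$ you need. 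The induction hypothesis concerns $(\mathbb{FS}_{d,\ell-1})^{(\ell-2)}$ inside $\mathbb{FS}_{d,\ell-1}$; you never establish any bridge from this to $Q^{(\ell-1)}$ inside $Q$.

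The paper bypasses induction entirely with a short explicit construction. Set $c=w_{\ell-1}$ where $w_1=[x_1,x_2]$ and $w_{i+1}=[x_1,w_i]$, so that $c\in L=Q^{(\ell-1)}$. Put $c_i=x_1^i c x_1^{-i}\in L$. Since $L$ is abelian, the map $(\varepsilon_1,\ldots,\varepsilon_n)\mapsto c_1^{\varepsilon_1}\cdots c_n^{\varepsilon_n}$ lands in $L$, and each such product has word length at most $(|c|+2)n$. Injectivity of this map (equivalently, $\Z$-linear independence of the $c_i$ in the abelian group $L$) follows from the $\Z[\mathbb{FS}_{d,\ell-1}]$-module structure on $L=N/N'$: this module is faithful (the fact used in Proposition~\ref{prop-F/N'}), so the annihilator of $c$ is trivial, hence the $\Z\langle x_1\rangle$-submodule generated by $c$ is free of rank one and the translates $c_i$ are independent. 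This places $2^n$ elements of $L$ in a ball of radius $O(n)$, giving $\relvol_{(Q,L)}(n)\simeq\exp(n)$ directly for every $\ell\ge 2$.
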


\begin{proof}
Let $x_1,\ldots, x_d$ be the images in $Q$ of free generators of $F_d$, and set $c = w_{\ell-1} \in L$, where $w_{i}$ is defined inductively by $w_{1}=[x_1,x_2]$ and $w_{i+1} = [x_1,w_i]$. For $i\geq 0$ we also set $c_i = x_1^i c x_1^{-i}$. The map from $\left\lbrace 0,1\right\rbrace ^n$ to $L$ that maps $(\varepsilon_1,\ldots,\varepsilon_n)$ to $w(\varepsilon_1,\ldots,\varepsilon_n) = c_1^{\varepsilon_1} \cdots c_n^{\varepsilon_n}$ is injective, and $w(\varepsilon_1,\ldots,\varepsilon_n)$ has word  length at most $(\left|c \right|  +2)n$ with respect to $x_1,\ldots, x_d$. Hence the ball of radius $(\left|c \right|  +2)n$ contains at least $2^n$ elements of $L$, and it follows that $L$ indeed has relative exponential growth in $Q$.
\end{proof}

\begin{proof}[Proof of Theorem \ref{thm-free-solvable}]
We have $G = F_d / N'$, where $N = F_d^{(\ell-1)}$, and $Q = F_d / N \simeq \mathbb{FS}_{d,\ell-1}$. The subgroup $L = Q^{(\ell-2)}$ is a torsion-free abelian subgroup of $Q$, and $f_{Q,L}(n) \simeq \exp(n)$ by	Lemma \ref{lem-Free-sol-exp-growth}.  Therefore the conclusion follows from Proposition \ref{prop-F/N'}. 
\end{proof}

\section{Additional comments and examples}

\subsection{Pointed growth of transitive actions}

When $X$ is a transitive $G$-set, there is another natural notion of growth, called the pointed growth of the action. It is defined by fixing a point $x \in X$, and considering $\vol_{G, X,x}(n)= |S^n \cdot x|$. By transitivity of the action, $\vol_{G, X,x}(n)$ is easily seen to be independent of the choice of $x$ up to $\simeq$. Clearly $\vol_{G, X}(n) \succcurlyeq \vol_{G, X,x}(n)$, and in general $\vol_{G, X}(n)$ is strictly larger than $\vol_{G, X,x}(n)$ (see the examples below). Here we would like to emphasize that in our setting the growth function $\vol_{G, X}(n)$ is more natural and better suited for our purposes than the pointed growth. The first advantage of considering $\vol_{G, X}(n)$ rather than the pointed growth is that the latter only makes sense for transitive actions. Restricting ourselves to transitive actions is not desirable here, for instance because it is not stable when passing to a subgroup. On the contrary $\vol_{G, X}(n)$ remains defined when passing to a subgroup and is monotone under this operation (Propositions  \ref{prop-monoton} and \ref{prop-finite-index}). Another advantage of $\vol_{G, X}(n)$ is the fact that it behaves nicely with respect to the topology on $\sub(G)$. One illustration of this is that our notion of non-foldable subsets, which by definition provides lower bounds for $\vol_{G, X}(n)$ (Lemma  \ref{lem-exp-subset-growth}), has a simple reinterpretation in terms of confined subgroups in $\sub(G)$ (Lemma \ref{l-non-foldable-confined}). Another illustration is given by Lemma \ref{lem-growth-closure}, which says that $\vol_{G, X}(n)$ decreases when taking a limit point in $\sub(G)$. Here we also note that this lemma  fails if one replaces $\vol_{G, X}(n)$ by the pointed growth (see Example \ref{ex-point-heis}).

The following simple examples also illustrate that the lower bounds obtained for $\vol_{G, X}(n)$ fail in general for the pointed growth. 

\begin{example} 
	Let $G = \Z \wr \Z  $. Recall that $\vol_{G, X}(n) \succcurlyeq  n^2$ for every faithful $G$-set $X$ by Theorem \ref{thm-wreath}. We claim that this bound fails for the pointed growth. Consider a pair of generators $s,t$, where $t$ is a generator of the base group $\Z$, and $s$ is a generator of lamp group at position $0$, and for $i\in \Z$ set $s_i=t^ist^{-i}$.  Fix an increasing sequence of integers $d_k \geq 1$, and let $H$ be the subgroup of $G$ contained in  $\oplus \Z$ consisting of all configurations $f \in \oplus \Z$ such that $f(d_k)\in 2^k\Z$.   The action of $G$ on $X:=G/H$ is easily seen to be faithful. In particular $\vol_{G, X}(n) \succcurlyeq  n^2$. By contrast, we claim that if $(d_k)$ grows sufficiently fast, then the pointed growth of the action of $G$ on $X$, henceforth denoted $v(n)$, can be made strictly subquadratic. To see this, let $k(n):=\max\{k \colon d_k\le n\}$. Let $g\in G$ be an element of word length $|g|\le n$. Then  $g$  can be expressed as  $g=t^\ell \prod_{i=-n}^ns_i^{m_i}$, with $\ell \leq n$ and $m_i\leq n$. Note that for each $i$, we have $s_i^{m_i}H=H$ if $i\neq d_k, k\ge 1$, and for every $i$ of the form $i=d_k$ for some $k$ we have $s_{d_k}^{m_{d_k}}H=s_{d_k}^{r_k}H$, where $r_i$ is the remainder of the euclidean division of $m_{d_k}$ by $2^k$.  Since moreover the $s_i$ commute,  we have $gH = t^\ell s_{d_1}^{r_1}\cdots s_{d_{k(n)}}^{r_{k(n)}}H$. Counting possibilities for $\ell$ and $r_i$ with $ i\le k(n)$, we obtain the upper bound \[v(n)\preccurlyeq n\prod_{i=1}^{k(n)} 2^i \preccurlyeq  n2^{k(n)^2}.\]  For a fixed $\alpha >1$, we choose  $d_k=2^{\alpha k^2}$ for all $k$. The above computation yields $v(n) \preccurlyeq n^{1+\frac{1}{\alpha}}$, and hence $v(n) \nsucccurlyeq n^2$. \end{example}

	The following example was suggested by Yves Cornulier.

\begin{example} \label{ex-point-heis}
 Consider the Heisenberg group $G=H_3(\Z) = \langle a, b, c\colon [a,b]=c, [a,c]=[b, c]=1\rangle$. Recall that every $g\in G$ admits a unique decomposition $g=c^zb^ya^x$, with $x, y, z\in \Z$. The word metric is given up to constants by $|g|\simeq \max (|x|, |y|, \sqrt{|z|})$, and as a consequence the volume growth of $G$ is $\vol_G(n)\simeq n^4$ (see e.g.  \cite[\S 14.1.1]{Drutu-Kapovich}). Recall from Proposition \ref{prop-n4-growth} that the group $G$  is non-foldable, and in particular every faithful $G$-set $X$ satisfies $\vol_{G, X}(n)\simeq \vol_G(n)\simeq n^4$. Now let $H$ be the cyclic subgroup generated by $a$.  For every $g=c^zb^ya^x$, the coset  $gH$ coincides with $c^zb^yH$. If $|g|\leq n$ we have $|y|\leq Cn, |z|\leq Cn^2$ for some $C>0$, so that there are $\simeq n^3$ possibilities for the pair $(y, z)$. Hence the pointed growth of the $G$-action on $G/H$ is $\simeq n^3$. Note that since $H$ is not confined, this example also shows that Lemma \ref{lem-growth-closure} fails if one replaces $\vol_{G, X}$ by the pointed growth. 
 	\end{example}

\subsection{Connection with groups of dynamical origin} \label{s-non-embeddings}

As mentioned in the introduction,  Schreier growth gaps provide an obstruction to the existence of embeddings between groups, which has applications to the study of topological full groups. Recall that given an action $G \acts \mathfrak{C}$ by homeomorphisms  on the Cantor set, the topological full group of the action is the group $F(G, \mathfrak{C})$ of all homeomorphisms of $\mathfrak{C}$ that locally coincide with elements of $G$. This notion was introduced (for $G=\Z$) by Giordano, Putnam and Skau, and further developed by Matui, and has been studied extensively in recent years \cite{Ju-Mo,Nek-simple-dyn, Nek-frag}.  This construction also encompasses many more specific groups studied in the litterature, such as Thompson's groups $V$ and some generalisations, and subgroups of the group $\operatorname{IET}$ of interval exchanges. 

Despite various advances, it remains quite mysterious how the properties of the $G$-action on $ \mathfrak{C}$ constraint the group $F(G, \mathfrak{C})$ and its possible subgroups. Non-foldable subsets and graphs of actions naturally fit  in this setting due to the following basic and well-known remark.

\begin{prop}
Let $G$ be a finitely generated group and $G\acts \mathfrak{C}$ be an action on the Cantor set. Then for every finitely generated subgroup $H\le F(G, \mathfrak{C})$, the identity map  $\mathfrak{C} \to \mathfrak{C}$ defines a Lipschitz embedding of the graph $\Gamma(H, \mathfrak{C})$ inside $\Gamma(G, \mathfrak{C})$. In particular, we have $\vol_{H, \mathfrak{C}}(n)\preceq \vol_{G, \mathfrak{C}}(n)$ and $\asdim(H, \mathfrak{C})\le \asdim (G, \mathfrak{C})$,
\end{prop}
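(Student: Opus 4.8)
The plan is to unwind the definition of the topological full group and use compactness of $\mathfrak{C}$ to reduce everything to the elementary observation that, after possibly enlarging the generating set of $G$, every edge of $\Gamma(H,\mathfrak{C})$ is already an edge of $\Gamma(G,\mathfrak{C})$.

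First I would fix a finite symmetric generating set $\{h_1,\dots,h_k\}$ of $H$. By definition of $F(G,\mathfrak{C})$, for each $j$ there is a clopen partition of $\mathfrak{C}$ on each piece of which $h_j$ agrees with a single element of $G$; by compactness of $\mathfrak{C}$ this partition can be taken finite. Passing to a common refinement over $j=1,\dots,k$ yields one finite clopen partition $\mathfrak{C}=\bigsqcup_{i=1}^m U_i$ together with elements $g_{j,i}\in G$ such that $h_j x = g_{j,i}\, x$ for every $x\in U_i$. Let $T\subset G$ be the finite set consisting of all the $g_{j,i}$ and their inverses. Since the growth of an action and the asymptotic dimension of its Schreier graph do not depend on the chosen finite generating set of $G$ (up to $\simeq$, resp.\ up to coarse equivalence), I may and do assume that the fixed finite symmetric generating set $S$ of $G$ contains $T$.

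Next I would check that the identity map on $\mathfrak{C}$ is a Lipschitz embedding $\Gamma(H,\mathfrak{C})\to\Gamma(G,\mathfrak{C})$. It is clearly injective, as it is the identity on vertex sets. For the Lipschitz bound: given $x\in\mathfrak{C}$, let $i$ be the index with $x\in U_i$; then for each generator $h_j$ we have $h_j x = g_{j,i} x$ with $g_{j,i}\in T\subseteq S$, so $x$ and $h_j x$ are joined by an edge of $\Gamma(G,\mathfrak{C})$. Hence every edge of $\Gamma(H,\mathfrak{C})$ is an edge of $\Gamma(G,\mathfrak{C})$, which gives $d_{\Gamma(G,\mathfrak{C})}(x,y)\le d_{\Gamma(H,\mathfrak{C})}(x,y)$ for all $x,y\in\mathfrak{C}$ (with the usual convention allowing the value $+\infty$ on the right). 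In particular the ball of radius $n$ around $x$ in $\Gamma(H,\mathfrak{C})$ is contained in the corresponding ball in $\Gamma(G,\mathfrak{C})$, whence $\vol_{H,\mathfrak{C}}(n)\le\vol_{G,\mathfrak{C}}(n)$, so $\vol_{H,\mathfrak{C}}(n)\preccurlyeq\vol_{G,\mathfrak{C}}(n)$. Both graphs have bounded degree, hence are uniformly locally finite, so Lemma \ref{l-asdim-monotone} applied to this $1$-Lipschitz map with singleton fibers yields $\asdim(H,\mathfrak{C})\le\asdim(G,\mathfrak{C})$.

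There is no serious obstacle here: the statement is essentially a bookkeeping consequence of the definition of $F(G,\mathfrak{C})$. The only two points deserving a word of care are the passage to a common finite refinement of the defining partitions of the generators $h_1,\dots,h_k$ (which uses compactness of $\mathfrak{C}$), and the remark that enlarging the generating set of $G$ so as to contain the finite set $T$ affects neither $\vol_{G,\mathfrak{C}}$ up to $\simeq$ nor $\asdim(G,\mathfrak{C})$ — both being standard invariance properties already recorded in \S\ref{subsec-growth} and in the discussion preceding Lemma \ref{l-asdim-monotone}.
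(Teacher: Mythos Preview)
Your proof is correct and is exactly the standard argument one has in mind here. In fact the paper does not supply a proof of this proposition at all: it is stated as a ``basic and well-known remark'' and left to the reader. Your write-up fills in precisely the expected details --- using compactness to get a finite clopen partition adapted to all generators of $H$, and then observing that every $H$-edge is a $G$-edge (after harmlessly enlarging the generating set of $G$, or equivalently noting that each $g_{j,i}$ has bounded $S$-length).
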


Hence our results immediately provide restrictions on the nature of the solvable subgroups of  the topological full groups of various actions. Without being exhaustive, we point out some simple illustrations.


\subsubsection{Topological full groups of $\Z^d$-actions} \label{subsubsection-full-gp}
Matui showed that for every minimal action of $\Z$ on  $\mathfrak{C}$ that is not an odometer action, the topological full group $F(\Z, \mathfrak{C})$ contains a copy of the lamplighter group $C_2 \wr \Z$ \cite{Mat-exp}. By contrast, Theorem \ref{thm-wreath} shows that neither $\Z \wr \Z$ nor $C_p \wr \Z^{2}$ embed in $F(\Z, \mathfrak{C})$. The case of $C_p \wr \Z^{2}$ solves Conjecture 2 from \cite{Salo}. More generally for every action of $\Z^d$ on $\mathfrak{C}$, the groups $\Z \wr \Z^d$ and $C_p \wr \Z^{d+1}$ do not embed in $F(\Z^d, \mathfrak{C})$.

\subsubsection{ Brin--Thompson groups}
Consider the generalizations $nV$ of Thompson group $V$ from \cite{Brin-nV}. It is not difficult to see that for the natural action of $nV$ on the Cantor set $\mathfrak{C}$, the graph of the action on each orbit is quasi-isometric to a product of $n$ trees (see e.g.  \cite[Lemma 11.10]{MB-graph-germs}). Thus $\Gamma(nV,\mathfrak{C})$ has asymptotic dimension $n$. Hence Theorem \ref{thm-wreath} implies:

\begin{cor}
The group $nV$ does contain a wreath product $A \wr \Z^{n+1}$, with $A\neq \{1\}$. 
\end{cor} 

By contrast it is not difficult to see that $C_2 \wr \Z^n$ embeds in $nV$. Hence this gives another proof of the result  \cite[Corollary 11.20]{MB-graph-germs} that $nV$ embeds in $mV$ only if $n\le m$.

In the case $n=2$, Corollary  \ref{cor-poly-asdimX} also immediately implies the following (that problem was notably raised in \cite{MO-zarem}):

\begin{cor} \label{cor-nV-poly}
Every polycyclic subgroup of $2V$ is virtually abelian.
\end{cor}


\subsubsection{Interval exchanges}

Dahmani, Fujiwara and Guirardel studied solvable subgroups of the group  $\operatorname{IET}$ of interval exchanges in \cite{DFG-sol}. It is well known that the natural action of a finitely generated subgroup of $\operatorname{IET}$ has polynomial growth. One result of  \cite{DFG-sol} is that every finitely generated torsion-free solvable subgroup of $\operatorname{IET}$ is virtually abelian. They deduce in particular that this holds true for polycyclic subgroups \cite[Cor.\ 3.2]{DFG-sol}. The arguments in \S \ref{s-noetherian} provide a soft proof of this last result, since Corollary \ref{cor-poly-growth} implies that every polycyclic subgroup of $\operatorname{IET}$ is virtually nilpotent, and hence virtually abelian by \cite{Novak-disc-IET}.  However we stress that the absence of torsion-free solvable subgroups in $\operatorname{IET}$  \cite{DFG-sol} cannot be proven relying only on considerations on growth (as torsion-free solvable groups can admit actions of polynomial growth).

\bibliographystyle{amsalpha}
\bibliography{bib-growth}

\end{document}